\definecolor{bred}{rgb}{0.8,0,0}
\setlist[enumerate]{label={\upshape(\roman*)}}
\def\rmd{\mathrm{d}}
\newcommand{\N}{\mathbb{N}}
\newcommand{\R}{\mathbb{R}}
\newcommand{\1}{\mathbbm{1}}
\renewcommand{\P}{\mathbb{P}}
\newcommand{\lfrf}[1]{\left\lfloor #1 \right\rfloor}
\newcommand{\lcrc}[1]{\left\lceil #1 \right\rceil}
\DeclareMathOperator{\E}{\mathbb{E}}
\DeclareMathOperator{\Tr}{Tr}
\newtheorem{theorem}{Theorem}[section]
\newtheorem{proposition}[theorem]{Proposition}
\newtheorem{lemma}[theorem]{Lemma}
\newtheorem{corollary}[theorem]{Corollary}
\newtheorem{remark}[theorem]{Remark}
\newtheorem{definition}[theorem]{Definition}
\newtheorem{assumption}{Assumption}
\newcommandx{\CPE}[3][1=]{{\mathbb E}^{#1}\left[\left. #2 \, \right| #3 \right]} %%%% esperance conditionnelle
\begin{document}

\title[Non-asymptotic estimates for TUSLA with applications to ReLU neural networks]{Non-asymptotic estimates for TUSLA algorithm \\for non-convex learning with applications to neural networks with ReLU activation function}

\author[D.-Y. Lim]{Dong-Young Lim}
\author[A. Neufeld]{Ariel Neufeld}
\author[S. Sabanis]{Sotirios Sabanis}
\author[Y. Zhang]{Ying Zhang}

\address{Department of Industrial Engineering, Ulsan National Institute of Science and Technology (UNIST),  112 Enginnering Building, 301-14, Ulsan, South Korea}
\email{dlim@unist.ac.kr}

\address{Division of Mathematical Sciences, Nanyang Technological University, 21 Nanyang Link, 637371 Singapore}
\email{ariel.neufeld@ntu.edu.sg}

\address{School of Mathematics, The University of Edinburgh, James Clerk Maxwell Building, Peter Guthrie Tait Rd, Edinburgh EH9 3FD, UK \& The Alan Turing Institute, 2QR, 96 Euston Rd, London NW1 2DB, UK \& National Technical University of Athens, Athens, 15780, Greece}
\email{s.sabanis@ed.ac.uk}

\address{Financial Technology Thrust, Society Hub, The Hong Kong University of Science and Technology (Guangzhou), No.\ 1 Du Xue Rd, Nansha District, Guangzhou, China \& Division of Mathematical Sciences, Nanyang Technological University, 21 Nanyang Link, 637371 Singapore}
\email{ying.zhang@ntu.edu.sg}
%\affil[1]{\footnotesize School of Mathematics, The University of Edinburgh, UK.}
%\affil[2]{\footnotesize Division of Mathematical Sciences, Nanyang Technological University, Singapore.}
%\affil[3]{\footnotesize The Alan Turing Institute, UK.}

\date{}
%\subjclass[2010]{}
\thanks{
Financial supports by The Alan Turing Institute, London under the EPSRC grant EP/N510129/1, by the Nanyang Assistant Professorship Grant (NAP Grant) \textit{Machine Learning based Algorithms in Finance and Insurance}, by the European Union’s Horizon 2020 research and innovation programme under the Marie Skłodowska-Curie grant agreement No 801215, and by the University of Edinburgh Data-Driven Innovation programme, part of the Edinburgh and South East Scotland City Region Deal are gratefully acknowledged.}
\keywords{Non-convex optimization, non-asymptotic estimates, artificial neural networks, ReLU activation function, taming technique, super-linearly growing coefficients, discontinuous stochastic gradient}

\begin{abstract}
We consider non-convex stochastic optimization problems where the objective functions have super-linearly growing and discontinuous stochastic gradients. In such a setting, we provide a non-asymptotic analysis for the tamed unadjusted stochastic Langevin algorithm (TUSLA) introduced in Lovas et al.\ (2020).  In particular, we establish non-asymptotic error bounds for the TUSLA algorithm in Wasserstein-1 and Wasserstein-2 distances. The latter result enables us to further derive non-asymptotic estimates for the expected excess risk. To illustrate the applicability of the main results, we consider an example from transfer learning with ReLU neural networks, which represents a key paradigm in machine learning. Numerical experiments are presented for the aforementioned example which support our theoretical findings. Hence, in this setting, we demonstrate both theoretically and numerically that the TUSLA algorithm can solve the optimization problem involving neural networks with ReLU activation function. Besides, we provide simulation results for synthetic examples where popular algorithms, e.g. ADAM, AMSGrad, RMSProp, and (vanilla) stochastic gradient descent (SGD) algorithm, may fail to find the minimizer of the objective functions due to the super-linear growth and the discontinuity of the corresponding stochastic gradient, while the TUSLA algorithm converges rapidly to the optimal solution.
Moreover, we provide an empirical comparison of the performance of TUSLA with popular stochastic optimizers on real-world datasets, as well as investigate the effect of the key hyperparameters of TUSLA on its performance.
\end{abstract}
\maketitle

\section{Introduction}
In this paper, we focus on non-convex stochastic optimization problems. More precisely, for positive integers $d$ and $m$, let $U: \R^d \times \R^m \rightarrow \R$ be a measurable function and let $X$ be a given $\R^m$-valued random variable. We assume that $\E[|U(\theta, X)|]<\infty$ for all $\theta \in \R^d$, and define $u: \R^d \rightarrow \R$ by $u(\theta) := \E[U(\theta, X)]$, $\theta \in \R^d$. We then consider the following optimization problem:
\begin{equation}\label{opproblem}
\text{minimize} \quad \R^d \ni \theta \mapsto u(\theta) := \E[U(\theta, X)].
\end{equation}
%where $X$ is an $\R^m$-valued random variable.
Our aim is to generate an estimator $\hat{\theta}$ such that the expected excess risk given by
\begin{equation}\label{eer}
\E[u(\hat{\theta})] - \inf_{\theta \in \R^d} u(\theta)
\end{equation}
is minimized. The optimization problem \eqref{opproblem} is closely linked to the problem of sampling from a target distribution $\pi_{\beta}(\rmd \theta) \wasypropto \exp(-\beta u(\theta))\rmd \theta$ with $\beta>0$, see \cite{dalalyan}, \cite{pmlr-v65-dalalyan17a}. This is due to the fact that $\pi_{\beta}$ concentrates around the minimizers of $u$ for sufficiently large $\beta$, see \cite{hwang}. It is well-known that, under mild conditions, the (overdamped) Langevin stochastic differential equation (SDE) given by
\begin{equation} \label{sdeintro}
Z_0 = \theta_0, \quad \mathrm{d} Z_t=-h\left(Z_t\right) \mathrm{d} t+ \sqrt{2\beta^{-1}} \mathrm{d} B_t, \quad t \geq 0,
\end{equation}
where $\theta_0$ is an $\R^d$-valued random variable, $h:= \nabla u$, $\beta>0$ is the so-called inverse temperature parameter, and $(B_t)_{t \geq 0}$ is a $d$-dimensional Brownian motion, admits  $\pi_\beta$ as its unique invariant measure. To sample from the target distribution $\pi_{\beta}$, one approach is to consider the stochastic gradient Langevin dynamics (SGLD) algorithm introduced in \cite{wt}, which is given by
\[
\theta^{\text{SGLD}}_0 :=\theta_0, \quad \theta^{\text{SGLD}}_{n+1}=\theta^{\text{SGLD}}_n-\lambda H(\theta^{\text{SGLD}}_n,X_{n+1})+ \sqrt{2\lambda\beta^{-1}} \xi_{n+1},\quad  n\in\N_0,
\]
where $\lambda>0$ is the stepsize, $(X_n)_{n \in \N_0}$ is an i.i.d. sequence of random variables, $H: \R^d \times \R^m \rightarrow \R^d$ is a measurable function satisfying $h(\theta) = \E[H(\theta, X_0)]$ for each $\theta \in \R^d$, $\beta>0$, and $\{\xi_n\}_{n\ge 1}$ is a sequence of independent standard $d$-dimensional Gaussian random variables.

The SGLD algorithm, which can be viewed as the Euler discretization of \eqref{sdeintro} with inexact gradient, has been extensively studied in literature. Under the conditions that the objective function $u$ is strongly convex and the (stochastic) gradient of $u$ is Lipschitz continuous,  i.e., there exist positive constants $M$, $L_{Lip}$ such that for all $\theta, \theta' \in \R^d$, $x, x' \in \R^m$,
\begin{align*}
\begin{cases}
\langle H(\theta, x)-H(\theta', x),  \theta-\theta' \rangle \geq M|\theta-\theta'|^2 &{\text{(Strong convexity)}},\\
|H(\theta, x) -H(\theta', x')| \leq L_{Lip}(|\theta-\theta'|+|x-x'|) &{\text{(Lipschitz continuity)}},
\end{cases}
\end{align*}
or similarly (but slightly weaker conditions) that there exist positive constants $\overline{M}$, $\overline{L}_{Lip}$ such that for all $\theta, \theta' \in \R^d$,
\begin{align*}
\begin{cases}
\langle h(\theta)-h(\theta'),  \theta-\theta'\rangle \geq \overline{M}|\theta-\theta'|^2 &{\text{(Strong convexity)}},\\
|h(\theta) -h(\theta')| \leq \overline{L}_{Lip}|\theta-\theta'| &{\text{(Lipschitz continuity)}},
\end{cases}
\end{align*}
\cite{convex}, \cite{ppbdm}, and \cite{dk} provide non-asymptotic error bounds in Wasserstein-2 distance between the SGLD algorithm and the target distribution $\pi_{\beta}$. In particular, the results in \cite{convex} are obtained in the case of dependent data stream $(X_n)_{n \in \N_0}$. Recent research focuses on the relaxation of the strong convexity condition of $u$. In \cite{raginsky} and \cite{xu}, a dissipativity condition is considered under which non-asymptotic estimates are obtained for the SGLD algorithm in Wasserstein-2 distance. By using a contraction result developed in \cite{eberle2019quantitative}, \cite{nonconvex} improves significantly the aforementioned convergence results in \cite{raginsky} and \cite{xu} even without assuming the independence of the data stream. Moreover, in \cite{sgldloc}, a local dissipativity condition is proposed, and non-asymptotic bounds are obtained in Wasserstein distances following a similar approach as in \cite{nonconvex}. Furthermore, \cite{berkeley} provide convergence results by assuming a convexity at infinity condition of $u$ based on the contraction property established in \cite{eberleold}.

The aforementioned results in both convex and non-convex case are obtained under a global Lipschitz continuity condition (in $\theta$) of the stochastic gradient $H$. However, popular applications in machine learning, especially those with the use of artificial neural networks (ANNs), typically have highly nonlinear objective functions, which results in super-linearly\footnote{We refer to functions $f: \R^k \rightarrow \R^j$, for $k,j \in \N$, to be super-linearly growing if $\sup_{\theta \in \R^k} \frac{|f(\theta)|}{1+|\theta|} = \infty$.} growing stochastic gradients. It has been shown in \cite{hutzenthaler2011} that the Euler scheme with super-linearly growing coefficients is unstable in the sense that the absolute moments of the Euler approximations could diverge to infinity at finite time point. As many (stochastic) gradient descent methods can be viewed as Euler discretizations of SDE \eqref{sdeintro}, their application to super-linearly growing stochastic gradient is problematic, which is confirmed by the numerical experiments in \cite{lovas2020taming} for the SGLD algorithm. To cope with this problem, \cite{lovas2020taming} considers the use of a taming technique, see, e.g., \cite{hutzenthaler2012}, \cite{eulerscheme}, \cite{SabanisAoAP}, \cite{tula}, and a tamed unadjusted stochastic Langevin algorithm (TUSLA) is proposed, which is  given by
\[
\theta^{\lambda}_0 :=\theta_0, \quad \theta^{\lambda}_{n+1}=\theta^{\lambda}_n-\lambda H_\lambda(\theta^{\lambda}_n,X_{n+1})+ \sqrt{2\lambda\beta^{-1}} \xi_{n+1},\quad  n\in\N_0,
\]
where for all $\theta \in \R^d, x \in \R^m$,
\[
H_\lambda(\theta,x):=\frac{H(\theta,x)}{1+\sqrt{\lambda} |\theta|^{2r}}
\]
with $\lambda>0$ and $r>0$. Non-asymptotic analysis of the TUSLA algorithm is provided in \cite{lovas2020taming} in the case of locally Lipschitz continuous $H$, and non-asymptotic results are obtained in Wasserstein-1 and Wasserstein-2 distances with the rate of convergence equal to $1/2$ and $1/4$, respectively. However, in the case of super-linearly growing and discontinuous $H$, theoretical guarantees for the TUSLA algorithm have not been established in the existing literature. Hence, the results established in \cite{lovas2020taming} cannot be applied to optimization problems involving neural networks with ReLU activation function.

To address the issue of $H$ being discontinuous, one line of research considers certain continuity in average conditions. Under such a type of condition, \cite{fort2016} and \cite{4} provide an almost sure convergence result and a strong $L^1$ convergence result, respectively, for the stochastic gradient descent (SGD) algorithm. Another line of research focuses on the application of proximal operators. In \cite{durmus2019analysis}, the Stochastic Proximal Gradient Langevin Dynamics (SPGLD) algorithm is proposed, and a non-asymptotic error bound between the Kullback-Leibler divergence from the target distribution to the averaged distribution associated with the SPGLD algorithm is obtained under the condition that the potential of the target distribution is convex (but no strong convexity condition is imposed). Furthermore, proximal operators can also be used for the design of algorithms involving discontinuous gradient $h$. In \cite{durmus2018efficient}, the Moreau-Yosida Unadjusted Langevin Algorithm (MYULA) is proposed by using proximal operators and Moreau-Yosida envelopes, and a non-asymptotic error bound in total variation distance is obtained under a convexity condition. In addition, \cite{luu2021sampling} proposes proximal type algorithms to sample from distributions that are not necessarily smooth nor log-concave, which can be applied to regression problems with non-smooth penalties. There, by using Moreau-Yosida envelopes, a convergence result in mean square of the proposed algorithm to the smoothed target distribution is obtained, but without specifying relevant constants.  It is worth noting that the aforementioned results in \cite{fort2016}, \cite{4}, \cite{durmus2019analysis}, and \cite{durmus2018efficient} %, \cite{luu2021sampling}
are established %for (at most) linearly growing (stochastic) gradients,
in the case where the (stochastic) gradients are growing (at most) linearly, and hence cannot be applied to optimization problems involving ReLU neural networks.

As an application of \eqref{opproblem}, we are interested in optimization problems in transfer learning with ReLU neural networks, see, e.g. \cite{Goodfellow2016} and references therein. One concrete example\footnote{The following example is presented in dimension one for the illustrative purpose. We refer to Section \ref{fixedexample} for the multidimensional setting.} would be to obtain the best nonlinear mean-square estimator by solving the (regularized) minimization problem:
\begin{equation}\label{probfixedintro}
\min_{\theta \in \R^2} u(\theta) := \min_{\theta \in \R^2} \left(\E[(Y - \mathfrak{N} (\theta, Z))^2]+\frac{\eta}{2(r+1)}|\theta|^{2(r+1)}\right),
\end{equation}
where $\theta \in \R^2$ is the parameter to be optimized, $Z$ is the $\R$-valued input random variable, $Y$ is the $\R$-valued target random variable, $\eta, r>0$, and $\mathfrak{N}: \R^2 \times \R \rightarrow \R$ takes the form:
\begin{equation}\label{nnexpintro}
\mathfrak{N} (\theta,z) :=   W_1 \sigma_1\left(cz+b_0 \right)
\end{equation}
with $z \in \R$ the input data, $c  \in \R $ the fixed (pre-trained) input weight, $b_0  \in \R $ the bias parameter, $W_1  \in \R $ the weight parameter, $\sigma_1$ the ReLU activation function given by $\sigma_1(\nu) = \max\{0, \nu\}$, $\nu \in \R$, and $\theta = (W_1, b_0)$. One observes that the stochastic gradient $H$ of the problem \eqref{probfixedintro}-\eqref{nnexpintro} is super-linearly growing and discontinuous. Thus, the theoretical results for the TUSLA algorithm obtained in \cite{lovas2020taming} cannot be applied. To extend the applicability of the TUSLA algorithm to, e.g., optimization problems involving neural networks with ReLU activation function, %establish finite-time guarantees for the TUSLA algorithm to find optimal solutions of \eqref{probfixedintro}-\eqref{nnexpintro},
we consider the case where $H$ is super-linearly growing and discontinuous. More precisely, we assume that $H$ takes the form $H := F+G$, where $G:  \R^d \times \R^m \rightarrow \R^d$ and $F:  \R^d \times \R^m \rightarrow \R^d$. The function $F$ is assumed to be locally Lipschitz continuous, and satisfy a certain convexity at infinity condition, while $G$ is assumed to satisfy a ``continuity in average'' condition. The precise formulations of the assumptions are provided in Assumption \ref{AI}-\ref{AC} in Section \ref{assumption}. For further discussions on the assumptions, we refer to the corresponding remarks in Section \ref{assumption}. %It is worth mentioning that the assumptions considered in the current article are weaker than those in \cite{lovas2020taming}. This is due to the fact that Assumption 1 in \cite{lovas2020taming} implies Assumption \ref{AG}, while a specific form of $F$, i.e., $F(\theta,x) = \eta\theta|\theta|^{2r}$ is considered in \cite{lovas2020taming} which satisfies Assumption \ref{AF}, \ref{AC}. In addition, compared to Assumption 2 in  \cite{lovas2020taming}, we require (in Assumption \ref{AI}) fewer finite moments of the initial value $\theta_0$ and of the data stream $(X_n)_{n \in \N_0}$.
Under these conditions, non-asymptotic estimates in Wasserstein distances are established in Theorem \ref{mainw1} and Corollary \ref{mainw2}, while a non-asymptotic error bound for the expected excess risk \eqref{eer} is established in Theorem \ref{mainop}, which provides a theoretical guarantee for the TUSLA algorithm to converge to a global minimizer. Detailed proofs are presented in Section \ref{po}. To illustrate the applicability of the main results, we consider an example in transfer learning with the use of ReLU neural networks in Section \ref{fixedexample}, which can be viewed as a multidimensional version of \eqref{probfixedintro}-\eqref{nnexpintro}. It is shown that the stochastic gradient of the problem satisfies Assumption \ref{AI}- \ref{AC}, and numerical experiments support our theoretical findings. Hence, in this setting, we show both theoretically and numerically that the TUSLA algorithm can solve the optimization problem involving neural networks with ReLU activation function. Moreover, we present synthetic examples in Section \ref{atexample} to demonstrate that widely-used machine learning algorithms, e.g. ADAM, AMSGrad, RMSProp, and (vanilla) SGD, may fail to find the minimizer of the corresponding objective function, which is due to the super-linear growth of the stochastic gradient. In contrast, the TUSLA algorithm converges rapidly to the optimal solution. Furthermore, we provide in Section \ref{sub:real_app} an empirical comparison of the performance of TUSLA with popular stochastic optimizers on real-world datasets, whereas in Section \ref{sub:eff_params}, we investigate the effect of the key hyperparameters of TUSLA on its performance. The proofs of the results in Section \ref{app}, i.e. Proposition \ref{propfixed}, Corollary \ref{corofixed}, and Proposition \ref{propat}, are provided in Section \ref{appproof}. %The new algorithm addresses known stability issues of SGLD algorithms, see also, and can be seen as an SGLD algorithm with adaptive step size. This is due to the fact that, at each iteration, the stochastic gradient $H$ is multiplied with a step size which is controlled by the $2r$-th power of the (vector) norm of the parameter, i.e. by $\lambda\left(1+\sqrt{\lambda} |\theta|^{2r}\right)^{-1}$. (to be changed)

We conclude this section by introducing some notation. Let $(\Omega,\mathcal{F},P)$ be a probability space. We denote by $\E[Z]$  the expectation of a random variable $Z$. For $1\leq p<\infty$, $L^p$ is used to denote the usual space of $p$-integrable real-valued random variables. Fix integers $d, m \geq 1$. For an $\R^d$-valued random variable $Z$, its law on $\mathcal{B}(\R^d)$, i.e. the Borel sigma-algebra of $\R^d$, is denoted by $\mathcal{L}(Z)$. For a positive real number $a$, we denote by $\lfrf{a}$ its integer part, and define $\lcrc{a} := \lfrf{a}+1 $. The Euclidean scalar product is denoted
by $\langle \cdot,\cdot\rangle$, with $|\cdot|$ standing for the corresponding Euclidean norm (where the dimension of the space may vary depending on the context). For any integer $q \geq 1$, let $\mathcal{P}(\R^q)$ denote the set of probability measures on $\mathcal{B}(\R^q)$. For $\mu\in\mathcal{P}(\R^d)$ and for a $\mu$-integrable function $f:\R^d\to\R$, the notation $\mu(f):=\int_{\R^d} f(\theta)\mu(\rmd \theta)$ is used. For $\mu,\nu\in\mathcal{P}(\R^d)$, let $\mathcal{C}(\mu,\nu)$ denote the set of probability measures $\zeta$ on $\mathcal{B}(\R^{2d})$ such that its respective marginals are $\mu,\nu$. For two Borel probability measures $\mu$ and $\nu$ defined on $\R^d$ with finite $p$-th moments, the Wasserstein distance of order $p \geq 1$ is defined as
\begin{equation} \label{eq:definition-W-p}
{W}_p(\mu,\nu):=
\left(\inf_{\zeta\in\mathcal{C}(\mu,\nu)}\int_{\R^d}\int_{\R^d}|\theta-\theta'|^p\zeta(\rmd \theta \rmd \theta')\right)^{1/p}.
\end{equation}

\section{Assumptions and main results}
Let $U: \R^d \times \R^m \rightarrow \R$ be a measurable function. We assume that $\E[|U(\theta, X)|]<\infty$ for all $\theta \in \R^d$, where $X$ is a given $\R^m$-valued random variable with probability law $\mathcal{L}(X)$. Assume that $u: \R^d \rightarrow \R$ defined by $u(\theta) := \E[U(\theta, X)]$, $\theta \in \R^d$, is a continuously differentiable function, and denote by $h:=\nabla u$ its gradient. Furthermore, define
\begin{equation}\label{pibetaexp}
\pi_{\beta}(A) := \frac{\int_A e^{-\beta u(\theta)} \, \rmd \theta}{\int_{\R^d} e^{-\beta u(\theta)} \, \rmd \theta}, \quad A \in \mathcal{B}(\R^d),
\end{equation}
where we assume $\int_{\R^d} e^{-\beta u(\theta)} \, \rmd \theta <\infty$.

Denote by $(\mathcal{G}_n)_{n\in\N_0}$ a given filtration representing the flow of past information, and denote by $\mathcal{G}_{\infty} := \sigma(\bigcup_{n \in \N_0} \mathcal{G}_n)$. Moreover, let $(X_n)_{n\in\N_0}$ be a $(\mathcal{G}_n)$-adapted process such that $(X_n)_{n\in\N_0}$ is a sequence of i.i.d. $\R^m$-valued random variables with probability law $\mathcal{L}(X)$. Let $(\xi_{n})_{n\in\N_0}$ be  a sequence of independent standard $d$-dimensional Gaussian random variables.  It is assumed throughout the paper that the $\R^d$-valued random variable $\theta_0$ (initial condition), $\mathcal{G}_{\infty}$, and $(\xi_{n})_{n\in\N_0}$ are independent.

The tamed unadjusted stochastic Langevin algorithm (TUSLA) is given by
\begin{equation}\label{tusla}
\theta^{\lambda}_0 :=\theta_0, \quad \theta^{\lambda}_{n+1}=\theta^{\lambda}_n-\lambda H_\lambda(\theta^{\lambda}_n,X_{n+1})+ \sqrt{2\lambda\beta^{-1}} \xi_{n+1},\quad  n\in\N_0,
\end{equation}
where $\lambda>0$ is the stepsize, and $\beta>0$ is the inverse temperature parameter. In addition, for all $\theta \in \R^d, x \in \R^m$, let
\begin{equation} \label{tamedH}
H_\lambda(\theta,x):=\frac{H(\theta,x)}{1+\sqrt{\lambda} |\theta|^{2r}},
\end{equation}
where $H: \R^d \times \R^m \rightarrow \R^d$ takes the following form: for all $\theta \in \R^d, x \in \R^m$,
\begin{equation}\label{expressionH}
H(\theta,x) := G(\theta,x)+F(\theta,x),
\end{equation}
where $G:  \R^d \times \R^m \rightarrow \R^d$ and $F:  \R^d \times \R^m \rightarrow \R^d$ are measurable functions.
%%%%%%%%%%%%%%%%%%%%%%%%
\subsection{Assumptions} \label{assumption}
In this section, we present the conditions required to obtain the main results. Let $q \in [1, \infty), r \in [q/2, \infty) \cap \N, \rho \in [1, \infty)$ be fixed. The following assumptions are stated.

We first impose conditions on the initial value $\theta_0$ and the data process $(X_n)_{n \in \N_0}$. In addition, it is assumed that $H(\theta, x)$ is an unbiased estimate of $h(\theta)$ for all $\theta \in \R^d, x \in \R^m$.
\begin{assumption} \label{AI}
The initial condition $\theta_0$ has a finite $4(2r+1)$-th moment, i.e., $\E[|\theta_0|^{4(2r+1)}]<\infty$. The process $(X_n)_{n \in \N_0}$ has a finite $4(2r+1)\rho$-th moment, i.e. $\E[|X_0|^{4(2r+1)\rho}]<\infty$. Furthermore, we have that $h(\theta) = \E[H(\theta, X_0)]$, for all $\theta \in \R^d$.
\end{assumption}
%%%%%%%%%%%%%%%%%%%%%%%%
Recall the expression of $H$ presented in \eqref{expressionH}. In the second assumption below, we impose a ``continuity in average'' condition on $G$, which is weaker than a (locally) Lipschitz continuity condition. This concept is proposed in \cite[Eqn. (6)]{4}, and a similar continuity condition can be found in \cite[{\textbf{H4}}]{fort2016}. Moreover, we assume the function $G$ satisfies a polynomial growth condition.
\begin{assumption} \label{AG}
There exists a constant $L_G>0$ such that, for all $\theta, \theta' \in \R^d$,
\[
\E[|G(\theta, X_0)-G(\theta', X_0)|] \leq L_G(1+|\theta|+|\theta'|)^{q-1}|\theta-\theta'|.
\]
In addition, there exists a constant $K_G >1$, such that for all $\theta \in \R^d, x \in \R^m$,
\[
|G(\theta,x) | \leq K_G(1+|x|)^{\rho}(1+|\theta|)^q.
\]
\end{assumption}
%%%%%%%%%%%%%%%%%%%%%%%%
\begin{remark} One observes that Assumption \ref{AG} is slightly weaker than the conditional Lipschitz continuity (CLC) property in \cite[Eqn. (6)]{4}, as we consider i.i.d. data stream while \cite{4} considers dependent data stream. In addition, one may refer to \cite[Remark 2.4]{4} for the comments on the differences between \cite[Eqn. (6)]{4} and its similar condition \cite[{\textbf{H4}}]{fort2016}.

Furthermore, consider $G = (G^1, \dots, G^d): \R^d \times \R^m \rightarrow \R^d$ with $G^l$, $l = 1, \dots, d$, taking the following form:
\begin{equation} \label{functionofformclc}
G^l(\theta,x) := \sum_{j=1}^N g_j^l(\theta, x)\1_{ \{\langle c^l(\theta),x \rangle \in I_j^l (\theta)\} }, \qquad \theta \in \R^d, x \in \R^m,
\end{equation}
where $N \in \N$, $c^l=(c_1^l, \dots, c_m^l): \R^d \rightarrow \R^m$ with $c_k^l: \R^d \rightarrow \R$ being Lipschitz continuous, where $g_j^l:\R^d \times \R^m \rightarrow \R$ are jointly local Lipschitz continuous functions, and where the intervals $I_j^l(\theta)$ take the form $(-\infty, \bar{i}^l_j(\theta))$, $(\underline{i}^l_j(\theta), \infty)$, or $(\underline{i}^l_j(\theta), \bar{i}^l_j(\theta))$ with $\underline{i}^l_j, \bar{i}^l_j: \R^d \rightarrow \R$ being Lipschitz continuous functions. Let $X=(X^1, \dots, X^m)$ be an $\R^m$-valued continuous random variable. For any $k = 1, \dots, m$, denote by $f_{X^k|X_{-k}}:  \R \to [0,\infty)$ the density function of $X^k$ given $X^1, \dots, X^{k-1}$, $X^{k+1}, \dots, X^m$. Let $f_{X^k|X_{-k}}$, $k = 1, \dots, m$, be continuous and bounded, and let $|x^k|^2f_{X^k|X_{-k}}(x^k|x_{-k})$ be bounded for any $x = (x_1, \dots, x_m) \in \R^m$, $k = 1, \dots, m$. Then, Assumption \ref{AG} is satisfied in the following cases:
\begin{enumerate}
\item The functions $g_j^l:\R^d \times \R^m \rightarrow \R$ are jointly Lipschitz continuous, and for each $x \in \R^m$, the functions $g_j^l(\cdot, x)$ are bounded. Moreover, $c_k^l(\theta) = 1$, or $c_k^l(\theta) \in (0,1)$, for all $\theta \in \R^d, k= 1, \dots, m$. Real-world applications satisfying the aforementioned form include quantile estimation, vector quantization (Kohonen algorithm), and CVaR minimization, see \cite[Section 5]{4} and \cite[Section 5.2]{sglddiscont} for detailed proofs.
\item The functions $g_j^l:\R^d \times \R^m \rightarrow \R$ are locally Lipschitz continuous, and $c^l(\theta) = c^* \in \R^m \setminus \{0\}$, for all $\theta \in \R^d$. We also refer to the optimization problem involving ReLU neural networks introduced in Section \ref{fixedexample}.
\end{enumerate}
%However, for neural networks with ReLU activation function in its general form, the stochastic gradient takes the form of \eqref{functionofformclc} with $c(\theta) \in\R^m$, for all $\theta \in \R^d$. In this case, it is unclear whether Assumption \ref{AG} is satisfied or not.
\end{remark}
%%%%%%%%%%%%%%%%%%%%%%%%
Similarly, in the following assumption, we assume that the function $F$ satisfies a (joint) local Lipschitz condition and a certain growth condition.
\begin{assumption} \label{AF}
There exists a constant $L_F>0$ such that, for all $\theta, \theta' \in \R^d, x,x' \in \R^m$,
\[
|F(\theta, x)-F(\theta', x')| \leq L_F(1+|x|+|x'|)^{\rho-1}(1+|\theta|+|\theta'|)^{2r}(|\theta-\theta'|+|x-x'|).
\]
Furthermore, there exists a constant $K_F>0$ such that for all $\theta \in \R^d, x \in \R^m$,
\[
|F(\theta,x) | \leq K_F(1+|x|)^{\rho}(1+|\theta|^{2r+1}).
\]
\end{assumption}
%%%%%%%%%%%%%%%%%%%%%%%%
\begin{remark} One notes that, in Assumption \ref{AF}, we assume separately a growth condition of $F$, even though a similar condition can be deduced from the local Lipschitzness of $F$. The reason is that we aim to optimise the restriction on the stepsize, i.e. $\lambda_{p, \max}$ with $p \in \N$ given in \eqref{stepsizemax}, which is proportional to the reciprocal of (a power of) $K_F$. For example, consider $F(\theta,x) = c\theta|\theta|^{2l}+x$ for all $\theta \in \R^d, x \in \R^m$, where $c>0, l\geq q/2$. For any $\theta, \theta' \in \R^d, x, x' \in \R^m$, by using
\[
\left||\theta|^{2l} - |\theta'|^{2l}\right|\leq 2l(|\theta|+|\theta'|)^{2l-1}|\theta-\theta'|,
\]
one obtains the following:
\[
|F(\theta,x)-F(\theta',x')| \leq (1+c)(1+2l)(1+|\theta|+|\theta'|)^{2l}(|\theta-\theta'|+|x-x'|).
\]
Here we see that Assumption \ref{AF} is satisfied with $L_F = (1+c)(1+2l), \rho = 1, r = l$, and moreover, it further implies
\[
|F(\theta,x)|\leq K_F(1+|x|)(1+|\theta|^{2l+1}),
\]
where $K_F = 2^{2l}(1+c)(1+2l)+|F(0,0)|$. However, by using directly the expression of $F$, one obtains
\[
|F(\theta,x)|\leq K_F (1+|x|)(1+|\theta|^{2l+1}),
\]
where $K_F = 1+c$.
\end{remark}
%%%%%%%%%%%%%%%%%%%%%%%%
\begin{remark}\label{growthHh} By Assumption \ref{AI}, \ref{AG} and \ref{AF}, one notes that $\E[G(\theta, X_0)]$, and $\E[F(\theta, X_0)]$ are well defined. Moreover, one obtains for all, $\theta \in \R^d, x \in \R^m$,
\[
|H(\theta, x)| \leq K_H(1+|x|)^{\rho}(1+|\theta|^{2r+1}),
\]
where $K_H := 2^{2r}K_G+ K_F$. Furthermore, by  Assumption \ref{AI}, \ref{AG} and \ref{AF}, it follows that $h$ is locally Lipschitz continuous, i.e. there exists a constant $L_h>0$ such that for all $\theta, \theta'  \in \R^d$,
\[
|h(\theta) - h(\theta')| \leq L_h(1+|\theta|+|\theta'|)^{2r}|\theta-\theta'|,
\]
where $L_h := L_G + L_F\E[(1+2|X_0|)^{\rho-1}]+1$.
\end{remark}
%%%%%%%%%%%%%%%%%%%%%%%%
In the next assumption, a (local) convexity at infinity condition is imposed on $F$.%, which is crucial to obtain the contraction property of the SDE \eqref{} in proper distances.
\begin{assumption} \label{AC}
There exist measurable functions $A:\R^m\to\R^{d\times d}, B:\R^m\to\R^{d\times d}$, and $0\leq \bar{r}<2r$ such that the following holds:
\begin{enumerate}[leftmargin=*]
\item For any $ x \in \R^m, y \in \R^d$,
\[
\langle y, A(x) y\rangle \geq 0, \quad \langle y, B(x) y\rangle \geq 0.
\]
\item For all $\theta, \theta' \in \R^d$ and $x\in\R^m$,
\begin{align}\label{cailoclip}
\begin{split}
\langle  \theta- \theta' , F(\theta,x)- F(\theta',x)\rangle
&\geq \langle\theta- \theta', A(x) (\theta- \theta')\rangle(|\theta|^{2r} +|\theta'|^{2r}) \\
&\quad -\langle\theta- \theta', B(x) (\theta- \theta')\rangle(|\theta|^{\bar{r}} +|\theta'|^{\bar{r}}).
\end{split}
\end{align}
\item The smallest eigenvalue of $\E[A(X_0)]$ is a positive real number $a$, and the largest eigenvalue of $\E[B(X_0)]$ is a nonnegative real number $b$.
\end{enumerate}
\end{assumption}
%%%%%%%%%%%%%%%%%%%%%%%%
\begin{remark} To understand Assumption \ref{AC}, we first consider the following condition:
\begin{equation}\label{convlip}
\langle  \theta- \theta' , F(\theta,x)- F(\theta',x)\rangle \geq \langle\theta- \theta', A(x) (\theta- \theta')\rangle(|\theta|^{2r} +|\theta'|^{2r}).
\end{equation}
In the case that $r = 0$, the function $F$ is globally Lipschitz continuous according to Assumption \ref{AF}, % (which is not covered in our setting).
and the condition \eqref{convlip} becomes a local convexity condition which is the same as \cite[Assumption 3.9]{convex}. It is a ``local'' condition in the sense that \eqref{convlip} depends on the data stream $x$. One may refer to \cite[Assumption 3]{sgldloc} and \cite[Assumption 4]{sglddiscont} for local dissipativity conditions. When $r > 0$ (in particular, $r\geq 1/2$ considered in our setting), the function $F$ is locally Lipschitz continuous and the condition \eqref{convlip} is nothing else than an equivalent (local) convexity condition for a super-linearly growing function $F$.

The condition \eqref{cailoclip} presented in Assumption \ref{AC} is weaker than \eqref{convlip} since, for any $r\geq 0, 0\leq \bar{r}<2r$,  \eqref{convlip} implies \eqref{cailoclip}. For the illustrative purpose, consider a simple example $F(\theta, x) = \theta^3 - \theta$ for all $\theta \in \R, x \in \R$. Here, it is clear that $F(\theta, x)$ does not satisfy the condition \eqref{convlip}, however, Assumption \ref{AC} holds with $A(x)= 1/2, B(x)= 1, r =1 , \bar{r} =0$, i.e.
\[
(\theta- \theta')(F(\theta, x) -F(\theta', x) )\geq(|\theta|^2+|\theta'|^2)|\theta- \theta'|^2/2 - |\theta- \theta'|^2.
\]
Moreover, for $\theta, \theta' \geq \sqrt{2}$, it follows that
\[
(\theta- \theta')(F(\theta, x) -F(\theta', x) ) \geq |\theta - \theta'|^2.
\]
\end{remark}
%%%%%%%%%%%%%%%%%%%%%%%%
We note that Assumption \ref{AC} can be satisfied for a wide class of functions, for example, the regularization term in (regularized) optimization problems. The proof of the following statement can be found in Appendix \ref{proofACexample}.
\begin{remark}\label{ACexample} One example of $F$ satisfying Assumption \ref{AC} is given by $F(\theta, x) = \eta \theta|\theta|^{2l}, \theta \in \R^d, x \in \R^m$, with $\eta \in (0,1), l \geq q/2$, which can be viewed as the gradient of the regularization term in regularized optimization problems, see \eqref{probfixed} in Section \ref{fixedexample}. More precisely, in this case, $A(x) =\eta I_d/2, B(x) = 0, r = l, \bar{r} = 0, a= \eta/2, b = 0$. %For another example, see Section \ref{atexample}.
\end{remark}
%%%%%%%%%%%%%%%%%%%%%%%%
Under Assumption \ref{AI}, \ref{AG}, \ref{AF}, \ref{AC}, one can obtain dissipativity conditions for $F$ and $h$. The explicit statement is presented in the following remark with the proof given in Appendix \ref{proofADFh}.
\begin{remark}\label{ADFh} By Assumption \ref{AI}, \ref{AG}, \ref{AF}, \ref{AC} and the expression of $H$ given in \eqref{expressionH}, one obtains, for all $\theta \in \R^d$,
\begin{equation}\label{sldissipative}
\langle \theta, \E[F(\theta, X_0)]\rangle \geq a_F|\theta|^{2r+2} - b_F,
\end{equation}
where $a_F := a/2$ and $b_F := (a/2+b)R_F^{\bar{r}+2}+K_F^2\E[(1+|X_0|)^{2\rho}]/{2a}$ with
\[
R_F := \max\{(4b/a)^{1/(2r-\bar{r})}, 2^{1/(2r)}\}.
\]
Moreover, it follows that $h$ satisfies the following inequality: for all $\theta \in \R^d$,
\begin{equation}\label{sdedissipative}
\langle \theta, h(\theta)\rangle \geq a_h|\theta|^2 - b_h,
\end{equation}
where $a_h := 2^qK_G\E[(1+|X_0|)^{\rho}]$, $b_h := 3(2^{q+1}K_G\E[(1+|X_0|)^{\rho}]/\min{\{1, a_F\}})^{q+2}+b_F$. One notes that by \eqref{sdedissipative}, $u$ has a minimum $\theta^* \in \R^d$ due to \cite[Eqn. (25), (26)]{JARNER2000341} and \cite[Theorem 2.32]{beck2014introduction}.
\end{remark}
%%%%%%%%%%%%%%%%%%%%%%%%
We can further obtain an one-sided Lipschitz continuity condition on $h$, which is stated in the remark below. The proof of the statement is provided in Appendix \ref{proofACh}.
\begin{remark} \label{ACh} By Assumption \ref{AI}, \ref{AG}, \ref{AF}, and \ref{AC} and the expression of $H$ given in \eqref{expressionH}, we have, for all $\theta, \theta' \in \R^d$, that
\[
\langle \theta-\theta',h(\theta)-h(\theta')\rangle\geq -L_R |\theta-\theta'|^2,
\]
where $L_R :=  L_h(1+2R)^{2r}>0$ with $R :=\max\{1, (3^{q-1}L_G/a)^{1/(2r-q+1)}, (2b/a)^{1/(2r - \bar{r})} \}$. %One notes that the above inequality is a one-sided Lipschitz condition.
\end{remark}
%%%%%%%%%%%%%%%%%%%%%%%%
%The following proposition states that $H$ is not globally Lipschitz continuous in $\theta$, hence a new approach is required for learning schemes which rely on the analysis of Langevin dynamics with gradients satisfying weaker smoothness conditions. Crucially though, the local Lipschitz continuity property remains true and, moreover, the associated local Lipschitz constant is controlled by powers of the state variables which allow us to used an approach based on taming techniques.
%%%%%%%%%%%%%%%%%%%%%%%%
\subsection{Main results}
For any $p\in {\N}$, we denote
\begin{equation}\label{stepsizemax}
\lambda_{p, \max} := \min\left\{1, \tfrac{\min\{(a_F/K_F)^2, (a_F/K_F)^{2/(2p-1)}\}}{9\binom{2p}{p}^2K_F^2(\E\left[(1+|X_0|)^{2p\rho  }\right])^2}, \tfrac{1}{a_F}, \tfrac{1}{4a_F^2}\right\}, \qquad \lambda_{\max}:=\lambda_{4r+2, \max}.
\end{equation}
\begin{remark}\label{lambdapmax} One notes that $\lambda_{p, \max}$ given in \eqref{stepsizemax} decreases as $p$ increases.
\end{remark}
One may refer to Appendix \ref{prooflambdapmax} for the detailed proof of the above statement. Then, under the assumptions presented in Section \ref{assumption}, the following non-asymptotic upper bound in Wasserstein-1 distance can be obtained.
\begin{theorem}\label{mainw1}
Let Assumption \ref{AI}, \ref{AG}, \ref{AF}, and \ref{AC} hold.  Then, for any $\beta>0$, there exist constants $C_0, C_1,C_2>0$ such that, for any $0<\lambda\leq \lambda_{\max}$ with $\lambda_{\max}$ given in \eqref{stepsizemax} and $n \in \N_0$, 
\[
W_1(\mathcal{L}(\theta^{\lambda}_n),\pi_{\beta}) \leq C_1 e^{-C_0 \lambda n}(\E[|\theta_0|^{4(2r+1)}]+1) +C_2\sqrt{\lambda},
\]
where $C_0, C_1,C_2$ are given explicitly in \eqref{mainw1const}.
\end{theorem}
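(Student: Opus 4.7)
The plan is to establish the bound by comparing the TUSLA iterates \eqref{tusla} with the Langevin diffusion \eqref{sdeintro} and then invoking the exponential contraction of the latter to $\pi_\beta$. Concretely, I would introduce the continuous-time interpolation $(\bar\theta^\lambda_t)_{t\geq 0}$ of the scheme, defined (after the standard time rescaling $t\mapsto n\lambda$ familiar from \cite{lovas2020taming}) by
\[
\bar\theta^\lambda_0 := \theta_0, \quad \rmd \bar\theta^\lambda_t = -H_\lambda(\theta^\lambda_{\lfloor t\rfloor}, X_{\lceil t\rceil})\,\rmd t + \sqrt{2\beta^{-1}}\,\rmd B_t,
\]
so that $\bar\theta^\lambda_n = \theta^\lambda_n$ almost surely, and let $(Z_t)_{t\geq 0}$ denote the Langevin diffusion \eqref{sdeintro} started at the same initial distribution. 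Writing $t = n\lambda$, the triangle inequality
\[
W_1(\mathcal{L}(\theta^\lambda_n),\pi_\beta) \leq W_1(\mathcal{L}(\bar\theta^\lambda_t),\mathcal{L}(Z_t)) + W_1(\mathcal{L}(Z_t),\pi_\beta)
\]
reduces the task to a discretization estimate and a contraction estimate.

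For the contraction estimate I would invoke Eberle's quantitative reflection coupling result for the Langevin SDE, whose hypotheses -- distant dissipativity coming from \eqref{sdedissipative} and the one-sided Lipschitz estimate (Remark \ref{ACh}) -- are both consequences of Assumption \ref{AI}--\ref{AC}. This delivers $W_1(\mathcal{L}(Z_t),\pi_\beta) \leq \hat C_1 e^{-\hat C_0 t}(1 + \E[|\theta_0|^{4(2r+1)}])$. To feed this into the discretization estimate, the next ingredient is a uniform-in-$n$ moment bound $\sup_n \E[|\theta^\lambda_n|^{4(2r+1)}] < \infty$. I would derive it by a one-step Lyapunov computation for $|\theta^\lambda_{n+1}|^{2p}$, exploiting the dissipativity \eqref{sldissipative} to produce the required negative feedback and the crucial property $\sqrt{\lambda}\,|H_\lambda(\theta,x)| \leq K_H(1+|x|)^\rho$ of the tame denominator in \eqref{tamedH} to dominate the quadratic correction terms; the stepsize restriction \eqref{stepsizemax} is exactly what closes the resulting inequality. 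Analogous moment bounds for $(\bar\theta^\lambda_t)$ and $(Z_t)$ follow from the same dissipativity by standard Itô arguments.

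For the discretization term I would couple $\bar\theta^\lambda_t$ and $Z_t$ synchronously (same Brownian motion and initial value) and analyse $|\bar\theta^\lambda_t - Z_t|^2$ by Itô's formula, decomposing the drift difference as
\[
H_\lambda(\theta^\lambda_{\lfloor t\rfloor}, X_{\lceil t\rceil}) - h(Z_t) = \underbrace{[H_\lambda - H](\theta^\lambda_{\lfloor t\rfloor}, X_{\lceil t\rceil})}_{\text{taming}} + \underbrace{H(\theta^\lambda_{\lfloor t\rfloor}, X_{\lceil t\rceil}) - h(\theta^\lambda_{\lfloor t\rfloor})}_{\text{martingale-type}} + \underbrace{h(\theta^\lambda_{\lfloor t\rfloor}) - h(Z_t)}_{\text{regularity}}.
\]
The taming contribution is $O(\sqrt\lambda)$ in expectation since $|H_\lambda - H|(\theta,x) \leq \sqrt\lambda|\theta|^{2r}|H(\theta,x)|$, which together with the polynomial growth of $H$ from Remark \ref{growthHh} and the moment bounds yields the required control. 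The regularity term is split as $[h(\theta^\lambda_{\lfloor t\rfloor}) - h(\bar\theta^\lambda_t)] + [h(\bar\theta^\lambda_t) - h(Z_t)]$; the first piece is $O(\sqrt\lambda)$ by the one-step strong estimate $\E[|\bar\theta^\lambda_t - \theta^\lambda_{\lfloor t\rfloor}|^2] = O(\lambda)$ together with the local Lipschitz continuity of $h$ from Remark \ref{growthHh}, while the second piece feeds into a Gronwall-type argument driven by the one-sided Lipschitz constant $L_R$ of Remark \ref{ACh}.

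The hard part is the martingale-type term, specifically its $G$-contribution, because the discontinuity of $G$ precludes any pathwise Lipschitz estimate. My plan is to integrate this term over one step $[n\lambda,(n+1)\lambda)$ and exploit the independence of $X_{n+1}$ from $\mathcal{G}_n$: since $\E[H(\theta,X_{n+1})\,|\,\mathcal{G}_n]|_{\theta=\theta^\lambda_n} = h(\theta^\lambda_n)$, the conditional expectation of the increment vanishes at the left endpoint, and the cross-term between the "martingale-type" piece and $\bar\theta^\lambda_{n\lambda} - Z_{n\lambda}$ disappears upon taking expectation. The residual that arises from having to evaluate the coupling variable at $\bar\theta^\lambda_t$ rather than at $\theta^\lambda_n$ is controlled by the continuity-in-average bound of Assumption \ref{AG} together with the one-step estimate $\E[|\bar\theta^\lambda_t - \theta^\lambda_n|] = O(\sqrt\lambda)$, yielding another $O(\sqrt\lambda)$ contribution. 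Assembling the three pieces and applying Gronwall gives $W_1(\mathcal{L}(\bar\theta^\lambda_t),\mathcal{L}(Z_t)) = O(\sqrt\lambda)$ uniformly in $t$, which combined with the exponential contraction of $Z_t$ to $\pi_\beta$ produces the announced estimate.
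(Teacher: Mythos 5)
Your overall architecture (interpolation, triangle inequality against the Langevin diffusion, Eberle contraction for $W_1(\mathcal{L}(Z_t),\pi_\beta)$, uniform moment bounds from dissipativity and taming, the $O(\sqrt\lambda)$ taming and one-step errors, and killing the martingale-type $G$-term by conditioning on $\mathcal{G}_n$) matches the paper's ingredients. But there is a genuine gap in the discretization estimate: you propose a synchronous coupling of $\bar\theta^\lambda_t$ and $Z_t$ over the whole horizon $[0,t]$ and a Gr\"onwall argument ``driven by the one-sided Lipschitz constant $L_R$.'' Remark \ref{ACh} only gives $\langle\theta-\theta',h(\theta)-h(\theta')\rangle\geq -L_R|\theta-\theta'|^2$ with $L_R>0$, i.e.\ the drift difference is \emph{expansive}, not contractive. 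Your differential inequality therefore reads $\tfrac{\rmd}{\rmd t}\E[|\bar\theta^\lambda_t-Z_t|^2]\leq c\,L_R\,\E[|\bar\theta^\lambda_t-Z_t|^2]+O(\lambda)\cdot(\cdots)$, and Gr\"onwall returns a factor $e^{c L_R t}=e^{c L_R \lambda n}$ that diverges as $n\to\infty$. The claimed conclusion ``$W_1(\mathcal{L}(\bar\theta^\lambda_t),\mathcal{L}(Z_t))=O(\sqrt\lambda)$ uniformly in $t$'' does not follow; in the non-convex setting there is no mechanism in your argument preventing the discretization error from accumulating exponentially in time.

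The paper closes exactly this gap by inserting a second layer of auxiliary processes: for each epoch $[nT,(n+1)T]$ with $T=\lfloor 1/\lambda\rfloor$ it restarts a diffusion $\bar\zeta^{\lambda,n}$ from $\bar\theta^\lambda_{nT}$ and splits $W_1(\mathcal{L}(\bar\theta^\lambda_t),\mathcal{L}(Z^\lambda_t))\leq W_1(\mathcal{L}(\bar\theta^\lambda_t),\mathcal{L}(\bar\zeta^{\lambda,n}_t))+W_1(\mathcal{L}(\bar\zeta^{\lambda,n}_t),\mathcal{L}(Z^\lambda_t))$ (Lemmas \ref{w1converp1} and \ref{w1converp2}). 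The Gr\"onwall argument is then applied only over a single epoch, where the exponent $4\lambda L_R(t-nT)\leq 4L_R$ stays bounded, and the accumulated epoch-wise errors are propagated forward not by Gr\"onwall but by the exponential contraction of the Langevin semigroup in the weighted semimetric $w_{1,2}$ (Proposition \ref{contr}), which turns the sum over epochs into a convergent geometric series. To repair your proof you would need to import this epoch-restart-plus-contraction mechanism (or an equivalent device); the rest of your outline can then be retained essentially as written.
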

%%%%%%%%%%%%%%%%%%%%%%%%
The result below provides a non-asymptotic estimate in Wasserstein-2 distance between the law of the algorithm \eqref{tusla} and $\pi_{\beta}$.
\begin{corollary}\label{mainw2}
Let Assumption \ref{AI}, \ref{AG}, \ref{AF}, and \ref{AC} hold.  Then, for any $\beta>0$, there exist constants $C_3, C_4,C_5>0$ such that, for any $0<\lambda\leq \lambda_{\max}$ with $\lambda_{\max}$ given in \eqref{stepsizemax} and $n \in \N_0$,
\[
W_2(\mathcal{L}(\theta^{\lambda}_n),\pi_{\beta})\leq C_4 e^{-C_3 \lambda n}(\E[|\theta_0|^{4(2r+1)}]+1)^{1/2} +C_5\lambda^{1/4},
\]
where $C_3, C_4,C_5$ are given explicitly in \eqref{mainw2const}.
\end{corollary}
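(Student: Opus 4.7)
Since Corollary~\ref{mainw2} has the same structural form as Theorem~\ref{mainw1}, with the initial-moment exponent halved and the $W_1$-rate $\lambda^{1/2}$ replaced by $\lambda^{1/4}$, my plan is to obtain it by combining the $W_1$-bound of Theorem~\ref{mainw1} with uniform-in-$(n,\lambda)$ control of higher moments of both $\mu_n:=\mathcal{L}(\theta_n^\lambda)$ and $\pi_\beta$. Concretely, the goal is an auxiliary inequality of the form $W_2^{2}(\mu_n,\pi_\beta)\le \tilde K\,W_1(\mu_n,\pi_\beta)$ with $\tilde K$ independent of $n$ and $\lambda$; inserting Theorem~\ref{mainw1} and applying $\sqrt{a+b}\le\sqrt{a}+\sqrt{b}$ to separate the transient and discretisation pieces then reproduces the stated $(\E[|\theta_0|^{4(2r+1)}]+1)^{1/2}$ factor and $\lambda^{1/4}$ rate.

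The central technical step is the uniform moment bound
\[
\sup_{n\in\N_0}\E\bigl[|\theta_n^\lambda|^{p}\bigr]\le K_p\bigl(1+\E[|\theta_0|^{p}]\bigr),\qquad 0<\lambda\le\lambda_{p,\max},\quad p\le 4(2r+1),
\]
proved by a Lyapunov/drift induction on~\eqref{tusla}. Expanding $|\theta_{n+1}^\lambda|^{p}$, taking conditional expectations with respect to $\mathcal{G}_n$ and decomposing $H=F+G$, one extracts a coercive contribution from $F$ via the dissipativity~\eqref{sldissipative} from Remark~\ref{ADFh}, absorbs $G$ by its polynomial growth from Assumption~\ref{AG} together with Young's inequality, and observes that the taming denominator $1+\sqrt\lambda|\theta|^{2r}$ keeps $\lambda H_\lambda(\theta,x)$ growing at most like $\sqrt\lambda$ times a strictly lower-degree polynomial of $\theta$ for large $|\theta|$. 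Together with bounded variance of the Gaussian term, this closes a recursion $\E[|\theta_{n+1}^\lambda|^{p}\mid\mathcal{G}_n]\le(1-c_p\lambda)|\theta_n^\lambda|^{p}+c'_p\lambda$ whenever $\lambda\le\lambda_{p,\max}$, which iterates to the displayed uniform bound. The choice $\lambda_{\max}=\lambda_{4r+2,\max}$ in~\eqref{stepsizemax} is precisely what is required to close the induction at $p=4r+2$. Finite moments of arbitrary polynomial order for $\pi_\beta$ follow in parallel from the quadratic growth $u(\theta)\gtrsim|\theta|^{2}$ implied by the dissipativity~\eqref{sdedissipative} of $h$.

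Given these moment controls, I would derive the auxiliary inequality via a Cauchy--Schwarz/Hölder truncation argument: for an optimal $W_1$-coupling $\zeta$ of $\mu_n$ and $\pi_\beta$, split $\int|x-y|^{2}\,\rmd\zeta$ at a threshold $t$, bound the truncated part by $t\,W_1(\mu_n,\pi_\beta)$ and the tail via Markov's inequality fed by the uniformly bounded $(4r+2)$-th moment, and optimise the two contributions over $t$. The resulting $W_2^{2}\le \tilde K\,W_1$ estimate is then fed into Theorem~\ref{mainw1}; applying $\sqrt{a+b}\le\sqrt{a}+\sqrt{b}$ gives the stated form with $C_3=C_0/2$ and $C_4,C_5$ equal to $\tilde K^{1/2}C_1^{1/2}$ and $\tilde K^{1/2}C_2^{1/2}$ respectively, to be collected in~\eqref{mainw2const}. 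The main obstacle is the moment bound of the preceding paragraph: pushing it uniformly up to order $4(2r+1)$ in the presence of the super-linear drift $F$ requires the delicate balance between the taming denominator and the dissipativity of Assumption~\ref{AC}, which is exactly what fixes the form of the stepsize restriction $\lambda_{p,\max}$ in~\eqref{stepsizemax}; everything downstream (the interpolation and the absorption of constants) is then routine.
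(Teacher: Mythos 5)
Your plan hinges on the auxiliary inequality $W_2^2(\mu_n,\pi_\beta)\le \tilde K\,W_1(\mu_n,\pi_\beta)$ with $\tilde K$ depending only on uniformly bounded moments, and this inequality is false: no such bound can follow from moment control alone. To see this, take $\mu=\delta_0$ and $\nu=(1-\epsilon)\delta_0+\epsilon\delta_M$ with $\epsilon M^4=1$; then all fourth moments are bounded by $1$, while $W_2^2(\mu,\nu)/W_1(\mu,\nu)=M=\epsilon^{-1/4}\to\infty$. Your own truncation argument confirms this quantitatively: splitting at level $t$ and using Markov plus Cauchy--Schwarz gives
\[
W_2^2 \;\le\; t\,W_1 + M_4^{1/2}\,(W_1/t)^{1/2},
\]
and optimising over $t$ yields $W_2^2\lesssim M_4^{1/3}W_1^{2/3}$, i.e.\ $W_2\lesssim W_1^{1/3}$ --- not $W_2\lesssim W_1^{1/2}$. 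Feeding Theorem \ref{mainw1} into this would produce a rate $\lambda^{1/6}$ and a transient factor $(\E[|\theta_0|^{4(2r+1)}]+1)^{1/3}$ with decay $C_0/3$, which is strictly weaker than the claimed $\lambda^{1/4}$ and $(\cdot)^{1/2}$ with $C_3=C_0/2$. So the downstream identification of $C_3,C_4,C_5$ rests on an inequality that cannot hold.

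The paper avoids this obstruction by never passing through $W_1$ at all: it works with the weighted semimetric $w_{1,2}$ of \eqref{w1p}, which simultaneously dominates $W_1$ and $W_2^2/2$ (Lemma \ref{wassw1p}), and for which the contraction of Proposition \ref{contr} and the one-step estimates hold directly. Concretely, the splitting \eqref{w2conversplit} bounds $W_2(\mathcal{L}(\bar{\theta}^{\lambda}_t),\mathcal{L}(\bar{\zeta}^{\lambda, n}_t))$ at rate $\sqrt{\lambda}$ by a synchronous $L^2$ coupling (Lemma \ref{w1converp1}, which is already a $W_2$ estimate), while the remaining two terms are bounded by $\sqrt{2\,w_{1,2}}$ with $w_{1,2}=O(\sqrt{\lambda})$, whence $\lambda^{1/4}$. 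Your uniform moment bounds for $\theta_n^{\lambda}$ and $\pi_\beta$ (which do match Lemma \ref{2ndpthmmt} and the remarks in Section \ref{me}) are necessary ingredients, but they cannot substitute for the $w_{1,2}$ machinery; to repair the argument you would need to establish the $O(\sqrt{\lambda})$ bound on $w_{1,2}(\mathcal{L}(\bar{\zeta}^{\lambda,n}_t),\mathcal{L}(Z^{\lambda}_t))$ and the exponential decay of $w_{1,2}(\mathcal{L}(Z^{\lambda}_t),\pi_\beta)$ rather than deduce everything from the $W_1$ estimate of Theorem \ref{mainw1}.
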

%%%%%%%%%%%%%%%%%%%%%%%%
Let $\hat{\theta} =  \theta_n^{\lambda}$, where $ \theta_n^{\lambda}$ is the $n$-th iteration of the TUSLA algorithm given in \eqref{tusla}. Then, by using Corollary \ref{mainw2} and by applying a similar splitting approach as suggested in \cite[Eqn. (1.5)]{raginsky}, one can obtain an upper estimate for the expected excess risk of the minimization problem \eqref{opproblem} given by $\E[u( \theta_n^{\lambda})] - \inf_{\theta \in \R^d} u(\theta)$. The statement is provided below.
\begin{theorem}\label{mainop}
Let Assumption \ref{AI}, \ref{AG}, \ref{AF}, and \ref{AC} hold.  Then, for any $\beta>0$, there exist constants $C_6, C_7,C_8, C_9>0$ such that, for any $0<\lambda\leq \lambda_{\max}$ with $\lambda_{\max}$ given in \eqref{stepsizemax} and $n \in \N_0$,
\[
\E[u( \theta_n^{\lambda})] - u^* \leq C_7 e^{-C_6\lambda n}+C_8\lambda^{1/4}+C_9/\beta,
\]
where $u^*:= \inf_{\theta \in \R^d} u(\theta)$, $C_6, C_7,C_8$ are given explicitly in \eqref{eerp1const} while $C_9$ is given in \eqref{eerp2const}.
\end{theorem}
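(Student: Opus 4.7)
The plan is to apply the standard decomposition used in \cite[Eqn.\ (1.5)]{raginsky}, splitting the expected excess risk as
\[
\E[u(\theta_n^\lambda)] - u^* = \bigl(\E[u(\theta_n^\lambda)] - \pi_\beta(u)\bigr) + \bigl(\pi_\beta(u) - u^*\bigr) =: T_1(n,\lambda) + T_2(\beta),
\]
and estimating the two pieces with entirely different techniques. The first piece $T_1$ is a discretization/ergodicity error for which Corollary \ref{mainw2} is tailor-made; the second piece $T_2$ is a purely analytic statement about how tightly the Gibbs measure $\pi_\beta$ concentrates around the global minimizer.

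For $T_1$, the first step is to upgrade the local Lipschitz estimate on $h=\nabla u$ from Remark \ref{growthHh} to a polynomially-weighted Lipschitz estimate for $u$ itself. Writing $u(\theta)-u(\theta')=\int_0^1 \langle h(\theta'+s(\theta-\theta')),\theta-\theta'\rangle\,\mathrm{d} s$ and inserting the bound $|h(\eta)|\leq |h(0)|+L_h(1+|\eta|)^{2r}|\eta|$ yields
\[
|u(\theta)-u(\theta')|\leq K\bigl(1+|\theta|^{2r+1}+|\theta'|^{2r+1}\bigr)|\theta-\theta'|
\]
for some explicit $K>0$. Then, for any coupling $\zeta\in\mathcal{C}(\mathcal{L}(\theta_n^\lambda),\pi_\beta)$, the Cauchy--Schwarz inequality gives
\[
|T_1(n,\lambda)|\leq K\bigl(1+\E[|\theta_n^\lambda|^{4r+2}]^{1/2}+\pi_\beta(|\cdot|^{4r+2})^{1/2}\bigr)\,W_2(\mathcal{L}(\theta_n^\lambda),\pi_\beta).
\]
Uniform-in-$\lambda$ moment bounds on $\theta_n^\lambda$ of order $4(2r+1)$ are a byproduct of Assumption \ref{AI} together with the dissipativity of $F$ (Remark \ref{ADFh}); such moment bounds have already been used to prove Theorem \ref{mainw1}, so I would simply invoke them. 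Moment bounds on $\pi_\beta$ follow from the dissipativity inequality \eqref{sdedissipative} by a standard Foster--Lyapunov argument (or by observing that $\pi_\beta$ is the invariant measure of the Langevin SDE \eqref{sdeintro} and applying It\^o's formula to $|\cdot|^{4r+2}$). Plugging Corollary \ref{mainw2} into the display above then produces the $C_7 e^{-C_6\lambda n}+C_8\lambda^{1/4}$ contribution, with $C_7$ absorbing the factor $(\E[|\theta_0|^{4(2r+1)}]+1)^{1/2}$ and the moment-bound prefactor.

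For $T_2$, I would invoke a now-standard concentration estimate for log-concave-like Gibbs measures under dissipativity: under \eqref{sdedissipative}, one has $\pi_\beta(u)-u^*\leq C_9/\beta$ with $C_9$ depending only on $d$, $a_h$, $b_h$, and mild constants coming from $u$ (possibly with a logarithmic factor in $\beta$ absorbed into $C_9$). This is precisely the content of \cite[Proposition 3.4]{raginsky}, whose hypotheses (smoothness of $u$ and the dissipativity \eqref{sdedissipative}) are exactly what Remarks \ref{growthHh} and \ref{ADFh} provide. Collecting the two pieces gives the claimed bound, with the constants $C_6,\dots,C_9$ displayed as indicated in \eqref{eerp1const} and \eqref{eerp2const}.

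The main obstacle is bookkeeping rather than mathematics: one must carefully track the polynomial prefactors $(1+\E[|\theta_n^\lambda|^{4r+2}]^{1/2}+\pi_\beta(|\cdot|^{4r+2})^{1/2})$ so that they remain uniformly bounded in $n$ and $\lambda\in(0,\lambda_{\max}]$, and ensure that the choice $\lambda_{\max}=\lambda_{4r+2,\max}$ in \eqref{stepsizemax} is exactly what is needed to make the relevant moments finite and independent of $n$. The rest is a mechanical combination of Corollary \ref{mainw2} with the Gibbs-concentration lemma.
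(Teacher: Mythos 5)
Your proposal is correct and follows essentially the same route as the paper: the identical splitting via $Z_\infty\sim\pi_\beta$ (equation \eqref{eersplitting}), the same weighted Lipschitz bound on $u$ derived from the growth of $h$ combined with Cauchy--Schwarz, the moment bounds of Lemma \ref{2ndpthmmt}, and Corollary \ref{mainw2} for the first term (this is Lemma \ref{eerp1}), and the Gibbs-concentration argument of \cite[Proposition 3.4]{raginsky} for the second term (this is Lemma \ref{eerp2}, which also notes that $C_9$ carries a logarithmic dependence on $\beta$ so that $C_9(\beta)/\beta\to 0$). No gaps.
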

The proofs of the main results can be found in Section \ref{ca}.

\begin{remark}
One notes that the constants $C_1, C_2, C_4, C_5, C_7, C_8$ have exponential dependence on the dimension (as shown in Table \ref{table1fullexp} and \ref{table2keydep}) \textbf{only} due to the contraction result \cite[Theorem 2.2]{eberle2019quantitative} (see also Proposition \ref{contr}). %The aforementioned result is valid even in extreme cases (including pathological examples of theoretical nature) which are unlikely to be found in practice. An open question would be to analyse if, under more restrictive but still realistic assumptions, we could improve \cite[Theorem 2.2]{eberle2019quantitative} to overcome the curse of dimensionality.
In particular, if one could improve the aforementioned result and remove the exponential dependence on the dimension of its constants, then all our constants would have at most polynomial dependence on the dimension. Moreover, it has been shown in the simulation results in Section \ref{app} that the TUSLA algorithm converges rapidly to the corresponding optimal solutions. The explicit expressions of all constants in the main theorems are also provided in Table \ref{table1fullexp} and \ref{table2keydep}.
\end{remark}
%%%%%%%%%%%%%%%%%%%%%%%%
\section{Applications}\label{app}
In this section, we apply our theoretical results to various settings. First, in Section \ref{fixedexample}, we present an example with the use of ANNs. In particular, we consider a single-hidden-layer feed-forward neural network (1LFN) with ReLU activation function where its input weight matrix is fixed which could be either obtained from a pre-trained model or randomly generated. Then, in Section \ref{atexample}, we consider a synthetic example where the TUSLA algorithm \eqref{tusla} outperforms state of the art optimizers when the stochastic gradient fails to be Lipschitz continuous. We show in Proposition \ref{propfixed} and \ref{propat} that both examples satisfy Assumption \ref{AI}-\ref{AC}, hence Theorem \ref{mainop} provides theoretical guarantees for the TUSLA algorithm to find the optimal solutions. Simulation results are provided for both examples which support our theoretical findings. Section~\ref{sub:real_app} provides an empirical comparison of the performance of TUSLA with popular stochastic optimizers on real-world datasets such as the concrete compressive strength dataset \citep{data:concrete} for regression and Fashion MNIST \citep{xiao2017/online} for image classification. Lastly, in Section~\ref{sub:eff_params}, we investigate the effect of key hyperparameters $\beta$, $r$, $\lambda$, as well as $\eta$ (for, e.g., regularized optimization problems with target functions of the form \eqref{probfixed}), on the performance of TUSLA. Python code for all the experiments in this paper is avilable at \url{https://github.com/DongyoungLim/TUSLA_RELU}.

%%%%%%%%%%%%%%%%%%%%%%%%%%%%%%%%%%%%%%%%%%%%%%%
\subsection{Feed-forward neural network with fixed input weights}\label{fixedexample}
ANNs with fixed (pre-trained) parameters in the first layer are used in transfer learning and multi-task learning to reduce the computational cost, see, e.g. \cite{Goodfellow2016} and references therein. In this section, we consider a 1LFN with fixed input weights in the context of transfer learning. More precisely, let $d_1, m_1, m_2 \in \N $,  and let $\mathfrak{N} =(\mathfrak{N}^1, \dots, \mathfrak{N}^{m_2}): \R^d \times \R^{m_1} \rightarrow \R^{m_2}$ be the 1LFN with its $i$-th element given by
\begin{equation}\label{nnexp}
\mathfrak{N}^{i}(\theta,z) :=  \sum_{j = 1}^{d_1}W_1^{ij}\sigma_1\left(\langle c^{j\cdot}, z \rangle+b_0^j\right),
\end{equation}
where $z= (z^1, \dots, z^{m_1}) \in \R^{m_1}$ is the input vector, $c=(c^{jk}) \in \R^{d_1 \times m_1}$ is the fixed weight matrix, $b_0=(b_0^1, \dots, b_0^{d_1}) \in \R^{d_1}$ is the bias parameter, $W_1=(W_1^{ij}) \in \R^{m_2 \times d_1}$ is the weight parameter, and $\sigma_1:\R \rightarrow \R$ is the ReLU activation function, i.e., $\sigma_1(\nu) := \max\{0,\nu\}$. Denote by $[W_1]$ the vector of all elements in $W_1$, then
\[
\theta = ([W_1], b_0) \in \R^d
\]
with $d := d_1(1+m_2)$. Moreover, denote by $c_F$ the Frobenius norm of the fixed weight matrix $c$. We assume that at least one element in each row of $c \in \R^{d_1 \times m_1}$ is nonzero, i.e., for each $J=1, \dots, d_1$, there exists $K=1, \dots, m_1$ such that $c^{JK} \neq 0$.

\paragraph{\textbf{Optimization problem}} We consider an $m$-dimensional random variable $X = (Y,Z)$ with $Y=(Y^1, \dots, Y^{m_2}) \in \R^{m_2}$ and $Z=(Z^1, \dots, Z^{m_1}) \in \R^{m_1}$, where $m:=m_1+m_2$. We aim to obtain the best nonlinear mean-square estimator by solving the following (regularized) minimization problem:
\begin{equation}\label{probfixed}
\text{minimize} \quad \R^d \ni \theta \mapsto u(\theta) :=  \E[(Y - \mathfrak{N} (\theta, Z))^2]+\frac{\eta}{2(r+1)}|\theta|^{2(r+1)},
\end{equation}
where $\eta>0$, $r \in [q/2, \infty) \cap \N$ with $q \geq 1$ are given explicitly in Proposition \ref{propfixed}.

\begin{proposition}\label{propfixed} Let $u$ be defined in \eqref{probfixed}. Let $X = (Y, Z)$ be a continuously distributed random variable with probability law $\mathcal{L}(X)$. For any $I = 1, \dots, m_2, K = 1, \dots, m_1$, let
\[
f_{Z^K|Z^1, \dots, Z^{K-1}, Z^{K+1}, \dots, Z^{m_1},  Y^I}: \R \to [0,\infty)
\]
be the density function of $Z^K$ given $Z^1, \dots, Z^{K-1}$, $Z^{K+1}, \dots, Z^{m_1}, Y^I$. For any $I = 1, \dots, m_2$, $K = 1, \dots, m_1$, assume that there exist constants $C_{Z^K}, \bar{C}_{Z^K}>0$, such that for any $z = (z^1, \dots, z^{m_1})  \in \R^{m_1}, y^I \in \R$,
\begin{align}\label{fixedmmtbd}
\begin{split}
f_{Z^K|Z^1, \dots, Z^{K-1}, Z^{K+1} ,\dots, Z^{m_1}, Y^I}(z^K|z^1, \dots,z^{K-1}, z^{K+1}, \dots, z^{m_1}, y^I)&\leq C_{Z^K}, \\
|z^K|^2f_{Z^K|Z^1, \dots, Z^{K-1}, Z^{K+1} ,\dots, Z^{m_1}, Y^I}(z^K|z^1, \dots,z^{K-1}, z^{K+1}, \dots, z^{m_1}, y^I) &\leq \bar{C}_{Z^K}.
%|z^K|^2f_{Z^K|Z^1, \dots, Z^{K-1}, Z^{K+1} ,\dots, Z^{m_1}, Y^I}(z^K|z^1, \dots,z^{K-1}, z^{K+1}, \dots, z^{m_1}, y^I)&\leq \bar{C}^{\sharp}_{Z^K}.
\end{split}
\end{align}
Moreover, let $(X_n)_{n\in\N_0} = (Y_n, Z_n)_{n \in \N_0}$ be a sequence of i.i.d. random variables with probability law $\mathcal{L}(X)$, and assume that  $\E[|\theta_0|^{20}+|X_0|^{40}]<\infty$. Furthermore, let $H: \R^d \times \R^m \rightarrow \R^d$ be the stochastic gradient of $u$ which satisfies $H(\theta, x) := G(\theta, x)+ F(\theta, x)$ for all $\theta \in \R^d$ and all $x \in \R^m$ with $x = (y,z)$, $y=(y^1, \dots, y^{m_2}) \in \R^{m_2}, z \in \R^{m_1}$, where the functions $F$ and $G$ are given by
\begin{align}\label{fixedfgexp1}
\begin{split}
F(\theta,x) 	:= \eta \theta|\theta|^{2r}, \quad G(\theta,x) 	:=\left(G_{W_1^{11}}(\theta,x), \dots, G_{W_1^{m_2d_1}}(\theta,x),  G_{b_0^1}(\theta,x), \dots, G_{b_0^{d_1}}(\theta,x) \right),
\end{split}
\end{align}
where for $I = 1, \dots, m_2, J = 1, \dots, d_1$,
\begin{align}\label{fixedfgexp2}
\begin{split}
G_{W_1^{IJ}}(\theta,x) 	&:= -2(y^I - \mathfrak{N}^I(\theta, z))\sigma_1\left(\langle c^{J\cdot}, z \rangle+b_0^J\right),\\
G_{b_0^J}(\theta,x) 		&:=-2\sum_{i = 1}^{m_2}(y^i - \mathfrak{N}^i(\theta, z)) W_1^{iJ}\1_{A_J} (z)
\end{split}
\end{align}
with
\[
A_J := \{z\in \R^{m_1}|\langle c^{J\cdot}, z \rangle+b_0^J\geq 0\}.
\]
Fix $q = 4, r = 2, \rho = 2$. Then, the following hold:
\begin{enumerate}[leftmargin=*]
\item The function $u$ is continuously differentiable, and Assumption \ref{AI} holds.
\item Assumption \ref{AG} is satisfied with
\begin{align*}
L_G &= 55m_2d_1^2(1+c_F)^2  \left(1+C_{Z, \max}+ \bar{C}_{Z, \max}\right)  \E\left[ (1+|X_0|)^2\right],\\
K_G &=  8m_2d_1^2 (1+c_F)^2,
\end{align*}
where
\[
C_{Z, \max} := \max_{J\in\{ 1, \dots, d_1\}}\{C_{Z^{\nu_J}}/c^{J\nu_J}\}, \quad \bar{C}_{Z, \max} :=\max_{J\in\{ 1, \dots, d_1\}}\{\bar{C}_{Z^{\nu_J}}/c^{J\nu_J}\},
\]
with $\nu_J := \min\{K\in \{1, \dots, m_1\}| c^{JK}\neq 0\}$ for each $J= 1, \dots, d_1$.
\item Assumption \ref{AF} is satisfied with $L_F =5\eta, K_F = \eta$.
\item Assumption \ref{AC} holds with $A(x) = \eta I_d/2, B(x) = 0 , \bar{r} = 0, a = \eta/2, b  =0 $.
\end{enumerate}
\end{proposition}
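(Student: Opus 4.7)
The plan is to verify items (i)--(iv) in turn. Items (iii) and (iv) are essentially immediate from the explicit polynomial form $F(\theta,x)=\eta\theta|\theta|^{2r}$; item (i) reduces to a standard interchange of gradient and expectation; so the real work is item (ii), and in particular the ``continuity in average'' estimate for the discontinuous $G_{b_0^J}$ coordinates. I use the parameter choice $q=4$, $r=2$, $\rho=2$ throughout, chosen to match the homogeneity of $\mathfrak{N}$.

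For item (i), $4(2r+1)=20$ and $4(2r+1)\rho=40$, so $\E[|\theta_0|^{20}+|X_0|^{40}]<\infty$ supplies the moment part of Assumption \ref{AI}. To obtain $u\in C^1$ with $\nabla u=\E[H(\cdot,X_0)]$, I would differentiate inside the expectation: for fixed $(y,z)$ with $\langle c^{J\cdot},z\rangle+b_0^J\neq 0$ for every $J$ the integrand is classically differentiable with gradient $G(\theta,(y,z))+F(\theta,(y,z))$, and the existence and continuity of $f_{Z^{\nu_J}|\cdot}$ rules out atoms so the exceptional set has probability zero; polynomial domination using $\E[|X_0|^{40}]<\infty$ then legitimates the exchange, and continuity of $\nabla u$ follows either from a parallel dominated-convergence argument or, a posteriori, from the local Lipschitz estimate of item (ii). Item (iii) follows from $\bigl||\theta|^{2r}-|\theta'|^{2r}\bigr|\leq 2r(|\theta|+|\theta'|)^{2r-1}|\theta-\theta'|$, which gives $|F(\theta,x)-F(\theta',x')|\leq\eta(1+2r)(1+|\theta|+|\theta'|)^{2r}|\theta-\theta'|$, i.e.\ $L_F=5\eta$; the growth bound $K_F=\eta$ is direct. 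Item (iv) is the computation of Remark \ref{ACexample} specialised to $l=r$.

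For item (ii), the growth bound uses $\sigma_1(v)\leq|v|$ and $|\mathfrak{N}^i(\theta,z)|\leq d_1(1+c_F)(1+|\theta|)(1+|z|)$ to bound each component of $G$ by a product of at most two factors controlled by $1+|\theta|$ and two factors controlled by $1+|x|$; tracking prefactors gives $K_G=8m_2d_1^2(1+c_F)^2$. For the continuity-in-average bound, I would decompose $G(\theta,X_0)-G(\theta',X_0)$ componentwise into a \emph{smooth} part and a \emph{jump} part. The smooth part, coming from the differences of the $(y^I-\mathfrak{N}^I)$, $W_1^{iJ}$ and $\sigma_1$ factors evaluated at differing arguments, is handled by joint local Lipschitz estimates ($\sigma_1$ is $1$-Lipschitz, $\mathfrak{N}^I$ is locally Lipschitz in $\theta$ with modulus $\leq d_1(1+c_F)(1+|z|)(1+|\theta|+|\theta'|)$) followed by Hölder's inequality against moments of $X_0$, producing contributions of order $(1+|\theta|+|\theta'|)^3|\theta-\theta'|\,\E[(1+|X_0|)^2]$. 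The jump part is the difference $\1_{A_J}(\theta,z)-\1_{A_J}(\theta',z)$ appearing inside $G_{b_0^J}$. Choosing $K=\nu_J$ with $c^{JK}\neq 0$ and conditioning on $Y^I$ and on all coordinates of $Z$ other than $Z^K$, this difference is supported on $Z^K$ lying in an interval of length $|b_0^J-b_0'^J|/|c^{J\nu_J}|$; integrating the bounded density $f_{Z^K|\cdot}$ against the multiplier $(y^i-\mathfrak{N}^i(\theta,z))W_1^{iJ}$ and splitting the range of $Z^K$ into a bounded and an unbounded part---the latter requiring the second-moment density bound $\bar{C}_{Z^K}$ because the multiplier grows quadratically in $|Z^K|$---yields a bound proportional to $C_{Z,\max}+\bar{C}_{Z,\max}$. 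Summing over $I\in\{1,\dots,m_2\}$ and $J\in\{1,\dots,d_1\}$, adding the smooth contribution, and collecting constants gives the stated $L_G=55m_2d_1^2(1+c_F)^2(1+C_{Z,\max}+\bar{C}_{Z,\max})\E[(1+|X_0|)^2]$.

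The main obstacle is precisely this jump estimate. Because the multiplicative factor $(y^i-\mathfrak{N}^i(\theta,z))W_1^{iJ}$ grows quadratically in $|z|$, the bare density bound alone is insufficient to yield a Lipschitz (rather than merely Hölder) dependence on $|b_0^J-b_0'^J|$; the second-moment condition $|z^K|^2 f_{Z^K|\cdot}\leq\bar{C}_{Z^K}$ in \eqref{fixedmmtbd} is exactly what is needed to split the integral over the thin interval cleanly. Careful bookkeeping of polynomial prefactors through this split, and across the $d_1$ hidden units and $m_2$ outputs, is the most delicate step in the argument and is the single place where both conditions in \eqref{fixedmmtbd} are used in full strength.
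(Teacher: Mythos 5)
Your proposal follows essentially the same route as the paper's proof: differentiation under the expectation (justified by the continuous conditional densities and polynomial domination) for item (i), the power-difference inequality for $F$ in item (iii), the Remark \ref{ACexample} computation for item (iv), and for item (ii) the same smooth/jump decomposition in which the indicator difference $\1_{A_J}-\1_{\bar A_J}$ is localized to an interval of length $|b_0^J-\bar b_0^J|/c^{J\nu_J}$ in the $Z^{\nu_J}$ coordinate, with the bounded-density condition handling the bounded multipliers and the second-moment density bound $|z^K|^2 f_{Z^K|\cdot}\leq\bar C_{Z^K}$ handling the quadratically growing ones. The only part not carried out is the explicit bookkeeping that produces the stated numerical constants $L_G$ and $K_G$, which you correctly identify as routine but delicate.
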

\begin{proof}
See Section \ref{proofpropfixed}.
\end{proof}

\begin{corollary}\label{corofixed} Let $y: \R^{m_1} \to \R^{m_2}$ be a Borel measurable function such that, for any $z \in \R^{m_1}$, $|y(z)| \leq c_y(1+|z|^{q_y})$ with $c_y\geq 0, q_y \geq 1$. Moreover, let $Z$ be an $m_1$-dimensional continuously distributed random variable with probability law $\mathcal{L}(Z)$, let $Y$ be an $m_2$-dimensional random variable defined by $Y = y(Z)$, and let $X = (Y, Z)$. Furthermore, for any $K = 1, \dots, m_1$, let $f_{Z^K|Z^1, \dots, Z^{K-1}, Z^{K+1}, \dots, Z^{m_1}}$ be the density function of $Z^K$ given $Z^1, \dots, Z^{K-1}$, $Z^{K+1}, \dots, Z^{m_1}$. For any $K = 1, \dots, m_1$, assume that there exist constants $C_{Z^K}, \bar{C}_{Z^K}>0$, such that for any $z = (z^1, \dots, z^{m_1}) \in \R^{m_1}$,
\begin{align}\label{remarkfixedmmtbd}
\begin{split}
f_{Z^K|Z^1, \dots, Z^{K-1}, Z^{K+1} ,\dots, Z^{m_1} }(z^K|z^1, \dots,z^{K-1}, z^{K+1}, \dots, z^{m_1} )&\leq C_{Z^K}, \\
|z^K|^{2 q_y}f_{Z^K|Z^1, \dots, Z^{K-1}, Z^{K+1} ,\dots, Z^{m_1} }(z^K|z^1, \dots,z^{K-1}, z^{K+1}, \dots, z^{m_1} ) &\leq \bar{C}_{Z^K}.
\end{split}
\end{align}
In addition, let $(X_n)_{n\in\N_0} =  (Y_n, Z_n)_{n\in\N_0}$ be a sequence of i.i.d. random variables with probability law $\mathcal{L}(X)$. Assume that $\E[ |X_0|^{40q_y}]<\infty$. Then, the results in Proposition \ref{propfixed} hold for $u$ defined in \eqref{probfixed} and for $H$ given in \eqref{fixedfgexp1} and \eqref{fixedfgexp2} but with $\rho = 2q_y$ and
\[
C_{Z, \max} := \max_{J\in\{ 1, \dots, d_1\}}\left\{c_y^2\left(2C_{Z^{\nu_J}}+2^{q_y}  \bar{C}_{Z^{\nu_J}}+2^{q_y}  C_{Z^{\nu_J}}\right)/c^{J\nu_J}\right\}.
\]
\end{corollary}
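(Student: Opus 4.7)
The plan is to verify Assumptions \ref{AI}--\ref{AC} by mirroring the argument of Proposition \ref{propfixed}, with the two substitutions $Y = y(Z)$ and $\rho = 2q_y$. Items (iii) and (iv) of the proposition concern Assumption \ref{AF} and Assumption \ref{AC}, which depend only on $F(\theta,x) = \eta\theta|\theta|^{2r}$ and are independent of $x$; they therefore hold verbatim with the same constants $L_F = 5\eta$, $K_F = \eta$, $A(x) = \eta I_d/2$, $B(x) = 0$, $\bar{r} = 0$, $a = \eta/2$, $b = 0$. Assumption \ref{AI} follows immediately from the hypothesis $\E[|X_0|^{40q_y}] < \infty$, since $4(2r+1)\rho = 40 q_y$ with $r = 2$ and $\rho = 2q_y$, while $h = \nabla u$ is the unbiased estimate of $\E[H(\theta,X_0)]$ as in the proposition.

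For the pointwise growth portion of Assumption \ref{AG}, inspection of \eqref{fixedfgexp2} shows that each component of $G(\theta,x)$ contains at most a single factor of $y^I$, which now contributes $c_y(1+|z|^{q_y})$, multiplied by at most an affine function of $z$. The resulting growth in $|x|$ is therefore of order $q_y + 1 \leq 2q_y$ (using $q_y \geq 1$), matching the new exponent $\rho = 2q_y$ with a constant $K_G$ absorbing $c_y$. The remaining polynomial dependence on $\theta$ is unchanged from the proposition.

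The substantive difficulty lies in reproving the continuity in average estimate. In the setting of the corollary the pair $(Y,Z) = (y(Z),Z)$ no longer admits a joint Lebesgue density, so all integrations against the law of $X_0$ must be rewritten as integrations against the law of $Z_0$, with factors involving $y$ controlled via $|y(z)| \leq c_y(1+|z|^{q_y})$. The critical term is the jump of the indicator $\1_{A_J}(Z)$ appearing in $G_{b_0^J}$: for $\theta = ([W_1],b_0)$ and $\theta' = ([W_1'],b_0')$, the event $\{\1_{A_J(\theta)}(Z) \neq \1_{A_J(\theta')}(Z)\}$ forces $\langle c^{J\cdot},Z\rangle$ to lie in an interval of length $|b_0^J - (b_0')^J|$. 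Choosing $\nu_J$ with $c^{J\nu_J}\neq 0$, this becomes the event that $Z^{\nu_J}$ lies in an interval of length $|b_0^J - (b_0')^J|/|c^{J\nu_J}|$ conditional on the remaining coordinates, which is bounded using the conditional density $f_{Z^{\nu_J}|Z_{-\nu_J}}$. Because the integrand now also carries a factor $|y^I(Z)|^2 \leq 2 c_y^2(1 + |z|^{2q_y})$, one needs the upgraded bound $|z^K|^{2q_y} f_{Z^K|Z_{-K}}(z^K \mid z_{-K}) \leq \bar{C}_{Z^K}$ from \eqref{remarkfixedmmtbd} in place of the $|z^K|^2 f_{Z^K|Z_{-K},Y^I}$ bound used in Proposition \ref{propfixed}. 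Collecting the factors $2C_{Z^{\nu_J}}$, $2^{q_y}\bar{C}_{Z^{\nu_J}}$, and $2^{q_y}C_{Z^{\nu_J}}$ scaled by $c_y^2/c^{J\nu_J}$ yields precisely the definition of $C_{Z,\max}$ stated in the corollary.

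The main obstacle is the careful bookkeeping in the last paragraph: tracking how the polynomial growth of $y$ propagates through every factor in the continuity in average estimate, ensuring that the stronger moment condition on the conditional density of $Z^{\nu_J}$ is sufficient to absorb both the additional powers of $|z|$ produced by $y^I$ and the cross terms with $\mathfrak{N}^I(\theta, z)$, and confirming that all resulting constants can be collected into the single $C_{Z,\max}$ of the corollary. All other steps are routine adaptations of the corresponding arguments in the proof of Proposition \ref{propfixed}.
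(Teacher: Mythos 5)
Your proposal is correct and follows essentially the same route as the paper: the paper's proof likewise re-derives only the estimate for $\E[|Y^I|^2|\1_{A_J}(Z)-\1_{\bar{A}_J}(Z)|]$, replacing the joint density of $(Y,Z)$ by the conditional density of $Z^{\nu_J}$ given $Z_{-\nu_J}$ together with the bound $|y^I(z)|^2\leq 2c_y^2(1+|z|^{2q_y})$ and the upgraded moment condition in \eqref{remarkfixedmmtbd}, arriving at exactly the constant $C_{Z,\max}$ you collect, and then notes that the remaining steps carry over verbatim with $\rho=2q_y$.
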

\begin{proof}
See Section \ref{proofremarkfixed}.
\end{proof}

\paragraph{\textbf{Simulation result}}
Denote by $z = (z^1,z^2) \in \R^2$ with $z^1,z^2 \in \R$. We aim to approximate the function $y(z) =  |2z^1+2z^2-1.5|^3$ on $[0, 1]\times [0, 1]$ using the 1LFN given in \eqref{nnexp} with $d_1 = 15, m_1 = 2, m_2 = 1$. In order to obtain the fixed input weight matrix $c\in \R^{d_1\times m_1}$ in \eqref{nnexp}, we consider the following two methods:
\begin{enumerate}[leftmargin=*]
\item \label{item:tl} In this approach, we obtain $c$ using transfer learning. More precisely, we first train a two-hidden-layer feed-forward neural network (2LFN) to approximate a function $\tilde{y}$ that is similar to the target function $y$. Once the aforementioned 2LFN is fully trained, we obtain the trained parameters involved in the 2LFN. Denote by $\widetilde W_0^*$ the trained input matrix of the 2LFN. Then, when approximating the target function $y$ using 1LFN \eqref{nnexp}, we set $c:=\widetilde W_0^*$. 

To illustrate the aforementioned procedures of transfer learning, we provide below a concrete example. Consider the 2LFN $\widetilde{\mathfrak{N}}:\R^{\widetilde{d}} \times \R^{m_1} \rightarrow \R$ given by
\begin{equation}\label{tlfnexp}
\widetilde{\mathfrak{N}} (\widetilde{\theta},\widetilde{z}) = \sum_{j = 1}^{d_2}\widetilde W_2^{1j}\sigma_2\left(\sum_{k = 1}^{d_1}\left[\widetilde W_1^{jk}\sigma_1\left(\langle \widetilde W_0^{k\cdot}, \widetilde{z} \rangle +\widetilde b_0^k\right)\right]+\widetilde b_1^j\right)
\end{equation}
where $\widetilde{z} = (\widetilde{z}^1, \widetilde{z}^2) \in \R^{m_1}$, $\widetilde W_0\in \R^{d_1 \times m_1}$, $\widetilde b_0\in \R^{d_1}$, $\widetilde b_1\in \R^{d_2}$, $\widetilde W_1\in \R^{d_2\times d_1}$, $\widetilde W_2 \in \R^{1\times d_2}$ with $ d_2 = 15$, $\sigma_1$ is the ReLU activation function, $\sigma_2$ is the $\tanh$ activation function, and where the parameter $\widetilde{\theta} = ([\widetilde W_0], [\widetilde W_1], [\widetilde W_2], \widetilde b_0, \widetilde b_1) \in \R^{\widetilde{d}} $ with $\widetilde{d}=d_1(m_1+1)+d_2(d_1+2)$. We aim to use the 2LFN \eqref{tlfnexp} to approximate the function $\widetilde{y}(\widetilde{z}) =  -|\widetilde{z}^1+2\widetilde{z}^2-1|^2$ on $[0, 1]\times [0, 1]$, and  to obtain the best nonlinear mean-square estimator by solving the optimization problem:
\[
\text{minimize} \quad \R^{\widetilde{d}} \ni \widetilde{\theta} \mapsto \widetilde{u}(\widetilde{\theta} ) :=  \E\left[\left(\widetilde{Y} - \widetilde{\mathfrak{N} }(\widetilde{\theta} , \widetilde{Z})\right)^2\right]+\frac{\eta}{8}\left|\widetilde{\theta} \right|^{8}.
\]
Let $\widetilde{Z} =(\widetilde{Z}^1,\widetilde{Z}^2)\in \R^2$. For the simulation, let $\widetilde{Z}^1, \widetilde{Z}^2 \sim Uni(0,1)$ be independent, and we generate 10000 independent samples $(\widetilde{y}_n, \widetilde{z}_n)_{n = 1}^{10000}$ with $\widetilde{z}_n = (\widetilde{z}^1_n, \widetilde{z}^2_n) $ and $\widetilde{y}_n = \widetilde{y}(\widetilde{z}_n) $ for each $n$. Moreover, we set
\begin{equation}\label{tlfnhyperprmter}
\lambda = 0.5, \quad \eta=10^{-6}, \quad \beta=10^{10}
\end{equation}
with initial value $\widetilde{\theta}_0$ obtained using Xavier initialization \cite{glorot2010understanding}, which is the default setting in Pytorch. %The batch size is 2 and the number of epochs is 100.
Figure~\ref{fig:first_nn_loss} shows the training error for the 2LFN \eqref{tlfnexp} with TUSLA. Figure~\ref{fig:first_nn_plot} plots the true function and the fitted curve. After training the 2LFN \eqref{tlfnexp}, we obtain the trained parameters denoted by $\widetilde{\theta}^* = ([\widetilde W_0^*], [\widetilde W_1^*], [\widetilde W_2^*], \widetilde b_0^*, \widetilde b_1^*)$. Then, we set
$%\begin{equation}\label{tlfnweights}
c := \widetilde W_0^*
$ %\end{equation}
when approximating $y$ using the 1LFN~\eqref{nnexp}.
\item Alternatively, one may consider using a randomly generated input weight matrix $c$. For example, we generate each element in $c$ by using a standard uniform distribution. One notes that it has been proved in \cite[Corollary 3]{gonon2020approximation}, \cite[Theorem 5.1 and Corollary 5.4]{cuchiero2020deep}, and \cite[Proposition 4.8]{neufeld2022chaotic} that 1LFNs with randomly generated input weight matrix $c$ and input bias vector $b_0$ possess a certain form of universal approximation property. One may refer to \cite{gonon2020approximation}, \cite{cuchiero2020deep}, and \cite{neufeld2022chaotic} for detailed discussions on neural networks with randomly generated input weights.
\end{enumerate}

\begin{figure}[t]
  \begin{minipage}{0.49\textwidth}
     \centering
     \includegraphics[width=.95\linewidth]{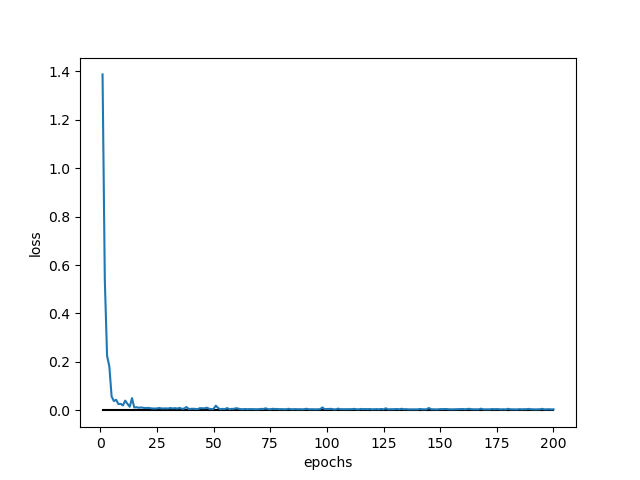}
     \caption{Training loss curve for the 2LFN}
     \label{fig:first_nn_loss}
  \end{minipage}\hfill
  \begin{minipage}{0.49\textwidth}
     \centering
     \includegraphics[width=.95\linewidth]{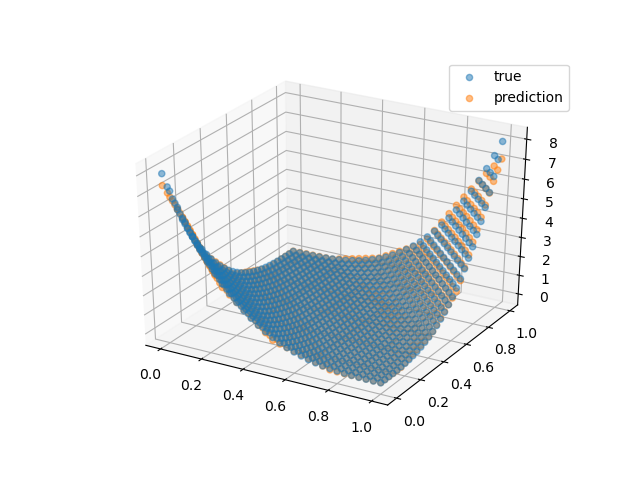}
     \caption{True and estimated value from the 2LFN}
     \label{fig:first_nn_plot}
  \end{minipage}
\end{figure}

Here, we aim to present the simulation results for the optimization problem \eqref{probfixed} with 1LFN \eqref{nnexp} in the context of transfer learning described in \ref{item:tl}, thus, we set the fixed input matrix $c := \widetilde W_0^*$. Let $X = (Y, Z) \in \R^3$ with $Y \in \R$ and $Z =(Z^1, Z^2)\in \R^2$, and let $Z^1, Z^2 \sim Uni(0,1)$ be independent. We generate 10,000 independent samples $(x_n)_{n = 1}^{10000} =(y_n, z_n)_{n = 1}^{10000}$ with $z_n = (z^1_n,z^2_n)$ and $y_n = y(z_n)$ for each $n$. Furthermore, we set the hyperparameters to be the same as in \eqref{tlfnhyperprmter} with $\theta_0$ obtained using Xavier initialization. One notes that Assumption \ref{AI}-\ref{AC} for our main results hold in this setting due to Corollary \ref{corofixed}. Figure~\ref{fig:second_nn_loss} shows the training loss curve for the 1LFN \eqref{nnexp}. Also, Figure~\ref{fig:second_nn_plot} displays the true function and the estimated values computed from the 1LFN \eqref{nnexp}. These results indicate that TUSLA can be successfully used for solving minimization problems involving neural networks with discontinuous activation functions like ReLU.

\begin{figure}[t]
  \begin{minipage}{0.49\textwidth}
     \centering
     \includegraphics[width=.95\linewidth]{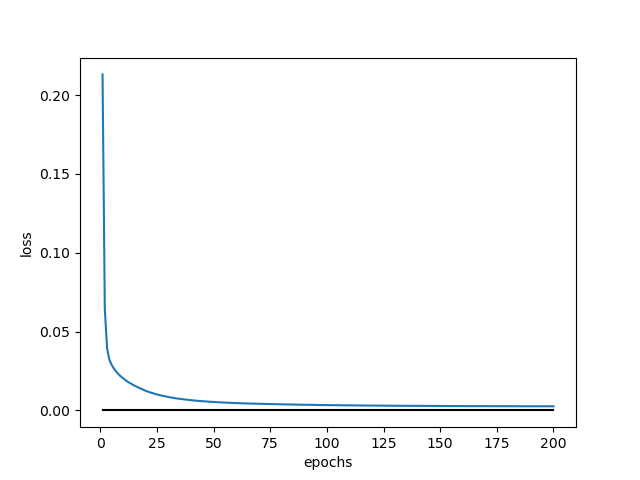}
     \caption{Training loss curve for the 1LFN}
     \label{fig:second_nn_loss}
  \end{minipage}\hfill
  \begin{minipage}{0.49\textwidth}
     \centering
     \includegraphics[width=.95\linewidth]{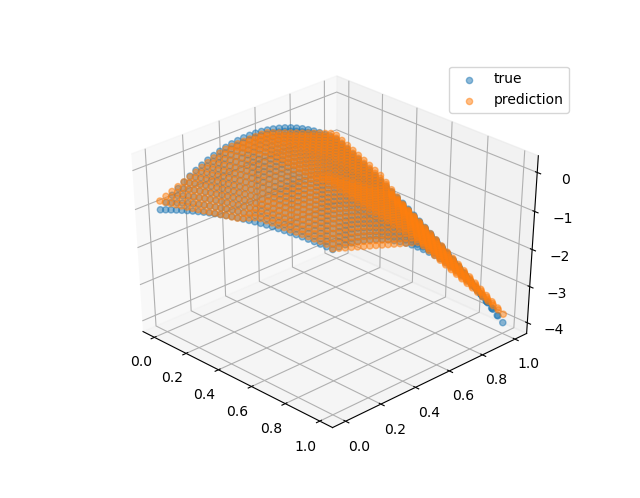}
     \caption{True and estimated value from the 1LFN}
     \label{fig:second_nn_plot}
  \end{minipage}
\end{figure}
%%%%%%%%%%%%%%%%%%%%%%%

%%%%%%%%%%%%%%%%%%%%%%%%%%%%%%%%%%%%%%%%%%%%%%%%
\subsection{Artificial example}\label{atexample}
\paragraph{\textbf{Optimization problem}} In this example, we set $d = m = 1$. Consider the following optimization problem:
\begin{equation}\label{atobj}
\text{minimize} \quad \R \ni \theta \mapsto u(\theta) :=   \E[U(\theta, X )],
\end{equation}
where $U: \R \times \R \rightarrow \R$ is defined by
\begin{equation}\label{atobjexp}
U(\theta, x) =
\begin{cases}
a_1\theta^2 \1_{\{x \leq \theta\}}+a_2\theta^2\1_{\{x > \theta\}}+\theta^{30}, \quad |\theta| \leq 1,\\
(a_3|\theta|+a_4) \1_{\{x \leq  \theta\}}+(a_5|\theta|+a_6) \1_{\{x > \theta\}}+\theta^{30}, \quad |\theta| > 1
\end{cases}
\end{equation}
with $a_3, a_4, a_5, a_6 \in \R$ satisfying
\begin{equation}\label{atobjas}
a_3= 2a_1, \quad a_4 = -a_1 , \quad a_5 = 2a_2, \quad a_6 = -a_2
\end{equation}
for any fixed $a_1, a_2\in \R$.
\begin{proposition}\label{propat}
Let $u$ be defined in \eqref{atobj} - \eqref{atobjas}. Let $X$ be a continuously distributed random variable with probability law $\mathcal{L}(X)$, and denote by $f_{X}$ its density function. Assume $f_{X}$ is Lipschitz continuous with Lipschitz constant $L_{X}$, and let $f_{X}$ be upper bounded by the constant $c_{X}$. Moreover, let $(X_n)_{n\in\N_0}$ be a sequence of i.i.d. random variables with probability law $\mathcal{L}(X)$, and assume that $\E[|\theta_0|^{116}+|X_0|^{116}]<\infty$. Furthermore, let $H: \R \times \R \rightarrow \R$ be the stochastic gradient of $u$ that satisfies $H(\theta, x): =F(\theta, x)+G(\theta, x)$ for all $\theta, x \in \R$, where
\begin{equation}\label{atsgexp}
F(\theta, x) =30\theta^{29}, \quad G(\theta, x) =
\begin{cases}
2a_2\theta +2 (a_1-a_2)  \theta \1_{\{x \leq  \theta\}} + (a_1-a_2)\theta^2f_{X}(\theta), &\quad |\theta| \leq 1,\\
2(a_2+ (a_1-a_2) \1_{\{x \leq  \theta\}})(\1_{\{\theta>1\}}-\1_{\{\theta<-1\}}) &\\
+ (a_1-a_2) (2|\theta|-1)f_{X}(\theta), &\quad |\theta| > 1.
\end{cases}
\end{equation}
Fix $q = 3, r = 14, \rho = 1$. Then, the following hold:
\begin{enumerate}[leftmargin=*]
\item The function $u$ is continuously differentiable. Moreover, Assumption \ref{AI} holds.
\item Assumption \ref{AG} is satisfied with
\[
L_G = (4+5c_{X}+2L_{X})(1+|a_1|+|a_2|), \quad K_G =  (4+2c_{X})(1+|a_1|+|a_2|).
\]
\item Assumption \ref{AF} is satisfied with $L_F =870, K_F = 30$.
\item Assumption \ref{AC} holds with $A(x) =15I_d, B(x) = 0, \bar{r} = 0, a = 15, b = 0$.
\end{enumerate}
\end{proposition}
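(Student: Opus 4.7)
My plan is to treat the four items in order, exploiting the piecewise structure of $U$ on the regions $\{|\theta|\le 1\}$ and $\{|\theta|>1\}$ and the one-dimensionality of the problem.

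For item (i), I would first check that $U(\cdot,x)$ glues continuously at $\theta=\pm 1$ using the compatibility conditions \eqref{atobjas}: plugging $\theta=1$ into both pieces gives $a_1+a_2\mathbf{1}_{\{x>1\}}+1$ (with $a_3+a_4=a_1$ and $a_5+a_6=a_2$), and similarly at $\theta=-1$. For $|\theta|<1$ and $|\theta|>1$ I would differentiate $u(\theta)=\E[U(\theta,X)]$ directly under the expectation by splitting $\E[\cdot\mathbf{1}_{\{X\le\theta\}}]=\int_{-\infty}^\theta\cdot f_X(s)\,ds$; the boundary terms produce the $(a_1-a_2)\theta^2 f_X(\theta)$ and $(a_1-a_2)(2|\theta|-1)f_X(\theta)$ contributions, matching $\E[G(\theta,X_0)]+F(\theta,x)$ in \eqref{atsgexp}. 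Continuity of $h=u'$ at $\theta=\pm 1$ is then a direct comparison of the two one-sided limits (both evaluate to $2a_2\pm 2(a_1-a_2)P(X\le\pm 1)+(a_1-a_2)f_X(\pm 1)+30(\pm 1)^{29}$). Assumption \ref{AI} is immediate from $4(2r+1)=116$ and the assumed moment bound.

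For item (ii), the pointwise bound $|G(\theta,x)|\le K_G(1+|x|)(1+|\theta|)^3$ is routine: on $|\theta|\le 1$ every term is bounded by a constant depending on $|a_1|,|a_2|,c_X$, while on $|\theta|>1$ the worst term $(a_1-a_2)(2|\theta|-1)f_X(\theta)$ is linear in $|\theta|$ (since $f_X\le c_X$), so the displayed $K_G$ suffices. The continuity-in-average inequality is the main technical step; I would handle it case by case on $\{|\theta|,|\theta'|\le 1\}$, $\{|\theta|,|\theta'|>1\text{ same sign}\}$, $\{\theta>1,\theta'<-1\}$, and the mixed case by inserting $\theta=\pm 1$. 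The core estimates are
\[
\E\bigl[|\theta\mathbf{1}_{\{X_0\le\theta\}}-\theta'\mathbf{1}_{\{X_0\le\theta'\}}|\bigr]\le \bigl(1+|\theta'|c_X\bigr)|\theta-\theta'|,
\]
and $|\theta^2 f_X(\theta)-\theta'^2 f_X(\theta')|\le\bigl((|\theta|+|\theta'|)c_X+\theta'^2 L_X\bigr)|\theta-\theta'|$, both via telescoping together with the boundedness and Lipschitz-ness of $f_X$. Collecting constants across the cases yields the stated $L_G$; the asymmetric cross case ($\theta>1,\theta'<-1$) is handled by the observation $|\theta|+|\theta'|=|\theta-\theta'|\ge 2$, which absorbs the constant contributions into a multiple of $|\theta-\theta'|$. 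The factor $(1+|\theta|+|\theta'|)^{q-1}=(1+|\theta|+|\theta'|)^2$ is generous enough to cover the highest-order term in $|\theta'|$ that appears.

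For item (iii), $F(\theta,x)=30\theta^{29}$ gives $|F(\theta,x)-F(\theta',x')|=30|\theta^{29}-\theta'^{29}|\le 30\cdot 29(|\theta|+|\theta'|)^{28}|\theta-\theta'|\le L_F(1+|\theta|+|\theta'|)^{2r}(|\theta-\theta'|+|x-x'|)$ with $L_F=870$, $2r=28$; the growth bound $|F|\le 30(1+|\theta|^{29})$ is trivial.

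Item (iv) is the cleanest but requires a short algebraic lemma. With $A(x)=15I_d$, $B(x)=0$, $\bar r=0$, the inequality to prove reduces to
\[
30(\theta-\theta')(\theta^{29}-\theta'^{29})\ \ge\ 15(\theta-\theta')^2(\theta^{28}+\theta'^{28}),
\]
equivalently, after dividing by $\theta-\theta'$ (assuming $\theta>\theta'$), to
\[
\phi(\theta,\theta')\ :=\ (\theta^{29}-\theta'^{29})+\theta\theta'(\theta^{27}-\theta'^{27})\ \ge\ 0.
\]
When $\theta\theta'\ge 0$ both summands have the same sign as $\theta-\theta'$, so $\phi\ge 0$. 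When $\theta\theta'<0$, writing $s=\theta>0$, $t=-\theta'>0$ and expanding yields $\phi=(s^{28}-t^{28})(s-t)\ge 0$. This gives Assumption \ref{AC} with $a=15$, $b=0$. The expected main obstacle is the bookkeeping for item (ii) across the cases, since one must ensure the constants involving $c_X$ and $L_X$ assemble into the advertised $L_G$ without losing the right power of $(1+|\theta|+|\theta'|)$; everything else follows from direct one-dimensional calculus.
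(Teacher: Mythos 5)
Your proposal is correct and follows essentially the same route as the paper's proof: the same case analysis over $\{|\theta|\le 1\}$, $\{|\theta|>1\}$ and the mixed region, the same two core estimates $\E[|\theta\1_{\{X_0\le\theta\}}-\theta'\1_{\{X_0\le\theta'\}}|]\le(1+c_X|\theta'|)|\theta-\theta'|$ and $|\theta^2f_X(\theta)-\theta'^2f_X(\theta')|\le(c_X(|\theta|+|\theta'|)+L_X|\theta'|^2)|\theta-\theta'|$ for Assumption 2, the same telescoping bound $|\theta^{29}-\theta'^{29}|\le 29(|\theta|+|\theta'|)^{28}|\theta-\theta'|$ for Assumption 3, and for Assumption 4 your identity $\phi=(\theta^{29}-\theta'^{29})+\theta\theta'(\theta^{27}-\theta'^{27})$ is exactly the one-dimensional specialization of the paper's general computation in Remark 2.9 (which splits off the nonnegative term $(|\theta|^2-|\theta'|^2)(|\theta|^{2l}-|\theta'|^{2l})$). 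The only cosmetic differences are that the paper handles the mixed case with the direct inequalities $|\theta-(\1_{\{\bar\theta>1\}}-\1_{\{\bar\theta<-1\}})|\le|\theta-\bar\theta|$ and $|\theta^2-(2|\bar\theta|-1)|\le 3|\theta-\bar\theta|$ rather than inserting the boundary point, which does not change the constants.
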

\begin{proof}
See Section \ref{proofpropat}.
\end{proof}
%%%%%%%%%%%%%%%%%%%%%%%%
\paragraph{\textbf{Simulation results}} We provide two examples which demonstrate the non-convergence issue of the existing stochastic optimization methods including ADAM, AMSGrad, RMSProp, and (vanilla) SGD when the stochastic gradient $H$ fails to satisfy the global Lipschitz condition, which is commonly assumed in the literature. For illustrative purposes, we consider the optimization problem with super-linearly growing gradients. Our numerical results show that TUSLA can successfully deal with these synthetic examples for both input distributions with bounded and unbounded support, respectively.

\paragraph{\textbf{\textit{Simulation 1}}} Set $a_1 = 2, a_2 = 1$. Then, one observes that by using \eqref{atobj} - \eqref{atobjas}, the optimal solution of $u$ is attained at $\theta=0$ since $U(\theta, x) \geq 0$ for all $\theta, x \in \R$ and $U(0, x) = 0$ for all $x \in \R$. We first present the simulation results for the optimization problem \eqref{atobj} - \eqref{atobjas} with input data $X \sim Beta(2,2)$ and $\theta_0=4$. We solve the optimization problem using TUSLA, ADAM, AMSGrad, RMSProp, and (vanilla) SGD. For ADAM and AMSGrad, we set $\epsilon=10^{-8}$, $\beta_1= 0.9$, $\beta_2=0.999$, and $0.001$ as the stepsize, which are suggested in their papers \cite{chen:19} and \cite{kingma:15}. For RMSProp, we use the default settings in Pytorch, which are $0.01$ for the stepsize, and $\alpha=0.99$. We run TUSLA with $\lambda=0.001$ and $\beta = 10^{10}$.  Figure~\ref{fig:discon1} shows that TUSLA finds the optimal solution after about 500 iterations whereas the other algorithms fail to converge to the true solution even after 1,000 iterations. We also highlight that the vanilla SGD instantly blows up in the presence of higher-order gradients. In addition, Figure~\ref{fig:discon1-2} shows that the same problematic behaviors are consistently observed with larger step sizes for ADAM, AMSGrad and RMSprop, implying that the non-convergence issue cannot be simply resolved by adjusting the learning rate.

%\begin{figure}[t]
%    \centering
%    \includegraphics[width=0.7\textwidth]{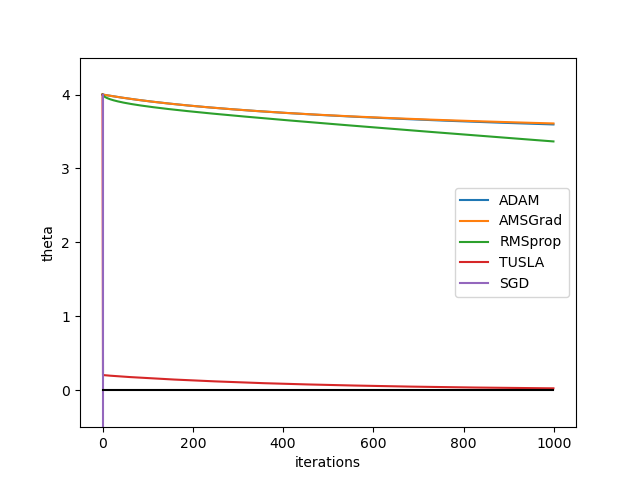}
%    %\includegraphics[width=0.47\textwidth]{figures/exam_uniform.png}
%    \caption{initial value $\theta_0 = 5.0$}
%    \label{fig:discon1}
%\end{figure}
%\begin{figure}[t]
%    \centering
%    \includegraphics[width=0.7\textwidth]{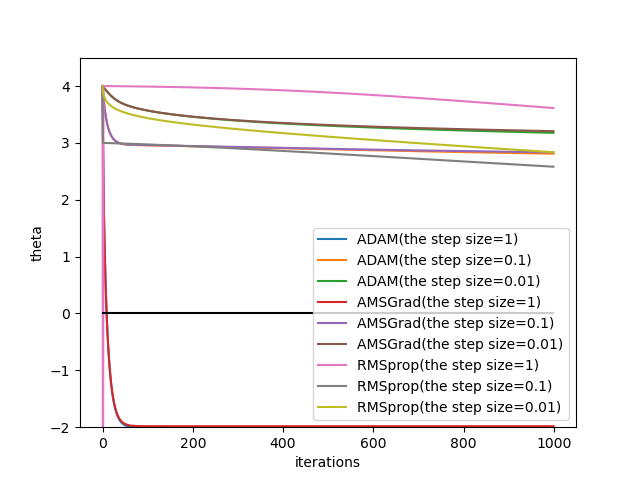}
%    \caption{Different learning rates for ADAM, AMSGrad and RMSprop}
%    \label{fig:discon1-2}
%\end{figure}

\begin{figure}[t]
   \begin{minipage}{0.49\textwidth}
     \centering
     \includegraphics[width=.95\linewidth]{figures/exam_beta.png}
     \caption{Initial value $\theta_0 = 4.0$}
     \label{fig:discon1}
   \end{minipage}\hfill
   \begin{minipage}{0.49\textwidth}
     \centering
     \includegraphics[width=.95\linewidth]{figures/exam_beta2.png}
     \caption{Different learning rates for ADAM, AMSGrad and RMSprop}
     \label{fig:discon1-2}
   \end{minipage}
\end{figure}

\paragraph{\textbf{\textit{Simulation 2}}} Set $a_1 = 2, a_2 = 1$. We then present the simulation results for the optimization problem \eqref{atobj} - \eqref{atobjas} with $X \sim  N(0,1)$ and $\theta_0 = 5$. In this setting, we employ TUSLA, ADAM, AMSGrad, RMSProp, and (vanilla) SGD to find the optimal solution $\theta=0$. Hyper-parameters for these algorithms are the same as in Simulation 1. Figure~\ref{fig:discon2} depicts that TUSLA reaches the optimal point already after about 200 iterations. On the contrary, ADAM, AMSGrad and RMSprop do not effectively work. Figure \ref{fig:discon2-2} further illustrates the non-convergence issue of ADAM, AMSGrad and RMSprop with different learning rates. Moreover, it is worth noting that TUSLA is convergent to the true solution even with larger step sizes, implying that the stability of TUSLA is superior to the existing adaptive optimization methods in the presence of super-linearly growing gradients.

\begin{figure}[t]
   \begin{minipage}{0.49\textwidth}
     \centering
     \includegraphics[width=.95\linewidth]{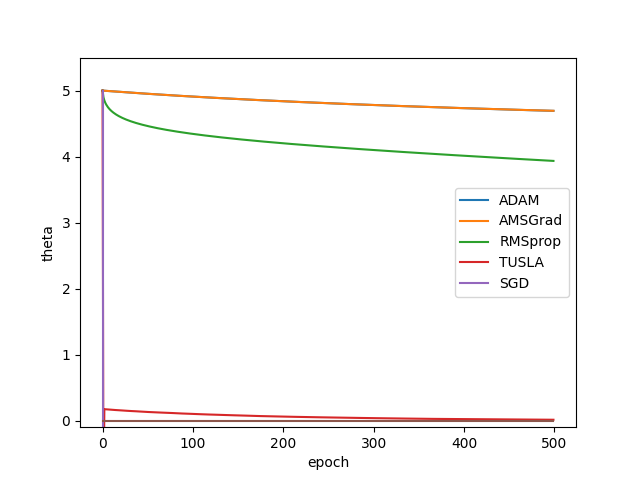}
     \caption{Initial value $\theta_0 = 5.0$}
     \label{fig:discon2}
   \end{minipage}\hfill
   \begin{minipage}{0.49\textwidth}
     \centering
     \includegraphics[width=.95\linewidth]{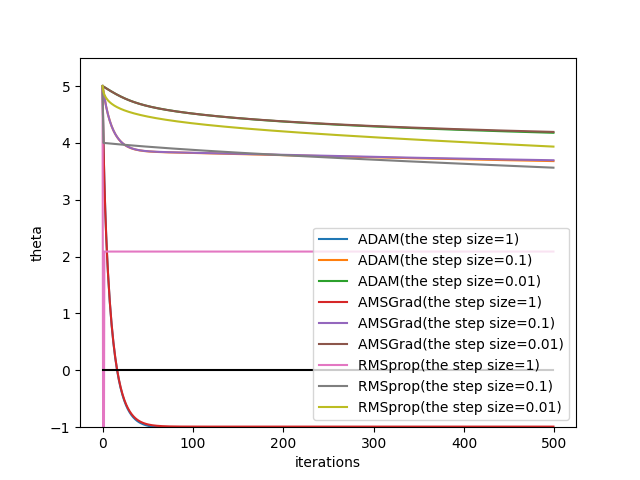}
     \caption{Different learning rates for ADAM, AMSGrad and RMSprop}
     \label{fig:discon2-2}
   \end{minipage}
\end{figure}

\subsection{Real-world applications}\label{sub:real_app}
This subsection presents two real-world applications, image classification on Fashion MNIST \cite{xiao2017/online} and (nonlinear) regression on the concrete compressive strength dataset \cite{data:concrete}, to demonstrate the empirical performance of TUSLA in comparison with other popular stochastic optimization algorithms. For the experiments, we solve the following (regularized) optimization problem:
\begin{equation}\label{optimization_real}
\text{minimize} \quad \R^d \ni \theta \mapsto u(\theta) :=  \E[\ell(Y, \mathfrak{N} (\theta, Z))]+\frac{\eta}{2(r+1)}|\theta|^{2(r+1)} ,
\end{equation}
where $\ell: \R^{m_2} \times \R^{m_2} \rightarrow \R$ is a loss function with $m_2 \in \N$, $\theta \in \R^d$ is the parameter to be optimized, $Z$ is the $\R^{m_1}$-valued input random variable with $m_1 \in \N$, $Y$ is the $\R^{m_2}$-valued target random variable, and $\mathfrak{N}: \R^d \times \R^{m_1} \rightarrow \R^{m_2}$ is a neural network which will be specified later. Note that $d$ and $\ell$ depends on the structure of the neural network and the task of interest.

\subsubsection{UCI regression data}\label{subsec:uci}
We test the performance of TUSLA on the concrete compressive strength dataset of \cite{data:concrete}, which is publicly available at the UCI machine learning repository \footnote{https://archive.ics.uci.edu/ml/datasets.php}. The dataset consists of 1,030 samples where each sample has 9 different attributes, e.g., age, and one target variable: the concrete compressive strength. We aim to find the best estimator that predicts the concrete compressive strength $Y\in \R$ given the input variable $Z \in \R^9$ by solving the optimization problem \eqref{optimization_real} with the squared loss function $\ell(u, v) = |u-v|^2$ for $u,v\in \R$, $m_1=9$, $m_2=1$, and $\eta=0$.

In this example, we consider a 1LFN, which is defined as
\begin{equation}\label{eq:1LFN}
\mathfrak{N}(\theta, z) :=   W_2\sigma_1(W_1 z + b_1)+b_2,
 \quad\quad {\text{(1LFN)}}
\end{equation}
where $W_1 \in \R^{d_1 \times m_1}$, $b_1 \in \R^{d_1}$, $W_2 \in \R^{m_2 \times d_1}$, $b_2\in \R^{m_2}$, $\theta = (W_1, W_2, b_1, b_2) \in \R^d$ with $d=d_1(m_1+m_2+1)+m_2$, $\sigma_1$ is the ReLU activation function and $d_1$ is the number of neurons. Here, $d_1$ is set to be $50$ so that $d$ is 551.

We randomly select 10\% of samples as test set for the evaluation of the trained models and employ TUSLA, RMSprop, ADAM, and AMSGrad to solve the regression problem. For ADAM and AMSGrad, we search the optimal learning rate between $\{0.01, 0.001\}$ and set $\epsilon=10^{-8}$, $\beta_1=0.9$, $\beta_2=0.999$. For RMSprop, the learning rate is chosen from $\{0.01, 0.001\}$ where $\beta=0.99$ and $\epsilon=10^{-8}$ are fixed. For TUSLA, we use $\lambda=0.5$, $r=0.5$, and $\beta=10^{12}$ throughout the experiment. We train the models for $5,000$ epochs with $256$ batch size. Each experiment is performed three times to compute the mean and standard deviation of test loss generated by each optimizer. As shown in Table~\ref{tab:best_test}, TUSLA achieves the lowest  mean-squared error (MSE) in comparison with ADAM, AMSGrad, and RMSprop.

% \begin{table}[h]
% \centering
% \caption{Best mean-square error evaluated by different optimizers for Energy and Concrete datasets and its standard deviation.}
% \begin{tabular}{c | c  c  c c }
%  \hline
% Dataset & TUSLA & Adam & Amsgrad & Rmsprop  \\  \hline
% Concrete & \textbf{0.3396} (0.0397) & 0.3861 (0.0467) & 0.4165 (0.0315) & 0.3850 (0.0010) \\ \hline
% \end{tabular}
% \label{tab:nll}
% \end{table}

\subsubsection{Fashion MNIST}  We conduct image classification on Fashion MNIST dataset \cite{xiao2017/online} consisting of a training set of 60,000 images and a test set of 10,000 images \footnote{The Fashion MNIST data set can be downloaded at ``
https://github.com/zalandoresearch/fashion-mnist'' .}. Each sample of the dataset $(z_i)_{i = 1}^{60,000}$ is a $28\times 28$ pixel image, i.e., $z_i \in \R^{784}$, and is assigned to one of 10 different labels $l_i \in \{0, 1, \dots, 9\}$ describing T-shirt (0), Trouser (1), Pullover (2), Dress (3), Coat (4), Sandal (5), Shirt (6), Sneaker (7), Bag (8), and Ankle boot (9). Then, the label variables are converted to vectors such that $y_i = [y_{i,0}, y_{i,1}, \dots, y_{i,9}]^\top \in \R^{10}$ with $y_{i, j}=\1_{j=l_i}, j=0, 1, \cdots, 9, i=1,\dots, 60,000.$ \footnote{For example, the target variables for Trouser and Bag are $[0, 1, 0, 0, 0, 0, 0, 0, 0, 0]^\top$ and $[0, 0, 0, 0, 0, 0, 0, 0, 1, 0]^\top$, respectively.}

For image classification, we consider both the 1LFN \eqref{eq:1LFN} with 50 neurons as well as a 2LFN with 50 neurons on each hidden layer, which is defined by
\begin{equation}\label{eq:2LFN}
\mathfrak{N}(\theta, z) :=   W_5\sigma_1\bigg(W_4\sigma_1(W_3 z + b_3)+b_4\bigg)+b_5,
 \quad\quad {\text{(2LFN)}}
\end{equation}
where $\theta = (W_3, W_4, W_5, b_3, b_4, b_5)$, $W_3 \in \R^{50 \times 784}$, $W_4 \in \R^{50 \times 50}$, $W_5 \in \R^{10 \times 50}$,  $b_3 \in \R^{50}$, $b_4 \in \R^{50}$, $b_5 \in \R^{10}$ and $\sigma_1$ is the ReLU activation function. Therefore, we have $d=39,760$ for the 1LFN \eqref{eq:1LFN} and $d=42,310$ for the 2LFN \eqref{eq:2LFN}. Furthermore, the cross entropy loss is used, which is given by $\ell(u, v) = -\sum_{i=1}^{10} u_i \log v_i$ for $u = [u_1, u_2, \cdots, u_{10}]^\top \in \R^{10}, v= [v_1, v_2, \cdots, v_{10}]^\top \in \R^{10}$. $\eta$ is fixed to $10^{-5}$ for all experiments. The models are trained for 200 epochs with 128 batch size.

The hyperparameters for TUSLA are set as follows: $\lambda=0.5$, $r=0.5$, and $\beta=10^{12}$. We apply the same hyperparameters used in Subsection~\ref{subsec:uci} to tune ADAM, AMSgrad, and RMSprop optimizers. Also, we decay the initial learning rate by $10$ after 150 epochs.

Table~\ref{tab:best_test} shows that the performance of TUSLA is slightly better than that of ADAM, AMSgrad, and RMSprop in terms of test accuracy. Also, it is worth noting that TUSLA produces a very stable learning curve compared to other optimizers as shown in Figure~\ref{fig:fmnist_slfn} and \ref{fig:fmnist_tlfn}, confirming the effectiveness of the taming technique.

\begin{figure}[t]
   \begin{minipage}{0.49\textwidth}
     \centering
     \includegraphics[width=.95\linewidth]{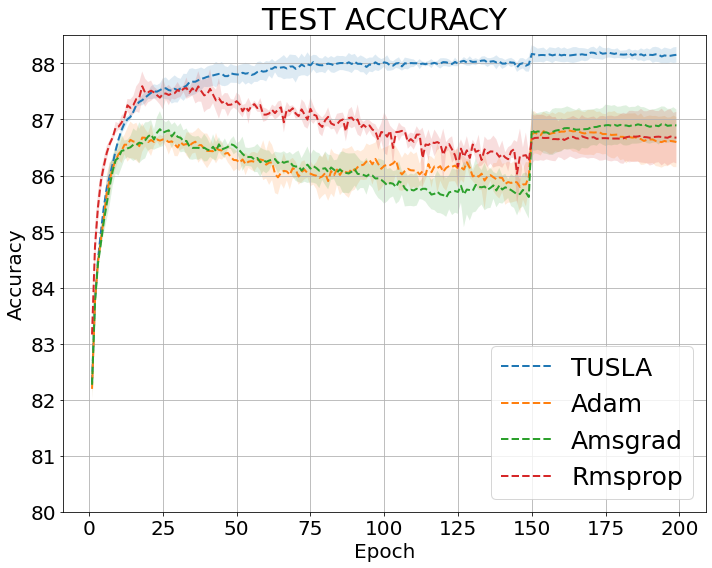}
     \caption{Test accuracy curve for 1LFN}
     \label{fig:fmnist_slfn}
   \end{minipage}\hfill
   \begin{minipage}{0.49\textwidth}
     \centering
     \includegraphics[width=.95\linewidth]{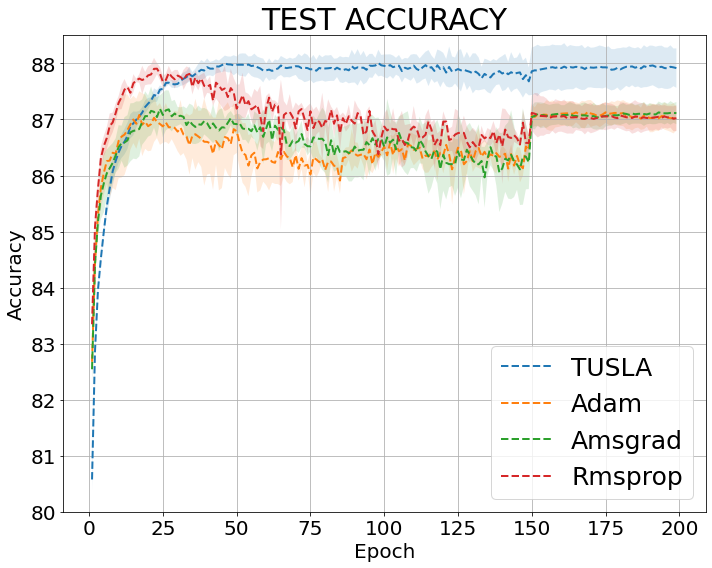}t
     \caption{Test accuracy curve for 2LFN}
     \label{fig:fmnist_tlfn}
   \end{minipage}
\end{figure}

\begin{table}[h]
\centering
\caption{Best metric score evaluated on the test set for concrete compressive strength (Concrete) and Fashion MNIST datasets. We report average and standard deviation of the best metric score on the test set from three repetitive experiments with different random seeds. The numbers in parenthesises indicate the standard deviations.}
\begin{tabular}{c | c  c  c }
 \hline
 Dataset & Concrete & Fashion MNIST & Fashion MNIST  \\
Model & 1LFN & 1LFN & 2LFN   \\
Metric & MSE & Accuracy & Accuracy  \\ \hline
%\# of parameters & 39,760 & 42,310 \\ \hline
TUSLA & $\textbf{0.3386 } (\textbf{0.0467})$  & $\textbf{88.22 } (\textbf{0.09})$ & $\textbf{88.19 } (\textbf{0.13})$ \\
ADAM & $0.3861$ $(0.0315)$ & $87.03$ $(0.03)$ & $87.32$ $(0.23)$  \\
AMSgrad & $0.4165$ $(0.0010)$ & $87.13$ $(0.22)$ & $87.40$ $(0.09)$ \\
RMSprop & $0.3850$ $(0.0397)$ & $87.70$ $(0.22)$ & $87.99$ $(0.07)$ \\ \hline
\end{tabular}
\label{tab:best_test}
\end{table}

\subsection{Effect of $\beta$, $r$, $\lambda$, and $\eta$ on the performance of TUSLA}\label{sub:eff_params}

In this subsection, we perform a sensitivity analysis to investigate the effect of the hyperparameters, $\beta$, $r$, $\eta$, and $\lambda$ on the performance of TUSLA. We test experiments with the 1LFN on the Fashion MNIST dataset. We train the models for 200 epochs with a batch size of 128 under different hyperparameter settings for the experiment with $\beta$, $r$, and $\eta$, while we train the models for 2,000 epochs with a batch size of 128 for the experiment with $\lambda$.

The inverse temperature $\beta > 0$ is a key feature of Langevin based algorithms, which helps the algorithm to escape from local minima or saddle points. There is a trade-off between a large and small inverse temperature $\beta$. Intuitively, a large inverse temperature generates the solutions that explore the local geometry, so called \textit{the exploitation}. On the other hand, a small inverse temperature allows for the solutions to jump drastically, leading to \textit{the exploration}. To leverage the effects of $\beta$, simulated annealing methods for $\beta$ is widely applied in sampling and optimization. In our experiments, we fix $\beta$ as a constant during the training. Table~\ref{tab:beta} shows that, for fixed other parameters $\lambda=0.5, r=0.5$, and $\eta=10^{-5}$, TUSLA achieves the highest accuracy when $\beta$ is large, namely $10^{8}\sim 10^{12}$. This is consistent with the cold posterior effect in Bayesian deep learning which states that the model performance is improved when a large inverse temperature $\beta$ is chosen, see \cite{aitc20} and \cite{wenzel20}.

\begin{table}[h]
\centering
\caption{Test accuracy for Fashion MNIST datasets with different $\beta$. Other hyperparameters are $\lambda=0.5$, $r=0.5$, and $\eta=10^{-5}$. }
\begin{tabular}{c | c  c  c c  c }
 \hline
 $\beta$ & $10^4$ & $10^6$ & $10^8$ & $10^{10}$ & $10^{12}$  \\ \hline
test accuracy & 68.32 & 86.71 & 88.48 & 88.63 & 88.33 \\ \hline
\end{tabular}
\label{tab:beta}
\end{table}

The hyperparameter $r\geq 0.5$ controls the intensity of the taming function of TUSLA. We conduct experiments with $\lambda =0.5$, $\beta=10^{12}$, and different $r\in \{0.5, 1, 2, 3\}$ and summarize the results in Table~\ref{tab:r}. It turns out that the choice of an appropriate $r$ is a crucial factor for the performance of TUSLA. It is encouraged to gradually increase $r$, as a large $r$ can excessively suppress the gradient part in the formula of TUSLA.

\begin{table}[h]
\centering
\caption{Test accuracy for Fashion MNIST datasets with different $r$. Other hyperparameters are $\lambda=0.5$, $\beta=10^{12}$, and $\eta=10^{-5}$.}
\begin{tabular}{c | c  c  c c  }
 \hline
 $r$ & 0.5 & 1 & 2 & 3 \\ \hline
test accuracy & 88.33 & 87.27 & 82.72 & 79.28 \\ \hline
\end{tabular}
\label{tab:r}
\end{table}

Next, we report the performance of TUSLA with different  $\lambda \in \{0.5, 0.1, 0.05, 0.01, 0.005, 0.001\}$ where $r=0.5$, $\beta=10^{12}$, and $\eta=10^{-5}$ are fixed. For the experiment, we use 2,000 epochs to ensure all the models are fully trained. Then, we report the best test accuracy and the epoch at which the best performance is attained. As shown in Table~\ref{tab:lambda}, it is observed that while the highest test accuracy is attained with $\lambda=0.1$, the model with $\lambda=0.5$ reaches its best accuracy the fastest.

\begin{table}[h]
\centering
\caption{Test accuracy for Fashion MNIST datasets with different $\lambda$. Other hyperparameters are $r=0.5$, $\beta=10^{12}$, and $\eta=10^{-5}$.}
\begin{tabular}{c | c  c  c c c c}
 \hline
 $\lambda$ & $0.5$  & $0.1$ & $0.05$ & $0.01$ & $0.005$ & $0.001$ \\ \hline
test accuracy & 88.69 & 88.81 & 88.66 & 88.24 & 87.96 & 86.83  \\ \hline
best epoch & 584 & 906 & 614 & 621 & 1153 & 1481 \\ \hline
\end{tabular}
\label{tab:lambda}
\end{table}

Lastly, we investigate the impact of $\eta$, which controls the magnitude of the regularization term $|\theta|^{2(r+1)}$, on test accuracy. When the regularized term is incorporated in optimization problems, i.e., $\eta>0$ in \eqref{optimization_real}, overfitting can be reduced by forcing the neural network to have smaller values of its parameters which leads to a simpler model. On the other hand, the deviation between the regularized and original objective functions could lead to degrading the performance of the model. To balance this trade-off, one needs to find an appropriate $\eta$ numerically. Table~\ref{tab:eta} shows the test accuracy of TUSLA with different values of $\eta$ varying from $10^{-5}$ to $10^{-1}$. The other parameters are fixed as follows: $\lambda=0.5$, $r=0.5$, and $\beta=10^{12}$. We observe that TUSLA generates the highest test accuracy when $\eta$ is $10^{-4}$.

\begin{table}[h]
\centering
\caption{Test accuracy for Fashion MNIST datasets with different $\eta$. Other hyperparameters are $\lambda=0.5$, $r=0.5$, and $\beta=10^{12}$.}
\begin{tabular}{c | c  c  c c c }
 \hline
 $\eta$ & $10^{-5}$ & $10^{-4}$ & $10^{-3}$ & $10^{-2}$ & $10^{-1}$ \\ \hline
test accuracy & 88.33 & 88.51 & 86.04 & 80.84 & 71.84 \\ \hline
\end{tabular}
\label{tab:eta}
\end{table}

%%%%%%%%%%%%%%%%%%%%%%%%%%%%
\section{Proof overview of the main results}\label{po}
In this section, we present the main ideas of establishing Theorem \ref{mainw1}, Corollary \ref{mainw2}, and Theorem \ref{mainop}. We first introduce several auxiliary processes which are key for the analysis of the convergence results. Then, suitable Lyapunov functions are defined, and explicit moment bounds are obtained for the auxiliary processes. Finally, we provide detailed explanations of the proofs of the main results. All proofs of the intermediate results are postponed to Appendix \ref{proofpo}.
\subsection{Auxiliary processes}
Consider the $\R^d$-valued Langevin SDE $(Z_t)_{ t\geq 0}$ given by
\begin{equation} \label{sde}
\mathrm{d} Z_t=-h\left(Z_t\right) \mathrm{d} t+ \sqrt{2\beta^{-1}} \mathrm{d} B_t,
\end{equation}
with $Z_0 := \theta_0$, where $(B_t)_{t \geq 0}$ is a standard $d$-dimensional Brownian motion on $(\Omega,\mathcal{F},P)$. Denote by $(\mathcal{F}_t)_{t\geq 0}$ the completed natural filtration of $(B_t)_{t \geq 0}$, which is assumed to be independent of $\mathcal{G}_{\infty} \vee \sigma(\theta_0)$.

For each $\lambda>0$, denote by $Z^{\lambda}_t := Z_{\lambda t},  t\geq 0$,  the time-changed Langevin SDE given by
\begin{equation} \label{tcsde}
\rmd Z^{\lambda}_t = -\lambda h(Z^{\lambda}_t)\, \rmd t +\sqrt{2\lambda\beta^{-1}}\,\rmd B^{\lambda}_t,
\end{equation}
with the initial condition $Z^{\lambda}_0 := \theta_0$, where $ B^{\lambda}_t :=B_{\lambda t}/\sqrt{\lambda}, t \geq 0$. Note that $( B^{\lambda}_t )_{t\geq 0}$ is a $d$-dimensional standard Brownian motion. For each $\lambda >0$, denote by $(\mathcal{F}^{\lambda}_t)_{t \geq 0}$ the completed natural filtration of $( B^{\lambda}_t )_{t\geq 0}$ with $\mathcal{F}^{\lambda}_t := \mathcal{F}_{\lambda t}$ for each $t \geq 0$, which is also independent of $\mathcal{G}_{\infty} \vee \sigma(\theta_0)$.

Then, define the continuous-time interpolation of the TUSLA algorithm \eqref{tusla}, denoted by $(\bar{\theta}^{\lambda}_t)_{ t\geq 0}$, as
\begin{equation}\label{tuslaproc}
\rmd \bar{\theta}^{\lambda}_t=-\lambda H_{\lambda}(\bar{\theta}^{\lambda}_{\lfrf{t}},X_{\lcrc{t}})\, \rmd t
+ \sqrt{2\lambda\beta^{-1}} \rmd B^{\lambda}_t
\end{equation}
with the initial condition $\bar{\theta}^{\lambda}_0:=\theta_0$. One notes that the law of the interpolated process coincides with the law of the TUSLA algorithm \eqref{tusla} at grid-points, i.e. $\mathcal{L}(\bar{\theta}^{\lambda}_n)=\mathcal{L}(\theta_n^{\lambda})$, for each $n\in\N_0$.

Moreover, denote by $\zeta^{s,v, \lambda}_t, t \geq s$, a continuous-time process defined by the SDE:
\[
\zeta^{s,v, \lambda}_s := v \in \R^d, \quad \rmd\zeta^{s,v, \lambda}_t = -\lambda h(\zeta^{s,v, \lambda}_t)\, \rmd t +\sqrt{2\lambda\beta^{-1}}\,\rmd B^{\lambda}_t.
\]
\begin{definition}\label{auxzeta} For each fixed $\lambda >0$ and $n \in \N_0$, define $\bar{\zeta}^{\lambda, n}_t := \zeta^{nT,\bar{\theta}^{\lambda}_{nT}, \lambda}_t$, $t \geq nT$, where $T\equiv T(\lambda) : = \lfrf{1/\lambda}.$
\end{definition}
%%%%%%%%%%%%%%%%%%%%%%%%
\subsection{Preliminary estimates}\label{me}
For each $p\in [2, \infty)\cap {\N}$, define the Lyapunov function $V_p(\theta) := (1+|\theta|^2)^{p/2}$, for all $\theta \in \R^d$, and similarly, define $\mathrm{v}_p(w) := (1+w^2)^{p/2}$ for any real $w \geq 0$. Denote by $\mathcal{P}_{V_p}(\R^d)$ the set of probability measures $\mu \in \mathcal{P}(\R^d)$ satisfying $\int_{\R^d} V_p(\theta)\, \mu(\rmd \theta) <\infty$. Note that $V_p$ is twice continuously differentiable, and
\begin{equation}\label{contrassumption}
\sup_{\theta \in \R^d}\frac{|\nabla V_p(\theta)|}{ V_p(\theta)} <\infty, \quad \lim_{|\theta|\to\infty}\frac{\nabla V_p(\theta)}{V_p(\theta)}=0.
\end{equation}

It is well-known that, under Assumption \ref{AI}, \ref{AG}, \ref{AF}, and \ref{AC}, and by Remark \ref{ADFh}, \ref{ACh}, the Langevin SDE \eqref{tcsde} has a unique solution adapted to $\mathcal{F}_t \vee \sigma(\theta_0)$, $ t\geq 0$, see, e.g. \cite[Theorem 1]{krylovsolmontone}. In addition, for each $p\in {\N}$, the $2p$-th moment of SDE \eqref{tcsde} is finite, see Lemma \ref{tcsdemmtbd}. Moreover, by using the same arguments as in the proof of  \cite[Proposition 1-(ii)]{DM16} together with Lemma \ref{tcsdemmtbd}, it follows that the $2p$-th moment of $\pi_{\beta}$ is finite.

In the following lemma, we provide moment estimates for $(\bar{\theta}^{\lambda}_t)_{t\geq 0}$ defined in \eqref{tuslaproc}. Moreover, by considering a special case of $F$, a more practical stepsize restriction $\tilde{\lambda}_{\max}$ is provided in \eqref{relaxedlamax}.
\begin{lemma}\label{2ndpthmmt} Let Assumption \ref{AI}, \ref{AG}, \ref{AF}, and \ref{AC} hold. Then, one obtains the following:
\begin{enumerate}[leftmargin=*]
\item For any $0<\lambda\leq \lambda_{1, \max}$ with $\lambda_{1, \max}$ given in \eqref{stepsizemax}, $n \in \N_0$, and $t \in (n, n+1]$,
\[
\E\left[ |\bar{\theta}^{\lambda}_t|^2  \right]  \leq  (1-\lambda(t-n)a_F\kappa)(1-a_F\kappa \lambda)^n \E\left[|\theta_0|^2\right]  +c_0(1+1/(a_F\kappa)),
\]
where the constants $c_0, \kappa$ are given explicitly in \eqref{constc0}. In particular, the above inequality implies $\sup_{t\geq 0}\E\left[|\bar{\theta}^{\lambda}_t|^2\right]  \leq  \E\left[|\theta_0|^2\right] +c_0(1+1/(a_F\kappa))<\infty$.
\label{2ndpthmmti}
\item  For any $p\in [2, \infty)\cap {\N}$, $0<\lambda\leq \lambda_{p,\max}$ with $\lambda_{p,\max}$ given in \eqref{stepsizemax}, $n \in \N_0$, and $t \in (n, n+1]$,
\begin{equation}\label{pthmmt}
\E\left[ |\bar{\theta}^{\lambda}_t|^{2p} \right]  \leq (1-\lambda (t-n) a_F\kappa^\sharp_2/2) (1-\lambda a_F\kappa^\sharp_2/2)^n\E\left[|\theta_0|^{2p}\right]  + c^\sharp_p(1+2/(a_F\kappa^\sharp_p )),
\end{equation}
where $\kappa^\sharp_p := \min\{\bar{\kappa}(p), \tilde{\kappa}(p)\}$, $c^\sharp_p := \max\{\bar{c}_0(p), \tilde{c}_0(p)\}$ with the constants $\bar{c}_0(p), \bar{\kappa}(p)$ and $\tilde{c}_0(p), \tilde{\kappa}(p)$ given explicitly in \eqref{constcp} and \eqref{constcpsp}, respectively. In particular, the above estimate implies $\sup_{t\geq 0}\E\left[|\bar{\theta}^{\lambda}_t|^{2p} \right]  \leq  \E\left[|\theta_0|^{2p}\right]+c^\sharp_p(1+2/(a_F\kappa^\sharp_p))<\infty$. \label{2ndpthmmtii}

\item \label{2ndpthmmtiii} If $F$ is a function only of $\theta$, i.e. for any $\theta \in \R^d$, $F(\theta, x) = F(\theta)$ for all $x \in \R^m$, then for any $p\in [2, \infty)\cap {\N}$, $n \in \N_0$, and $t \in (n, n+1]$, \eqref{pthmmt} holds with  $0<\lambda\leq \tilde{\lambda}_{\max} $ where
\begin{equation}\label{relaxedlamax}
\tilde{\lambda}_{\max} := \min\left\{1,\frac{a_F^2}{16K_F^4}, \frac{1}{a_F}, \frac{1}{4a_F^2}\right\}.
\end{equation}
\end{enumerate}
\end{lemma}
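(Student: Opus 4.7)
The plan is to establish a one-step recursion of the form
\[
\E[|\bar{\theta}^{\lambda}_t|^{2p}] \leq (1-\lambda(t-n)a_F\kappa_p)\E[|\bar{\theta}^{\lambda}_n|^{2p}] + \lambda(t-n)c_p
\]
for $t\in(n,n+1]$, and then to iterate it in $n$. On each such interval the drift of \eqref{tuslaproc} is frozen at $H_\lambda(\bar{\theta}^{\lambda}_n,X_{n+1})$, so with $s:=t-n \in (0,1]$ I have the explicit representation
\[
\bar{\theta}^{\lambda}_t = \bar{\theta}^{\lambda}_n - \lambda s\, H_\lambda(\bar{\theta}^{\lambda}_n, X_{n+1}) + \sqrt{2\lambda\beta^{-1}}\bigl(B^{\lambda}_t - B^{\lambda}_n\bigr),
\]
where the Gaussian increment is independent of $(\bar{\theta}^{\lambda}_n,X_{n+1})$ with variance $sI_d$ and $X_{n+1}$ is independent of $\bar{\theta}^{\lambda}_n$.

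For part (i), I would expand $|\bar{\theta}^{\lambda}_t|^2$ and take conditional expectations, which kills the Brownian cross terms (replacing them by $2\lambda\beta^{-1}ds$) and replaces $H_\lambda(\bar{\theta}^{\lambda}_n,X_{n+1})$ in the drift term by its conditional mean $h(\bar{\theta}^{\lambda}_n)/(1+\sqrt{\lambda}|\bar{\theta}^{\lambda}_n|^{2r})$. The key inputs are then (a) the dissipativity $\langle \theta,\E[F(\theta,X_0)]\rangle\geq a_F|\theta|^{2r+2}-b_F$ from Remark \ref{ADFh}, which combined with the growth bound on $\E[G(\theta,X_0)]$ from Assumption \ref{AG} and a Young's-inequality step (made possible by $q\leq 2r+1$) yields $\langle \theta,h(\theta)\rangle \geq (a_F/2)|\theta|^{2r+2}$ minus a constant; and (b) the taming bound $|H_\lambda(\theta,x)|^2 \leq K_H^2(1+|x|)^{2\rho}(1+|\theta|^{2r+1})^2/(1+\sqrt{\lambda}|\theta|^{2r})^2$, which essentially converts the $\lambda^2$-order drift correction into an $O(\lambda)$ term of size $\mathrm{const}\cdot(1+|\theta|^2)$. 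The stepsize cap $\lambda_{1,\max}\propto 1/K_F^2$ in \eqref{stepsizemax} is calibrated precisely so that this positive term is dominated by, say, half the dissipative contraction, leaving the recursion with the explicit $\kappa, c_0$ of \eqref{constc0}. Iterating in $n$ and then over $s\in(0,1]$, and summing the geometric series $\lambda c_0\sum_{k}(1-a_F\kappa\lambda)^k$, produces the additive constant $c_0(1+1/(a_F\kappa))$.

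For part (ii) I would apply the same template to $(|\bar{\theta}^{\lambda}_t|^2)^p$. Binomially expanding the power produces a leading $2p|\bar{\theta}^{\lambda}_n|^{2p-2}$ times exactly the scalar treated in (i), handled by the same dissipativity/taming argument, together with a finite collection of lower-order cross terms mixing powers of $|H_\lambda|$, $|\bar{\theta}^{\lambda}_n|$ and $\lambda\beta^{-1}$; the $2k$-th Gaussian moments contribute combinatorial prefactors bounded by $\binom{2p}{p}$. Each such cross term can be matched against a small fraction of the leading $|\bar{\theta}^{\lambda}_n|^{2p}$ contraction by Young's inequality, and the combinatorial factor $\binom{2p}{p}$ together with the $2p\rho$-th moment of $X_0$ from Assumption \ref{AI} is precisely what multiplies $K_F^2$ in the denominator of $\lambda_{p,\max}$, so that $\lambda\leq\lambda_{p,\max}$ preserves a net contraction rate $a_F\kappa^\sharp_p/2$. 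The min/max structure $\kappa^\sharp_p=\min\{\bar\kappa(p),\tilde\kappa(p)\}$, $c^\sharp_p=\max\{\bar c_0(p),\tilde c_0(p)\}$ simply selects the worse of two regimes---one where the far-field strict contraction binds and one where the near-field additive constant dominates. For part (iii), when $F(\theta,x)=F(\theta)$ the dissipativity $\langle \theta, F(\theta)\rangle\geq a_F|\theta|^{2r+2}-b_F$ holds pathwise rather than only on average, so the upper bound on $\lambda^2 s^2\E[|H_\lambda|^2]$ no longer needs to carry the inflating factor $(\E[(1+|X_0|)^{2p\rho}])^2$; the same chain of absorptions then goes through under the cleaner cap $\tilde{\lambda}_{\max}$ of \eqref{relaxedlamax}.

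The main obstacle I anticipate is the careful bookkeeping of constants in the higher-moment expansion: each of the $O(p)$ cross terms has to be absorbed into the leading dissipative term with a Young's constant small enough to preserve a positive contraction rate yet large enough that the resulting $c^\sharp_p$ stays finite and explicit. The precise form of $\lambda_{p,\max}$ with its factor $9\binom{2p}{p}^2 K_F^2(\E[(1+|X_0|)^{2p\rho}])^2$ and the two-regime definition of $(\kappa^\sharp_p,c^\sharp_p)$ are exactly what encodes this trade-off.
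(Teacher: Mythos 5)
Your proposal follows essentially the same route as the paper: freeze the drift on each unit interval, condition on $\bar{\theta}^{\lambda}_n$ to isolate the deterministic increment $\bar{\theta}^{\lambda}_n-\lambda(t-n)H_\lambda(\bar{\theta}^{\lambda}_n,X_{n+1})$, use the dissipativity of $\E[F(\cdot,X_0)]$ together with the taming denominator to obtain a strict contraction outside a ball of explicit radius and absorb the inside of the ball into an additive constant, then iterate in $n$ and sum the geometric series; the $2p$-th moment case is handled by exactly the binomial expansion with Gaussian-moment prefactors and Young absorptions you describe (with $\lambda_{p,\max}$ calibrated to make the $F$-power terms nonnegative), and part (iii) by the pathwise rather than averaged absorption of the $|F|^2$ contribution. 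One small correction: the pair $\kappa^\sharp_p=\min\{\bar{\kappa}(p),\tilde{\kappa}(p)\}$, $c^\sharp_p=\max\{\bar{c}_0(p),\tilde{c}_0(p)\}$ does not encode a near-field/far-field trade-off --- that split is already internal to each branch via the radii $M_1(p)$ and $M_3(p)$ --- but rather unifies the constants produced by the general case (ii) and by the $x$-independent case (iii), so that the single display \eqref{pthmmt} is valid under either stepsize restriction.
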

\begin{proof} See Appendix \ref{proof2ndpthmmt}.
\end{proof}
%%%%%%%%%%%%%%%%%%%%%%%%
%\begin{remark} \label{relaxedsr}
%\end{remark}
%\begin{proof} See Appendix \ref{proofrelaxedsr}.
%\end{proof}
%%%%%%%%%%%%%%%%%%%%%%%%
A drift condition is obtained for the Lyapunov function $V_p$, which is one of the key assumptions in \cite[Theorem 2.2]{eberle2019quantitative}. The statement is provided below.
\begin{lemma}%{\cite[Lemma 3.5]{nonconvex}}
\label{driftcon} Let Assumption \ref{AI}, \ref{AG}, \ref{AF}, and \ref{AC} hold. Then, for any $p\in [2, \infty)\cap {\N}$, $\theta \in \R^d$, one obtains
\[
\Delta V_p(\theta)/\beta - \langle \nabla V_p(\theta), h(\theta) \rangle \leq -c_{V,1}(p) V_p(\theta) +c_{V,2}(p),
\]
where $c_{V,1}(p) := a_hp/4$, $c_{V,2}(p) := (3/4)a_hp\mathrm{v}_p(M_V(p))$ with $M_V(p) := (1/3+4b_h/(3a_h)+4d/(3a_h\beta)+4(p-2)/(3a_h\beta))^{1/2}$.
\end{lemma}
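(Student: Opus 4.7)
The plan is a direct pointwise computation that leverages the dissipativity of $h$ established in Remark \ref{ADFh}. Setting $u := 1+|\theta|^2 \geq 1$, elementary differentiation yields
\begin{equation*}
\nabla V_p(\theta) = p u^{p/2-1}\theta,\qquad \Delta V_p(\theta) = pu^{p/2-2}\bigl[(d+p-2)u - (p-2)\bigr].
\end{equation*}
Since $p\geq 2$, the term $-(p-2)$ is non-positive, so one has the upper bound $\Delta V_p(\theta)/\beta \le p(d+p-2)u^{p/2-1}/\beta$. The dissipativity $\langle \theta, h(\theta)\rangle \geq a_h|\theta|^2 - b_h = a_h(u-1) - b_h$ from Remark \ref{ADFh} then yields
\begin{equation*}
-\langle\nabla V_p(\theta), h(\theta)\rangle = -pu^{p/2-1}\langle \theta, h(\theta)\rangle \leq pu^{p/2-1}(a_h + b_h - a_h u).
\end{equation*}
Adding these two estimates and abbreviating $K := (d+p-2)/\beta + a_h + b_h$ gives
\begin{equation*}
\Delta V_p(\theta)/\beta - \langle \nabla V_p(\theta), h(\theta)\rangle \leq pu^{p/2-1}(K - a_h u).
\end{equation*}

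The next step is the algebraic split $-a_h u = -(3a_h/4)u - (a_h/4)u$, which rewrites the right-hand side as
\begin{equation*}
pu^{p/2-1}\bigl(K - (3a_h/4)u\bigr) - (a_h p/4)u^{p/2} = pu^{p/2-1}\bigl(K - (3a_h/4)u\bigr) - c_{V,1}(p)V_p(\theta).
\end{equation*}
It therefore suffices to show that the function $\Phi(u) := pu^{p/2-1}(K - (3a_h/4)u)$ is bounded above by $c_{V,2}(p)$ uniformly on $[1,\infty)$.

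The key observation is that $M_V(p)$ is precisely the value for which $1+M_V(p)^2 = 4K/(3a_h)$, as a direct unfolding of the defining formula for $M_V(p)$ confirms. With this identity in hand, I would split on whether $u$ lies above or below the threshold $1+M_V(p)^2$. For $u \ge 1+M_V(p)^2$, the affine factor $K - (3a_h/4)u$ is non-positive and hence $\Phi(u) \le 0$. For $1 \le u \le 1+M_V(p)^2$, I would use that $p\ge 2$ makes $u^{p/2-1}$ nondecreasing and bound $K - (3a_h/4)u \le K$, yielding $\Phi(u) \leq pK(1+M_V(p)^2)^{p/2-1}$; substituting $K = (3a_h/4)(1+M_V(p)^2)$ collapses this to $(3a_h p/4)(1+M_V(p)^2)^{p/2} = (3/4)a_h p\, \mathrm{v}_p(M_V(p)) = c_{V,2}(p)$.

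The computation is essentially routine calculus; the only nontrivial element is recognising that the constant $M_V(p)$ stated in the lemma is chosen precisely to match this splitting threshold, so that the boundary $u = 1+M_V(p)^2$ corresponds exactly to the point where the quadratic-in-$u$ drift term extinguishes the positive contribution. Once this identification is made, both cases are handled by inspection and no further estimates are required.
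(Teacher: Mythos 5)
Your computation is correct: the identity $1+M_V(p)^2 = \tfrac{4}{3a_h}\bigl((d+p-2)/\beta + a_h + b_h\bigr)$ checks out, the two-case bound on $\Phi(u)$ is sound, and the constants land exactly on $c_{V,1}(p)$ and $c_{V,2}(p)$. The paper does not reproduce a proof but simply cites Lemma 3.5 of the reference [nonconvex]; your argument is a self-contained reconstruction of the standard dissipativity-plus-splitting computation behind that cited result, so it is essentially the same approach.
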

\begin{proof} See \cite[Lemma 3.5]{nonconvex}.
\end{proof}
%%%%%%%%%%%%%%%%%%%%%%%%
By using Lemma \ref{2ndpthmmt} and Lemma \ref{driftcon}, one can obtain the moment estimates for the process $(\bar{\zeta}^{\lambda, n}_t)_{t \geq nT}$ defined in Definition \ref{auxzeta}. The following lemma provides the second and the fourth moment bound of the aforementioned process.
\begin{lemma}\label{zetaprocme} Let Assumption \ref{AI}, \ref{AG}, \ref{AF}, and \ref{AC} hold. Then, one obtains the following:
\begin{enumerate}[leftmargin=*]
\item For any $0<\lambda\leq \lambda_{1,\max}$ with $\lambda_{1,\max}$ given in \eqref{stepsizemax}, $n \in \N_0$, and $t\geq nT$,
\[
\E[V_2(\bar{\zeta}^{\lambda, n}_t)] \leq e^{-\min\{a_F\kappa, a_h /2\}\lambda t}\E[V_2(\theta_0)]+c_0(1+1/(a_F\kappa))+3\mathrm{v}_2(M_V(2))+1,
\]
where $c_0, \kappa$ are given in \eqref{constc0} (see also Lemma \ref{2ndpthmmt}) and $M_V(2)$ is given in Lemma \ref{driftcon}. \item For any $0<\lambda\leq \lambda_{2,\max}$ with $\lambda_{2,\max}$ given in \eqref{stepsizemax}, $n \in \N_0$, and $t\geq nT$,
\[
\E[V_4(\bar{\zeta}^{\lambda, n}_t)] \leq 2e^{- \min\{a_F \kappa^\sharp_2/2, a_h\}\lambda t}\E[V_4(\theta_0)]+2c^\sharp_2(1+2/(a_F\kappa^\sharp_2 ))+3\mathrm{v}_4(M_V(4))+2,
\]
where $c^\sharp_2, \kappa^\sharp_2$ are given in Lemma \ref{2ndpthmmt} and $M_V(4)$ is given in Lemma \ref{driftcon}.
\end{enumerate}
\end{lemma}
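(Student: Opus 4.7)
The plan is to apply Itô's formula to $V_p(\bar{\zeta}^{\lambda, n}_t)$ for $p=2$ and $p=4$, use the drift condition from Lemma \ref{driftcon} to obtain a Grönwall-type differential inequality, and then bound the resulting initial term $\E[V_p(\bar{\theta}^{\lambda}_{nT})]$ via the TUSLA moment estimates from Lemma \ref{2ndpthmmt}. Since the SDE defining $\zeta^{s,v,\lambda}$ is the Langevin SDE under the time change $t \mapsto \lambda t$, its infinitesimal generator is $\mathcal{L}^{\lambda} := \lambda(\beta^{-1}\Delta - \langle h, \nabla\cdot\rangle)$, i.e. the usual Langevin generator scaled by $\lambda$. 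Lemma \ref{driftcon} thus gives $\mathcal{L}^{\lambda} V_p(\theta) \le \lambda(-c_{V,1}(p) V_p(\theta) + c_{V,2}(p))$ pointwise, and a standard localisation argument (together with \eqref{contrassumption} which controls $|\nabla V_p|/V_p$, ensuring that the stochastic integral is a true martingale once moments of $\bar{\zeta}^{\lambda,n}$ are under control) converts this into
\[
\tfrac{d}{dt}\E[V_p(\bar{\zeta}^{\lambda, n}_t)] \le -\lambda c_{V,1}(p)\,\E[V_p(\bar{\zeta}^{\lambda, n}_t)] + \lambda c_{V,2}(p), \qquad t \ge nT.
\]

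Integrating via Grönwall yields
\[
\E[V_p(\bar{\zeta}^{\lambda, n}_t)] \le e^{-\lambda c_{V,1}(p)(t-nT)}\,\E[V_p(\bar{\theta}^{\lambda}_{nT})] + \tfrac{c_{V,2}(p)}{c_{V,1}(p)}\bigl(1 - e^{-\lambda c_{V,1}(p)(t-nT)}\bigr).
\]
Using the explicit values $c_{V,1}(2) = a_h/2$ and $c_{V,2}(2)=(3/2)a_h\mathrm{v}_2(M_V(2))$ (resp.\ $c_{V,1}(4)=a_h$, $c_{V,2}(4)=3a_h\mathrm{v}_4(M_V(4))$), the inhomogeneous term simplifies to $3\mathrm{v}_p(M_V(p))$ for $p\in\{2,4\}$, which matches exactly the constant terms appearing in the target bounds.

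It remains to bound $\E[V_p(\bar{\theta}^{\lambda}_{nT})]$. For $p=2$ use $V_2(\theta)=1+|\theta|^2$ and apply Lemma \ref{2ndpthmmt}\ref{2ndpthmmti} with $t=n=nT$ to get
\[
\E[V_2(\bar{\theta}^{\lambda}_{nT})] \le (1-a_F\kappa\lambda)^{nT}\E[|\theta_0|^2] + c_0(1+1/(a_F\kappa)) + 1.
\]
For $p=4$ use the elementary bound $V_4(\theta) = (1+|\theta|^2)^2 \le 2(1+|\theta|^4)$ and apply Lemma \ref{2ndpthmmt}\ref{2ndpthmmtii} with $p=2$ to obtain
\[
\E[V_4(\bar{\theta}^{\lambda}_{nT})] \le 2(1-\lambda a_F\kappa^\sharp_2/2)^{nT}\E[|\theta_0|^4] + 2c^\sharp_2(1+2/(a_F\kappa^\sharp_2)) + 2.
\]
In both cases, combining with the Grönwall bound and using $(1-x)^{nT}\le e^{-xnT}$ along with the elementary inequality $e^{-\alpha\lambda(t-nT)}e^{-\beta\lambda nT} \le e^{-\min(\alpha,\beta)\lambda t}$ merges the two exponential factors into a single $e^{-\min\{a_F\kappa,\,a_h/2\}\lambda t}$ (for $p=2$) or $e^{-\min\{a_F\kappa^\sharp_2/2,\,a_h\}\lambda t}$ (for $p=4$). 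Replacing $\E[|\theta_0|^{2p}]$ by the larger $\E[V_p(\theta_0)]$ yields exactly the stated inequalities.

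I do not anticipate any real obstacle: the only mildly delicate point is justifying the Itô calculation (strict local martingale vs.\ true martingale) uniformly in the stopping times used for localisation, but this is routine given \eqref{contrassumption} combined with the a priori moment estimates on $\bar{\zeta}^{\lambda,n}$ that can be obtained by first running Itô up to an exit time, taking expectations, and sending the exit radius to infinity via monotone convergence, exactly as in the proof of \cite[Lemma 3.5]{nonconvex}.
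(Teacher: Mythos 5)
Your proposal is correct and follows essentially the same route as the paper: Itô's formula for $V_p(\bar{\zeta}^{\lambda,n}_t)$ with the stochastic integral handled by localisation, the drift condition of Lemma \ref{driftcon} combined with Grönwall to get $\E[V_p(\bar{\zeta}^{\lambda,n}_t)]\le e^{-\lambda c_{V,1}(p)(t-nT)}\E[V_p(\bar{\theta}^{\lambda}_{nT})]+c_{V,2}(p)/c_{V,1}(p)$, and the TUSLA moment bounds of Lemma \ref{2ndpthmmt} (with $V_4(\theta)\le 2(1+|\theta|^4)$ for the fourth-moment case) to control the initial term before merging the two exponential rates. All constants, including $c_{V,2}(p)/c_{V,1}(p)=3\mathrm{v}_p(M_V(p))$, match the paper's computation.
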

\begin{proof} See Appendix \ref{proofzetaprocme}.%\cite[Lemma 3.5]{sgldloc}.
\end{proof}
%%%%%%%%%%%%%%%%%%%%%%%%
\subsection{Proof of the main theorems}\label{ca}
We first present the key steps and results in proving Theorem \ref{mainw1}. To obtain a non-asymptotic estimate in Wasserstein-1 distance between $\mathcal{L}(\theta^{\lambda}_n)$ and $\pi_\beta$, we consider the following splitting using the continuous-time interpolation of the TUSLA algorithm \eqref{tusla} given in \eqref{tuslaproc}: for any $n \in \N_0$, and $t \in (nT, (n+1)T]$,
\begin{equation}\label{w1convermain}
W_1(\mathcal{L}(\bar{\theta}^{\lambda}_t),\pi_{\beta})\leq W_1(\mathcal{L}(\bar{\theta}^{\lambda}_t),\mathcal{L}(Z_t^{\lambda}))+W_1(\mathcal{L}(Z_t^{\lambda}),\pi_{\beta}).
\end{equation}
Moreover, the first term on the RHS of the above inequality can be further split as follows by using the auxiliary process $\bar{\zeta}^{\lambda, n}_t$ given in Definition \ref{auxzeta}: for any $n \in \N_0$, and $t \in (nT, (n+1)T]$,
\begin{equation}\label{w1conver}
W_1(\mathcal{L}(\bar{\theta}^{\lambda}_t),\mathcal{L}(Z_t^{\lambda})) \leq W_1(\mathcal{L}(\bar{\theta}^{\lambda}_t),\mathcal{L}(\bar{\zeta}^{\lambda, n}_t))+W_1(\mathcal{L}(\bar{\zeta}^{\lambda, n}_t),\mathcal{L}(Z_t^{\lambda})).
\end{equation}
In the following lemma, we provide an upper estimate for the first term on the RHS of \eqref{w1conver}.
\begin{lemma}\label{w1converp1} Let Assumption \ref{AI}, \ref{AG}, \ref{AF}, and \ref{AC} hold.  Then, for any $0<\lambda\leq \lambda_{\max}$ with $\lambda_{\max}$ given in \eqref{stepsizemax}, $n \in \N_0$, and $t \in (nT, (n+1)T]$, one obtains
\[
W_2(\mathcal{L}(\bar{\theta}^{\lambda}_t),\mathcal{L}(\bar{\zeta}^{\lambda, n}_t)) \leq \sqrt{\lambda}  \left(e^{-a_F \kappa^\sharp_2 n/4}\bar{C}_0\E\left[V_{4(2r+1)}(\theta_0)\right]+\bar{C}_1\right)^{1/2},
\]
where $\kappa^\sharp_2, \bar{C}_0, \bar{C}_1$ are given explicitly in \eqref{w1converp1const}.
\end{lemma}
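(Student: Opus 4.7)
The plan is to use a synchronous coupling. By definition $\bar{\theta}^{\lambda}_{nT}=\bar{\zeta}^{\lambda,n}_{nT}$, and both processes in \eqref{tuslaproc} and Definition \ref{auxzeta} are driven by the same Brownian motion $B^{\lambda}$. Hence the Brownian terms cancel in the difference, so $t\mapsto \bar{\theta}^{\lambda}_t-\bar{\zeta}^{\lambda,n}_t$ is absolutely continuous with no martingale part, and the classical chain rule (not Itô) gives
\[
\frac{d}{dt}\bigl|\bar{\theta}^{\lambda}_t-\bar{\zeta}^{\lambda,n}_t\bigr|^2
=-2\lambda\bigl\langle \bar{\theta}^{\lambda}_t-\bar{\zeta}^{\lambda,n}_t,\; H_\lambda(\bar{\theta}^{\lambda}_{\lfrf{t}},X_{\lcrc{t}})-h(\bar{\zeta}^{\lambda,n}_t)\bigr\rangle.
\]
Since $W_2^2(\mathcal{L}(\bar{\theta}^{\lambda}_t),\mathcal{L}(\bar{\zeta}^{\lambda,n}_t))\leq \E|\bar{\theta}^{\lambda}_t-\bar{\zeta}^{\lambda,n}_t|^2$ under any coupling, it suffices to control the expectation of the integrated right-hand side from $nT$ to $t$.

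The natural decomposition of the drift difference is into four pieces: the taming error $H_\lambda-H$ evaluated at $(\bar{\theta}^{\lambda}_{\lfrf{t}},X_{\lcrc{t}})$, the stochastic noise $H(\bar{\theta}^{\lambda}_{\lfrf{t}},X_{\lcrc{t}})-h(\bar{\theta}^{\lambda}_{\lfrf{t}})$, the time-discretization error $h(\bar{\theta}^{\lambda}_{\lfrf{t}})-h(\bar{\theta}^{\lambda}_t)$, and the coupling term $h(\bar{\theta}^{\lambda}_t)-h(\bar{\zeta}^{\lambda,n}_t)$. The coupling term is handled by the one-sided Lipschitz property of Remark \ref{ACh}, which yields $-2\lambda\langle\bar{\theta}^{\lambda}_t-\bar{\zeta}^{\lambda,n}_t,\,h(\bar{\theta}^{\lambda}_t)-h(\bar{\zeta}^{\lambda,n}_t)\rangle \le 2\lambda L_R|\bar{\theta}^{\lambda}_t-\bar{\zeta}^{\lambda,n}_t|^2$ and thus feeds a Gronwall factor $e^{2L_R\lambda(t-nT)}\le e^{2L_R}$ over one cycle (since $\lambda T\le 1$). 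The taming term is bounded pointwise by $\sqrt{\lambda}\,|\bar{\theta}^{\lambda}_{\lfrf{t}}|^{2r}|H(\bar{\theta}^{\lambda}_{\lfrf{t}},X_{\lcrc{t}})|$ via the definition \eqref{tamedH} and the growth bound of Remark \ref{growthHh}; Young's inequality splits this into a multiple of $|\bar{\theta}^{\lambda}_t-\bar{\zeta}^{\lambda,n}_t|^2$ (absorbed into Gronwall) and a forcing term of order $\lambda^2(1+|X_0|)^{2\rho}(1+|\bar{\theta}^{\lambda}_{\lfrf{t}}|^{2(2r+1)+2r})$. The discretization term is treated similarly, using the local Lipschitz property of $h$ together with the single-step increment estimate $\bar{\theta}^{\lambda}_t-\bar{\theta}^{\lambda}_{\lfrf{t}}=-\lambda(t-\lfrf{t})H_\lambda(\bar{\theta}^{\lambda}_{\lfrf{t}},X_{\lcrc{t}})+\sqrt{2\lambda\beta^{-1}}(B^{\lambda}_t-B^{\lambda}_{\lfrf{t}})$.

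The delicate term is the stochastic noise. For $t\in(k,k+1]$ with $k\ge nT$, $X_{k+1}$ is independent of $\mathcal{F}^{\lambda}_k\vee \sigma(X_1,\dots,X_k)$, and Assumption \ref{AI} gives $\E[H(\bar{\theta}^{\lambda}_k,X_{k+1})\mid \mathcal{F}^{\lambda}_k\vee \sigma(X_1,\dots,X_k)]=h(\bar{\theta}^{\lambda}_k)$. To exploit this we split
\[
\bar{\theta}^{\lambda}_t-\bar{\zeta}^{\lambda,n}_t=\bigl(\bar{\theta}^{\lambda}_k-\bar{\zeta}^{\lambda,n}_k\bigr)+\bigl[(\bar{\theta}^{\lambda}_t-\bar{\theta}^{\lambda}_k)-(\bar{\zeta}^{\lambda,n}_t-\bar{\zeta}^{\lambda,n}_k)\bigr].
\]
The first bracket is independent of $X_{k+1}$, so when paired with $H-h$ its expectation vanishes by the tower property. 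In the remaining cross term, the Brownian increments again cancel, leaving a quantity of size $\lambda$ times a bounded combination of $H_\lambda(\bar{\theta}^{\lambda}_k,X_{k+1})$ and $\int_k^t h(\bar{\zeta}^{\lambda,n}_s)\,ds$; Cauchy--Schwarz then produces an $O(\lambda^2)$ contribution per unit time in terms of the polynomial-in-$|\bar{\theta}^{\lambda}_k|$ growth of $H$. This careful measurability-preserving splitting is the main technical obstacle, since the discontinuity of $G$ rules out direct pathwise Lipschitz estimates on $H$.

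Collecting all four contributions, integrating over $[nT,t]\subset[nT,(n+1)T]$, and applying Gronwall's inequality gives
\[
\E\bigl|\bar{\theta}^{\lambda}_t-\bar{\zeta}^{\lambda,n}_t\bigr|^2\le e^{2L_R}\cdot \lambda \cdot \Psi\!\left(\sup_{s\in[nT,(n+1)T]}\E\bigl[V_{4(2r+1)}(\bar{\theta}^{\lambda}_s)\bigr]\right)
\]
for some polynomial $\Psi$; the exponent $4(2r+1)$ appears because the Young inequalities on the taming and discretization terms yield polynomial factors of total degree up to $2(2r+1)$ in $\bar{\theta}^{\lambda}_s$, whose $L^2$-bound demands moments of order $4(2r+1)$. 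Finally, applying Lemma \ref{2ndpthmmt}\ref{2ndpthmmtii} with $p=2(2r+1)$ (valid since $\lambda\le\lambda_{\max}$) replaces $\E[V_{4(2r+1)}(\bar{\theta}^{\lambda}_s)]$ by a term of the form $e^{-a_F\kappa^\sharp_2\lambda \lfrf{s}/2}\E[V_{4(2r+1)}(\theta_0)]+\mathrm{const}$. For $s\ge nT$, $\lambda\lfrf{s}\ge \lambda(nT)\gtrsim n/2$, producing the decay $e^{-a_F\kappa^\sharp_2 n/4}$ inside the square-root and yielding the advertised bound with constants $\bar{C}_0,\bar{C}_1$ depending only on the structural parameters and $L_R,K_H,\E[(1+|X_0|)^{2(2r+1)\rho}]$.
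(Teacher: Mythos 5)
Your proposal follows essentially the same route as the paper's proof: a synchronous coupling so the noise cancels, the identical four-way decomposition of the drift difference (taming error, stochastic noise, one-step discretization error, and the coupling term handled via the one-sided Lipschitz bound of Remark \ref{ACh}), the tower-property/independence argument to kill the leading part of the noise term, and Gr\"onwall combined with the $4(2r+1)$-order moment bounds of Lemma \ref{2ndpthmmt}. The only (immaterial) deviation is that you centre the noise term at $\bar{\theta}^{\lambda}_k-\bar{\zeta}^{\lambda,n}_k$ rather than at $\bar{\zeta}^{\lambda,n}_s-\bar{\theta}^{\lambda}_{\lfrf{s}}$ as in the paper, so your remainder involves $\int_k^t h(\bar{\zeta}^{\lambda,n}_s)\,\rmd s$ and would additionally require high-order moment bounds for $\bar{\zeta}^{\lambda,n}$ (a routine extension of Lemma \ref{zetaprocme}), whereas the paper's choice keeps the remainder in terms of $\bar{\theta}^{\lambda}$ alone.
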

\begin{proof} See Appendix \ref{proofw1converp1}.
\end{proof}
%%%%%%%%%%%%%%%%%%%%%%%%
To obtain an upper bound for the second term on the RHS of \eqref{w1conver}, we consider the following functional: for any $p\geq 1$, $ \mu,\nu \in \mathcal{P}_{V_p}(\R^d)$, let
\begin{equation}\label{w1p}
w_{1,p}(\mu,\nu):=\inf_{\zeta\in\mathcal{C}(\mu,\nu)}\int_{\R^d}\int_{\R^d} [1\wedge |\theta-\theta'|](1+V_p(\theta)+V_p(\theta'))\zeta(\rmd\theta \rmd\theta').
\end{equation}
The case $p=2$, i.e. $w_{1,2}$, is used throughout the paper. For any $ \mu,\nu \in \mathcal{P}_{V_2}(\R^d)$, the following inequalities hold:
\[
W_1(\mu,\nu)\leq w_{1,2}(\mu,\nu), \quad W_2(\mu,\nu)\leq \sqrt{2w_{1,2}(\mu,\nu)}.
\]
One may refer to Lemma \ref{wassw1p} for the proof of these inequalities.

By using \cite[Theorem 2.2]{eberle2019quantitative}, one can derive a contraction property in $w_{1,2}$ with explicit constants as presented below.

\begin{proposition}\label{contr} Let Assumption \ref{AI}, \ref{AG}, \ref{AF}, and \ref{AC} hold. Let $Z_t'$, $ t\geq 0$, be the solution of \eqref{sde} with initial condition $Z'_0 := \theta'_0$ which is independent of $\mathcal{F}_\infty$ and satisfies $\theta_0' \in L^2$. Then,
\begin{equation}\label{w12contraction}
w_{1,2}(\mathcal{L}(Z_t),\mathcal{L}(Z'_t)) \leq \hat{c} e^{-\dot{c} t} w_{1,2}(\mathcal{L}(\theta_0),\mathcal{L}(\theta_0')),
\end{equation}
where the explicit expressions for $\dot{c}, \hat{c}$ are given below.

The contraction constant $\dot{c}$ is given by:
%\[
%\dot{c} = e^{-C^*(a_h,b_h)L_R(1+d/\beta)-11},
%\]
%where $C^*(a_h,b_h) = 5(1+2/a_h)^2(1+a_h+b_h)$
\begin{equation}\label{expcontra1}
\dot{c}:=\min\{\bar{\phi}, c_{V,1}(2), 4c_{V,2}(2) \epsilon c_{V,1}(2)\}/2,
\end{equation}
where $c_{V,1}(2) := a_h/2$ and $c_{V,2}(2) := (3/2)a_h\mathrm{v}_2(M_V(2))$ with $M_V(2)$ given in Lemma \ref{driftcon}, the constant $\bar{\phi} $ is given by
\begin{equation}\label{expcontra2}
\bar{\phi} := \left(\sqrt{8\beta\pi/ L_R} \dot{c}_0  \exp \left( \left(\dot{c}_0 \sqrt{\beta L_R/8} + \sqrt{8/(\beta L_R)} \right)^2 \right) \right)^{-1},
\end{equation}
and $\epsilon >0$ is chosen such that the following inequality is satisfied
\begin{equation}\label{expcontra3}
\epsilon  \leq 1 \wedge    \left(4 c_{V,2}(2) \sqrt{2 \beta\pi/  L_R }\int_0^{\dot{c}_1}\exp  \left( \left(s \sqrt{\beta L_R/8}+\sqrt{8/(\beta L_R)}\right)^2 \right) \,\rmd s \right)^{-1}
\end{equation}
with $\dot{c}_0 := 2(4c_{V,2}(2)(1+c_{V,1}(2))/c_{V,1}(2)-1)^{1/2}$ and $\dot{c}_1:=2(2 c_{V,2}(2)/c_{V,1}(2)-1)^{1/2}$.

Moreover, the constant $\hat{c}$ is given by:
\begin{equation}\label{expcontrabd}
%\hat{c} =2(1+\hat{c}_1)\exp(L_R \hat{c}_3^2/4+2\hat{c}_3)/(\min\{1, \hat{c}_2\} \epsilon),
\hat{c}: =2(1+ \dot{c}_0)\exp(\beta L_R \dot{c}_0^2/8+2\dot{c}_0)/\epsilon.
\end{equation}
%where $\hat{c}_2 \leq \hat{c}_1 \leq \hat{c}_3$ with $\hat{c}_1 \leq 2 \sup\{|\theta|: V_2(\theta) \leq 4c_{V,2}(2)(1+c_{V,1}(2))/c_{V,1}(2) \}$, $\hat{c}_2 = (4c_{V,2}(2)(1+c_{V,1}(2))/c_{V,1}(2)-2)^{1/2}$, and $ \hat{c}_3 = 2\dot{c}_0$.
\end{proposition}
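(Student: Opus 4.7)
The plan is to specialize the general contraction theorem \cite[Theorem 2.2]{eberle2019quantitative} to the overdamped Langevin SDE \eqref{sde}. That result produces an explicit exponential contraction in a Kantorovich-type semimetric of the form $\rho_\epsilon(\mu,\nu):=\inf_{\zeta\in\mathcal{C}(\mu,\nu)}\int f(|\theta-\theta'|)(1+\epsilon V(\theta)+\epsilon V(\theta'))\zeta(\rmd\theta\rmd\theta')$, where $f$ is a concave non-decreasing function tailored to the problem parameters and $V$ is a Lyapunov function. To apply it, one needs three ingredients: a one-sided Lipschitz bound on the drift $h$, a drift inequality for $V$, and mild regularity of $V$.

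First, I would check the three hypotheses in our setting. The one-sided Lipschitz estimate $\langle \theta-\theta',h(\theta)-h(\theta')\rangle\geq -L_R|\theta-\theta'|^2$ is supplied by Remark \ref{ACh}, and the constant $L_R$ is what enters the construction of $f$. The drift inequality $\Delta V_2/\beta-\langle\nabla V_2,h\rangle\leq -c_{V,1}(2)V_2+c_{V,2}(2)$ is Lemma \ref{driftcon} specialized to $p=2$. Finally, the regularity property \eqref{contrassumption} of $V_2$ guarantees that the reflection/synchronous coupling used in the Eberle--Guillin--Zimmer construction is well-defined and that $\nabla V_2$ is controlled by $V_2$.

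Second, I would follow the construction explicitly with the parameters appearing in \eqref{expcontra1}--\eqref{expcontrabd}. Take $\dot{c}_0:=2(4c_{V,2}(2)(1+c_{V,1}(2))/c_{V,1}(2)-1)^{1/2}$ as the radius beyond which the Lyapunov function dominates in the coupling, and $\dot{c}_1:=2(2c_{V,2}(2)/c_{V,1}(2)-1)^{1/2}$ as the radius inside which the concave distortion $f$ is strictly concave. The contraction rate produced by the coupling on the concave part is $\bar{\phi}$ in \eqref{expcontra2}, while the Lyapunov component contributes the rate $c_{V,1}(2)$; choosing $\epsilon$ small enough so that \eqref{expcontra3} holds makes $\rho_\epsilon$ and $w_{1,2}$ genuinely comparable and balances the two contributions, giving the overall rate $\dot{c}$ in \eqref{expcontra1}. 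Using the explicit two-sided bounds $c_1(r\wedge 1)\leq f(r)\leq c_2(r\wedge 1)$, which follow from monotonicity and the Gaussian-type quadrature used to define $f$, one converts the contraction for $\rho_\epsilon$ into the contraction for $w_{1,2}$ with multiplicative constant $\hat{c}$ as in \eqref{expcontrabd}.

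The main obstacle is the tedious bookkeeping of Eberle's constants: $f$ is built from the integral $\int_0^r\exp\bigl((s\sqrt{\beta L_R/8}+\sqrt{8/(\beta L_R)})^2\bigr)\rmd s$, and one must propagate $L_R$, $\beta$, $c_{V,1}(2)$, $c_{V,2}(2)$, $\dot{c}_0$, and $\dot{c}_1$ through the definition of $\bar{\phi}$, the admissible range of $\epsilon$, and the two-sided comparison between $\rho_\epsilon$ and $w_{1,2}$, in order to recover precisely the closed-form expressions \eqref{expcontra1}--\eqref{expcontrabd}. This is direct but computationally heavy; no fundamentally new estimate beyond Lemma \ref{driftcon} and Remark \ref{ACh} is required.
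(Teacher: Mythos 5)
Your proposal matches the paper's own proof: the paper verifies exactly the same three hypotheses of \cite[Theorem 2.2]{eberle2019quantitative} (one-sided Lipschitz with $\kappa=L_R$ from Remark \ref{ACh}, the Lyapunov drift condition with $V=V_2$ from Lemma \ref{driftcon}, and the regularity \eqref{contrassumption}), then traces the explicit constants through Eberle--Guillin--Zimmer's construction and converts the contraction in the weighted semimetric to one in $w_{1,2}$ via two-sided bounds on the concave distortion, exactly as you outline. No gap; the remaining work is the constant bookkeeping you already identified.
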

\begin{proof} See Appendix \ref{proofcontr}.
\end{proof}
%%%%%%%%%%%%%%%%%%%%%%%%
The following result provides an upper estimate for the second term on the RHS of \eqref{w1conver} based on the contraction property in $w_{1,2}$.
\begin{lemma}\label{w1converp2} Let Assumption \ref{AI}, \ref{AG}, \ref{AF}, and \ref{AC} hold.  Then, for any $0<\lambda\leq \lambda_{\max}$ with $\lambda_{\max}$ given in \eqref{stepsizemax}, $n \in \N_0$, and $t \in (nT, (n+1)T]$, one obtains
\[
W_1(\mathcal{L}(\bar{\zeta}_t^{\lambda,n}),\mathcal{L}(Z_t^\lambda)) \leq \sqrt{\lambda}\left(e^{-\min\{\dot{c}, a_F \kappa^\sharp_2/2, a_h\}n/4}\bar{C}_2\E\left[V_{4(2r+1)}(\theta_0)\right]+\bar{C}_3\right),
\]
where $\kappa^\sharp_2, \bar{C}_2, \bar{C}_3$ are given explicitly in \eqref{w1converp2const}.
\end{lemma}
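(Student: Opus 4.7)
The plan is to exploit the decomposition suggested by \eqref{w1conver}, combined with the contraction result (Proposition \ref{contr}) and the one-step bound (Lemma \ref{w1converp1}). The key structural observation is that $\bar{\zeta}^{\lambda,0}_t \stackrel{d}{=} Z^{\lambda}_t$ for all $t \geq 0$, since both processes satisfy the time-changed Langevin SDE \eqref{tcsde} starting from $\theta_0$ at time $0$. This suggests a telescoping over the auxiliary processes introduced at times $0, T, 2T, \ldots, nT$:
\[
W_1(\mathcal{L}(\bar{\zeta}^{\lambda,n}_t), \mathcal{L}(Z^{\lambda}_t)) \leq \sum_{k=0}^{n-1} W_1(\mathcal{L}(\bar{\zeta}^{\lambda,k+1}_t), \mathcal{L}(\bar{\zeta}^{\lambda,k}_t)).
\]

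For each summand, the plan is to (i) apply $W_1 \leq w_{1,2}$ from the inequalities following \eqref{w1p}; (ii) invoke the contraction of Proposition \ref{contr}, applied to the time-changed Langevin SDE on the time interval $[(k+1)T, t]$ (the contraction rate becomes $\dot{c}\lambda$ under the time change, and time-homogeneity allows us to shift the reference time to $(k+1)T$), giving
\[
w_{1,2}(\mathcal{L}(\bar{\zeta}^{\lambda,k+1}_t), \mathcal{L}(\bar{\zeta}^{\lambda,k}_t)) \leq \hat{c}\, e^{-\dot{c}\lambda(t-(k+1)T)}\, w_{1,2}(\mathcal{L}(\bar{\theta}^{\lambda}_{(k+1)T}), \mathcal{L}(\bar{\zeta}^{\lambda,k}_{(k+1)T}));
\]
(iii) bound $w_{1,2}$ by $W_2$ times a fourth-moment factor via Cauchy--Schwarz applied to $[1 \wedge |\theta-\theta'|](1+V_2(\theta)+V_2(\theta'))$, yielding $w_{1,2}(\mu,\nu) \leq \sqrt{3}\, W_2(\mu,\nu)\,(1+\mu(V_4)+\nu(V_4))^{1/2}$; and (iv) control the remaining $W_2$ via Lemma \ref{w1converp1} applied with $t = (k+1)T$ and index $k$ in place of $n$, which produces the crucial $\sqrt{\lambda}$ factor together with the exponential factor $e^{-a_F\kappa^\sharp_2 k/4}$ multiplying $\E[V_{4(2r+1)}(\theta_0)]$.

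The $V_4$ moments appearing in step (iii) are then uniformly controlled by Lemma \ref{2ndpthmmt}\eqref{2ndpthmmtii} (for $\bar{\theta}^{\lambda}_{(k+1)T}$) and Lemma \ref{zetaprocme} (for $\bar{\zeta}^{\lambda,k}_{(k+1)T}$), both of which involve the rate $\min\{a_F\kappa^\sharp_2/2, a_h\}$; it is through Lemma \ref{zetaprocme} that $a_h$ enters the final rate. Using $(A+B)^{1/2} \leq \sqrt{A}+\sqrt{B}$ to split the bound from step (iv), the telescoping sum becomes of the form
\[
\sqrt{\lambda}\sum_{k=0}^{n-1} e^{-\dot{c}\lambda(t-(k+1)T)} \Bigl(e^{-a_F\kappa^\sharp_2 k/8}\,C'\,\E[V_{4(2r+1)}(\theta_0)]^{1/2} + C''\Bigr).
\]

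Using $\lambda T \in [1/2, 1]$ for $\lambda \leq \lambda_{\max}$, the contraction factor satisfies $e^{-\dot{c}\lambda(t-(k+1)T)} \leq e^{-\dot{c}(n-k-1)/2}$. The constant part of the sum is then dominated by a geometric series $\sum_{k} e^{-\dot{c}(n-k-1)/2}$, uniformly bounded in $n$, producing the term $\bar{C}_3$. For the decaying part, the elementary inequality $\alpha(n-k-1) + \beta k \geq \min(\alpha,\beta)(n-1)$ together with absorbing a polynomial-in-$n$ factor into a slightly smaller exponential yields a bound of the form $e^{-\min\{\dot{c},\, a_F\kappa^\sharp_2/2,\, a_h\}n/4}$ (the $a_h$ component being inherited through the $V_4$-moment of $\bar{\zeta}^{\lambda,k}$), which delivers $\bar{C}_2 \E[V_{4(2r+1)}(\theta_0)]$. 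The main technical obstacle is precisely this combinatorial step: balancing the two exponentials so the product decays at the minimum rate while keeping all constants independent of $n$; once the bookkeeping of the moment estimates and the rate $\dot{c}\lambda$ arising from the time change is handled correctly, the explicit form of $\bar{C}_2$ and $\bar{C}_3$ in \eqref{w1converp2const} follows by collecting the factors from Proposition \ref{contr}, Lemma \ref{w1converp1}, Lemma \ref{2ndpthmmt}, and Lemma \ref{zetaprocme}.
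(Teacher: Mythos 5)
Your proposal matches the paper's proof essentially step for step: the same telescoping over the auxiliary processes (using that $\bar{\zeta}^{\lambda,0}_t$ and $Z^{\lambda}_t$ coincide in law), the bound $W_1\leq w_{1,2}$, the contraction of Proposition \ref{contr} under the time change, the Cauchy--Schwarz comparison of $w_{1,2}$ with $W_2$ times fourth-moment factors, Lemmas \ref{w1converp1}, \ref{2ndpthmmt} and \ref{zetaprocme} for the moments, and the same geometric-sum/minimum-rate bookkeeping at the end. The only cosmetic difference is that the paper first applies Young's inequality to split $W_2\cdot(\text{moment factor})$ into $\lambda^{-1/2}W_2^2+\sqrt{\lambda}\,(\text{moments})$ and only then invokes Lemma \ref{w1converp1}, which keeps the decay rate $e^{-a_F\kappa^\sharp_2 (k-1)/4}$ un-halved and yields exactly the constants of \eqref{w1converp2const}, whereas your square-root splitting gives a bound of the same form with slightly different explicit constants.
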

\begin{proof} See Appendix \ref{proofw1converp2}.
\end{proof}
%%%%%%%%%%%%%%%%%%%%%%%%
Finally, observe that by Proposition \ref{contr} and Lemma \ref{wassw1p},
\begin{equation}\label{w1converp3}
W_1(\mathcal{L}(Z_t^{\lambda}),\pi_{\beta}) \leq \hat{c} e^{-\dot{c}\lambda t} w_{1,2}(\theta_0, \pi_{\beta}),
\end{equation}
where the above inequality holds due to the fact that $\pi_{\beta}$ is the invariant measure of SDE \eqref{tcsde}, and where $\dot{c}$ and $\hat{c}$ are defined in \eqref{expcontra1}-\eqref{expcontra3} and \eqref{expcontrabd}, respectively. Combining \eqref{w1convermain}, \eqref{w1conver} together with the Lemma \ref{w1converp1}, \ref{w1converp2} and \eqref{w1converp3} yields the desired upper bound in Theorem \ref{mainw1}.
\begin{proof}[\textbf{Proof of Theorem \ref{mainw1}}] Recall the definition of $w_{1,p}(\mu, \nu)$ with $p\geq 1$, $ \mu,\nu \in \mathcal{P}_{V_p}(\R^d)$ given in \eqref{w1p}. By applying Proposition \ref{contr}, and by using the results in Lemma \ref{w1converp1} and Lemma \ref{w1converp2}, for any $t \in (nT, (n+1)T]$, $n \in \N_0$, $0<\lambda\leq \lambda_{\max}$ with $\lambda_{\max}$ given in \eqref{stepsizemax}, one obtains
\begin{align*}
W_1(\mathcal{L}(\bar{\theta}^{\lambda}_t),\pi_{\beta})
&\leq W_1(\mathcal{L}(\bar{\theta}^{\lambda}_t),\mathcal{L}(Z_t^{\lambda}))+W_1(\mathcal{L}(Z_t^{\lambda}),\pi_{\beta})\\
& \leq W_2(\mathcal{L}(\bar{\theta}^{\lambda}_t),\mathcal{L}(\bar{\zeta}^{\lambda, n}_t))+W_1(\mathcal{L}(\bar{\zeta}^{\lambda, n}_t),\mathcal{L}(Z_t^{\lambda}))+ \hat{c} e^{-\dot{c}\lambda t} w_{1,2}(\theta_0, \pi_{\beta})\\
&\leq \sqrt{\lambda}  \left(e^{-a_F \kappa^\sharp_2 n/4}\bar{C}_0\E\left[V_{4(2r+1)}(\theta_0)\right]+\bar{C}_1\right)^{1/2}\\
& \quad +\sqrt{\lambda}\left(e^{-\min\{\dot{c}, a_F \kappa^\sharp_2/2, a_h\}n/4}\bar{C}_2\E\left[V_{4(2r+1)}(\theta_0)\right]+\bar{C}_3\right)\\
&\quad +\hat{c} e^{-\dot{c}\lambda t}\left[1+ \mathbb{E}[V_2(\theta_0)]+\int_{\R^d}V_2(\theta)\pi_{\beta}(d\theta)\right]\\
& \leq C_1 e^{-C_0 (n+1)}(\E[|\theta_0|^{4(2r+1)}]+1) +C_2\sqrt{\lambda},
\end{align*}
where
\begin{align}\label{mainw1const}
\begin{split}
C_0
& := \min\{\dot{c}, a_F \kappa^\sharp_2/2, a_h\}/4,\\
C_1
& := 2^{2(2r+1)-1}e^{\min\{\dot{c}, a_F \kappa^\sharp_2/2, a_h\}/4}\left[\bar{C}_0^{1/2}+\bar{C}_2+\hat{c}  \left(2+\int_{\R^d}V_2(\theta)\pi_{\beta}(d\theta)\right)\right], \\
C_2
&:=  \bar{C}_1^{1/2}+\bar{C}_3
\end{split}
\end{align}
with  $\dot{c}, \hat{c}$ given in Proposition \ref{contr}, $\kappa^\sharp_2$ given in Lemma \ref{2ndpthmmt}, $\bar{C}_0, \bar{C}_1$ given in \eqref{w1converp1const} (Lemma \ref{w1converp1}), and $\bar{C}_2, \bar{C}_3$ given in \eqref{w1converp2const} (Lemma \ref{w1converp2}). The above result implies that for any $n \in \N_0$,
\[
W_1(\mathcal{L}(\bar{\theta}^{\lambda}_{nT}),\pi_{\beta}) \leq C_1 e^{-C_0 n}(\E[|\theta_0|^{4(2r+1)}]+1) +C_2\sqrt{\lambda}.
\]
However, we aim to obtain a non-asymptotic upper bound for the TUSLA algorithm $(\theta^{\lambda}_n)_{n \in \N_0}$. To achieve this, we set $nT$ to $n$ on the LHS of the above inequality, while $n$ on the RHS of the above inequality is set to $n/T$ with $T\equiv T(\lambda) : = \lfrf{1/\lambda}$. Finally,  for any $n \in \N_0$, $0<\lambda\leq \lambda_{\max}$ with $\lambda_{\max}$ given in \eqref{stepsizemax}, by noticing that $n \lambda \leq n/T$, one obtains
\[
W_1(\mathcal{L}(\theta^{\lambda}_n),\pi_{\beta}) \leq C_1 e^{-C_0 \lambda n}(\E[|\theta_0|^{4(2r+1)}]+1) +C_2\sqrt{\lambda},
\]
which completes the proof.
\end{proof}

Next, one can apply similar arguments to obtain the non-asymptotic error bound in Corollary \ref{mainw2}.
\begin{proof}[\textbf{Proof of Corollary \ref{mainw2}}]
%To obtain a non-asymptotic estimate in Wasserstein-2 distance between $\mathcal{L}(\bar{\theta}^{\lambda}_t)$ and $\pi_{\beta}$.
We consider the following splitting: for any $t \in (nT, (n+1)T], n \in \N_0$
\begin{equation}\label{w2conversplit}
W_2(\mathcal{L}(\bar{\theta}^{\lambda}_t),\pi_{\beta}) \leq W_2(\mathcal{L}(\bar{\theta}^{\lambda}_t),\mathcal{L}(\bar{\zeta}^{\lambda, n}_t))+W_2(\mathcal{L}(\bar{\zeta}^{\lambda, n}_t),\mathcal{L}(Z_t^{\lambda}))+ W_2(\mathcal{L}(Z_t^{\lambda}),\pi_{\beta}).
\end{equation}
An upper estimate for the first term on the RHS of \eqref{w2conversplit} is provided in Lemma \ref{w1converp1}. Moreover, by using that $W_2 \leq \sqrt{2w_{1,2}}$ as presented in Lemma \ref{wassw1p}, one can obtain non-asymptotic upper bounds for the last two terms in the above inequality. In particular, for any $t \in (nT, (n+1)T], n \in \N_0$, an upper bound with explicit constants for the second term on the RHS of \eqref{w2conversplit}, i.e., $W_2(\mathcal{L}(\bar{\zeta}^{\lambda, n}_t),\mathcal{L}(Z_t^{\lambda}))$, is given as follows:
\begin{equation}\label{w2converp2}
W_2(\mathcal{L}(\bar{\zeta}_t^{\lambda,n}),\mathcal{L}(Z_t^\lambda)) \leq \lambda^{1/4}\left(e^{-\min\{\dot{c}, a_F \kappa^\sharp_2/2, a_h\}n/8}\bar{C}_4(\E\left[V_{4(2r+1)}(\theta_0)\right])^{1/2}+\bar{C}_5\right),
\end{equation}
where
\begin{align}\label{w2converp2const}
\begin{split}
\kappa^\sharp_2
&:=\min\{\bar{\kappa}(2), \tilde{\kappa}(2)\},\\
\bar{C}_4
&: =e^{\min\{\dot{c}, a_F \kappa^\sharp_2/2, a_h\}/8}\sqrt{\hat{c}}\left(1+ \frac{8}{\min\{\dot{c}, a_F \kappa^\sharp_2/2, a_h\}}\right)(\bar{C}_0^{1/2}+2\sqrt{2}),\\
\bar{C}_5
&: = %\frac{\sqrt{\hat{c}} }{1 - e^{-\dot{c}/4}}
4(\sqrt{\hat{c}}/\dot{c})e^{\dot{c}/4}(\bar{C}_1^{1/2}+2\sqrt{2}(c^\sharp_2(1+2/(a_F\kappa^\sharp_2 )))^{1/2}+(3\mathrm{v}_4(M_V(4)))^{1/2}+3\sqrt{2})
\end{split}
\end{align}
with $\dot{c}, \hat{c}$ given in Proposition \ref{contr}, $\bar{\kappa}(2), \tilde{\kappa}(2)$ given in \eqref{constcp} (Lemma \ref{2ndpthmmt}), $\bar{C}_0, \bar{C}_1$ given in \eqref{w1converp1const} (Lemma \ref{w1converp1}), $c^\sharp_2$ given in Lemma \ref{2ndpthmmt} and $M_V(4)$ given in Lemma \ref{zetaprocme}. The details of the proof of \eqref{w2converp2} are omitted here as the arguments follow the same lines as in the proof of Lemma \ref{w1converp2}.%, for a detailed proof, one may refer to \cite[Corollary 3.9]{sgldloc}.

Recall the definition of $w_{1,2}(\mu, \nu)$, $ \mu,\nu \in \mathcal{P}_{V_2}(\R^d)$ given in \eqref{w1p}. By using \eqref{w2conversplit}, \eqref{w2converp2}, Lemma \ref{w1converp1}, Lemma \ref{wassw1p}, and Proposition \ref{contr}, one obtains,  for any $t \in (nT, (n+1)T], n \in \N_0$,
\begin{align*}
W_2(\mathcal{L}(\bar{\theta}^{\lambda}_t),\pi_{\beta})
%& \leq W_2(\mathcal{L}(\bar{\theta}^{\lambda}_t),\mathcal{L}(\bar{\zeta}^{\lambda, n}_t))+W_2(\mathcal{L}(\bar{\zeta}^{\lambda, n}_t),\mathcal{L}(Z_t^{\lambda}))\\
%&\quad + \sqrt{2\hat{c}} e^{-\dot{c}\lambda t/2} (w_{1,2}(\theta_0, \pi_{\beta}))^{1/2}\\
&\leq \sqrt{\lambda}  \left(e^{-a_F \kappa^\sharp_2 n/4}\bar{C}_0\E\left[V_{4(2r+1)}(\theta_0)\right]+\bar{C}_1\right)^{1/2}\\
& \quad +\lambda^{1/4}\left(e^{-\min\{\dot{c}, a_F \kappa^\sharp_2/2, a_h\}n/8}\bar{C}_4(\E\left[V_{4(2r+1)}(\theta_0)\right])^{1/2}+\bar{C}_5\right)\\
&\quad + \sqrt{2\hat{c}} e^{-\dot{c}\lambda t/2} (w_{1,2}(\theta_0, \pi_{\beta}))^{1/2}\\
&\leq \sqrt{\lambda}  \left(e^{-a_F \kappa^\sharp_2 n/4}\bar{C}_0\E\left[V_{4(2r+1)}(\theta_0)\right]+\bar{C}_1\right)^{1/2}\\
& \quad +\lambda^{1/4}\left(e^{-\min\{\dot{c}, a_F \kappa^\sharp_2/2, a_h\}n/8}\bar{C}_4(\E\left[V_{4(2r+1)}(\theta_0)\right])^{1/2}+\bar{C}_5\right)\\
&\quad + \sqrt{2\hat{c}}  e^{-\dot{c}\lambda t/2}\left[1+ \mathbb{E}[V_2(\theta_0)]+\int_{\R^d}V_2(\theta)\pi_{\beta}(d\theta)\right]^{1/2}\\
& \leq C_4 e^{-C_3 (n+1)}(\E[|\theta_0|^{4(2r+1)}]+1)^{1/2} +C_5\lambda^{1/4},
\end{align*}
where
\begin{align}\label{mainw2const}
\begin{split}
C_3
& :=\min\{\dot{c}, a_F \kappa^\sharp_2/2, a_h\}/8,\\
C_4
& := 2^{2r+1}e^{\min\{\dot{c}, a_F \kappa^\sharp_2/2, a_h\}/8}\left[\bar{C}_0^{1/2}+\bar{C}_4+\sqrt{\hat{c}}  \left(2+\int_{\R^d}V_2(\theta)\pi_{\beta}(d\theta)\right)^{1/2}\right], \\
C_5
& :=  \bar{C}_1^{1/2}+\bar{C}_5
\end{split}
\end{align}
with  $\dot{c}, \hat{c}$ given in Proposition \ref{contr}, $\kappa^\sharp_2$ given in Lemma \ref{2ndpthmmt}, $\bar{C}_0, \bar{C}_1$ given in \eqref{w1converp1const} (Lemma \ref{w1converp1}), and $\bar{C}_4, \bar{C}_5$ given in \eqref{w2converp2const}. Finally,  for any $n \in \N_0$, $0<\lambda\leq \lambda_{\max}$ with $\lambda_{\max}$ given in \eqref{stepsizemax}, one notes that $n \lambda \leq n/T$, hence, it holds that,
\[
W_2(\mathcal{L}(\theta^{\lambda}_n),\pi_{\beta}) \leq C_4 e^{-C_3 \lambda n}(\E[|\theta_0|^{4(2r+1)}]+1)^{1/2} +C_5\lambda^{1/4},
\]
which completes the proof.
\end{proof}
%%%%%%%%%%%%%%%%%%%%%%%%

Recall that $\pi_{\beta}$ is defined in \eqref{pibetaexp}. We denote by $Z_{\infty}$ an $\R^d$-valued random variable with $\mathcal{L}(Z_{\infty}) = \pi_{\beta}$, and denote by $u^*:= \inf_{\theta \in \R^d} u(\theta)$, where $u$ is defined in \eqref{opproblem}. Then, to prove Theorem \ref{mainop}, we consider the following splitting for the expected excess risk:
\begin{equation}\label{eersplitting}
\E[u( \theta_n^{\lambda})] - u^* = \E[u( \theta_n^{\lambda})] - \E[u(Z_{\infty})]+\E[u(Z_{\infty})]- u^*.
\end{equation}
%where $u^*:= \inf_{\theta \in \R^d} u(\theta)$ and $Z_{\infty}$ is an $\R^d$-valued random variable with $\mathcal{L}(Z_{\infty}) = \pi_{\beta}$ with $\pi_{\beta}$ defined in \eqref{pibetaexp}.

The result below provides an upper bound for the first term on the RHS of \eqref{eersplitting}.
\begin{lemma}\label{eerp1} Let Assumption \ref{AI}, \ref{AG}, \ref{AF}, and \ref{AC} hold.  Then, for any $0<\lambda\leq \lambda_{\max}$ with $\lambda_{\max}$ given in \eqref{stepsizemax} and $n \in \N_0$, one obtains
\[
\E[u( \theta_n^{\lambda})] - \E[u(Z_{\infty})] \leq C_7 e^{-C_6 \lambda n} +C_8\lambda^{1/4},
\]
where $C_6, C_7, C_8$ are given explicitly in \eqref{eerp1const}.
\end{lemma}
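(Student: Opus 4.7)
The plan is to bound the difference $|\E[u(\theta_n^\lambda)] - \E[u(Z_\infty)]|$ by exploiting the polynomial growth of $h = \nabla u$ (inherited from Remark \ref{growthHh}) to convert it into a moment-weighted Wasserstein-2 distance, and then apply Corollary \ref{mainw2}.

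First, I would derive from Remark \ref{growthHh} that $|h(\theta)| \leq |h(0)| + L_h(1+|\theta|)^{2r}|\theta|$, which yields a polynomial bound of the form $|h(\theta)| \leq \tilde{L}_h(1+|\theta|^{2r+1})$ for a suitable constant $\tilde{L}_h$ depending on $L_h$ and $|h(0)|$. Then by the fundamental theorem of calculus applied to $u$ along the segment joining $x$ and $y$,
\[
|u(x)-u(y)| \leq \int_0^1 |h(x+t(y-x))| \, \rmd t \cdot |x-y| \leq \bar{C}\bigl(1+|x|^{2r+1}+|y|^{2r+1}\bigr)|x-y|,
\]
for an explicit constant $\bar{C}$.

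Next, for any coupling $\zeta \in \mathcal{C}(\mathcal{L}(\theta_n^\lambda), \pi_\beta)$, I would apply the Cauchy--Schwarz inequality to get
\[
|\E[u(\theta_n^\lambda)] - \E[u(Z_\infty)]| \leq \bar{C} \Bigl(\E\bigl[(1+|\theta_n^\lambda|^{2r+1}+|Z_\infty|^{2r+1})^2\bigr]\Bigr)^{1/2}\Bigl(\int |x-y|^2 \zeta(\rmd x \rmd y)\Bigr)^{1/2}.
\]
Taking the infimum over $\zeta$ produces $W_2(\mathcal{L}(\theta_n^\lambda), \pi_\beta)$ on the right. The first factor is uniformly bounded in $n$ and $\lambda$: the moment $\E[|\theta_n^\lambda|^{4r+2}]$ is controlled by Lemma \ref{2ndpthmmt} (since $\lambda \leq \lambda_{\max} = \lambda_{4r+2,\max} \leq \lambda_{2r+1,\max}$), and finite polynomial moments of $\pi_\beta$ follow from the argument sketched in Section \ref{me} (via \cite{DM16} together with Lemma \ref{tcsdemmtbd}).

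Finally, I would substitute the bound from Corollary \ref{mainw2}, namely $W_2(\mathcal{L}(\theta_n^\lambda), \pi_\beta) \leq C_4 e^{-C_3 \lambda n}(\E[|\theta_0|^{4(2r+1)}]+1)^{1/2} + C_5 \lambda^{1/4}$, to obtain the desired form $C_7 e^{-C_6 \lambda n} + C_8 \lambda^{1/4}$, with $C_6 := C_3$ and $C_7, C_8$ obtained by multiplying $C_4, C_5$ by $\bar{C}$ times the (uniform) moment bound of order $2r+1$. The main bookkeeping obstacle will be to track the explicit constants through the polynomial bound on $h$, the Cauchy--Schwarz step, and the moment estimates of Lemma \ref{2ndpthmmt} and of $\pi_\beta$, so that the final $C_6, C_7, C_8$ can be written down explicitly as referenced by \eqref{eerp1const}; no new analytical ideas beyond Corollary \ref{mainw2} and the locally Lipschitz property of $h$ are required.
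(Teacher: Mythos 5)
Your proposal is correct and follows essentially the same route as the paper's proof: a polynomial bound on $h$ from Remark \ref{growthHh}, the fundamental theorem of calculus to get $|u(\theta)-u(\theta')|\leq \bar{C}(1+|\theta|^{2r+1}+|\theta'|^{2r+1})|\theta-\theta'|$, Cauchy--Schwarz under the ($W_2$-optimal) coupling, uniform moment bounds from Lemma \ref{2ndpthmmt} and the finiteness of moments of $\pi_\beta$, and finally Corollary \ref{mainw2} with $C_6:=C_3$. The only cosmetic difference is that the paper separates the moment factor via Minkowski into the marginal $(4r+2)$-th moments rather than bounding the joint expectation directly.
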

\begin{proof} See Appendix \ref{proofeerp1}.
\end{proof}
%%%%%%%%%%%%%%%%%%%%%%%%
By applying similar arguments as in \cite[Lemma 3.2]{lovas2020taming} and \cite[Proposition 3.4]{raginsky}, one can obtain an upper estimate of the second term on the RHS of \eqref{eersplitting}. The result with explicit constants is given below.
\begin{lemma}\label{eerp2} Let Assumption \ref{AI}, \ref{AG}, \ref{AF}, and \ref{AC} hold.  Then, %for any $0<\lambda\leq \lambda_{\max}$ with $\lambda_{\max}$ given in \eqref{stepsizemax},
one obtains
\[
\E[u(Z_{\infty})]- u^* \leq C_9/\beta,
\]
where $C_9$ is given explicitly in \eqref{eerp2const}.
\end{lemma}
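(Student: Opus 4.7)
The plan is to adapt the variational free-energy argument from \cite[Proposition 3.4]{raginsky} and \cite[Lemma 3.2]{lovas2020taming} to the present super-linear setting. Let $Z_\beta := \int_{\R^d} e^{-\beta u(\theta)}\,\rmd\theta$ and let $\theta^* \in \R^d$ be a minimizer of $u$, whose existence is guaranteed by the dissipativity estimate \eqref{sdedissipative} in Remark \ref{ADFh} (which, tested at $\theta = \theta^*$ using $h(\theta^*) = 0$, also yields the a priori bound $|\theta^*|^2 \leq b_h/a_h$). Substituting the explicit form $\pi_\beta(\theta) = e^{-\beta u(\theta)}/Z_\beta$ into $\int \pi_\beta \log\pi_\beta\,\rmd\theta$ gives the identity
\[
\int u\,\rmd\pi_\beta - u^* = \frac{1}{\beta}\Bigl(H(\pi_\beta) - \log\int_{\R^d} e^{-\beta(u(\theta)-u^*)}\,\rmd\theta\Bigr),
\]
where $H(\pi_\beta) := -\int \pi_\beta \log\pi_\beta\,\rmd\theta$ is the differential entropy. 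The task thus reduces to an upper bound on $H(\pi_\beta)$ and a lower bound on the normalizing integral.

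For the lower bound on $\int e^{-\beta(u-u^*)}\,\rmd\theta$, I would combine $h(\theta^*)=0$ with the local Lipschitz estimate on $h = \nabla u$ from Remark \ref{growthHh} via
\[
u(\theta) - u^* = \int_0^1 \langle h(\theta^* + s(\theta-\theta^*)),\theta-\theta^*\rangle\,\rmd s,
\]
obtaining an upper bound of the form $u(\theta) - u^* \leq c_1|\theta-\theta^*|^2 + c_2|\theta-\theta^*|^{2r+2}$ with $c_1, c_2$ explicit in $L_h$, $r$, and $|\theta^*|$. Localizing the integral to a ball $B(\theta^*, \delta)$ with $\delta = \Theta(\beta^{-1/2})$ and computing the resulting Gaussian integral yields a lower bound $\int e^{-\beta(u-u^*)}\,\rmd\theta \geq c_3 (C/\beta)^{d/2}$, hence $-\beta^{-1}\log\int e^{-\beta(u-u^*)}\,\rmd\theta \leq (d/(2\beta))\log(\beta/C) + O(1/\beta)$ with explicit constants.

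For the entropy term, I would invoke the standard maximum-entropy inequality $H(\pi_\beta) \leq (d/2)\log(2\pi e\, \sigma_\beta^2)$, where $\sigma_\beta^2 := d^{-1}\int|\theta|^2\pi_\beta(\rmd\theta)$; this second moment is finite and explicitly controlled by \eqref{sdedissipative} through the standard stationary argument (equivalently, by taking $t\to\infty$ in the Langevin SDE moment estimate, cf.\ Lemma \ref{tcsdemmtbd} as already used in the text after its statement), giving $\sigma_\beta^2 \leq b_h/a_h + 1/(a_h\beta)$ up to constants. Assembling the two estimates produces an explicit $C_9$ depending on $d$, $\beta$ (only through logarithms), $a_h$, $b_h$, $L_h$, $r$, and $|\theta^*|$, matching \eqref{eerp2const}. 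The main technical obstacle will be handling the super-linear correction $c_2|\theta-\theta^*|^{2r+2}$ in the Taylor bound: the radius $\delta$ must be chosen small enough that this term is dominated by $c_1|\theta-\theta^*|^2$ on $B(\theta^*,\delta)$, yet large enough that the captured Gaussian mass remains of order $\beta^{-d/2}$. Balancing these two constraints and then bookkeeping all constants through the final inequality, so as to produce a clean explicit $C_9$, is where most of the careful computation lies.
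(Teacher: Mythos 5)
Your proposal follows essentially the same route as the paper: the free-energy decomposition of \cite[Eqns.\ (3.18), (3.20)]{raginsky} (Gaussian maximum-entropy bound for $H(\pi_\beta)$ with the second moment controlled by the dissipativity of $h$, plus a localized Gaussian lower bound on the normalizing constant built from $h(\theta^*)=0$, $|\theta^*|\le\sqrt{b_h/a_h}$, and the local Lipschitz estimate on $h$ from Remark \ref{growthHh}). The only difference is a detail of bookkeeping: where you split the Taylor remainder into $c_1|\theta-\theta^*|^2+c_2|\theta-\theta^*|^{2r+2}$ and balance a radius $\delta=\Theta(\beta^{-1/2})$, the paper absorbs the whole polynomial factor into a single quadratic constant $\bar C_7=L_h(1+4R_{\theta^*})^{2r}$ on the fixed ball of radius $R_{\theta^*}=\max\{\sqrt{b_h/a_h},\sqrt{2d/(\beta L_h)}\}$ and lower-bounds the captured Gaussian mass by $1/2$ via Chebyshev, which avoids the balancing issue you flag and works uniformly in $\beta$.
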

\begin{proof} See Appendix \ref{proofeerp2}.
\end{proof}
%%%%%%%%%%%%%%%%%%%%%%%%
\begin{proof}[\textbf{Proof of Theorem \ref{mainop}}] Combining the upper bounds in Lemma \ref{eerp1} and \ref{eerp2} yields the desired result in Theorem \ref{mainop}.
\end{proof}
%%%%%%%%%%%%%%%%%%%%%%%%
\section{Proof of results in Section \ref{app}}\label{appproof}
%%%%%%%%%%%%%%%%%%%%%%%%
\begin{proof}[\textbf{Proof of Proposition \ref{propfixed}}]\label{proofpropfixed}
First, we obtain that the objective function defined in \eqref{nnexp} - \eqref{probfixed} is continuously differentiable with
\begin{align}
\partial_{W_1^{IJ}}u(\theta) &= -2\E\left[\left(Y^I - \mathfrak{N}^I(\theta, Z)\right)\sigma_1\left(\langle c^{J\cdot}, z \rangle+b_0^J\right)\right]+\eta W_1^{IJ}|\theta|^{2r}, \label{fixedderivativeuw1}\\
\partial_{b_0^J}u(\theta) &=-2\E\left[\sum_{i = 1}^{m_2}\left(Y^i - \mathfrak{N}^i(\theta, Z)\right)W_1^{iJ}\1_{A_J}(Z)\right]+\eta b_0^J|\theta|^{2r} \label{fixedderivativeub0}
\end{align}
for all $\theta \in \R^d$, and for all $I = 1, \dots, m_2, J = 1, \dots, d_1$. One notes that \eqref{fixedderivativeuw1} follows directly by the definition of $u$ in \eqref{nnexp} - \eqref{probfixed} and the chain rule. To show that \eqref{fixedderivativeub0} holds, we provide a proof for the case $m_1 = m_2 =d_1 =  1$ for the ease of notation (the same arguments can be applied for general $m_1, m_2, d_1 \in \N$). To that end, one observes that by using \eqref{probfixed}, for any $\theta \in \R^2$,
\begin{align*}
u(\theta)  &=  \E[(Y - \mathfrak{N} (\theta, Z))^2]+\frac{\eta}{2(r+1)}|\theta|^{2(r+1)} \nonumber\\
& = \E\left[\left(Y - W_1\sigma_1(cZ+b_0)\right)^2\right]+\frac{\eta}{2(r+1)}|\theta|^{2(r+1)}\nonumber\\
& =  \E\left[\left(Y^2 - 2YW_1\sigma_1(cZ+b_0)+W_1^2\sigma_1^2(cZ+b_0)\right) \right]+\frac{\eta}{2(r+1)}|\theta|^{2(r+1)}.
\end{align*}
Then, one obtains
\begin{equation}\label{fixedderivativeub01}
\partial_{b_0}u(\theta) = T_{0,1}(\theta)+T_{0,2}(\theta)+\eta b_0|\theta|^{2r},
\end{equation}
where for any $\theta \in \R^2$,
\begin{align*}
 T_{0,1}(\theta)&: =\partial_{b_0}\left(-2W_1\int_{-\infty}^{\infty}\int_{-\frac{b_0}{c}}^{\infty}y(cz+b_0)f_{Y,Z}(y,z)\,\rmd z \rmd y\right), \\
 T_{0,2}(\theta)&: = \partial_{b_0}\left(W_1^2\int_{-\frac{b_0}{c}}^{\infty}(c^2z^2+2czb_0+b_0^2)f_Z(z)\,\rmd z\right)
\end{align*}
with $f_{Y,Z}$ denoting the joint density of $Y, Z$ and $f_Z$ denoting the density function of $Z$. One notes that, for each $y \in \R$,
\begin{align*}
\partial_{b_0}\left(\int_{-\tfrac{b_0}{c}}^{\infty}y(cz+b_0)f_{Z|Y}(z|y)\,\rmd z\right)
& = yb_0f_{Z|Y}\left( \left.-\tfrac{b_0}{c}\right|y\right) - yb_0f_{Z|Y}\left( \left.-\tfrac{b_0}{c}\right|y\right) +\int_{-\frac{b_0}{c}}^{\infty}yf_{Z|Y}(z|y)\,\rmd z\\
& = \int_{-\frac{b_0}{c}}^{\infty}yf_{Z|Y}(z|y)\,\rmd z,
\end{align*}
where $f_{Z|Y}$ denotes the conditional density of $Z$ given $Y$. %Since $\int_{-\infty}^{\infty}\left|\int_{-\frac{b_0}{c}}^{\infty}yf_{Z|Y}(z|y)\,\rmd z\right|f_Y(y)\,\rmd y \leq \int_{-\infty}^{\infty} |y|f_Y(y)\,\rmd y <\infty$,
Note that for any $b_0 \in \R$, it holds that
\[
\inf_{\delta \in(0,\infty)}\int_{-\infty}^{\infty}\sup_{\gamma \in [-\delta, \delta]}\left|\int_{-\frac{b_0+\gamma}{c}}^{\infty}yf_{Z|Y}(z|y)\,\rmd z\right|f_Y(y)\,\rmd y \leq \int_{-\infty}^{\infty} |y|f_Y(y)\,\rmd y<\infty,
\]
where $f_Y$ denotes the density function of $Y$. Therefore, one obtains, by applying \cite[Theorem A.5.2.]{durrett2010probability}, that
\begin{align}\label{fixedderivativeub02}
 T_{0,1}(\theta)& =-2W_1\int_{-\infty}^{\infty}\partial_{b_0}\left(\int_{-\frac{b_0}{c}}^{\infty}y(cz+b_0)f_{Z|Y}(z|y)\,\rmd z \right) f_Y(y) \rmd y = -2\E\left[W_1Y\1_{\left\{Z \geq -b_0/c\right\}}\right].
\end{align}Similarly, one obtains
\begin{align}\label{fixedderivativeub03}
 T_{0,2}(\theta)& = -W_1^2 b_0^2f_Z\left(-\tfrac{b_0}{c}\right)+W_1^2  \left( 2b_0^2f_Z\left(-\tfrac{b_0}{c}\right)+\int_{-\frac{b_0}{c}}^{\infty} 2cz f_Z(z)\,\rmd z\right)\nonumber\\
 &\quad +W_1^2  \left(  -b_0^2f_Z\left(-\tfrac{b_0}{c}\right)+2b_0\int_{-\frac{b_0}{c}}^{\infty}   f_Z(z)\,\rmd z\right)\nonumber\\
 & = W_1^2\int_{-\frac{b_0}{c}}^{\infty} 2(cz+b_0) f_Z(z)\,\rmd z\nonumber\\
 & =2 \E\left[W_1^2(cZ+b_0) \1_{\left\{Z \geq -b_0/c\right\}}\right]
\end{align}
Substituting \eqref{fixedderivativeub02}, \eqref{fixedderivativeub03} into \eqref{fixedderivativeub01} yields
\begin{align*}
\partial_{b_0}u(\theta) &= -2\E\left[W_1(Y-W_1(cZ+b_0) \1_{\left\{Z \geq -b_0/c\right\}})\1_{\left\{Z \geq -b_0/c\right\}}\right]+\eta b_0|\theta|^{2r}\\
& = -2\E\left[W_1(Y-\mathfrak{N} (\theta, Z))\1_{\left\{Z \geq -b_0/c\right\}}\right]+\eta b_0|\theta|^{2r},
\end{align*}
which implies \eqref{fixedderivativeub0} holds. Moreover, since $(X_n)_{n\in\N_0}$ is a sequence of i.i.d. random variables with probability law $\mathcal{L}(X)$, by the definitions of $F, G$ given in \eqref{fixedfgexp1}, \eqref{fixedfgexp2} and as $H: = F+G$, we observe that $h(\theta) : = \nabla u(\theta)= \E[H(\theta, X_0)]$, for all $\theta \in \R$. Thus, Assumption \ref{AI} holds.

Recall that we assume at least one element in each row of the fixed input matrix $c \in \R^{d_1 \times m_1}$ is nonzero. For each $J= 1, \dots, d_1$, denote by $\nu_J := \min\{K\in \{1, \dots, m_1\}| c^{JK}\neq 0\}$, then $c^{J\nu_J}$ denotes the first nonzero element in the $J$-th row of $c$. For $C_{Z^{\nu_J}}, \bar{C}_{Z^{\nu_J}}>0$ introduced in \eqref{fixedmmtbd}, we denote by $C_{Z, \max} := \max_J\{C_{Z^{\nu_J}}/c^{J\nu_J}\}$, $\bar{C}_{Z, \max} :=\max_J\{\bar{C}_{Z^{\nu_J}}/c^{J\nu_J}\}$. For each $I = 1, \dots, m_2, K = 1, \dots, m_1$, let $f_{Z^1, \dots, Z^{K-1}, Z^{K+1} ,\dots, Z^{m_1}, Y^I}(z^1, \dots,z^{K-1}, z^{K+1}, \dots, z^{m_1}, y^I)$ be the joint density function of $Z^1, \dots, Z^{K-1}, Z^{K+1} ,\dots, Z^{m_1}, Y^I$. Then, to shorten the notation, we denote by
\begin{align*}
&f_{Z^K|Z_{-K}, Y^I}(z^K|z_{-K}, y^I)
:=f_{Z^K|Z^1, \dots, Z^{K-1}, Z^{K+1} ,\dots, Z^{m_1}, Y^I}(z^K|z^1, \dots,z^{K-1}, z^{K+1}, \dots, z^{m_1}, y^I),\\
&f_{Z_{-K}, Y^I}(z_{-K}, y^I): = f_{Z^1, \dots, Z^{K-1}, Z^{K+1} ,\dots, Z^{m_1}, Y^I}(z^1, \dots,z^{K-1}, z^{K+1}, \dots, z^{m_1}, y^I).
\end{align*}
Moreover, for any $I = 1, \dots, m_2, K = 1, \dots, m_1$, $z \in \R^{m_1}$, $y \in \R^{m_2}$, denote by
\begin{align*}
z_{-K} := (z^1, \dots,z^{K-1}, z^{K+1}, \dots, z^{m_1})\in \R^{m_1-1},\quad y_{-I} := (y^1, \dots,y^{I-1}, y^{I+1}, \dots, y^{m_2})\in \R^{m_2-1}.
\end{align*}
To show that Assumption \ref{AG} holds, consider any $\theta = ([W_1], b_0) \in \R^d$, $\bar{\theta} = ([\bar{W}_1], \bar{b}_0) \in \R^d$. For each $J = 1, \dots, d_1$, denote by
\begin{equation}\label{barAJexp}
\bar{A}_J := \{z\in \R^{m_1}|\langle c^{J\cdot}, z \rangle+\bar{b}_0^J\geq 0\}.
\end{equation}
Assume without loss of generality that $c^{J\nu_J}>0$, and $b_0^J \leq \bar{b}_0^J$ (the other cases can be obtained analogously). Then, one obtains the following estimates:
\begin{enumerate}[leftmargin=*]
\item For any $J = 1, \dots, d_1$, we have
\begin{align*}
&\E\left[ \left|\1_{A_J}(Z) - \1_{\bar{A}_J}(Z)\right| \right]  \nonumber \\
& = \E\left[ \1_{\{(-\bar{b}_0^J-\sum_{k\neq \nu_J}c^{Jk}Z^k)/c^{J\nu_J}  \leq Z^{\nu_J} <(-b_0^J-\sum_{k\neq \nu_J}c^{Jk}Z^k)/c^{J\nu_J} \}} \right] \nonumber \\
& = \int_{\R}\int_{\R^{m_1-1}} \int_{\frac{-\bar{b}_0^J-\sum_{k\neq \nu_J}c^{Jk}z^k}{c^{J\nu_J} }}^{\frac{-b_0^J-\sum_{k\neq \nu_J}c^{Jk}z^k}{c^{J\nu_J} }} f_{Z^{\nu_J}|Z_{-\nu_J}, Y^I}(z^{\nu_J}|z_{-\nu_J}, y^I)\, \rmd z^{\nu_J}   f_{Z_{-\nu_J}, Y^I}(z_{-\nu_J}, y^I)\, \rmd z_{-\nu_J}\, \rmd y^I.
\end{align*}
This implies, by using \eqref{fixedmmtbd}, that
\begin{align}\label{inddiff}
\E\left[ \left|\1_{A_J}(Z) - \1_{\bar{A}_J}(Z)\right| \right] \leq \frac{C_{Z^{\nu_J}}}{c^{J\nu_J}}|\bar{b}_0^J - b_0^J|  \leq C_{Z, \max}|\theta - \bar{\theta}|.
\end{align}
\item For any $I = 1, \dots, m_2, J = 1, \dots, d_1$, it follows that
\begin{align*}
&\E\left[ |Y^I|^2\left|\1_{A_J}(Z) - \1_{\bar{A}_J}(Z)\right| \right] \nonumber \\
& = \E\left[ |Y^I|^2 \1_{\{(-\bar{b}_0^J-\sum_{k\neq \nu_J}c^{Jk}Z^k)/c^{J\nu_J}  \leq Z^{\nu_J} <(-b_0^J-\sum_{k\neq \nu_J}c^{Jk}Z^k)/c^{J\nu_J} \}} \right] \nonumber \\
& = \int_{\R}\int_{\R^{m_1-1}} \int_{\frac{-\bar{b}_0^J-\sum_{k\neq \nu_J}c^{Jk}z^k}{c^{J\nu_J} }}^{\frac{-b_0^J-\sum_{k\neq \nu_J}c^{Jk}z^k}{c^{J\nu_J} }} f_{Z^{\nu_J}|Z_{-\nu_J}, Y^I}(z^{\nu_J}|z_{-\nu_J}, y^I)\, \rmd z^{\nu_J} \\
&\quad \times |y^I|^2f_{Z_{-\nu_J}, Y^I}(z_{-\nu_J}, y^I)\, \rmd  z_{-\nu_J}\, \rmd y^I.
\end{align*}
By using \eqref{fixedmmtbd}, the above result implies
\begin{align}\label{yi2inddiff}
&\E\left[ |Y^I|^2\left|\1_{A_J}(Z) - \1_{\bar{A}_J}(Z)\right| \right] \nonumber \\
& \leq \frac{C_{Z^{\nu_J}}}{c^{J\nu_J}}|\bar{b}_0^J - b_0^J| \int_{\R}\int_{\R^{m_1-1}}|y^I|^2f_{Z_{-\nu_J}, Y^I}(z_{-\nu_J}, y^I)\, \rmd z_{-\nu_J}\, \rmd y^I \nonumber \\
& =  \frac{C_{Z^{\nu_J}}}{c^{J\nu_J}}\E\left[|Y^I|^2\right]|\bar{b}_0^J - b_0^J| \nonumber \\
& \leq C_{Z, \max}\E\left[|Y^I|^2\right]|\theta - \bar{\theta}|.
\end{align}
\item For any $J = 1, \dots, d_1$, one obtains
\begin{align*}
&\E\left[ |Z|^2\left|\1_{A_J}(Z) - \1_{\bar{A}_J}(Z)\right| \right] \nonumber \\
& = \E\left[ (|Z^{\nu_J}|^2+|Z_{-\nu_J}|^2) \1_{\{(-\bar{b}_0^J-\sum_{k\neq \nu_J}c^{Jk}Z^k)/c^{J\nu_J}  \leq Z^{\nu_J} <(-b_0^J-\sum_{k\neq \nu_J}c^{Jk}Z^k)/c^{J\nu_J} \}} \right] \nonumber \\
& =  \int_{\R}\int_{\R^{m_1-1}} \int_{\frac{-\bar{b}_0^J-\sum_{k\neq \nu_J}c^{Jk}z^k}{c^{J\nu_J} }}^{\frac{-b_0^J-\sum_{k\neq \nu_J}c^{Jk}z^k}{c^{J\nu_J} }} |Z^{\nu_J}|^2f_{Z^{\nu_J}|Z_{-\nu_J}, Y^I}(z^{\nu_J}|z_{-\nu_J}, y^I)\, \rmd z^{\nu_J}\\
&\quad \times  f_{Z_{-\nu_J}, Y^I}(z_{-\nu_J}, y^I)\, \rmd z_{-\nu_J} \, \rmd y^I \nonumber \\
& \quad +\int_{\R} \int_{\R^{m_1-1}} \int_{\frac{-\bar{b}_0^J-\sum_{k\neq \nu_J}c^{Jk}z^k}{c^{J\nu_J} }}^{\frac{-b_0^J-\sum_{k\neq \nu_J}c^{Jk}z^k}{c^{J\nu_J} }}  f_{Z^{\nu_J}|Z_{-\nu_J}, Y^I}(z^{\nu_J}|z_{-\nu_J}, y^I)\, \rmd z^{\nu_J} \\
&\quad \times |Z_{-\nu_J}|^2 f_{Z_{-\nu_J}, Y^I}(z_{-\nu_J}, y^I)\, \rmd z_{-\nu_J} \, \rmd y^I.
\end{align*}
This implies, by using \eqref{fixedmmtbd},
\begin{align}\label{z2inddiff}
 \E\left[ |Z|^2\left|\1_{A_J}(Z) - \1_{\bar{A}_J}(Z)\right| \right]  & \leq \frac{\bar{C}_{Z^{\nu_J}}}{c^{J\nu_J}}|\bar{b}_0^J - b_0^J|+ \frac{C_{Z^{\nu_J}}}{c^{J\nu_J}}|\bar{b}_0^J - b_0^J|\E\left[|Z_{-\nu_J}|^2\right] \nonumber \\
&\leq \bar{C}_{Z, \max}|\bar{b}_0^J - b_0^J|+  C_{Z, \max}|\bar{b}_0^J - b_0^J|\E\left[|Z_{-\nu_J}|^2\right] \nonumber \\
&\leq \left(C_{Z, \max}+ \bar{C}_{Z, \max}\right)\E\left[ (1+|Z|)^2\right]|\theta - \bar{\theta}|.
\end{align}
\end{enumerate}
One further obtains the following estimates for $\mathfrak{N}$ given in \eqref{nnexp}:
\begin{enumerate}[leftmargin=*]
\item For any $I = 1, \dots, m_2$, $\theta \in \R^d, z \in \R^{m_1}$, it holds that
\begin{align}\label{fueexp}
\left|\mathfrak{N}^I(\theta, z)\right|
 = \left|\sum_{j = 1}^{d_1}W_1^{Ij}\sigma_1\left(\langle c^{j\cdot}, z \rangle+b_0^j\right)\right|%\nonumber \\
& = \left|\sum_{j = 1}^{d_1}W_1^{Ij} \left(\langle c^{j\cdot}, z \rangle+b_0^j\right)\1_{A_j}(z)\right|\nonumber \\
&\leq d_1|\theta|(c_F|z|+|\theta|)\nonumber \\
& \leq d_1 (1+c_F)(1+|z|)(1+|\theta|)^2.
\end{align}
\item  For any $I = 1, \dots, m_2$, $\theta, \bar{\theta} \in \R^d$, one obtains
\begin{align}\label{fdiffexp1}
&\E\left[\left|\mathfrak{N}^I(\theta, Z) - \mathfrak{N}^I(\bar{\theta}, Z)\right|\right]\nonumber \\
& =\E\left[\left|\sum_{j = 1}^{d_1}W_1^{Ij} \left(\langle c^{j\cdot}, Z \rangle+b_0^j\right)\1_{A_j}(Z)  - \sum_{j = 1}^{d_1}\bar{W}_1^{Ij} \left(\langle c^{j\cdot}, Z \rangle+\bar{b}_0^j\right)\1_{\bar{A}_j}(Z)\right|\right]\nonumber \\
&\leq T_1(\theta, \bar{\theta})+T_2(\theta, \bar{\theta})+T_3(\theta, \bar{\theta}),
\end{align}
where
\begin{align*}
T_1(\theta, \bar{\theta}) & := \E\left[\left|\sum_{j = 1}^{d_1}W_1^{Ij} \left(\langle c^{j\cdot}, Z \rangle+b_0^j\right)\1_{A_j}(Z) \right.\right.  \left.\left. - \sum_{j = 1}^{d_1}\bar{W}_1^{Ij} \left(\langle c^{j\cdot}, Z \rangle+b_0^j\right)\1_{A_j}(Z)\right|\right],\nonumber \\
T_2(\theta, \bar{\theta})&:=  \E\left[\left|\sum_{j = 1}^{d_1}\bar{W}_1^{Ij} \left(\langle c^{j\cdot}, Z \rangle+b_0^j\right)\1_{A_j}(Z) \right.\right. \left.\left. - \sum_{j = 1}^{d_1}\bar{W}_1^{Ij} \left(\langle c^{j\cdot}, Z \rangle+\bar{b}_0^j\right)\1_{A_j}(Z)\right|\right],\nonumber \\
T_3(\theta, \bar{\theta})&:= \E\left[\left|\sum_{j = 1}^{d_1}\bar{W}_1^{Ij} \left(\langle c^{j\cdot}, Z \rangle+\bar{b}_0^j\right)\1_{A_j}(Z) \right.\right. \left.\left. - \sum_{j = 1}^{d_1}\bar{W}_1^{Ij} \left(\langle c^{j\cdot}, Z \rangle+\bar{b}_0^j\right)\1_{\bar{A}_j}(Z)\right|\right].
\end{align*}
Further calculations yield
\begin{align}\label{fdiffexp2}
T_1(\theta, \bar{\theta}) & = \E\left[\left|\sum_{j = 1}^{d_1}W_1^{Ij} \left(\langle c^{j\cdot}, Z \rangle+b_0^j\right)\1_{A_j}(Z) \right.\right.  \left.\left. - \sum_{j = 1}^{d_1}\bar{W}_1^{Ij} \left(\langle c^{j\cdot}, Z \rangle+b_0^j\right)\1_{A_j}(Z)\right|\right]\nonumber \\
&\leq \sum_{j = 1}^{d_1}\E\left[\left|\langle c^{j\cdot}, Z \rangle+b_0^j\right|\right]\left|W_1^{Ij}-\bar{W}_1^{Ij} \right|\nonumber \\
&\leq d_1(1+c_F)\E[(1+|Z|)](1+|\theta|+|\bar{\theta}|)|\theta - \bar{\theta}|.
\end{align}
Moreover, it follows that
\begin{align}\label{fdiffexp3}
T_2(\theta, \bar{\theta})&=  \E\left[\left|\sum_{j = 1}^{d_1}\bar{W}_1^{Ij} \left(\langle c^{j\cdot}, Z \rangle+b_0^j\right)\1_{A_j}(Z) \right.\right.  \left.\left. - \sum_{j = 1}^{d_1}\bar{W}_1^{Ij} \left(\langle c^{j\cdot}, Z \rangle+\bar{b}_0^j\right)\1_{A_j}(Z)\right|\right]\nonumber \\
&\leq  \sum_{j = 1}^{d_1}\left| \bar{W}_1^{Ij} \right|\left|b_0^j - \bar{b}_0^j \right|\nonumber \\
&\leq d_1(1+|\theta|+|\bar{\theta}|)|\theta - \bar{\theta}|.
\end{align}
In addition, one obtains
\begin{align*}
T_3(\theta, \bar{\theta})&= \E\left[\left|\sum_{j = 1}^{d_1}\bar{W}_1^{Ij} \left(\langle c^{j\cdot}, Z \rangle+\bar{b}_0^j\right)\1_{A_j}(Z) \right.\right.  \left.\left. - \sum_{j = 1}^{d_1}\bar{W}_1^{Ij} \left(\langle c^{j\cdot}, Z \rangle+\bar{b}_0^j\right)\1_{\bar{A}_j}(Z)\right|\right]\nonumber\\
&\leq \sum_{j = 1}^{d_1}(1+c_F)\left(1+\left|\bar{b}_0^j\right|\right)\left|\bar{W}_1^{Ij} \right|\E\left[\left(1+|Z|\right)\left|\1_{A_j}(Z) -\1_{\bar{A}_j}(Z) \right|\right]\nonumber \\
&\leq 2\sum_{j = 1}^{d_1}(1+c_F)(1+|\theta|+|\bar{\theta}|)^2\E\left[\left(1+|Z|^2\right)\left|\1_{A_j}(Z) -\1_{\bar{A}_j}(Z) \right|\right].
\end{align*}
The above inequality implies, by using \eqref{inddiff}, \eqref{z2inddiff}, that
\begin{align}\label{fdiffexp4}
T_3(\theta, \bar{\theta})
&\leq 2\sum_{j = 1}^{d_1}(1+c_F)(1+|\theta|+|\bar{\theta}|)^2C_{Z, \max}|\theta - \bar{\theta}|\nonumber \\
&\quad + 2\sum_{j = 1}^{d_1}(1+c_F)(1+|\theta|+|\bar{\theta}|)^2\left(C_{Z, \max}+ \bar{C}_{Z, \max}\right)\E\left[ (1+|Z|)^2\right]|\theta - \bar{\theta}|\nonumber \\
&\leq 4d_1(1+c_F)\left(C_{Z, \max}+ \bar{C}_{Z, \max}\right)\E\left[ (1+|Z|)^2\right](1+|\theta|+|\bar{\theta}|)^2|\theta - \bar{\theta}|.
\end{align}
Substituting \eqref{fdiffexp2}, \eqref{fdiffexp3}, \eqref{fdiffexp4} into \eqref{fdiffexp1} yields
\begin{align}\label{fdiffexp5}
\begin{split}
&\E\left[\left|\mathfrak{N}^I(\theta, Z) - \mathfrak{N}^I(\bar{\theta}, Z)\right|\right]\\
& \leq 6d_1(1+c_F)\left(1+C_{Z, \max}+ \bar{C}_{Z, \max}\right) \E\left[ (1+|Z|)^2\right](1+|\theta|+|\bar{\theta}|)^2|\theta - \bar{\theta}|.
\end{split}
\end{align}
\item For any $I = 1, \dots, m_2$, $\theta, \bar{\theta} \in \R^d$, one obtains
\begin{align}\label{fdiffexp6}
&\E\left[\left(1+|Z|\right)\left|\mathfrak{N}^I(\theta, Z) - \mathfrak{N}^I(\bar{\theta}, Z)\right|\right]\\
& =\E\left[\left(1+|Z|\right)\left|\sum_{j = 1}^{d_1}W_1^{Ij} \left(\langle c^{j\cdot}, Z \rangle+b_0^j\right)\1_{A_j}(Z) \right.\right.  \left.\left. - \sum_{j = 1}^{d_1}\bar{W}_1^{Ij} \left(\langle c^{j\cdot}, Z \rangle+\bar{b}_0^j\right)\1_{\bar{A}_j}(Z)\right|\right]\nonumber \\
&\leq T_4(\theta, \bar{\theta})+T_5(\theta, \bar{\theta})+T_6(\theta, \bar{\theta}),
\end{align}
where
\begin{align*}
T_4(\theta, \bar{\theta}) & := \E\left[\left(1+|Z|\right)\left|\sum_{j = 1}^{d_1}W_1^{Ij} \left(\langle c^{j\cdot}, Z \rangle+b_0^j\right)\1_{A_j}(Z) \right.\right.\nonumber \\
&\qquad \left.\left. - \sum_{j = 1}^{d_1}\bar{W}_1^{Ij} \left(\langle c^{j\cdot}, Z \rangle+b_0^j\right)\1_{A_j}(Z)\right|\right],\nonumber \\
T_5(\theta, \bar{\theta})&:=  \E\left[\left(1+|Z|\right)\left|\sum_{j = 1}^{d_1}\bar{W}_1^{Ij} \left(\langle c^{j\cdot}, Z \rangle+b_0^j\right)\1_{A_j}(Z) \right.\right.\nonumber \\
&\qquad \left.\left. - \sum_{j = 1}^{d_1}\bar{W}_1^{Ij} \left(\langle c^{j\cdot}, Z \rangle+\bar{b}_0^j\right)\1_{A_j}(Z)\right|\right],\nonumber \\
T_6(\theta, \bar{\theta})&:= \E\left[\left(1+|Z|\right)\left|\sum_{j = 1}^{d_1}\bar{W}_1^{Ij} \left(\langle c^{j\cdot}, Z \rangle+\bar{b}_0^j\right)\1_{A_j}(Z) \right.\right.\nonumber \\
&\qquad \left.\left. - \sum_{j = 1}^{d_1}\bar{W}_1^{Ij} \left(\langle c^{j\cdot}, Z \rangle+\bar{b}_0^j\right)\1_{\bar{A}_j}(Z)\right|\right].
\end{align*}
By using similar arguments as in \eqref{fdiffexp2}, \eqref{fdiffexp3}, straightforward calculations show that
\begin{align}\label{fdiffexp7}
\begin{split}
T_4(\theta, \bar{\theta})  &\leq d_1(1+c_F)\E[(1+|Z|)^2](1+|\theta|+|\bar{\theta}|)|\theta - \bar{\theta}|,\\
T_5(\theta, \bar{\theta})  &\leq d_1\E[(1+|Z|)](1+|\theta|+|\bar{\theta}|)|\theta - \bar{\theta}|.
\end{split}
\end{align}
Furthermore, one obtains
\begin{align*}
T_6(\theta, \bar{\theta})&= \E\left[\left(1+|Z|\right)\left|\sum_{j = 1}^{d_1}\bar{W}_1^{Ij} \left(\langle c^{j\cdot}, Z \rangle+\bar{b}_0^j\right)\1_{A_j}(Z) \right.\right.\nonumber \\
&\qquad \left.\left. - \sum_{j = 1}^{d_1}\bar{W}_1^{Ij} \left(\langle c^{j\cdot}, Z \rangle+\bar{b}_0^j\right)\1_{\bar{A}_j}(Z)\right|\right]\nonumber\\
&\leq \sum_{j = 1}^{d_1}(1+c_F)\left(1+\left|\bar{b}_0^j\right|\right)\left|\bar{W}_1^{Ij} \right|\E\left[\left(1+|Z|\right)^2\left|\1_{A_j}(Z) -\1_{\bar{A}_j}(Z) \right|\right]\nonumber \\
&\leq 2\sum_{j = 1}^{d_1}(1+c_F)(1+|\theta|+|\bar{\theta}|)^2\E\left[\left(1+|Z|^2\right)\left|\1_{A_j}(Z) -\1_{\bar{A}_j}(Z) \right|\right],
\end{align*}
which, following the arguments in \eqref{fdiffexp4}, implies
\begin{align}\label{fdiffexp8}
T_6(\theta, \bar{\theta})
&\leq 4d_1(1+c_F)\left(C_{Z, \max}+ \bar{C}_{Z, \max}\right)\E\left[ (1+|Z|)^2\right](1+|\theta|+|\bar{\theta}|)^2|\theta - \bar{\theta}|.
\end{align}
Substituting \eqref{fdiffexp7}, \eqref{fdiffexp8} into \eqref{fdiffexp6} yields
\begin{align}\label{fdiffexp9}
\begin{split}
&\E\left[\left(1+|Z|\right)\left|\mathfrak{N}^I(\theta, Z) - \mathfrak{N}^I(\bar{\theta}, Z)\right|\right]\\
& \leq 6d_1(1+c_F)\left(1+C_{Z, \max}+ \bar{C}_{Z, \max}\right) \E\left[ (1+|Z|)^2\right](1+|\theta|+|\bar{\theta}|)^2|\theta - \bar{\theta}|.
\end{split}
\end{align}
\end{enumerate}
For any $I = 1, \dots, m_2, J = 1, \dots, d_1$, %by using \eqref{inddiff}, \eqref{yi2inddiff}, \eqref{z2inddiff}, \eqref{fdiffexp5}, \eqref{fdiffexp9},
one obtains the following estimates for $G_{W_1^{IJ}}, G_{b_0^J}$ defined in \eqref{fixedfgexp2}:
\begin{enumerate}[leftmargin=*]
\item For any $\theta, \bar{\theta} \in \R^d$, by $\mathcal{L}(X) = \mathcal{L}(X_0) $, it holds that
\begin{align}\label{gw1diffexp1}
 \E\left[\left|G_{W_1^{IJ}}(\theta,X_0) -G_{W_1^{IJ}}(\bar{\theta},X_0)  \right|\right]%\nonumber\\
&=\E\left[\left|G_{W_1^{IJ}}(\theta,X ) -G_{W_1^{IJ}}(\bar{\theta},X )  \right|\right]\nonumber\\
& = \E\left[\left|-2(Y^I - \mathfrak{N}^I(\theta, Z)) \left(\langle c^{J\cdot}, Z \rangle+b_0^J\right) \1_{A_J}(Z)  \right.\right. \nonumber\\
&\qquad \left.\left.+2(Y^I - \mathfrak{N}^I(\bar{\theta}, Z)) \left(\langle c^{J\cdot}, Z \rangle+\bar{b}_0^J\right)\1_{\bar{A}_J}(Z)  \right|\right]\nonumber\\
&\leq T_7(\theta, \bar{\theta})+T_8(\theta, \bar{\theta})+T_9(\theta, \bar{\theta}),
\end{align}
where
\begin{align*}
T_7(\theta, \bar{\theta})&:=  \E\left[\left|-2(Y^I - \mathfrak{N}^I(\theta, Z)) \left(\langle c^{J\cdot}, Z \rangle+b_0^J\right) \1_{A_J}(Z)  \right.\right. \nonumber\\
&\qquad \left.\left.+2(Y^I - \mathfrak{N}^I(\bar{\theta}, Z)) \left(\langle c^{J\cdot}, Z \rangle+b_0^J\right)\1_{A_J}(Z)  \right|\right],\nonumber\\
T_8(\theta, \bar{\theta})&:= \E\left[\left|-2(Y^I - \mathfrak{N}^I(\bar{\theta}, Z)) \left(\langle c^{J\cdot}, Z \rangle+b_0^J\right) \1_{A_J}(Z)  \right.\right. \nonumber\\
&\qquad \left.\left.+2(Y^I - \mathfrak{N}^I(\bar{\theta}, Z)) \left(\langle c^{J\cdot}, Z \rangle+\bar{b}_0^J\right)\1_{A_J}(Z)  \right|\right],\nonumber\\
T_9(\theta, \bar{\theta})&:= \E\left[\left|-2(Y^I - \mathfrak{N}^I(\bar{\theta}, Z)) \left(\langle c^{J\cdot}, Z \rangle+\bar{b}_0^J\right) \1_{A_J}(Z)  \right.\right. \nonumber\\
&\qquad \left.\left.+2(Y^I - \mathfrak{N}^I(\bar{\theta}, Z)) \left(\langle c^{J\cdot}, Z \rangle+\bar{b}_0^J\right)\1_{\bar{A}_J}(Z)  \right|\right].
\end{align*}
By using \eqref{fdiffexp9}, one obtains
\begin{align}\label{gw1diffexp2}
&T_7(\theta, \bar{\theta})   =   \E\left[\left|-2(Y^I - \mathfrak{N}^I(\theta, Z)) \left(\langle c^{J\cdot}, Z \rangle+b_0^J\right) \1_{A_J}(Z)  \right.\right. \nonumber\\
&\hspace{5em} \left.\left.+2(Y^I - \mathfrak{N}^I(\bar{\theta}, Z)) \left(\langle c^{J\cdot}, Z \rangle+b_0^J\right)\1_{A_J}(Z)  \right|\right]\nonumber\\
&\leq 2 (1+c_F)\left(1+\left|b_0^J\right|\right)\E\left[(1+|Z|)\left|\mathfrak{N}^I(\theta, Z) - \mathfrak{N}^I(\bar{\theta}, Z)\right|\right]\nonumber\\
\begin{split}
&\leq 12d_1(1+c_F)^2\left(1+C_{Z, \max}+ \bar{C}_{Z, \max}\right) %\\
%&\qquad \times
\E\left[ (1+|Z|)^2\right](1+|\theta|+|\bar{\theta}|)^3|\theta - \bar{\theta}|.
\end{split}
\end{align}
Moreover, by \eqref{fueexp}, and the fact that $X = (Y,Z)$, it follows that
\begin{align}\label{gw1diffexp3}
T_8(\theta, \bar{\theta})&= \E\left[\left|-2(Y^I - \mathfrak{N}^I(\bar{\theta}, Z)) \left(\langle c^{J\cdot}, Z \rangle+b_0^J\right) \1_{A_J}(Z)  \right.\right. \nonumber\\
&\qquad \left.\left.+2(Y^I - \mathfrak{N}^I(\bar{\theta}, Z)) \left(\langle c^{J\cdot}, Z \rangle+\bar{b}_0^J\right)\1_{A_J}(Z)  \right|\right],\nonumber\\
&\leq 2 \E\left[\left(\left|Y^I \right|+\left| \mathfrak{N}^I(\bar{\theta}, Z)\right|\right)\right]\left|\bar{b}_0^J-b_0^J\right|\nonumber\\
&\leq 2 \E\left[\left(\left|X \right|+d_1 (1+c_F)(1+|X|)(1+|\bar{\theta}|)^2\right)\right]\left|\theta-\bar{\theta}\right|\nonumber\\
&\leq 4d_1 (1+c_F)\E\left[ (1+|X |)\right](1+|\theta|+|\bar{\theta}|)^2|\theta - \bar{\theta}|.
\end{align}
Furthermore, by using \eqref{fueexp}, and by applying Young's inequality that $2ab \leq a^2+b^2$ for $a, b \geq 0$, one obtains,
\begin{align*}
&T_9(\theta, \bar{\theta})= \E\left[\left|-2(Y^I - \mathfrak{N}^I(\bar{\theta}, Z)) \left(\langle c^{J\cdot}, Z \rangle+\bar{b}_0^J\right) \1_{A_J}(Z)  \right.\right. \nonumber\\
&\hspace{5em} \left.\left.+2(Y^I - \mathfrak{N}^I(\bar{\theta}, Z)) \left(\langle c^{J\cdot}, Z \rangle+\bar{b}_0^J\right)\1_{\bar{A}_J}(Z)  \right|\right]\nonumber\\
&\leq  2(1+c_F)\left(1+\left|\bar{b}_0^J\right|\right)\E\left[(1+|Z|)\left(\left|Y^I \right|+\left| \mathfrak{N}^I(\bar{\theta}, Z)\right|\right)\left|\1_{\bar{A}_J}(Z) -\1_{A_J}(Z)  \right|\right]\nonumber\\
&\leq (1+c_F)\left(1+\left|\bar{b}_0^J\right|\right)\E\left[(1+|Z|)^2 \left|\1_{\bar{A}_J}(Z) -\1_{A_J}(Z)  \right|\right]\nonumber\\
&\quad + (1+c_F)\left(1+\left|\bar{b}_0^J\right|\right)\E\left[ \left|Y^I \right|^2\left|\1_{\bar{A}_J}(Z) -\1_{A_J}(Z)  \right|\right]\nonumber\\
&\quad +2(1+c_F)\left(1+\left|\bar{b}_0^J\right|\right)\E\left[(1+|Z|) \left| \mathfrak{N}^I(\bar{\theta}, Z)\right| \left|\1_{\bar{A}_J}(Z) -\1_{A_J}(Z)  \right|\right]\nonumber\\
&\leq 2 (1+c_F)\left(1+\left|\bar{b}_0^J\right|\right)\E\left[ \left|\1_{\bar{A}_J}(Z) -\1_{A_J}(Z)  \right|\right]\nonumber\\
&\quad + 2(1+c_F)\left(1+\left|\bar{b}_0^J\right|\right)\E\left[|Z|^2 \left|\1_{\bar{A}_J}(Z) -\1_{A_J}(Z)  \right|\right]\nonumber\\
&\quad +(1+c_F)\left(1+\left|\bar{b}_0^J\right|\right)\E\left[ \left|Y^I \right|^2\left|\1_{\bar{A}_J}(Z) -\1_{A_J}(Z)  \right|\right]\nonumber\\
&\quad +2d_1(1+c_F)^2\left(1+\left|\bar{b}_0^J\right|\right)(1+|\theta|+|\bar{\theta}|)^2%\nonumber\\
%&\qquad \times
\E\left[ (1+|Z|)^2 \left|\1_{\bar{A}_J}(Z) -\1_{A_J}(Z)  \right|\right].
\end{align*}
Applying \eqref{inddiff}, \eqref{yi2inddiff}, \eqref{z2inddiff}, and the fact that $X = (Y,Z)$, yields
\begin{align}\label{gw1diffexp4}
T_9(\theta, \bar{\theta})
& \leq  2 (1+c_F)(1+|\bar{\theta}|)C_{Z, \max}|\theta - \bar{\theta}|\nonumber\\
&\quad +2 (1+c_F)(1+|\bar{\theta}|)\left(C_{Z, \max}+ \bar{C}_{Z, \max}\right)\E\left[ (1+|Z|)^2\right]|\theta - \bar{\theta}|\nonumber\\
&\quad + (1+c_F)(1+|\bar{\theta}|)C_{Z, \max}\E\left[|Y^I|^2\right]|\theta - \bar{\theta}|\nonumber\\
&\quad +4d_1(1+c_F)^2(1+|\theta|+|\bar{\theta}|)^3\E\left[  \left|\1_{\bar{A}_J}(Z) -\1_{A_J}(Z)  \right|\right]\nonumber\\
&\quad +4d_1(1+c_F)^2(1+|\theta|+|\bar{\theta}|)^3\E\left[ |Z|^2 \left|\1_{\bar{A}_J}(Z) -\1_{A_J}(Z)  \right|\right]\nonumber\\
&\hspace{-3em}\leq 5(1+c_F)(1+|\theta|+|\bar{\theta}|)\left(1+C_{Z, \max}+ \bar{C}_{Z, \max}\right)\E\left[ (1+|X|)^2\right]|\theta - \bar{\theta}|\nonumber\\
&\quad +4d_1(1+c_F)^2(1+|\theta|+|\bar{\theta}|)^3C_{Z, \max}|\theta - \bar{\theta}|\nonumber\\
&\quad +4d_1(1+c_F)^2(1+|\theta|+|\bar{\theta}|)^3\left(C_{Z, \max}+ \bar{C}_{Z, \max}\right)\E\left[ (1+|Z|)^2\right]|\theta - \bar{\theta}|\nonumber\\
\begin{split}
&\hspace{-3em}\leq 13d_1(1+c_F)^2(1+|\theta|+|\bar{\theta}|)^3\left(1+C_{Z, \max}+ \bar{C}_{Z, \max}\right)%\\
%&\qquad \times
\E\left[ (1+|X|)^2\right]|\theta - \bar{\theta}|.
\end{split}
\end{align}
Substituting \eqref{gw1diffexp2}, \eqref{gw1diffexp3}, \eqref{gw1diffexp4} into \eqref{gw1diffexp1} and using $\mathcal{L}(X) = \mathcal{L}(X_0) $ yield, for any $I = 1, \dots, m_2, J = 1, \dots, d_1$, $\theta, \bar{\theta} \in \R^d$,
\begin{align}\label{gw1diffexp5}
&\E\left[\left|G_{W_1^{IJ}}(\theta,X_0) -G_{W_1^{IJ}}(\bar{\theta},X_0)  \right|\right]\nonumber\\
\begin{split}
&\leq 29d_1(1+c_F)^2(1+|\theta|+|\bar{\theta}|)^3 \left(1+C_{Z, \max}+ \bar{C}_{Z, \max}\right)  \E\left[ (1+|X_0|)^2\right]|\theta - \bar{\theta}|.
\end{split}
\end{align}
\item For any $\theta, \bar{\theta} \in \R^d$, by $\mathcal{L}(X) = \mathcal{L}(X_0) $, it follows that
\begin{align}\label{gw1diffexp6}
 &\E\left[\left|G_{b_0^J}(\theta,X_0)  -G_{b_0^J}(\bar{\theta},X_0)  \right|\right]\nonumber\\
 &=\E\left[\left|G_{b_0^J}(\theta,X)  -G_{b_0^J}(\bar{\theta},X)  \right|\right]\nonumber\\
& = \E\left[\left|-2\sum_{i = 1}^{m_2}(Y^i - \mathfrak{N}^i(\theta, Z)) W_1^{iJ}\1_{A_J} (Z)   +2\sum_{i = 1}^{m_2}(Y^i - \mathfrak{N}^i(\bar{\theta}, Z)) \bar{W}_1^{iJ}\1_{\bar{A}_J} (Z) \right|\right]\nonumber\\
&\leq T_{10}(\theta, \bar{\theta})+T_{11}(\theta, \bar{\theta})+T_{12}(\theta, \bar{\theta}),
\end{align}
where
\begin{align*}
T_{10}(\theta, \bar{\theta})&:= \E\left[\left|-2\sum_{i = 1}^{m_2}(Y^i - \mathfrak{N}^i(\theta, Z)) W_1^{iJ}\1_{A_J} (Z)  \right.\right. \nonumber\\
&\qquad \left.\left.+2\sum_{i = 1}^{m_2}(Y^i - \mathfrak{N}^i(\bar{\theta}, Z)) W_1^{iJ}\1_{A_J} (Z) \right|\right],\nonumber\\
T_{11}(\theta, \bar{\theta})&:=  \E\left[\left|-2\sum_{i = 1}^{m_2}(Y^i - \mathfrak{N}^i(\bar{\theta}, Z)) W_1^{iJ}\1_{A_J} (Z)  \right.\right. \nonumber\\
&\qquad \left.\left.+2\sum_{i = 1}^{m_2}(Y^i - \mathfrak{N}^i(\bar{\theta}, Z)) \bar{W}_1^{iJ}\1_{A_J} (Z) \right|\right],\nonumber\\
T_{12}(\theta, \bar{\theta})&:=\E\left[\left|-2\sum_{i = 1}^{m_2}(Y^i - \mathfrak{N}^i(\bar{\theta}, Z)) \bar{W}_1^{iJ}\1_{A_J} (Z)  \right.\right. \nonumber\\
&\qquad \left.\left.+2\sum_{i = 1}^{m_2}(Y^i - \mathfrak{N}^i(\bar{\theta}, Z)) \bar{W}_1^{iJ}\1_{\bar{A}_J} (Z) \right|\right].\nonumber
\end{align*}
By using \eqref{fdiffexp5}, one obtains
\begin{align}\label{gw1diffexp7}
&T_{10}(\theta, \bar{\theta})
= \E\left[\left|-2\sum_{i = 1}^{m_2}(Y^i - \mathfrak{N}^i(\theta, Z)) W_1^{iJ}\1_{A_J} (Z)  \right.\right. \nonumber\\
&\hspace{5em} \left.\left.+2\sum_{i = 1}^{m_2}(Y^i - \mathfrak{N}^i(\bar{\theta}, Z)) W_1^{iJ}\1_{A_J} (Z) \right|\right]\nonumber\\
&\leq  2\sum_{i = 1}^{m_2}\left| W_1^{iJ}\right| \E\left[\left| \mathfrak{N}^i(\theta, Z) -  \mathfrak{N}^i(\bar{\theta}, Z)\right|\right]\nonumber\\
\begin{split}
&\leq 12m_2 d_1(1+c_F)\left(1+C_{Z, \max}+ \bar{C}_{Z, \max}\right)%\\
%&\quad \times
\E\left[ (1+|Z|)^2\right](1+|\theta|+|\bar{\theta}|)^3|\theta - \bar{\theta}|.
\end{split}
\end{align}
Moreover, by using \eqref{fueexp}, and that $X = (Y,Z)$, we have
\begin{align}\label{gw1diffexp8}
T_{11}(\theta, \bar{\theta})
&= \E\left[\left|-2\sum_{i = 1}^{m_2}(Y^i - \mathfrak{N}^i(\bar{\theta}, Z)) W_1^{iJ}\1_{A_J} (Z)  \right.\right. \nonumber\\
&\qquad \left.\left.+2\sum_{i = 1}^{m_2}(Y^i - \mathfrak{N}^i(\bar{\theta}, Z)) \bar{W}_1^{iJ}\1_{A_J} (Z) \right|\right],\nonumber\\
&\leq  2\sum_{i = 1}^{m_2} \E\left[\left(\left|Y^i \right|+\left| \mathfrak{N}^i(\bar{\theta}, Z)\right|\right)\right]\left| \bar{W}_1^{iJ} - W_1^{iJ}\right|\nonumber\\
&\leq 2\sum_{i = 1}^{m_2}\E\left[\left(\left|X \right|+d_1 (1+c_F)(1+|X|)(1+|\bar{\theta}|)^2\right)\right]\left|\theta-\bar{\theta}\right|\nonumber\\
&\leq 4m_2d_1 (1+c_F)\E\left[(1+|X|)\right](1+|\theta|+|\bar{\theta}|)^2|\theta - \bar{\theta}|.
\end{align}
In addition, one obtains, by using \eqref{fueexp},
\begin{align*}
T_{12}(\theta, \bar{\theta})
&= \E\left[\left|-2\sum_{i = 1}^{m_2}(Y^i - \mathfrak{N}^i(\bar{\theta}, Z)) \bar{W}_1^{iJ}\1_{A_J} (Z)  \right.\right. \nonumber\\
&\qquad \left.\left.+2\sum_{i = 1}^{m_2}(Y^i - \mathfrak{N}^i(\bar{\theta}, Z)) \bar{W}_1^{iJ}\1_{\bar{A}_J} (Z) \right|\right]\nonumber\\
&\leq  2\sum_{i = 1}^{m_2}\left| \bar{W}_1^{iJ} \right| \E\left[\left(\left|Y^i \right|+\left| \mathfrak{N}^i(\bar{\theta}, Z)\right|\right)\left|1_{\bar{A}_J} (Z) - 1_{A_J} (Z) \right|\right]\nonumber\\
&\leq 2\sum_{i = 1}^{m_2}\left| \bar{W}_1^{iJ} \right| \E\left[\left(1+\left|Y^i \right|^2\right)\left|1_{\bar{A}_J} (Z) - 1_{A_J} (Z) \right|\right]\nonumber\\
&\quad +2\sum_{i = 1}^{m_2}\left| \bar{W}_1^{iJ} \right| \E\left[\left| \mathfrak{N}^i(\bar{\theta}, Z)\right|\left|1_{\bar{A}_J} (Z) - 1_{A_J} (Z) \right|\right]\nonumber\\
&\leq  2\sum_{i = 1}^{m_2}\left| \bar{W}_1^{iJ} \right| \E\left[ \left|1_{\bar{A}_J} (Z) - 1_{A_J} (Z) \right|\right]\nonumber\\
&\quad + 2\sum_{i = 1}^{m_2}\left| \bar{W}_1^{iJ} \right| \E\left[ \left|Y^i \right|^2 \left|1_{\bar{A}_J} (Z) - 1_{A_J} (Z) \right|\right]\nonumber\\
&\quad +2d_1 (1+c_F)(1+|\bar{\theta}|)^2\sum_{i = 1}^{m_2}\left| \bar{W}_1^{iJ} \right| \E\left[(1+|Z|)\left|1_{\bar{A}_J} (Z) - 1_{A_J} (Z) \right|\right]\nonumber\\
&\leq  2\sum_{i = 1}^{m_2}\left| \bar{W}_1^{iJ} \right| \E\left[ \left|1_{\bar{A}_J} (Z) - 1_{A_J} (Z) \right|\right]\nonumber\\
&\quad + 2\sum_{i = 1}^{m_2}\left| \bar{W}_1^{iJ} \right| \E\left[ \left|Y^i \right|^2 \left|1_{\bar{A}_J} (Z) - 1_{A_J} (Z) \right|\right]\nonumber\\
&\quad +4d_1 (1+c_F)(1+|\bar{\theta}|)^2\sum_{i = 1}^{m_2}\left| \bar{W}_1^{iJ} \right| \E\left[ \left|1_{\bar{A}_J} (Z) - 1_{A_J} (Z) \right|\right]\nonumber\\
&\quad +2d_1 (1+c_F)(1+|\bar{\theta}|)^2\sum_{i = 1}^{m_2}\left| \bar{W}_1^{iJ} \right| \E\left[|Z|^2\left|1_{\bar{A}_J} (Z) - 1_{A_J} (Z) \right|\right].
\end{align*}
This yields, by applying \eqref{inddiff}, \eqref{yi2inddiff}, \eqref{z2inddiff},
\begin{align}\label{gw1diffexp9}
T_{12}(\theta, \bar{\theta})
&\leq 2m_2(1+|\theta|+|\bar{\theta}|)C_{Z, \max}|\theta - \bar{\theta}|\nonumber\\
&\quad + 2m_2(1+|\theta|+|\bar{\theta}|)C_{Z, \max}\E\left[|Y^I|^2\right]|\theta - \bar{\theta}|\nonumber\\
&\quad +4m_2d_1 (1+c_F)(1+|\theta|+|\bar{\theta}|)^3C_{Z, \max}|\theta - \bar{\theta}|\nonumber\\
&\quad +2m_2d_1 (1+c_F)(1+|\theta|+|\bar{\theta}|)^3\left(C_{Z, \max}+ \bar{C}_{Z, \max}\right)\E\left[ (1+|Z|)^2\right]|\theta - \bar{\theta}|\nonumber\\
\begin{split}
&\hspace{-3em}\leq 10m_2d_1 (1+c_F)(1+|\theta|+|\bar{\theta}|)^3%\\
%&\quad \times
\left(1+C_{Z, \max}+ \bar{C}_{Z, \max}\right)\E\left[ (1+|X|)^2\right]|\theta - \bar{\theta}| .
\end{split}
\end{align}
Substituting \eqref{gw1diffexp7}, \eqref{gw1diffexp8}, \eqref{gw1diffexp9} into \eqref{gw1diffexp6} and using $\mathcal{L}(X) = \mathcal{L}(X_0) $ yield, for any $J= 1, \dots, d_1$,
\begin{align}\label{gw1diffexp10}
&\E\left[\left|G_{b_0^J}(\theta,X_0)  -G_{b_0^J}(\bar{\theta},X_0)  \right|\right]\nonumber\\
\begin{split}
&\leq 26m_2d_1 (1+c_F)(1+|\theta|+|\bar{\theta}|)^3 \left(1+C_{Z, \max}+ \bar{C}_{Z, \max}\right)\E\left[ (1+|X_0|)^2\right]|\theta - \bar{\theta}| .
\end{split}
\end{align}
\end{enumerate}

By using \eqref{gw1diffexp5}, \eqref{gw1diffexp10}, one obtains, for any $\theta, \bar{\theta} \in \R^d$, that
\begin{align}\label{fixedsga2lg}
&\E\left[\left|G(\theta,X_0)  - G(\bar{\theta},X_0) \right|\right] \nonumber\\
& = \E\left[\left(\sum_{i=1}^{m_2}\sum_{j=1}^{d_1}\left|G_{W_1^{ij}}(\theta,X_0) -G_{W_1^{ij}}(\bar{\theta},X_0)  \right|^2 \right.\right.  \left.\left.+\sum_{j=1}^{d_1}\left| G_{b_0^J}(\theta,X_0)  -G_{b_0^J}(\bar{\theta},X_0)  \right|^2\right)^{1/2}\right] \nonumber\\
&\leq 29m_2d_1^2(1+c_F)^2(1+|\theta|+|\bar{\theta}|)^3 \left(1+C_{Z, \max}+ \bar{C}_{Z, \max}\right)  \E\left[ (1+|X_0|)^2\right]|\theta - \bar{\theta}|\nonumber\\
&\quad + 26m_2d_1^2 (1+c_F)(1+|\theta|+|\bar{\theta}|)^3  \left(1+C_{Z, \max}+ \bar{C}_{Z, \max}\right)\E\left[ (1+|X_0|)^2\right]|\theta - \bar{\theta}|\nonumber\\
\begin{split}
&\leq 55m_2d_1^2(1+c_F)^2\left(1+C_{Z, \max}+ \bar{C}_{Z, \max}\right)   \E\left[ (1+|X_0|)^2\right](1+|\theta|+|\bar{\theta}|)^3|\theta - \bar{\theta}|.
\end{split}
\end{align}
Moreover, for any $I = 1, \dots, m_2, J= 1, \dots, d_1$, the following upper bounds can be obtained by using the definition of $G_{W_1^{IJ}}, G_{b_0^J}$ given in \eqref{fixedfgexp2}:
\begin{enumerate}[leftmargin=*]
\item For any $\theta \in \R^d, x \in \R^m$, one obtains, by using \eqref{fueexp},
\begin{align}\label{fixedsga2ue1}
\left|G_{W_1^{IJ}}(\theta, x)\right|
&= \left| -2(y^I - \mathfrak{N}^I(\theta, z))\sigma_1\left(\langle c^{J\cdot}, z \rangle+b_0^J\right)\right|\nonumber\\
&\leq 2\left(\left|y^I \right|+\left|\mathfrak{N}^I(\theta, z))\right|\right)(1+c_F)(1+|z|)\left(1+\left|b_0^J\right|\right)\nonumber\\
&\leq  2\left(\left|y^I \right|+d_1 (1+c_F)(1+|x|)(1+|\theta|)^2\right)(1+c_F)(1+|z|)(1+|\theta|)\nonumber\\
&\leq 4d_1 (1+c_F)^2(1+|x|)^2(1+|\theta|)^3.
\end{align}
\item Similarly, for any $\theta \in \R^d, x \in \R^m$ with $x = (y, z) \in \R^{m_2} \times \R^{m_1}$, one obtains, by using \eqref{fueexp},
\begin{align}\label{fixedsga2ue2}
\left|G_{b_0^J}(\theta, x)\right|
&= \left|-2\sum_{i = 1}^{m_2}(y^i - \mathfrak{N}^i(\theta, z)) W_1^{iJ}\1_{A_J} (z)\right|\nonumber\\
&\leq 2\sum_{i = 1}^{m_2}\left(\left|y^i \right|+\left|\mathfrak{N}^i(\theta, z))\right|\right) \left(1+\left|W_1^{iJ}\right|\right)\nonumber\\
&\leq  2\sum_{i = 1}^{m_2}\left(\left|y^i \right|+d_1 (1+c_F)(1+|x|)(1+|\theta|)^2\right)(1+|\theta|)\nonumber\\
&\leq 4m_2d_1 (1+c_F) (1+|x|) (1+|\theta|)^3.
\end{align}
\end{enumerate}
The above results \eqref{fixedsga2ue1}, \eqref{fixedsga2ue2} imply that, for any $\theta \in \R^d, x \in \R^m$,
\begin{align}\label{fixedsga2kg}
\left|G(\theta, x)\right| & = \left(\sum_{i=1}^{m_2}\sum_{j=1}^{d_1}\left|G_{W_1^{ij}}(\theta, x)\right|^2+\sum_{j=1}^{d_1}\left|G_{b_0^J}(\theta, x)\right|^2\right)^{1/2}\nonumber\\
&\leq 4m_2d_1^2 (1+c_F)^2(1+|x|)^2(1+|\theta|)^3  +4m_2d_1^2 (1+c_F) (1+|x|) (1+|\theta|)^3\nonumber\\
&\leq 8m_2d_1^2 (1+c_F)^2(1+|x|)^2(1+|\theta|)^3.
\end{align}
Therefore, for fixed $q = 4$, $\rho =2$, by \eqref{fixedsga2lg}, \eqref{fixedsga2kg}, one observes that Assumption \ref{AG} holds with
\begin{align*}
L_G  = 55m_2d_1^2(1+c_F)^2  \left(1+C_{Z, \max}+ \bar{C}_{Z, \max}\right)  \E\left[ (1+|X_0|)^2\right],\quad K_G  =  8m_2d_1^2 (1+c_F)^2.
\end{align*}

To show that Assumption \ref{AF} is satisfied, we apply the same arguments as in \eqref{ftc1} - \eqref{ftc3} to obtain, for any $\theta, \bar{\theta} \in \R^d, x, \bar{x} \in \R^m$,
\[
\left|F(\theta, x) - F(\bar{\theta}, \bar{x})\right|  = \left|\eta \theta |\theta|^{2r} -\eta \bar{\theta} |\bar{\theta}|^{2r}\right|\leq \eta(1+2r)\left(1+|\theta|+|\bar{\theta}|\right)^{2r}|\theta - \bar{\theta}|.
\]
Furthermore, one notes that, for any $\theta \in \R^d, x \in \R^m$, $\left| F(\theta, x)\right|\leq \eta\left(1+|\theta|^{2r+1}\right)$. Thus, for fixed $r = 2, \rho = 2$, Assumption \ref{AF} holds with $L_F :=  5\eta, K_F := \eta$.

Finally, by using the same proof as in Appendix \ref{proofACexample}, one notes that Assumption \ref{AC} holds with $A(x) = \eta I_d/2, B(x) = 0, \bar{r} = 0, a =\eta/2 , b =0$.
\end{proof}
%%%%%%%%%%%%%%%%%%%%%%%%
\begin{proof}[\textbf{Proof of Corollary \ref{corofixed}}]\label{proofremarkfixed}
Let $\theta = ([W_1], b_0) \in \R^d, \quad \bar{\theta} = ([\bar{W}_1], \bar{b}_0) \in \R^d$, and let $\bar{A}_J$ be defined in \eqref{barAJexp}. One notes that under the assumptions in Corollary \ref{corofixed}, the following result can be obtained. For any $I = 1, \dots, m_2, J = 1, \dots, d_1$, we have
\begin{align*}
&\E\left[ |Y^I|^2\left|\1_{A_J}(Z) - \1_{\bar{A}_J}(Z)\right| \right] \nonumber \\
& = \E\left[ |y^I(Z)|^2 \1_{\{(-\bar{b}_0^J-\sum_{k\neq \nu_J}c^{Jk}Z^k)/c^{J\nu_J}  \leq Z^{\nu_J} <(-b_0^J-\sum_{k\neq \nu_J}c^{Jk}Z^k)/c^{J\nu_J} \}} \right] \nonumber \\
& =  \int_{\R^{m_1-1}} \int_{\frac{-\bar{b}_0^J-\sum_{k\neq \nu_J}c^{Jk}z^k}{c^{J\nu_J} }}^{\frac{-b_0^J-\sum_{k\neq \nu_J}c^{Jk}z^k}{c^{J\nu_J} }}|y^I(z)|^2 f_{Z^{\nu_J}|Z_{-\nu_J} }(z^{\nu_J}|z_{-\nu_J} )\, \rmd z^{\nu_J}  f_{Z_{-\nu_J} }(z_{-\nu_J} )\, \rmd  z_{-\nu_J} \nonumber\\
& \leq  \int_{\R^{m_1-1}} \int_{\frac{-\bar{b}_0^J-\sum_{k\neq \nu_J}c^{Jk}z^k}{c^{J\nu_J} }}^{\frac{-b_0^J-\sum_{k\neq \nu_J}c^{Jk}z^k}{c^{J\nu_J} }}c_y^2(1+|z|^{q_y})^2 f_{Z^{\nu_J}|Z_{-\nu_J} }(z^{\nu_J}|z_{-\nu_J} )\, \rmd z^{\nu_J}   f_{Z_{-\nu_J} }(z_{-\nu_J} )\, \rmd  z_{-\nu_J}  \nonumber\\
& \leq 2c_y^2 \int_{\R^{m_1-1}} \int_{\frac{-\bar{b}_0^J-\sum_{k\neq \nu_J}c^{Jk}z^k}{c^{J\nu_J} }}^{\frac{-b_0^J-\sum_{k\neq \nu_J}c^{Jk}z^k}{c^{J\nu_J} }}  f_{Z^{\nu_J}|Z_{-\nu_J} }(z^{\nu_J}|z_{-\nu_J} )\, \rmd z^{\nu_J}   f_{Z_{-\nu_J} }(z_{-\nu_J} )\, \rmd  z_{-\nu_J}  \nonumber\\
& \quad +2c_y^2 \int_{\R^{m_1-1}} \int_{\frac{-\bar{b}_0^J-\sum_{k\neq \nu_J}c^{Jk}z^k}{c^{J\nu_J} }}^{\frac{-b_0^J-\sum_{k\neq \nu_J}c^{Jk}z^k}{c^{J\nu_J} }} |z|^{2q_y}  f_{Z^{\nu_J}|Z_{-\nu_J} }(z^{\nu_J}|z_{-\nu_J} )\, \rmd z^{\nu_J} f_{Z_{-\nu_J} }(z_{-\nu_J} )\, \rmd  z_{-\nu_J}.
\end{align*}
Since $X = (Y, Z)$, by using \eqref{remarkfixedmmtbd}, the above result implies
\begin{align}
&\E\left[ |Y^I|^2\left|\1_{A_J}(Z) - \1_{\bar{A}_J}(Z)\right| \right] \nonumber \\
&\leq \frac{2c_y^2 C_{Z^{\nu_J}}}{c^{J\nu_J}}|\bar{b}_0^J - b_0^J| \nonumber \\
& \quad +2^{q_y}c_y^2 \int_{\R^{m_1-1}} \int_{\frac{-\bar{b}_0^J-\sum_{k\neq \nu_J}c^{Jk}z^k}{c^{J\nu_J} }}^{\frac{-b_0^J-\sum_{k\neq \nu_J}c^{Jk}z^k}{c^{J\nu_J} }} |z^{\nu_J}|^{2q_y}  f_{Z^{\nu_J}|Z_{-\nu_J} }(z^{\nu_J}|z_{-\nu_J} )\, \rmd z^{\nu_J}   f_{Z_{-\nu_J} }(z_{-\nu_J} )\, \rmd  z_{-\nu_J}\nonumber\\
& \quad +2^{q_y}c_y^2 \int_{\R^{m_1-1}} \int_{\frac{-\bar{b}_0^J-\sum_{k\neq \nu_J}c^{Jk}z^k}{c^{J\nu_J} }}^{\frac{-b_0^J-\sum_{k\neq \nu_J}c^{Jk}z^k}{c^{J\nu_J} }}  f_{Z^{\nu_J}|Z_{-\nu_J} }(z^{\nu_J}|z_{-\nu_J} )\, \rmd z^{\nu_J} |z_{-\nu_J}|^{2q_y}   f_{Z_{-\nu_J} }(z_{-\nu_J} )\, \rmd  z_{-\nu_J}\nonumber\\
&\leq \frac{2c_y^2 C_{Z^{\nu_J}}}{c^{J\nu_J}}|\bar{b}_0^J - b_0^J| +\frac{2^{q_y}c_y^2 \bar{C}_{Z^{\nu_J}}}{c^{J\nu_J}}|\bar{b}_0^J - b_0^J|+\frac{2^{q_y}c_y^2 C_{Z^{\nu_J}}}{c^{J\nu_J}}\E\left[|Z_{-\nu_J}|^{2q_y}\right]|\bar{b}_0^J - b_0^J|\nonumber \\
& \leq  \left(\frac{2c_y^2C_{Z^{\nu_J}}}{c^{J\nu_J}}+\frac{2^{q_y}c_y^2 \bar{C}_{Z^{\nu_J}}}{c^{J\nu_J}}+\frac{2^{q_y}c_y^2 C_{Z^{\nu_J}}}{c^{J\nu_J}}\right)\E\left[(1+|X|)^{2q_y}\right]|\bar{b}_0^J - b_0^J| \nonumber \\
& \leq C_{Z, \max}\E\left[(1+|X|)^{2q_y}\right]|\theta - \bar{\theta}|,\nonumber
\end{align}
where
\begin{equation}\label{remarkczmax}
C_{Z, \max} := \max_{J\in\{ 1, \dots, d_1\}}\left\{c_y^2\left(2C_{Z^{\nu_J}}+2^{q_y}  \bar{C}_{Z^{\nu_J}}+2^{q_y}  C_{Z^{\nu_J}}\right)/c^{J\nu_J}\right\}.
\end{equation}
The rest of the proof follows the same lines as in the proof of Proposition \ref{propfixed} (see Appendix \ref{proofpropfixed}), however, one notes that, in this case, $\rho = 2q_y$, and $C_{Z, \max}$ is given by \eqref{remarkczmax}.
\end{proof}
%%%%%%%%%%%%%%%%%%%%%%%%

\begin{proof}[\textbf{Proof of Proposition \ref{propat}}]\label{proofpropat}
First, since for any $a_1, a_2\in \R$,
\[
a_3= 2a_1, \quad a_4 = -a_1, \quad a_5 = 2a_2,  \quad a_6 = -a_2,
\]
one notes that the objective function $u$ defined in \eqref{atobj} - \eqref{atobjas} is given by
\[%\begin{align}\label{atobjfullexp}
u(\theta) =
\begin{cases}
\theta^{30}+a_2\theta^2 + (a_1-a_2)\theta^2\E[\1_{\{X \leq  \theta\}}], \quad |\theta| \leq 1,\\
\theta^{30}+a_5|\theta|+a_6+ ((a_3-a_5)|\theta|+(a_4-a_6))\E[\1_{\{X \leq  \theta\}}], \quad |\theta| > 1,
\end{cases}
%\end{align}
\]
which is continuously differentiable with
\[
u'(\theta)=
\begin{cases}
30\theta^{29}+2a_2\theta +2 (a_1-a_2)  \theta\E[\1_{\{X \leq  \theta\}}] + (a_1-a_2)\theta^2f_{X}(\theta), &\quad |\theta| \leq 1,\\
30\theta^{29}+(a_5+ (a_3-a_5)\E[\1_{\{X \leq  \theta\}}])(\1_{\{\theta>1\}}-\1_{\{\theta<-1\}}) &\\
+ ((a_3-a_5)|\theta|+(a_4-a_6))f_{X}(\theta), &\quad |\theta| > 1.
\end{cases}
\]
Since $(X_n)_{n\in\N_0}$ is a sequence of i.i.d. random variables with probability law $\mathcal{L}(X)$, by the definitions of $F, G$ given in \eqref{atsgexp} and as $H: = F+G$, we observe that $h(\theta) : = u'(\theta):= \E[H(\theta, X_0)]$, for all $\theta \in \R$. Thus, Assumption \ref{AI} holds.

To show that Assumption \ref{AG} holds, one considers the following cases:
\begin{enumerate}[leftmargin=*]
\item For $|\theta|, |\bar{\theta}|<1$, one obtains
\begin{align}
 \E[|G(\theta,X_0)-G(\bar{\theta}, X_0)|]
&=\E[|G(\theta,X)-G(\bar{\theta}, X)|]\nonumber\\
 & = \E \left[\left|2a_2\theta +2 (a_1-a_2)  \theta \1_{\{X \leq  \theta\}} + (a_1-a_2)\theta^2f_{X}(\theta)\right. \right. \nonumber\\
 &\quad\left. \left.- \left(2a_2\bar{\theta} +2 (a_1-a_2)  \bar{\theta} \1_{\{X \leq  \bar{\theta}\}} + (a_1-a_2)\bar{\theta}^2f_{X}(\bar{\theta})\right)\right|\right] \nonumber\\
\begin{split}\label{atsga2eq1}
& \leq 2|a_2||\theta-\bar{\theta}|+2|a_1-a_2|\E\left[| \theta \1_{\{X \leq  \theta\}} - \bar{\theta} \1_{\{X \leq  \bar{\theta}\}}| \right]\\
& \quad +|a_1-a_2||\theta^2f_{X}(\theta)-\bar{\theta}^2f_{X}(\bar{\theta})|.
\end{split}
\end{align}
By assuming without loss of generality that $\theta \leq \bar{\theta}$, one obtains
\begin{align}\label{atsga2eq2}
\E\left[| \theta \1_{\{X \leq  \theta\}} - \bar{\theta} \1_{\{X \leq  \bar{\theta}\}}| \right]
&\leq \E\left[| \theta \1_{\{X \leq  \theta\}} - \bar{\theta} \1_{\{X \leq  \theta\}}| \right] +\E\left[| \bar{\theta}  \1_{\{X \leq  \theta\}} - \bar{\theta} \1_{\{X \leq  \bar{\theta}\}}| \right] \nonumber\\
&\leq |\theta - \bar{\theta}| +|\bar{\theta}|\E[\1_{\{\theta \leq X \leq \bar{\theta}\}}]\nonumber\\
& = |\theta - \bar{\theta}| +|\bar{\theta}|\int_{\theta}^{\bar{\theta}}f_{X}(x)\, \rmd x \nonumber\\
&\leq |\theta - \bar{\theta}| +c_{X}|\bar{\theta}| |\theta - \bar{\theta}|\nonumber\\
& \leq (1+c_{X})(1+|\theta|+|\bar{\theta}|)|\theta - \bar{\theta}|.
\end{align}
Moreover, it holds that
\begin{align}\label{atsga2eq3}
|\theta^2f_{X}(\theta)-\bar{\theta}^2f_{X}(\bar{\theta})|
& \leq |\theta^2f_{X}(\theta)-\bar{\theta}^2f_{X}(\theta)|   +|\bar{\theta}^2f_{X}(\theta)-\bar{\theta}^2f_{X}(\bar{\theta})|\nonumber \\
&\leq c_{X}(|\theta|+|\bar{\theta}|)|\theta - \bar{\theta}| +L_{X}|\bar{\theta}|^2|\theta - \bar{\theta}|\nonumber\\
&\leq (c_{X}+L_{X})(1+|\theta|+|\bar{\theta}|)^2|\theta - \bar{\theta}|.
\end{align}
Substituting \eqref{atsga2eq2} and \eqref{atsga2eq3} into \eqref{atsga2eq1} yields
\begin{align}\label{atsga2resl1}
 \E[|G(\theta,X_0)-G(\bar{\theta}, X_0)|] %\nonumber \\
&\leq 2|a_2||\theta-\bar{\theta}|+2(|a_1|+|a_2|)(1+c_{X})(1+|\theta|+|\bar{\theta}|)|\theta - \bar{\theta}|\nonumber \\
& \quad +(|a_1|+|a_2|)(c_{X}+L_{X})(1+|\theta|+|\bar{\theta}|)^2|\theta - \bar{\theta}|\nonumber \\
&\leq (4+3c_{X}+L_{X})(1+|a_1|+|a_2|)(1+|\theta|+|\bar{\theta}|)^2|\theta - \bar{\theta}|.
\end{align}
\item For $|\theta|, |\bar{\theta}|>1$, one obtains
\begin{align}
&\E[|G(\theta,X_0)-G(\bar{\theta}, X_0)|]\nonumber\\
&=\E[|G(\theta,X)-G(\bar{\theta}, X)|]\nonumber\\
& = \E \left[\left|2(a_2+ (a_1-a_2) \1_{\{X \leq  \theta\}})(\1_{\{\theta>1\}}-\1_{\{\theta<-1\}}) + (a_1-a_2) (2|\theta|-1)f_{X}(\theta)\right. \right. \nonumber\\
&\quad\left. \left.- \left(2(a_2+ (a_1-a_2) \1_{\{X \leq  \bar{\theta}\}})(\1_{\{\bar{\theta}>1\}}-\1_{\{\bar{\theta}<-1\}}) + (a_1-a_2) (2|\bar{\theta}|-1)f_{X}(\bar{\theta})\right)\right|\right] \nonumber\\
\begin{split}\label{atsga2eq4}
& \leq \E \left[\left|2(a_2+ (a_1-a_2) \1_{\{X \leq  \theta\}})(\1_{\{\theta>1\}}-\1_{\{\theta<-1\}}) \right. \right. \\
&\quad\left. \left.- 2(a_2+ (a_1-a_2) \1_{\{X \leq  \bar{\theta}\}})(\1_{\{\bar{\theta}>1\}}-\1_{\{\bar{\theta}<-1\}})\right|\right] \\
&\quad + \E \left[\left| (a_1-a_2) (2|\theta|-1)f_{X}(\theta) - (a_1-a_2) (2|\bar{\theta}|-1)f_{X}(\bar{\theta})\right| \right].
\end{split}
\end{align}
One notes that as $|\theta|, |\bar{\theta}|>1$,
\begin{equation}\label{atsga2eq5}
\left|\1_{\{\theta>1\}}-\1_{\{\theta<-1\}} - (\1_{\{\bar{\theta}>1\}}-\1_{\{\bar{\theta}<-1\}}) \right| \leq |\theta- \bar{\theta}|.
\end{equation}
Then, by using \eqref{atsga2eq5}, and by assuming without loss of generality that $\theta \leq \bar{\theta}$, one obtains
\begin{align}\label{atsga2eq6}
&\E \left[\left|2(a_2+ (a_1-a_2) \1_{\{X \leq  \theta\}})(\1_{\{\theta>1\}}-\1_{\{\theta<-1\}}) \right. \right. \nonumber\\
&\quad\left. \left.- 2(a_2+ (a_1-a_2) \1_{\{X \leq  \bar{\theta}\}})(\1_{\{\bar{\theta}>1\}}-\1_{\{\bar{\theta}<-1\}})\right|\right]\nonumber\\
& \leq \E \left[\left|2(a_2+ (a_1-a_2) \1_{\{X \leq  \theta\}})(\1_{\{\theta>1\}}-\1_{\{\theta<-1\}}) \right. \right. \nonumber\\
&\quad\left. \left.- 2(a_2+ (a_1-a_2) \1_{\{X \leq  \theta\}})(\1_{\{\bar{\theta}>1\}}-\1_{\{\bar{\theta}<-1\}})\right|\right]\nonumber\\
&\quad +\E \left[\left|2(a_2+ (a_1-a_2) \1_{\{X \leq  \theta\}})(\1_{\{\bar{\theta}>1\}}-\1_{\{\bar{\theta}<-1\}}) \right. \right. \nonumber\\
&\quad\left. \left.- 2(a_2+ (a_1-a_2) \1_{\{X \leq  \bar{\theta}\}})(\1_{\{\bar{\theta}>1\}}-\1_{\{\bar{\theta}<-1\}})\right|\right]\nonumber\\
&\leq 2(|a_1|+2|a_2|)|\theta- \bar{\theta}|+2(|a_1|+ |a_2|)\E[|\1_{\{X \leq  \theta\}} - \1_{\{X \leq  \bar{\theta}\}}|]\nonumber\\
&\leq 2(|a_1|+2|a_2|)|\theta- \bar{\theta}|+2(|a_1|+ |a_2|)\E[\1_{\{\theta \leq X \leq \bar{\theta}\}}]\nonumber\\
&= 2(|a_1|+2|a_2|)|\theta- \bar{\theta}|+2(|a_1|+ |a_2|) \int_{\theta}^{\bar{\theta}}f_{X}(x)\, \rmd x \nonumber\\
&\leq 4(|a_1|+ |a_2|)|\theta- \bar{\theta}| +2c_{X}(|a_1|+ |a_2|) |\theta- \bar{\theta}|\nonumber\\
&\leq (4+2c_{X})(1+|a_1|+ |a_2|) |\theta- \bar{\theta}|.
\end{align}
In addition, one has that
\begin{align}\label{atsga2eq7}
&\E \left[\left| (a_1-a_2) (2|\theta|-1)f_{X}(\theta) - (a_1-a_2) (2|\bar{\theta}|-1)f_{X}(\bar{\theta})\right| \right]\nonumber\\
& \leq\E \left[\left| (a_1-a_2) (2|\theta|-1)f_{X}(\theta) - (a_1-a_2) (2|\bar{\theta}|-1)f_{X}(\theta)\right| \right]\nonumber\\
&\quad +\E \left[\left| (a_1-a_2) (2|\bar{\theta}|-1)f_{X}(\theta) - (a_1-a_2) (2|\bar{\theta}|-1)f_{X}(\bar{\theta})\right| \right]\nonumber\\
&\leq 2 c_{X}(|a_1|+ |a_2|) |\theta- \bar{\theta}|+2L_{X}(|a_1|+ |a_2|)(1+|\bar{\theta}|) |\theta- \bar{\theta}|\nonumber\\
&\leq 2(c_{X}+L_{X})(1+|a_1|+ |a_2|)(1+|\bar{\theta}|) |\theta- \bar{\theta}|.
\end{align}
Substituting \eqref{atsga2eq6} and \eqref{atsga2eq7} into \eqref{atsga2eq4} yields
\begin{align}\label{atsga2resl2}
 &\E[|G(\theta,X_0)-G(\bar{\theta}, X_0)|]\nonumber\\
& \leq (4+2c_{X})(1+|a_1|+ |a_2|) |\theta- \bar{\theta}| +2(c_{X}+L_{X})(1+|a_1|+ |a_2|)(1+|\bar{\theta}|) |\theta- \bar{\theta}|\nonumber\\
&\leq (4+4c_{X}+2L_{X})(1+|a_1|+ |a_2|)(1+|\theta|+|\bar{\theta}|) |\theta- \bar{\theta}|.
\end{align}
\item For $|\theta|\leq1$, $|\bar{\theta}|>1$, one obtains
\begin{align}
&\E[|G(\theta,X_0)-G(\bar{\theta}, X_0)|]\nonumber\\
&=\E[|G(\theta,X)-G(\bar{\theta}, X)|]\nonumber\\
& = \E \Big[\left|2a_2\theta +2 (a_1-a_2)  \theta \1_{\{X \leq  \theta\}} + (a_1-a_2)\theta^2f_{X}(\theta)\right. \Big. \nonumber\\
&\quad\Big. \left.- \left(2(a_2+ (a_1-a_2) \1_{\{X \leq  \bar{\theta}\}})(\1_{\{\bar{\theta}>1\}}-\1_{\{\bar{\theta}<-1\}}) + (a_1-a_2) (2|\bar{\theta}|-1)f_{X}(\bar{\theta})\right)\right|\Big] \nonumber\\
\begin{split}\label{atsga2eq8}
& \leq \E \left[\left|2a_2\theta +2 (a_1-a_2)  \theta \1_{\{X \leq  \theta\}}  -2(a_2+ (a_1-a_2) \1_{\{X \leq  \bar{\theta}\}})(\1_{\{\bar{\theta}>1\}}-\1_{\{\bar{\theta}<-1\}})\right|\right] \\
&\quad + \E \left[\left|(a_1-a_2)\theta^2f_{X}(\theta)- (a_1-a_2) (2|\bar{\theta}|-1)f_{X}(\bar{\theta})\right| \right].
\end{split}
\end{align}
One observes that as $|\theta|\leq1$, $|\bar{\theta}|>1$,
\begin{equation}\label{atsga2eq9}
\left|\theta- (\1_{\{\bar{\theta}>1\}}-\1_{\{\bar{\theta}<-1\}}) \right| \leq |\theta- \bar{\theta}|.
\end{equation}
Then, by using \eqref{atsga2eq9}, and by assuming without loss of generality that $\theta \leq \bar{\theta}$, one obtains
\begin{align}\label{atsga2eq10}
&\E \left[\left|2a_2\theta +2 (a_1-a_2)  \theta \1_{\{X \leq  \theta\}}  -2(a_2+ (a_1-a_2) \1_{\{X \leq  \bar{\theta}\}})(\1_{\{\bar{\theta}>1\}}-\1_{\{\bar{\theta}<-1\}})\right|\right] \nonumber\\
& \leq\E \left[\left|2(a_2 +(a_1-a_2)  \1_{\{X \leq  \theta\}} )\theta   -2(a_2+ (a_1-a_2) \1_{\{X \leq \theta\}})(\1_{\{\bar{\theta}>1\}}-\1_{\{\bar{\theta}<-1\}})\right|\right] \nonumber\\
&\quad +\E \left[\left|2(a_2+ (a_1-a_2) \1_{\{X \leq \theta\}})(\1_{\{\bar{\theta}>1\}}-\1_{\{\bar{\theta}<-1\}}) \right.\right.\nonumber\\
&\qquad \left.\left. -2(a_2+ (a_1-a_2) \1_{\{X \leq  \bar{\theta}\}})(\1_{\{\bar{\theta}>1\}}-\1_{\{\bar{\theta}<-1\}})\right|\right] \nonumber\\
&\leq 2(|a_1|+2|a_2|) |\theta- \bar{\theta}|+2(|a_1|+ |a_2|)\E[\1_{\{\theta \leq X \leq \bar{\theta}\}}]\nonumber\\
&\leq (4+2c_{X})(1+|a_1|+ |a_2|) |\theta- \bar{\theta}|,
\end{align}
where the last inequality follows by using the same arguments as in \eqref{atsga2eq6}. Furthermore, one notes that
\begin{align}\label{atsga2eq11}
\left|\theta^2 - (2|\bar{\theta}|-1)\right|
& = \left||\theta|^2- 2|\theta|+1+2|\theta|-  2|\bar{\theta}| \right|\nonumber\\
&\leq \left|(|\theta| - 1)^2\right|+2\left||\theta|-   |\bar{\theta}|\right|\nonumber\\
&\leq \left|(|\theta| - 1)^2\right|+2|\theta- \bar{\theta}|\nonumber\\
&\leq ||\theta| - 1|+2|\theta- \bar{\theta}|\nonumber\\
&\leq \left||\theta| - |\bar{\theta}|\right|+2|\theta- \bar{\theta}|\nonumber\\
&\leq 3|\theta- \bar{\theta}|,
\end{align}
where the third inequality holds due to $|\theta|\leq1$, while the fourth inequality holds due to $|\bar{\theta}|>1$. Thus, by using \eqref{atsga2eq11}, one obtains
\begin{align}\label{atsga2eq12}
&\E \left[\left|(a_1-a_2)\theta^2f_{X}(\theta)- (a_1-a_2) (2|\bar{\theta}|-1)f_{X}(\bar{\theta})\right| \right]\nonumber\\
& \leq \E \left[\left|(a_1-a_2)\theta^2f_{X}(\theta)- (a_1-a_2) (2|\bar{\theta}|-1)f_{X}(\theta)\right| \right]\nonumber\\
&\quad + \E \left[\left|(a_1-a_2) (2|\bar{\theta}|-1)f_{X}(\theta)- (a_1-a_2) (2|\bar{\theta}|-1)f_{X}(\bar{\theta})\right| \right]\nonumber\\
&\leq 3 c_{X}(|a_1|+ |a_2|) |\theta- \bar{\theta}|+2L_{X}(|a_1|+ |a_2|)(1+|\bar{\theta}|) |\theta- \bar{\theta}|\nonumber\\
&\leq  (3c_{X}+2L_{X})(1+|a_1|+ |a_2|)(1+|\bar{\theta}|) |\theta- \bar{\theta}|.
\end{align}
Substituting \eqref{atsga2eq10} and \eqref{atsga2eq12} into \eqref{atsga2eq8} yields
\begin{align}\label{atsga2resl3}
 &\E[|G(\theta,X_0)-G(\bar{\theta}, X_0)|] \nonumber\\
& \leq (4+2c_{X})(1+|a_1|+ |a_2|) |\theta- \bar{\theta}|  + (3c_{X}+2L_{X})(1+|a_1|+ |a_2|)(1+|\bar{\theta}|) |\theta- \bar{\theta}|\nonumber\\
&\leq (4+5c_{X}+2L_{X})(1+|a_1|+ |a_2|)(1+|\theta|+|\bar{\theta}|) |\theta- \bar{\theta}|.
\end{align}
\end{enumerate}
By \eqref{atsga2resl1}, \eqref{atsga2resl2}, and \eqref{atsga2resl3}, one concludes that, for any $\theta, \bar{\theta} \in \R$,
\[
\E[|G(\theta,X_0)-G(\bar{\theta}, X_0)|]\leq  (4+5c_{X}+2L_{X})(1+|a_1|+|a_2|) (1+|\theta|+|\bar{\theta}|)^2|\theta- \bar{\theta}|.
\]
Moreover, by the definition of $G$ in \eqref{atsgexp}, one can obtain the following estimates:
\begin{enumerate}[leftmargin=*]
\item For $|\theta|\leq 1$, we have, for any $x \in \R$, that
\begin{align}\label{atsga2resl4}
|G(\theta, x)|& = |2a_2\theta +2 (a_1-a_2)  \theta \1_{\{x \leq  \theta\}} + (a_1-a_2)\theta^2f_{X}(\theta)|\nonumber\\
&\leq 2(|a_1|+2|a_2|)|\theta|+c_{X}(|a_1|+ |a_2|)|\theta|^2\nonumber\\
&\leq (4+c_{X})(1+|a_1|+ |a_2|)(1+|\theta|)^2.
\end{align}
\item For $|\theta|> 1$, we have, for any $x \in \R$, that
\begin{align}\label{atsga2resl5}
|G(\theta, x)|& = |2(a_2+ (a_1-a_2) \1_{\{x \leq  \theta\}})(\1_{\{\theta>1\}}-\1_{\{\theta<-1\}}) + (a_1-a_2) (2|\theta|-1)f_{X}(\theta)|\nonumber\\
&\leq 2(|a_1|+2|a_2|) +2c_{X}(|a_1|+ |a_2|)(1+|\theta|)\nonumber\\
&\leq  (4+2c_{X})(1+|a_1|+ |a_2|)(1+|\theta|) .
\end{align}
\end{enumerate}
By \eqref{atsga2resl4}, \eqref{atsga2resl5}, we obtain, for every $\theta, x \in \R$, that
\[
|G(\theta, x)| \leq   (4+2c_{X})(1+|a_1|+|a_2|)(1+|\theta|)^2.
\]
Thus, since $q = 3$ and $\rho = 1$, Assumption \ref{AG} is satisfied with
\begin{equation}\label{atsga2consts}
L_G = (4+5c_{X}+2L_{X})(1+|a_1|+|a_2|)  , \quad K_G = (4+2c_{X})(1+|a_1|+ |a_2|).
\end{equation}

Next, we show that Assumption \ref{AF} is satisfied. For any $\theta, \bar{\theta} \in \R$, define
\begin{equation}\label{ftc1}
f(t) := (t\theta+(1-t)\bar{\theta})^{2r+1}, \quad t \in [0,1],
\end{equation}
which implies that
\begin{equation}\label{ftc2}
f'(t)  = (2r+1)(t\theta+(1-t)\bar{\theta})^{2r}(\theta- \bar{\theta}).
\end{equation}
Then, one obtains
\begin{align}\label{ftc3}
\begin{split}
\left|f(1) - f(0)\right|  = \left|\theta^{2r+1} - \bar{\theta}^{2r+1}\right|
&= \left|\int_0^1  (2r+1)(t\theta+(1-t)\bar{\theta})^{2r}(\theta- \bar{\theta})\,\rmd t\right|\\
&\leq (2r+1)\left(|\theta|+|\bar{\theta}|\right)^{2r}\left|\theta-\bar{\theta}\right|.
\end{split}
\end{align}
%which implies
%\begin{align*}
%\left|\theta^{2r+1} - \bar{\theta}^{2r+1}\right|
%&\leq |\theta^{2r+1}  - \theta^{2r}\bar{\theta} |+|\theta^{2r}\bar{\theta}- \bar{\theta}^{2r+1} |\\
%& \leq  |\theta|^{2r}|\theta - \bar{\theta}|+|\bar{\theta}|\left|\theta^{2r} - \bar{\theta}^{2r}\right|\\
%&\leq |\theta|^{2r}|\theta - \bar{\theta}|+2r |\bar{\theta}|\left(|\theta|+|\bar{\theta}|\right)^{2r-1}\left|\theta-\bar{\theta}\right|\\
%&\leq  (1+2r)\left(1+|\theta|+|\bar{\theta}|\right)^{2r}|\theta - \bar{\theta}|.
%\end{align*}
One notes that $r = 14$. Then, by using the above inequality, one obtains the following: for any $\theta, \bar{\theta} \in \R$, and any $x \in \R$, $|F(\theta, x) - F(\bar{\theta}, x)|= |30\theta^{29}-30\bar{\theta}^{29}| \leq 870(1+|\theta|+|\bar{\theta}|)^{28}|\theta- \bar{\theta}|$. Moreover, it holds that $|F(\theta, x)| \leq 30(1+|\theta|^{29})$. Thus, Assumption \ref{AF} is satisfied with $L_F = 870$, $K_F = 30$.

Finally, one can show that Assumption \ref{AC} is satisfied with $A(x) =15I_d, B(x) = 0, \bar{r} = 0, a = 15, b = 0$ by using similar arguments as in the proof of Remark \ref{ACexample} (see Appendix \ref{proofACexample}).
\end{proof}
%%%%%%%%%%%%%%%%%%%%%%%%

\newpage
\appendix
\section{Proof of auxiliary results}
\subsection{Proof of auxiliary results in Section \ref{assumption}}
\begin{proof}[\textbf{Proof of statement in Remark \ref{ACexample}}] \label{proofACexample}
For any $\theta \in \R^d, x \in \R^m$, consider $F(\theta, x) := \eta \theta|\theta|^{2l}$ with $\eta \in (0,1), l \geq q/2$. Then, one obtains,
\begin{align*}
 \langle  \theta- \theta' , F(\theta,x)- F(\theta',x)\rangle
& = \frac{\eta}{2}\left(2\sum_{i = 1}^d (\theta_i- \theta'_i)( \theta_i|\theta|^{2l} -  \theta'_i|\theta'|^{2l})\right)\\
& = \frac{\eta}{2}\sum_{i = 1}^d \left((\theta_i- \theta'_i)^2|\theta|^{2l} +(\theta_i- \theta'_i)\theta'_i(|\theta|^{2l}  - |\theta'|^{2l} ) \right)\\
&\quad +\frac{\eta}{2}\sum_{i = 1}^d \left((\theta_i- \theta'_i)\theta_i(|\theta|^{2l}  - |\theta'|^{2l} )+(\theta_i- \theta'_i)^2|\theta'|^{2l}  \right)\\
& = \frac{\eta}{2}\sum_{i = 1}^d \left((\theta_i- \theta'_i)^2(|\theta|^{2l} +|\theta'|^{2l} )+(|\theta_i|^2- |\theta'_i|^2)(|\theta|^{2l}  - |\theta'|^{2l} ) \right)\\
& = \frac{\eta}{2}|\theta- \theta'|^2(|\theta|^{2l} +|\theta'|^{2l} )+ \frac{\eta}{2}(|\theta|^2- |\theta'|^2)(|\theta|^{2l}  - |\theta'|^{2l} ) \\
%&\quad  +(|\theta| - |\theta'|)^2(|\theta| +|\theta'|)(|\theta|^{2l-1}+|\theta|^{2l-2}|\theta'|+\dots +|\theta||\theta'|^{2l-2} +|\theta'|^{2l-1})\\
&\geq \langle\theta- \theta', A(x) (\theta- \theta')\rangle(|\theta|^{2l} +|\theta'|^{2l}),
\end{align*}
where $A(x)  = \eta I_d/2, B(x) = 0$ for all $x \in \R^m$, and moreover, $r=l, \bar{r} = 0, a= \eta/2, b  =0 $.
\end{proof}
%%%%%%%%%%%%%%%%%%%%%%%%
\begin{proof}[\textbf{Proof of statement in Remark \ref{ADFh}}]  \label{proofADFh}
We first prove inequality \eqref{sldissipative}. By Assumption \ref{AI} and \ref{AC}, one obtains, for any $\theta \in \R^d$,
\begin{align*}
\langle \theta, \E[F(\theta, X_0)] \rangle
& \geq \langle \theta, \E[A( X_0)] \theta\rangle|\theta|^{2r} -\langle \theta, \E[B( X_0)] \theta\rangle|\theta|^{\bar{r}}+\langle \theta, \E[F(0, X_0)] \rangle \\
&\geq   a|\theta|^{2r+2} -b|\theta|^{\bar{r}+2}+\langle \theta, \E[F(0, X_0)] \rangle.
\end{align*}
Therefore, by applying Young's inequality, Assumption \ref{AF}, we obtain
\begin{equation}\label{ADFes1}
\langle \theta, \E[F(\theta, X_0)] \rangle \geq a|\theta|^{2r+2} -b|\theta|^{\bar{r}+2} - \frac{a}{2}|\theta|^2 - \frac{1}{2a}K_F^2\E[(1+|X_0|)^{2\rho}].
\end{equation}
By using $0\leq \bar{r} <2r$, $r\geq q/2 \geq 1/2$, it follows that, for $\theta \in \R^d$,
\[
\frac{a}{4}|\theta|^{2r+2} -b|\theta|^{\bar{r}+2} > 0 \quad \Leftrightarrow  \quad |\theta| > \left(\frac{4b}{a}\right)^{1/(2r-\bar{r})},
\]
and moreover,
\[
\frac{a}{4}|\theta|^{2r+2}  - \frac{a}{2}|\theta|^2 > 0 \quad \Leftrightarrow \quad |\theta| > 2^{1/(2r)}.
\]
Denote by $R_F := \max\{(4b/a)^{1/(2r-\bar{r})}, 2^{1/(2r)}\}>1$. For $|\theta| >  R_F$, \eqref{ADFes1} hence becomes
\begin{equation}\label{ADFes2}
\langle \theta, \E[F(\theta, X_0)] \rangle  > \frac{a}{2}|\theta|^{2r+2}  - \frac{1}{2a}K_F^2\E[(1+|X_0|)^{2\rho}],
\end{equation}
while for $|\theta| \leq R_F$, it follows that
\begin{align}\label{ADFes3}
\langle \theta, \E[F(\theta, X_0)] \rangle
& \geq  -b|\theta|^{\bar{r}+2} - \frac{a}{2}|\theta|^2 - \frac{1}{2a}K_F^2\E[(1+|X_0|)^{2\rho}] \nonumber\\
&\geq -b R_F^{\bar{r}+2} - \frac{a}{2} R_F^2 - \frac{1}{2a}K_F^2\E[(1+|X_0|)^{2\rho}] \nonumber\\
& \geq -\left(b+ \frac{a}{2} \right)R_F^{\bar{r}+2}- \frac{1}{2a}K_F^2\E[(1+|X_0|)^{2\rho}].
\end{align}
Finally, by using the estimates in \eqref{ADFes2} and \eqref{ADFes3}, one obtains
\begin{equation}\label{ADF}
\langle \theta, \E[F(\theta, X_0)] \rangle  \geq a_F|\theta|^{2r+2} - b_F,
\end{equation}
where $a_F := a/2$ and $b_F := (a/2+b)R_F^{\bar{r}+2}+K_F^2\E[(1+|X_0|)^{2\rho}]/(2a)$ with
\[
R_F := \max\{(4b/a)^{1/(2r-\bar{r})}, 2^{1/(2r)}\}.
\]

Recall the expression of $H$ in \eqref{expressionH}. Then, Assumption \ref{AI}, \ref{AG}, \eqref{ADF} and Cauchy-Schwarz inequality imply, for any $\theta \in \R^d$,
\begin{align}\label{dissiine}
\langle \theta, h(\theta) \rangle
&= \langle \theta, \E[G(\theta, X_0)] \rangle +\langle \theta, \E[F(\theta, X_0)] \rangle \nonumber \\
&\geq a_F|\theta|^{2r+2} - b_F-2^qK_G\E[(1+|X_0|)^{\rho}](1+|\theta|^{q+1}).
\end{align}
To prove \eqref{sdedissipative}, it suffices to show that
\[
a_F|\theta|^{2r+2} - b_F - 2^qK_G\E[(1+|X_0|)^{\rho}](1+|\theta|^{q+1}) \geq a_h|\theta|^2 -b_h
\]
for some $a_h, b_h>0$. Set
\[
a_h := 2^qK_G\E[(1+|X_0|)^{\rho}], \quad b_h := 3(2^{q+1}K_G\E[(1+|X_0|)^{\rho}]/\min{\{1, a_F\}})^{q+2}+b_F.
\]
Then, one observes that for $|\theta| \geq 2^{q+1}K_G\E[(1+|X_0|)^{\rho}]/\min{\{1, a_F\}}\geq 1$, $r\geq q/2 \geq 1/2$,
\begin{align}\label{dissies1}
2|\theta|^{2r+2} +2b_h/a_F
& \geq  |\theta|^3 +|\theta|^{q+2}+2b_h/a_F \nonumber\\
& \geq \big(2^{q+1}K_G\E[(1+|X_0|)^{\rho}]/\min{\{1, a_F\}}\big) |\theta|^2 \nonumber\\
&\quad + \big(2^{q+1}K_G\E[(1+|X_0|)^{\rho}]/\min{\{1, a_F\}}\big) |\theta|^{q+1} \nonumber\\
&\quad +2\big(3(2^{q+1}K_G\E[(1+|X_0|)^{\rho}]/\min{\{1, a_F\}})^{q+2}+b_F\big)/a_F \nonumber\\
& \geq 2\big(2^qK_G\E[(1+|X_0|)^{\rho}]\big) |\theta|^2/a_F +  \big(2^{q+1}K_G\E[(1+|X_0|)^{\rho}] \big) |\theta|^{q+1}/a_F \nonumber\\
&\quad +2^{q+1}K_G\E[(1+|X_0|)^{\rho}]/a_F + 2b_F/a_F \nonumber\\
& = 2a_h|\theta|^2/a_F  +2^{q+1}K_G\E[(1+|X_0|)^{\rho}](1+|\theta|^{q+1})/a_F+2b_F/a_F.
\end{align}
Similarly, for $|\theta| \leq 2^{q+1}K_G\E[(1+|X_0|)^{\rho}]/\min{\{1, a_F\}} $, it follows that
\begin{align}\label{dissies2}
2|\theta|^{2r+2} +2b_h/a_F
& \geq  2\left(3\left(2^{q+1}K_G\E[(1+|X_0|)^{\rho}]/\min{\{1, a_F\}}\right)^{q+2}+b_F\right)/a_F \nonumber\\
& \geq 2(2^{q+1}K_G\E[(1+|X_0|)^{\rho}]/\min{\{1, a_F\}})^3/a_F \nonumber\\
&\quad +(2^{q+1}K_G\E[(1+|X_0|)^{\rho}]/\min{\{1, a_F\}})^{q+2}/a_F  +2b_F/a_F \nonumber\\
&\geq (2^{q+1}K_G\E[(1+|X_0|)^{\rho}]/\min{\{1, a_F\}})|\theta|^2/a_F  +2^{q+1}K_G\E[(1+|X_0|)^{\rho}]/a_F \nonumber\\
&\quad +(2^{q+1}K_G\E[(1+|X_0|)^{\rho}]/\min{\{1, a_F\}} )|\theta|^{q+1}/a_F  +2b_F/a_F \nonumber\\
&\geq (2^{q+1}K_G\E[(1+|X_0|)^{\rho}])|\theta|^2/a_F   +2^{q+1}K_G\E[(1+|X_0|)^{\rho}]/a_F \nonumber\\
&\quad +2^{q+1}K_G\E[(1+|X_0|)^{\rho}]|\theta|^{q+1}/a_F+2b_F/a_F \nonumber\\
& =  2a_h|\theta|^2/a_F  +2^{q+1}K_G\E[(1+|X_0|)^{\rho}](1+|\theta|^{q+1})/a_F+2b_F/a_F,
\end{align}
where the first inequality holds due to $2|\theta|^{2r+2}  \geq0$, and the second inequality holds as $q \geq 1$. Thus, by using \eqref{dissies1} and \eqref{dissies2}, one concludes that, for any $\theta \in \R^d$,
\[
2|\theta|^{2r+2} +2b_h/a_F \geq 2a_h|\theta|^2/a_F  +2^{q+1}K_G\E[(1+|X_0|)^{\rho}](1+|\theta|^{q+1})/a_F+2b_F/a_F,
\]
which implies, by multiplying $a_F/2$ on both sides of the inequality, and by rearranging the terms
\[
a_F|\theta|^{2r+2} - b_F - 2^qK_G\E[(1+|X_0|)^{\rho}](1+|\theta|^{q+1}) \geq a_h|\theta|^2 -b_h.
\]
Finally, combining \eqref{dissiine} with the inequality above yields the desired result.
\end{proof}
%%%%%%%%%%%%%%%%%%%%%%%%
\begin{proof}[\textbf{Proof of statement in Remark \ref{ACh}}] \label{proofACh}
By using Assumption \ref{AI}, \ref{AG}, \ref{AC}, the expression of $H$ in \eqref{expressionH} and Cauchy-Schwarz inequality, one obtains, for any $\theta, \theta' \in \R^d$,
\begin{align}\label{hoslosR}
&\langle \theta-\theta',h(\theta)-h(\theta')\rangle  \nonumber\\
& \geq \langle\theta- \theta', \E[A(X_0)] (\theta- \theta')\rangle(|\theta|^{2r} +|\theta'|^{2r})   -\langle\theta- \theta', \E[B(X_0)] (\theta- \theta')\rangle(|\theta|^{\bar{r}} +|\theta'|^{\bar{r}}) \nonumber \\
&\quad + \langle \theta-\theta',\E[G(\theta, X_0)-G(\theta', X_0)]\rangle \nonumber \\
&\geq a(|\theta|^{2r} +|\theta'|^{2r})|\theta-\theta'|^2 - b(|\theta|^{\bar{r}} +|\theta'|^{\bar{r}})|\theta-\theta'|^2    - L_G(1+|\theta|+|\theta'|)^{q-1}|\theta-\theta'|^2 \nonumber \\
\begin{split}
&\geq  \frac{a}{2}(|\theta|^{2r} +|\theta'|^{2r})|\theta-\theta'|^2 - b(|\theta|^{\bar{r}} +|\theta'|^{\bar{r}})|\theta-\theta'|^2  \\
&\quad  +\frac{a}{6}(|\theta|^{2r} +|\theta'|^{2r})|\theta-\theta'|^2- 3^{q-2}L_G |\theta-\theta'|^2  \\
&\quad  +\frac{a}{3}(|\theta|^{2r} +|\theta'|^{2r})|\theta-\theta'|^2- 3^{q-2}L_G( |\theta|^{q-1}+|\theta'|^{q-1})|\theta-\theta'|^2.
\end{split}
\end{align}
For $\theta, \theta' \in \R^d$, $0\leq \bar{r} <2r$, one observes the following:
\begin{align*}
\frac{a}{2}|\theta|^{2r}|\theta-\theta'|^2 - b|\theta|^{\bar{r}} |\theta-\theta'|^2  > 0  \quad
& \Leftrightarrow  \quad |\theta| > \left(\frac{2b}{a}\right)^{1/(2r - \bar{r})}, \\
\frac{a}{2}|\theta'|^{2r}|\theta-\theta'|^2 - b|\theta'|^{\bar{r}} |\theta-\theta'|^2  > 0  \quad
& \Leftrightarrow  \quad |\theta'| > \left(\frac{2b}{a}\right)^{1/(2r - \bar{r})} , \\
\frac{a}{6} |\theta|^{2r}  |\theta-\theta'|^2- \frac{3^{q-2}L_G}{2} |\theta-\theta'|^2 > 0 \quad
& \Leftrightarrow  \quad |\theta| > \left(\frac{3^{q-1}L_G}{a}\right)^{1/(2r )} , \\
\frac{a}{6} |\theta'|^{2r}  |\theta-\theta'|^2- \frac{3^{q-2}L_G}{2} |\theta-\theta'|^2 > 0 \quad
& \Leftrightarrow  \quad |\theta'| > \left(\frac{3^{q-1}L_G}{a}\right)^{1/(2r )} , \\
\frac{a}{3} |\theta|^{2r}  |\theta-\theta'|^2- 3^{q-2}L_G  |\theta|^{q-1} |\theta-\theta'|^2 > 0 \quad
& \Leftrightarrow  \quad |\theta | > \left(\frac{3^{q-1}L_G}{a}\right)^{1/(2r-q+1)}, \\
\frac{a}{3} |\theta'|^{2r}  |\theta-\theta'|^2- 3^{q-2}L_G  |\theta'|^{q-1} |\theta-\theta'|^2 > 0 \quad
& \Leftrightarrow  \quad |\theta' | > \left(\frac{3^{q-1}L_G}{a}\right)^{1/(2r-q+1)}.
\end{align*}
One notes that $ (3^{q-1}L_G/a)^{1/(2r )} \leq \max\{1, (3^{q-1}L_G/a)^{1/(2r-q+1)} \}$ due to the fact that $2r \geq q \geq 1$. Define $R :=\max\{1, (3^{q-1}L_G/a)^{1/(2r-q+1)}, (2b/a)^{1/(2r - \bar{r})} \}$, and denote by $\mathrm{\bar{B}}(0,R)$ the closed ball with radius $R$ centred at the zero vector in $\R^d$. Then, for $\theta, \theta' \notin \mathrm{\bar{B}}(0, R)$, one obtains, by using \eqref{hoslosR},
\begin{align}\label{oslosball}
\langle \theta-\theta',h(\theta)-h(\theta')\rangle
&> 0,
\end{align}
while for $\theta, \theta' \in \mathrm{\bar{B}}(0, R)$, it follows, by applying Remark \ref{growthHh},
\begin{equation}\label{oslball}
-\langle \theta-\theta',h(\theta)-h(\theta')\rangle \leq |\theta - \theta'||h(\theta)-h(\theta')| \leq L_R|\theta - \theta'|^2,
\end{equation}
where $L_R := L_h(1+2R)^{2r}$. For the case $\theta \in \mathrm{\bar{B}}(0, R)$, $\theta' \notin \mathrm{\bar{B}}(0, R)$, i.e. $|\theta|\leq R, |\theta'| > R$, it is straightforward to see that the inequality \eqref{oslosball} holds when $|\theta|=R, |\theta'| > R$; moreover, for $|\theta|<R, |\theta'| >R$, there exists a unique $\bar{\theta} \in \R^d$ which lies at the intersection of the boundary of the ball $\mathrm{\bar{B}}(0, R)$ and the line segment between $\theta, \theta'$, such that $\theta - \bar{\theta} = c_{\theta, \theta'}(\theta - \theta')$ and $ \bar{\theta}-\theta' = (1-c_{\theta, \theta'})(\theta - \theta')$, where $c_{\theta, \theta'} \in (0,1)$. Then, one obtains, for $|\theta|<R, |\theta'| >R$
\begin{align}\label{oslint}
&\langle \theta-\theta',h(\theta)-h(\theta')\rangle  \nonumber \\
& \geq \langle \theta-\bar{\theta},h(\theta)-h(\bar{\theta})\rangle + \langle \theta-\bar{\theta},h(\bar{\theta}) - h(\theta')\rangle  +\langle \bar{\theta}-\theta',h(\theta)-h(\bar{\theta})\rangle +\langle \bar{\theta}-\theta',h(\bar{\theta})-h(\theta')\rangle \nonumber \\
&= \langle \theta-\theta',h(\theta)-h(\bar{\theta})\rangle  +\left(\frac{c_{\theta, \theta'}}{1-c_{\theta, \theta'}}+1\right) \langle \bar{\theta}-\theta',h(\bar{\theta})-h(\theta')\rangle,
\end{align}
where the equality above is obtained by using $\theta - \bar{\theta} = c_{\theta, \theta'}( \bar{\theta}-\theta')/(1-c_{\theta, \theta'})$. One notes that the second term on the RHS of \eqref{oslint} is greater or equal to zero due to \eqref{oslosball}. Thus, it follows that
\begin{align*}
\langle \theta-\theta',h(\theta)-h(\theta')\rangle
& \geq \langle \theta-\theta',h(\theta)-h(\bar{\theta})\rangle \\
& = \frac{1}{c_{\theta, \theta'}}\langle \theta-\bar{\theta},h(\theta)-h(\bar{\theta})\rangle \\
&\geq -\frac{L_R}{c_{\theta, \theta'}}|\theta-\bar{\theta}|^2\\
& =  - c_{\theta, \theta'}L_R|\theta-\theta'|^2 \\
&\geq -L_R|\theta - \theta'|^2,
\end{align*}
where the second inequality holds due to \eqref{oslball}, and the last inequality holds due to $c_{\theta, \theta'} \in (0,1)$. Applying the same arguments to the case $\theta \notin \mathrm{\bar{B}}(0, R)$, $\theta' \in \mathrm{\bar{B}}(0, R)$ completes the proof.
\end{proof}
%%%%%%%%%%%%%%%%%%%%%%%%
\begin{proof}[\textbf{Proof of statement in Remark \ref{lambdapmax}}] \label{prooflambdapmax}
For any $p\in {\N}$, recall the definition of $\lambda_{p,\max}$ given in \eqref{stepsizemax}. It suffices to show
\begin{equation}\label{stepsizemaxpterm}
\frac{\min\{(a_F/K_F)^2, (a_F/K_F)^{2/(2p-1)}\}}{9\binom{2p}{p}^2K_F^2(\E\left[(1+|X_0|)^{2p\rho  }\right])^2}
\end{equation}
decreases as $p$ increases. To this end, one first observes that the denominator of the fraction in \eqref{stepsizemaxpterm} is positive and it increases as $p$ increases. Then, for $0<a_F/K_F \leq 1$, one obtains that, for any $p\in {\N}$,
\[
\min\{(a_F/K_F)^2, (a_F/K_F)^{2/(2p-1)}\} = (a_F/K_F)^2>0;
\]
whereas for $ a_F/K_F \geq 1$, we have, for any $p\in {\N}$,
\[
\min\{(a_F/K_F)^2, (a_F/K_F)^{2/(2p-1)}\} = (a_F/K_F)^{2/(2p-1)}>0,
\]
which decreases as $p$ increases. Thus, one concludes that \eqref{stepsizemaxpterm} decreases as $p$ increases.
\end{proof}
%%%%%%%%%%%%%%%%%%%%%%%%
\subsection{Proof of auxiliary results in Section \ref{po}}\label{proofpo}
\begin{lemma}\label{tcsdemmtbd} Let Assumption \ref{AI}, \ref{AG}, \ref{AF}, and \ref{AC} hold. Then, one obtains, for any $p \in {\N}, t \geq 0$,
\[
\E[|Z^{\lambda}_t|^{2p}] \leq e^{- \lambda pa_h t }\E[|\theta_0|^{2p}]+2(b_h+\beta^{-1}(d+2(p-1))) M_5^{2p-2}/a_h,
\]
where $M_5 :=(2(b_h+\beta^{-1}(d+2(p-1)))/a_h)^{1/2}$. In particular, the above result further implies
\[
\sup_{t\geq 0} \E[|Z^{\lambda}_t|^{2p}]\leq \E[|\theta_0|^{2p}]+2(b_h+\beta^{-1}(d+2(p-1))) M_5^{2p-2}/a_h.
\]
\end{lemma}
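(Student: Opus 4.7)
The plan is to apply It\^o's formula to the smooth function $V_{2p}^0(z) := |z|^{2p}$ along the solution of the time-changed Langevin SDE \eqref{tcsde}, combine the resulting drift term with the dissipativity of $h$ established in \eqref{sdedissipative} of Remark \ref{ADFh}, and then close the estimate by a Gronwall-type argument after absorbing the lower-order term $|z|^{2p-2}$ via a simple Young-type splitting at the level $M_5$.

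First, I would compute $\nabla V_{2p}^0(z) = 2p|z|^{2p-2}z$ and $\Delta V_{2p}^0(z) = 2p(d+2(p-1))|z|^{2p-2}$, and apply It\^o's formula to $|Z^{\lambda}_t|^{2p}$. A standard localization with stopping times $\tau_N := \inf\{t\ge 0 : |Z^{\lambda}_t|\geq N\}$ lets me discard the martingale part in expectation; taking $N\to\infty$ and using the a priori finiteness of polynomial moments of $Z^\lambda$ (which follows e.g.\ from Krylov's theorem together with the dissipativity condition, as noted right before the lemma) yields
\begin{equation*}
\frac{d}{dt}\E[|Z^{\lambda}_t|^{2p}]
= -2p\lambda \, \E\!\left[|Z^{\lambda}_t|^{2p-2}\langle Z^{\lambda}_t, h(Z^{\lambda}_t)\rangle\right]
+ 2p(d+2(p-1))\lambda\beta^{-1} \E[|Z^{\lambda}_t|^{2p-2}].
\end{equation*}
Invoking \eqref{sdedissipative}, i.e.\ $\langle z,h(z)\rangle \geq a_h|z|^2 - b_h$, after multiplying through by $|z|^{2p-2}\geq 0$, gives
\begin{equation*}
\frac{d}{dt}\E[|Z^{\lambda}_t|^{2p}]
\leq -2p\lambda a_h \E[|Z^{\lambda}_t|^{2p}]
+ 2p\lambda \bigl(b_h + \beta^{-1}(d+2(p-1))\bigr)\E[|Z^{\lambda}_t|^{2p-2}].
\end{equation*}

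Next, with $M_5^2 = 2(b_h+\beta^{-1}(d+2(p-1)))/a_h$, the coefficient in front of the $|z|^{2p-2}$ term equals $p\lambda a_h M_5^2$. For $p\ge 2$ I would use the pointwise bound $M_5^2 |z|^{2p-2} \leq |z|^{2p} + M_5^{2p}$ (obtained by splitting on $\{|z|\leq M_5\}$ and $\{|z|>M_5\}$) to reduce the inequality to
\begin{equation*}
\frac{d}{dt}\E[|Z^{\lambda}_t|^{2p}]
\leq -p\lambda a_h \E[|Z^{\lambda}_t|^{2p}] + p\lambda a_h M_5^{2p};
\end{equation*}
the case $p=1$ is even simpler because $|z|^{2p-2}\equiv 1$ and no splitting is needed, yielding a sharper exponential rate $2\lambda a_h$ that a fortiori dominates the rate claimed in the lemma. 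Applying Gronwall's inequality to the displayed ODE estimate produces
\begin{equation*}
\E[|Z^{\lambda}_t|^{2p}] \leq e^{-p\lambda a_h t}\E[|\theta_0|^{2p}] + M_5^{2p}\bigl(1-e^{-p\lambda a_h t}\bigr),
\end{equation*}
and since $M_5^{2p} = M_5^{2p-2}\cdot 2(b_h+\beta^{-1}(d+2(p-1)))/a_h$, this is precisely the stated bound. The supremum bound in time is then immediate by taking $t\to\infty$ in the first term or simply bounding $e^{-p\lambda a_h t}\leq 1$.

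The only nontrivial step is the localization needed to justify vanishing of the It\^o martingale part in expectation; once the a priori polynomial moment bounds for \eqref{tcsde} are in hand (which are standard under the dissipativity established in Remark \ref{ADFh} together with the local Lipschitz property of $h$ from Remark \ref{growthHh}), everything else is elementary. I do not expect any subtle obstacle here: the Young-type splitting at $M_5$ is precisely tailored to make the constants match the form in the statement.
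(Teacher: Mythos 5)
Your proposal is correct and follows essentially the same route as the paper's proof: It\^o's formula applied to $|z|^{2p}$, the dissipativity bound $\langle z,h(z)\rangle\geq a_h|z|^2-b_h$ from Remark \ref{ADFh}, a splitting of the lower-order term at the threshold $M_5$, and a Gronwall argument. The only cosmetic difference is that you absorb $|z|^{2p-2}$ via the pointwise inequality $M_5^2|z|^{2p-2}\leq |z|^{2p}+M_5^{2p}$, whereas the paper splits the expectation over the event $\{|Z^{\lambda}_t|>M_5\}$ and its complement; both yield the same differential inequality and the same constants.
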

\begin{proof}
Consider the function $f(z) := |z|^{2p}$, $z \in \R^d$, $p \in \N$. Denote by $\nabla f$ and $\nabla^2 f$ the gradient and the Hessian of $f$, respectively. One observes that, for any $z \in \R^d$, $p \in \N$, $\nabla f(z) = 2pz|z|^{2p-2}$ and $\nabla^2 f(z)=2p |z|^{2p-2}I_d+4p(p-1)|z|^{2p-4}zz^{\mathsf{T}}$ with $I_d$ denoting the identity matrix and $z^{\mathsf{T}}$ denoting the transpose of $z$. Recall the definition of $(Z^{\lambda}_t)_{ t\geq 0}$ given in \eqref{tcsde}. For any $t\geq 0$, by applying It\^o's formula to $f(Z^{\lambda}_t)= |Z^{\lambda}_t|^{2p}$, one obtains, almost surely
\begin{align*}
\rmd  |Z^{\lambda}_t|^{2p}
& = \left[-\langle \nabla f(Z_t), \lambda h(Z^{\lambda}_t)\rangle   +\frac{1}{2}\Tr\left((\sqrt{2\lambda\beta^{-1}}I_d)^{\mathsf{T}}\nabla^2 f(Z_t) (\sqrt{2\lambda\beta^{-1}}I_d)\right)\right]dt\\
&\quad +\langle \nabla f(Z_t), \sqrt{2\lambda\beta^{-1}}\,\rmd B^{\lambda}_t\rangle\\
& =  - 2p\lambda   \langle Z^{\lambda}_t, h(Z^{\lambda}_t) \rangle |Z^{\lambda}_t|^{2p-2} \rmd t + 2p \langle Z^{\lambda}_t, \sqrt{2\lambda \beta^{-1}}d B^{\lambda}_t \rangle |Z^{\lambda}_t|^{2p-2}\\
&\quad +2p\lambda\beta^{-1}  (d+2(p-1))|Z^{\lambda}_t|^{2p-2} \rmd t,
\end{align*}
where $\Tr(A)$ and $A^{\mathsf{T}}$ denote the trace and the transpose of a given matrix $A$, respectively. Then, integrating both sides and taking expectation yield
\begin{align*}
\E[|Z^{\lambda}_t|^{2p} ]
& = \E[|\theta_0|^{2p} ]  - 2p\lambda  \int_0^t   \E[ \langle Z^{\lambda}_s, h(Z^{\lambda}_s) \rangle |Z^{\lambda}_s|^{2p-2} ] \, \rmd s \\
&\quad + 2p\lambda\beta^{-1} (d+2(p-1)) \int_0^t  \E[|Z^{\lambda}_s|^{2p-2}] \, \rmd s,
\end{align*}
where the expectation of the stochastic integral is zero by applying standard stopping time arguments (see, e.g., the proof of Lemma \ref{zetaprocme}). This further implies by differentiating both sides and by using Remark \ref{ADFh},
\begin{align}\label{z2pdiffub1}
\frac{\rmd}{\rmd t}\E[|Z^{\lambda}_t|^{2p} ]
& =  - 2p\lambda  \E[ \langle Z^{\lambda}_t, h(Z^{\lambda}_t) \rangle |Z^{\lambda}_t|^{2p-2} ]   +2p \lambda\beta^{-1} (d+2(p-1))    \E[|Z^{\lambda}_t|^{2p-2}] \nonumber\\
&\leq  -2pa_h \lambda   \E[|Z^{\lambda}_t|^{2p}] + 2p\lambda (b_h+\beta^{-1}(d+2(p-1)))  \E[|Z^{\lambda}_t|^{2p-2}].
\end{align}
For any $\theta \in \R^d$, one notes that
\begin{align}\label{sde2pub}
\begin{split}
& -  pa_h\lambda  |\theta|^{2p} + 2p\lambda  (b_h+\beta^{-1}(d+2(p-1)))|\theta|^{2p-2} < 0 \\
&\Leftrightarrow \quad |\theta| >\left(\frac{2(b_h+\beta^{-1}(d+2(p-1)))}{a_h}\right)^{1/2}.
\end{split}
\end{align}
Denote by $M_5 :=(2(b_h+\beta^{-1}(d+2(p-1)))/a_h)^{1/2}$ and $\mathsf{S}_{t,M_5} := \{\omega \in \Omega: |Z^{\lambda}_t(\omega)| >M_5\}$. Then, by \eqref{z2pdiffub1} and \eqref{sde2pub}, it holds that
\begin{align*}
\frac{\rmd}{\rmd t}\E[|Z^{\lambda}_t|^{2p} ]
&\leq -2pa_h \lambda   \E[|Z^{\lambda}_t|^{2p}\1_{\mathsf{S}_{t,M_5} }] + 2p\lambda (b_h+\beta^{-1}(d+2(p-1)))  \E[|Z^{\lambda}_t|^{2p-2}\1_{\mathsf{S}_{t,M_5} }]\\
&\quad -2pa_h \lambda   \E[|Z^{\lambda}_t|^{2p}\1_{\mathsf{S}_{t,M_5}^{\mathsf{c}} }] + 2p\lambda (b_h+\beta^{-1}(d+2(p-1)))  \E[|Z^{\lambda}_t|^{2p-2}\1_{\mathsf{S}_{t,M_5}^{\mathsf{c}} }]\\
&\leq   - pa_h\lambda   \E[|Z^{\lambda}_t|^{2p} \1_{\mathsf{S}_{t,M_5} }]  -2p a_h\lambda  \E[|Z^{\lambda}_t|^{2p} \1_{\mathsf{S}_{t,M_5}^{\mathsf{c}} }] \\
&\quad+2p \lambda  (b_h+\beta^{-1}(d+2(p-1))) M_5^{2p-2}\\
&\leq  -pa_h\lambda  \E[|Z^{\lambda}_t|^{2p}]+2p \lambda (b_h+\beta^{-1}(d+2(p-1))) M_5^{2p-2}.
\end{align*}
%Combining the two estimates above yields, for any $t \geq 0$,
%\[
%\frac{\rmd}{\rmd t}\E[|Z^{\lambda}_t|^{2p}] \leq  -pa_h\lambda  \E[|Z^{\lambda}_t|^{2p}]+2p \lambda (b_h+\beta^{-1}(d+2(p-1))) M_5^{2p-2}.
%\]
This implies, by multiplying $e^{pa_h\lambda t}$ and by integrating on both sides of the above inequality, that
\[
\E[|Z^{\lambda}_t|^{2p}] \leq e^{- \lambda pa_h t }\E[|\theta_0|^{2p}]+2(b_h+\beta^{-1}(d+2(p-1))) M_5^{2p-2}/a_h.
\]
Finally, it follows that
\[
\sup_{t\geq 0} \E[|Z^{\lambda}_t|^{2p}] \leq  \E[|\theta_0|^{2p}]+2(b_h+\beta^{-1}(d+2(p-1))) M_5^{2p-2}/a_h,
\]
which completes the proof.
\end{proof}
%%%%%%%%%%%%%%%%%%%%%%%%
\begin{proof}[\textbf{Proof of Lemma \ref{2ndpthmmt}-\ref{2ndpthmmti}}] \label{proof2ndpthmmt}
For any $0<\lambda\leq \lambda_{1, \max}$, $t\in (n, n+1]$, $n \in \N_0$, define
\begin{align}\label{delxinotation}
\begin{split}
\Delta_{n,t}^{\lambda}
 := \bar{\theta}^{\lambda}_n - \lambda H_{\lambda}(\bar{\theta}^{\lambda}_n, X_{n+1})(t-n),\quad
\Xi_{n,t}^{\lambda}
 := \sqrt{2\lambda \beta^{-1}}(B_t^{\lambda} - B_n^{\lambda}).
\end{split}
\end{align}
By using \eqref{tuslaproc}, one obtains,
\begin{equation}\label{2ndmmtexp}
\E\left[\left.|\bar{\theta}^{\lambda}_t|^2\right|\bar{\theta}^{\lambda}_n \right]  = \E\left[\left.|\Delta_{n,t}^{\lambda}|^2\right|\bar{\theta}^{\lambda}_n \right] +2\lambda (t-n)d/\beta.
\end{equation}
Moreover, by using Assumption \ref{AC} and Remark \ref{ADFh}, one further calculates, for any $0<\lambda <\lambda_{1,\max}$,
\begin{align*}
\E\left[\left.|\Delta_{n,t}^{\lambda}|^2\right|\bar{\theta}^{\lambda}_n \right]
& = | \bar{\theta}^{\lambda}_n|^2-2\lambda(t-n) \E\left[\left.\left\langle  \bar{\theta}^{\lambda}_n, \frac{G( \bar{\theta}^{\lambda}_n, X_{n+1})+ F( \bar{\theta}^{\lambda}_n, X_{n+1})}{1+\sqrt{\lambda}| \bar{\theta}^{\lambda}_n|^{2r}} \right\rangle\right|\bar{\theta}^{\lambda}_n \right] \\
&\quad + \lambda^2(t-n)^2\E\left[\left.\left|\frac{G( \bar{\theta}^{\lambda}_n, X_{n+1})+ F( \bar{\theta}^{\lambda}_n, X_{n+1})}{1+\sqrt{\lambda}| \bar{\theta}^{\lambda}_n|^{2r}}\right|^2\right|\bar{\theta}^{\lambda}_n \right] \\
&\leq  | \bar{\theta}^{\lambda}_n|^2 -\frac{2\lambda(t-n) (a_F|\bar{\theta}^{\lambda}_n|^{2r+2} - b_F)}{1+\sqrt{\lambda}| \bar{\theta}^{\lambda}_n|^{2r}}  +\frac{2\lambda(t-n)|\bar{\theta}^{\lambda}_n|\E\left[\left.|G( \bar{\theta}^{\lambda}_n, X_{n+1})|\right|\bar{\theta}^{\lambda}_n \right] }{1+\sqrt{\lambda}| \bar{\theta}^{\lambda}_n|^{2r}}\\
&\quad +2\lambda^2(t-n)^2\left(\frac{\E\left[\left.|G( \bar{\theta}^{\lambda}_n, X_{n+1})|^2\right|\bar{\theta}^{\lambda}_n \right] }{(1+\sqrt{\lambda}| \bar{\theta}^{\lambda}_n|^{2r})^2} +\frac{\E\left[\left.|F( \bar{\theta}^{\lambda}_n, X_{n+1})|^2\right|\bar{\theta}^{\lambda}_n \right] }{(1+\sqrt{\lambda}| \bar{\theta}^{\lambda}_n|^{2r})^2} \right).
\end{align*}
The above estimate further yields, by using Assumption \ref{AG}, \ref{AF}, the following bound:
\begin{align}
\begin{split}\label{2ndestineq1}
\E\left[\left.|\Delta_{n,t}^{\lambda}|^2\right|\bar{\theta}^{\lambda}_n \right]
& \leq | \bar{\theta}^{\lambda}_n|^2- \lambda(t-n)\frac{2 a_F|\bar{\theta}^{\lambda}_n|^{2r+2}}{1+\sqrt{\lambda}| \bar{\theta}^{\lambda}_n|^{2r}} + 2\lambda(t-n) b_F \\
&\quad +\lambda(t-n)\frac{2^{q+1}K_G\E\left[ (1+|X_0|)^{\rho}\right](1+|\bar{\theta}^{\lambda}_n|^{q+1}) }{1+\sqrt{\lambda}| \bar{\theta}^{\lambda}_n|^{2r}}  \\
&\quad +\lambda^2(t-n)^2\frac{2^{2q}K_G^2\E\left[(1+|X_0|)^{2\rho}\right](1+|\bar{\theta}^{\lambda}_n|^{2q}) }{(1+\sqrt{\lambda}| \bar{\theta}^{\lambda}_n|^{2r})^2} \\
&\quad +\lambda^2(t-n)^2\frac{4K_F^2\E\left[(1+|X_0|)^{2\rho}\right] (1+|\bar{\theta}^{\lambda}_n|^{4r+2})}{(1+\sqrt{\lambda}| \bar{\theta}^{\lambda}_n|^{2r})^2}.
\end{split}
\end{align}
Moreover, one notes that, the fifth term on the RHS of \eqref{2ndestineq1} can be upper bounded using the following inequality: for any $\theta \in \R^d, r \geq q/2$, $0<\lambda <\lambda_{1,\max}<1$,
\[
\frac{\lambda(1+|\theta|^{2q})}{(1+\sqrt{\lambda}|\theta|^{2r})^2} \leq \frac{\lambda(1+|\theta|^{2q})}{ 1+\lambda|\theta|^{4r} } \leq  \frac{\lambda+\lambda(1+|\theta|^{4r})}{ 1+\lambda|\theta|^{4r} }\leq \frac{2+\lambda|\theta|^{4r}}{ 1+\lambda|\theta|^{4r} } \leq 2.
\]
This and \eqref{2ndestineq1} imply that
\begin{align}
\E\left[\left.|\Delta_{n,t}^{\lambda}|^2\right|\bar{\theta}^{\lambda}_n \right]
& \leq | \bar{\theta}^{\lambda}_n|^2- \lambda(t-n)\frac{2 a_F|\bar{\theta}^{\lambda}_n|^{2r+2}}{1+\sqrt{\lambda}| \bar{\theta}^{\lambda}_n|^{2r}} + 2\lambda(t-n) b_F \nonumber \\
&\quad +\lambda(t-n)\frac{2^{q+1}K_G\E\left[ (1+|X_0|)^{\rho}\right] |\bar{\theta}^{\lambda}_n|^{q+1}  }{1+\sqrt{\lambda}| \bar{\theta}^{\lambda}_n|^{2r}}  + 2^{q+1}\lambda(t-n)K_G \E\left[ (1+|X_0|)^{\rho}\right]  \nonumber\\
&\quad + 2^{2q+1}\lambda (t-n)^2K_G^2\E\left[(1+|X_0|)^{2\rho}\right]   \nonumber\\
&\quad +\lambda^2(t-n)^2\frac{4K_F^2\E\left[(1+|X_0|)^{2\rho}\right]  |\bar{\theta}^{\lambda}_n|^{4r+2}}{(1+\sqrt{\lambda}| \bar{\theta}^{\lambda}_n|^{2r})^2}   +4\lambda^2(t-n)^2K_F^2\E\left[(1+|X_0|)^{2\rho}\right]  \nonumber\\
\begin{split}\label{2ndestint}
&= | \bar{\theta}^{\lambda}_n|^2 - \lambda(t-n)|\bar{\theta}^{\lambda}_n|^2T_1^{\lambda}(\bar{\theta}^{\lambda}_n) -\lambda(t-n) T_2^{\lambda}(\bar{\theta}^{\lambda}_n)\\
\end{split}\\
&\quad + 2\lambda(t-n) b_F + 2^{q+1}\lambda(t-n)K_G\E\left[ (1+|X_0|)^{\rho}\right] \nonumber\\
&\quad +2^{2q+1}\lambda (t-n)^2K_G^2\E\left[(1+|X_0|)^{2\rho}\right]  +4\lambda^2(t-n)^2K_F^2\E\left[(1+|X_0|)^{2\rho}\right] \nonumber,
\end{align}
where for all $\theta \in \R^d \setminus \{\underbrace{(0,\dots, 0)}_\text{$d$}\}$,
\begin{equation}\label{defT1}
T_1^{\lambda}(\theta) :=\frac{1}{|\theta|^2}\left( \frac{ a_F|\theta|^{2r+2}}{1+\sqrt{\lambda}|\theta|^{2r}} - \frac{2^{q+1}K_G\E\left[ (1+|X_0|)^{\rho}\right]|\theta|^{q+1}}{1+\sqrt{\lambda}|\theta|^{2r}}\right),
\end{equation}
and moreover, for all $\theta \in \R^d$
\[
T_2^{\lambda}(\theta) :=  \frac{ a_F|\theta|^{2r+2}}{1+\sqrt{\lambda}|\theta|^{2r}} - \frac{4\lambda(t-n)K_F^2\E\left[(1+|X_0|)^{2\rho}\right] |\theta|^{4r+2}}{(1+\sqrt{\lambda}|\theta|^{2r})^2}.
\]
Then, for all $\theta \in \R^d$
\begin{align*}
&a_F|\theta|^{2r+2} - 2^{q+1}K_G\E\left[ (1+|X_0|)^{\rho}\right]|\theta|^{q+1} > \frac{a_F}{2}|\theta|^{2r+2}  \\
& \Leftrightarrow \quad |\theta| > \left(\frac{2^{q+2}K_G\E\left[ (1+|X_0|)^{\rho}\right]}{a_F}\right)^{1/(2r-q+1)}.
\end{align*}
Denote by $M_0 = (2^{q+2}K_G\E\left[(1+|X_0|)^{2\rho}\right]/\min\{1,a_F\} )^{1/(2r-q+1)}$. Then, for all $|\theta|>M_0$, by using the inequalities above, one obtains
\begin{equation}\label{lbT1}
T_1^{\lambda}(\theta) >  \frac{a_F|\theta|^{2r}}{2(1+\sqrt{\lambda}|\theta|^{2r})} \geq \frac{a_FM_0^{2r}}{2(1+M_0^{2r})},
\end{equation}
where the last inequality holds due to $0<\lambda \leq \lambda_{1,\max}\leq 1$ and the fact that $f(s) := s/(1+\sqrt{\lambda}s)$ is non-decreasing for all $s\geq 0$. Furthermore, one observes that for all $\theta \in \R^d$ and for all $\lambda \leq \lambda_{1,\max} \leq a_F^2/(16K_F^4(\E\left[(1+|X_0|)^{2\rho}\right])^2)$,
\begin{align}\label{lbT2}
T_2^{\lambda}(\theta)
&= \frac{a_F|\theta|^{2r+2} + \sqrt{\lambda}a_F|\theta|^{4r+2} - 4\lambda(t-n)K_F^2\E\left[(1+|X_0|)^{2\rho}\right] |\theta|^{4r+2}}{(1+\sqrt{\lambda}|\theta|^{2r})^2} \nonumber\\
& \geq  \frac{  \sqrt{\lambda}a_F|\theta|^{4r+2} - 4\lambda K_F^2\E\left[(1+|X_0|)^{2\rho}\right] |\theta|^{4r+2}}{(1+\sqrt{\lambda}|\theta|^{2r})^2}\nonumber\\
&\geq 0.
\end{align}
Denote by $\kappa := M_0^{2r}/(2(1+M_0^{2r}))$, and $\mathsf{S}_{n,M_0} := \{\omega \in \Omega: |\bar{\theta}^{\lambda}_n(\omega)| >M_0\}$. Inserting \eqref{lbT1}, \eqref{lbT2} into \eqref{2ndestint} yields, for $0<\lambda \leq \lambda_{1,\max} $,
\begin{align*}
\E\left[\left.|\Delta_{n,t}^{\lambda}|^2 \1_{\mathsf{S}_{n,M_0} }\right|\bar{\theta}^{\lambda}_n \right]
& \leq (1-\lambda(t-n)a_F\kappa)|\bar{\theta}^{\lambda}_n|^2  \1_{\mathsf{S}_{n,M_0} } +\lambda(t-n)c_1  \1_{\mathsf{S}_{n,M_0} },
\end{align*}
where $c_1 := 2b_F +2^{q+1} K_G\E\left[ (1+|X_0|)^{\rho}\right]+2^{2q+1} K_G^2\E\left[(1+|X_0|)^{2\rho}\right] +4 K_F^2\E\left[(1+|X_0|)^{2\rho}\right] $. In addition, one obtains, by using the definition of $T_1^{\lambda}(\theta)$ given in \eqref{defT1},
\begin{align*}
\E\left[\left.|\Delta_{n,t}^{\lambda}|^2 \1_{\mathsf{S}_{n,M_0}^{\mathsf{c}} }\right|\bar{\theta}^{\lambda}_n \right]
& \leq (1-\lambda(t-n)a_F\kappa)|\bar{\theta}^{\lambda}_n|^2  \1_{S_{n,M_0}^{\mathsf{c}} } +\lambda(t-n)c_1  \1_{\mathsf{S}_{n,M_0}^{\mathsf{c}} }\\
&\quad +\lambda(t-n) (a_F\kappa M_0^2 + 2^{q+1} K_G\E\left[ (1+|X_0|)^{\rho}\right]M_0^{q+1})\1_{\mathsf{S}_{n,M_0}^{\mathsf{c}} }.
\end{align*}
Combining the two cases yields
\begin{equation}\label{delta2}
\E\left[\left.|\Delta_{n,t}^{\lambda}|^2 \right|\bar{\theta}^{\lambda}_n \right]  \leq (1-\lambda(t-n)a_F\kappa)|\bar{\theta}^{\lambda}_n|^2    +\lambda(t-n)c_2,
\end{equation}
where
\begin{equation}\label{constc2}
c_2 := c_1 + a_F\kappa M_0^2 + 2^{q+1} K_G\E\left[ (1+|X_0|)^{\rho}\right]M_0^{q+1}.
\end{equation}
%\begin{align*}
%\begin{split}
%c_2
%&= a_F\kappa M_0^2 + 2^{q+1} K_G\E\left[ (1+|X_0|)^{\rho}\right]M_0^{q+1} + 2b_F \\
%&\quad+2^{q+1} K_G\E\left[ (1+|X_0|)^{\rho}\right]+2^{2q+1} K_G^2\E\left[(1+|X_0|)^{2\rho}\right] +4 K_F^2\E\left[(1+|%X_0|)^{2\rho}\right].
%\end{split}
%\end{align*}
Thus, one can conclude from \eqref{2ndmmtexp} that, for $t \in (n, n+1], n \in \N_0$, $0<\lambda \leq \lambda_{1,\max} $,
\begin{align*}
\E\left[\left.|\bar{\theta}^{\lambda}_t|^2\right|\bar{\theta}^{\lambda}_n \right]
&=\E\left[\left.|\Delta_{n,t}^{\lambda}|^2 \right|\bar{\theta}^{\lambda}_n \right]  +2\lambda (t-n)d/\beta \leq  (1-\lambda(t-n)a_F\kappa)|\bar{\theta}^{\lambda}_n|^2 + \lambda(t-n)c_0,
\end{align*}
where
\begin{align}\label{constc0}
\begin{split}
\kappa
&:= M_0^{2r}/(2(1+M_0^{2r})),\\
M_0
&:= (2^{q+2}K_G\E\left[(1+|X_0|)^{2\rho}\right]/\min\{1,a_F\})^{1/(2r-q+1)},\\
c_0
&:= 2d/\beta +a_F\kappa M_0^2 + 2^{q+1} K_G\E\left[ (1+|X_0|)^{\rho}\right]M_0^{q+1} + 2b_F \\
&\quad+2^{q+1} K_G\E\left[ (1+|X_0|)^{\rho}\right]+2^{2q+1} K_G^2\E\left[(1+|X_0|)^{2\rho}\right] +4 K_F^2\E\left[(1+|X_0|)^{2\rho}\right].
\end{split}
\end{align}
This further implies, for $t \in (n, n+1], n \in \N_0$, $0<\lambda \leq \lambda_{1,\max} \leq 1$, that
\begin{align}\label{2ndmmttp}
\begin{split}
\E\left[ |\bar{\theta}^{\lambda}_t|^2  \right]
& \leq (1-\lambda(t-n)a_F\kappa) \E\left[|\bar{\theta}^{\lambda}_n|^2 \right]   + \lambda(t-n)c_0 \\
& \leq (1-\lambda(t-n)a_F\kappa) (1-\lambda a_F\kappa) \E\left[|\bar{\theta}^{\lambda}_{n-1}|^2 \right] +\lambda_{1,\max} c_0 +\lambda c_0\\
& \leq (1-\lambda(t-n)a_F\kappa) (1-\lambda a_F\kappa)^2 \E\left[|\bar{\theta}^{\lambda}_{n-2}|^2 \right] +  c_0 +\lambda c_0(1+(1-\lambda a_F\kappa))\\
&\leq \dots \\
&\leq  (1-\lambda(t-n)a_F\kappa)(1-a_F\kappa \lambda)^n \E\left[|\theta_0|^2\right]  +c_0(1+1/(a_F\kappa)),
\end{split}
\end{align}
which completes the proof.
\end{proof}

%%%%%%%%%%%%%%%%%%%%%%%%
\begin{proof}[\textbf{Proof of Lemma \ref{2ndpthmmt}-\ref{2ndpthmmtii}}]
For any $p\in [2, \infty)\cap {\N}$, $0<\lambda\leq \lambda_{p,\max}$, $t\in (n, n+1]$, $n \in \N_0$, recall the definition for $\Delta_{n,t}^{\lambda}$ and $\Xi_{n,t}^{\lambda}$ in \eqref{delxinotation}. To obtain the $2p$-th moment estimate (with $p\in [2, \infty)\cap {\N}$) of the TUSLA algorithm \eqref{tuslaproc}, one writes
\begin{align*}
\E\left[\left.|\bar{\theta}^{\lambda}_t|^{2p}\right|\bar{\theta}^{\lambda}_n \right]
& = \E\left[\left.\left(|\Delta_{n,t}^{\lambda}|^2 +2\langle \Delta_{n,t}^{\lambda}, \Xi_{n,t}^{\lambda} \rangle +|\Xi_{n,t}^{\lambda}|^2\right)^p\right|\bar{\theta}^{\lambda}_n \right]  \\
& = \E\left[\left.|\Delta_{n,t}^{\lambda}|^{2p}\right|\bar{\theta}^{\lambda}_n \right]   + 2p \E\left[\left.|\Delta_{n,t}^{\lambda}|^{2p-2} \langle \Delta_{n,t}^{\lambda}, \Xi_{n,t}^{\lambda} \rangle\right|\bar{\theta}^{\lambda}_n \right]  \\
&\quad + \E\left[\left.\sum_{\substack{k_1+k_2+k_3=p\\ \{k_1\neq p-1\} \cap \{k_2 \neq 1\}\\ \{k_1 \neq p\} }} \frac{p!}{k_1!k_2!k_3!}|\Delta_{n,t}^{\lambda}|^{2k_1}(2\langle \Delta_{n,t}^{\lambda}, \Xi_{n,t}^{\lambda} \rangle)^{k_2}|\Xi_{n,t}^{\lambda}|^{2k_3} \right|\bar{\theta}^{\lambda}_n \right]       \\
& \leq \E\left[\left.|\Delta_{n,t}^{\lambda}|^{2p}\right|\bar{\theta}^{\lambda}_n \right]  + 2p \E\left[\left.|\Delta_{n,t}^{\lambda}|^{2p-2} \langle \Delta_{n,t}^{\lambda}, \Xi_{n,t}^{\lambda} \rangle\right|\bar{\theta}^{\lambda}_n \right]  \\
&\quad + \sum_{k = 2}^{2p}\binom{2p}{k}\E\left[\left.|\Delta_{n,t}^{\lambda}|^{2p-k} |\Xi_{n,t}^{\lambda}|^k \right|\bar{\theta}^{\lambda}_n \right] \\
& =  \E\left[\left.|\Delta_{n,t}^{\lambda}|^{2p}\right|\bar{\theta}^{\lambda}_n \right]  +\sum_{k = 2}^{2p}\binom{2p}{k}\E\left[\left.|\Delta_{n,t}^{\lambda}|^{2p-k} |\Xi_{n,t}^{\lambda}|^k \right|\bar{\theta}^{\lambda}_n \right].
\end{align*}
where the inequality above holds due to \cite[Lemma A.3]{nonconvex}. This and the fact that $\Xi_{n,t}^{\lambda}$ is independent of $\Delta_{n,t}^{\lambda}$, and $\Xi_{n,t}^{\lambda}$ is independent of $\bar{\theta}^{\lambda}_n$, $t\in (n, n+1]$, $n \in \N_0$, yield
\begin{align}\label{2pthme}
\E\left[\left.|\bar{\theta}^{\lambda}_t|^{2p}\right|\bar{\theta}^{\lambda}_n \right]
& \leq \E\left[\left.|\Delta_{n,t}^{\lambda}|^{2p}\right|\bar{\theta}^{\lambda}_n \right] +\sum_{l = 0}^{2p-2}\binom{2p}{l+2}\E\left[\left.|\Delta_{n,t}^{\lambda}|^{2p-2-l} |\Xi_{n,t}^{\lambda}|^{l+2} \right|\bar{\theta}^{\lambda}_n \right] \nonumber\\
& =\E\left[\left.|\Delta_{n,t}^{\lambda}|^{2p}\right|\bar{\theta}^{\lambda}_n \right]  +\sum_{l = 0}^{2p-2} \frac{2p(2p-1)}{(l+2)(l+1)} \binom{2p-2}{l}\E\left[\left.\left(|\Delta_{n,t}^{\lambda}|^{2p-2-l} |\Xi_{n,t}^{\lambda}|^l\right)|\Xi_{n,t}^{\lambda}|^2 \right|\bar{\theta}^{\lambda}_n \right]  \nonumber\\
&\leq \E\left[\left.|\Delta_{n,t}^{\lambda}|^{2p}\right|\bar{\theta}^{\lambda}_n \right]  +p(2p-1)\E\left[\left.\left(|\Delta_{n,t}^{\lambda}|+|\Xi_{n,t}^{\lambda}|\right)^{2p-2} |\Xi_{n,t}^{\lambda}|^2 \right|\bar{\theta}^{\lambda}_n \right] \nonumber \\
&\leq \E\left[\left.|\Delta_{n,t}^{\lambda}|^{2p}\right|\bar{\theta}^{\lambda}_n \right]  +2^{2p-3}p(2p-1)\E\left[\left. |\Delta_{n,t}^{\lambda}|^{2p-2}   \right|\bar{\theta}^{\lambda}_n \right]\E\left[  |\Xi_{n,t}^{\lambda}|^2 \right] \nonumber \\
&\quad + 2^{2p-3}p(2p-1)\E\left[  |\Xi_{n,t}^{\lambda}|^{2p} \right] \nonumber \\
%&\leq \E\left[\left.|\Delta_{n,t}^{\lambda}|^{2p}\right|\bar{\theta}^{\lambda}_n \right]  +2^{2p-3}p(2p-1)\E\left[\left. |\Delta_{n,t}^{\lambda}|^{2p-2} |\Xi_{n,t}^{\lambda}|^2  \right|\bar{\theta}^{\lambda}_n \right]\nonumber \\
%&\quad +2^{2p-4}(2p(2p-1))^{p+1}(d\beta^{-1}\lambda (t-n) )^p \nonumber \\
\begin{split}
&\leq \E\left[\left.|\Delta_{n,t}^{\lambda}|^{2p}\right|\bar{\theta}^{\lambda}_n \right]  +2^{2p-2}p(2p-1)\lambda (t-n)d \beta^{-1}\E\left[\left. |\Delta_{n,t}^{\lambda}|^{2p-2}   \right|\bar{\theta}^{\lambda}_n \right]  \\
&\quad +2^{2p-4}(2p(2p-1))^{p+1}(d\beta^{-1}\lambda (t-n) )^p,
\end{split}
\end{align}
where the last inequality holds due to \cite[Theorem 7.1]{mao2007stochastic}. The first term in \eqref{2pthme} can be upper bounded in the following way:
\begin{align*}
\E\left[\left.|\Delta_{n,t}^{\lambda}|^{2p}\right|\bar{\theta}^{\lambda}_n \right]
& = \E\left[\left.\left(|\bar{\theta}^{\lambda}_n|^2 -2\lambda(t-n)\langle \bar{\theta}^{\lambda}_n, H_{\lambda}(\bar{\theta}^{\lambda}_n, X_{n+1})\rangle + |\lambda H_{\lambda}(\bar{\theta}^{\lambda}_n, X_{n+1})(t-n)|^2\right)^p\right|\bar{\theta}^{\lambda}_n \right]  \\
&= |\bar{\theta}^{\lambda}_n|^{2p} -2p\lambda(t-n) |\bar{\theta}^{\lambda}_n|^{2p-2} \E\left[\left.\langle \bar{\theta}^{\lambda}_n, H_{\lambda}(\bar{\theta}^{\lambda}_n, X_{n+1})\rangle \right|\bar{\theta}^{\lambda}_n \right]  \\
&\quad +\sum_{\substack{k_1+k_2+k_3=p\\ \{k_1\neq p-1\} \cap \{k_2 \neq 1\}\\ \{k_1 \neq p\} }} \frac{p!}{k_1!k_2!k_3!} \E\left[\left.|\bar{\theta}^{\lambda}_n|^{2k_1}( -2\lambda(t-n)\langle \bar{\theta}^{\lambda}_n, H_{\lambda}(\bar{\theta}^{\lambda}_n, X_{n+1})\rangle)^{k_2}\right.\right.\\
&\qquad \times \left.\left.|\lambda H_{\lambda}(\bar{\theta}^{\lambda}_n, X_{n+1})(t-n)|^{2k_3} \right|\bar{\theta}^{\lambda}_n \right]       \\
& \leq  |\bar{\theta}^{\lambda}_n|^{2p} -2p\lambda(t-n) |\bar{\theta}^{\lambda}_n|^{2p-2} \E\left[\left.\langle \bar{\theta}^{\lambda}_n, H_{\lambda}(\bar{\theta}^{\lambda}_n, X_{n+1})\rangle \right|\bar{\theta}^{\lambda}_n \right]  \\
&\quad + \sum_{k = 2}^{2p}\binom{2p}{k}\E\left[\left.|\bar{\theta}^{\lambda}_n|^{2p-k} |\lambda H_{\lambda}(\bar{\theta}^{\lambda}_n, X_{n+1})(t-n)|^k \right|\bar{\theta}^{\lambda}_n \right] ,
\end{align*}
where we apply \cite[Lemma A.3]{nonconvex} to obtain the last inequality above. Moreover, by Assumption \ref{AG} and Remark \ref{ADFh}, the above estimate further yields
\begin{align}\label{binomialterm}
 \E\left[\left.|\Delta_{n,t}^{\lambda}|^{2p}\right|\bar{\theta}^{\lambda}_n \right] %\nonumber \\
& \leq  |\bar{\theta}^{\lambda}_n|^{2p} -2p\lambda(t-n) |\bar{\theta}^{\lambda}_n|^{2p-2} \E\left[\left.\left\langle  \bar{\theta}^{\lambda}_n, \frac{G( \bar{\theta}^{\lambda}_n, X_{n+1})+ F( \bar{\theta}^{\lambda}_n, X_{n+1})}{1+\sqrt{\lambda}| \bar{\theta}^{\lambda}_n|^{2r}} \right\rangle\right|\bar{\theta}^{\lambda}_n \right]  \nonumber  \\
&\quad + \sum_{k = 2}^{2p}\binom{2p}{k}\E\left[\left.|\bar{\theta}^{\lambda}_n|^{2p-k} |\lambda H_{\lambda}(\bar{\theta}^{\lambda}_n, X_{n+1})(t-n)|^k \right|\bar{\theta}^{\lambda}_n \right] \nonumber \\
&\leq  |\bar{\theta}^{\lambda}_n|^{2p} -2p\lambda(t-n) |\bar{\theta}^{\lambda}_n|^{2p-2}\frac{(a_F|\bar{\theta}^{\lambda}_n|^{2r+2} - b_F)}{1+\sqrt{\lambda}| \bar{\theta}^{\lambda}_n|^{2r}} \nonumber  \\
&\quad +\frac{2^{q+1}p\lambda(t-n)K_G\E\left[ (1+|X_0|)^{\rho}\right]|\bar{\theta}^{\lambda}_n|^{2p-2}(1+|\bar{\theta}^{\lambda}_n|^{q+1}) }{1+\sqrt{\lambda}| \bar{\theta}^{\lambda}_n|^{2r}} \nonumber \\
&\quad + \sum_{k = 2}^{2p}\binom{2p}{k}\E\left[\left.|\bar{\theta}^{\lambda}_n|^{2p-k} |\lambda H_{\lambda}(\bar{\theta}^{\lambda}_n, X_{n+1})(t-n)|^k \right|\bar{\theta}^{\lambda}_n \right].
\end{align}
Next, for any $\theta \in \R^d, x \in \R^m$, denote by
\[
G_{\lambda}(\theta, x) := \frac{G(\theta, x)}{1+\sqrt{\lambda}|\theta|^{2r}}, \quad F_{\lambda}(\theta, x) := \frac{F(\theta, x)}{1+\sqrt{\lambda}|\theta|^{2r}}
\]
To obtain the optimal stepsize restriction $\lambda_{p,\max}$, we estimate the term in \eqref{binomialterm} using Assumption \ref{AG}, \ref{AF} as follows:
\begin{align}\label{2pmmtes1}
& \sum_{k = 2}^{2p}\binom{2p}{k}\E\left[\left.|\bar{\theta}^{\lambda}_n|^{2p-k} |\lambda H_{\lambda}(\bar{\theta}^{\lambda}_n, X_{n+1})(t-n)|^k \right|\bar{\theta}^{\lambda}_n \right]\nonumber\\
& \leq \sum_{k = 2}^{2p}\binom{2p}{k}|\bar{\theta}^{\lambda}_n|^{2p-k}\lambda^k(t-n)^k \E\left[\left.( |G_{\lambda}(\bar{\theta}^{\lambda}_n, X_{n+1})| +|F_{\lambda}(\bar{\theta}^{\lambda}_n, X_{n+1})|)^k \right|\bar{\theta}^{\lambda}_n \right]\nonumber\\
%&= \sum_{k = 2}^{2p}\binom{2p}{k}|\bar{\theta}^{\lambda}_n|^{2p-k}\lambda^k(t-n)^k \E\left[\left. \sum_{l=0}^k\binom{k}{l} |G_{\lambda}(\bar{\theta}^{\lambda}_n, X_{n+1}) |^{k-l}|F_{\lambda}(\bar{\theta}^{\lambda}_n, X_{n+1})|^l \right|\bar{\theta}^{\lambda}_n \right]\\
\begin{split}
& = \sum_{k = 2}^{2p}\binom{2p}{k}|\bar{\theta}^{\lambda}_n|^{2p-k}\lambda^k(t-n)^k \E\left[\left. \sum_{l=0}^{k-1}\binom{k}{l} |G_{\lambda}(\bar{\theta}^{\lambda}_n, X_{n+1}) |^{k-l}|F_{\lambda}(\bar{\theta}^{\lambda}_n, X_{n+1})|^l \right|\bar{\theta}^{\lambda}_n \right]\\
&\quad +\sum_{k = 2}^{2p}\binom{2p}{k}|\bar{\theta}^{\lambda}_n|^{2p-k}\lambda^k(t-n)^k \E\left[\left. |F_{\lambda}(\bar{\theta}^{\lambda}_n, X_{n+1})|^k \right|\bar{\theta}^{\lambda}_n \right].
\end{split}
\end{align}
By Assumption \ref{AG}, \ref{AF}, one notes that for any $\theta \in \R^d, m \in \R^m$, $2 \leq k \leq 2p$,
\begin{align}\label{2pmmtes2}
&\sum_{l=0}^{k-1}\binom{k}{l} |G_{\lambda}(\theta, x)|^{k-l}|F_{\lambda}(\theta, x)|^l \nonumber\\
&= \sum_{l=1}^{k-1}\binom{k}{l} |G_{\lambda}(\theta, x)|^{k-l}|F_{\lambda}(\theta, x)|^l +|G_{\lambda}(\theta, x)|^k \nonumber\\
& \leq  \sum_{l=1}^{k-1}\binom{k}{l} \frac{K_G^{k-l}(1+|x|)^{\rho(k-l)}(1+|\theta|)^{q(k-l)}}{(1+\sqrt{\lambda}|\theta|^{2r})^{k-l}}\times\frac{K_F^l(1+|x|)^{\rho l}(1+|\theta|^{2r+1})^l}{(1+\sqrt{\lambda}|\theta|^{2r})^l}\nonumber \\
& \quad +\frac{K_G^{k}(1+|x|)^{\rho k}(1+|\theta|)^{qk}}{(1+\sqrt{\lambda}|\theta|^{2r})^k}\nonumber\\
& \leq  \sum_{l=1}^{k-1}\binom{k}{l} 2^{qk-ql+l-2}K_G^{k-l}K_F^l (1+|x|)^{\rho k}\frac{1+|\theta|^{q(k-l)}}{(1+\sqrt{\lambda}|\theta|^{2r})^{k-l}}\times\frac{ 1+|\theta|^{(2r+1)l}}{(1+\sqrt{\lambda}|\theta|^{2r})^l}\nonumber \\
& \quad +2^{qk-1}K_G^{k}(1+|x|)^{\rho k}\frac{ 1+|\theta|^{qk}}{(1+\sqrt{\lambda}|\theta|^{2r})^k}\nonumber\\
& \leq  \sum_{l=1}^{k-1}\binom{k}{l} \lambda^{-k/2}2^{qk-ql+l-2}K_G^{k-l}K_F^l (1+|x|)^{\rho k}  \frac{1+\lambda^{(k-l)/2}|\theta|^{q(k-l)}}{1+\lambda^{(k-l)/2}|\theta|^{2r(k-l)} }\times\frac{ 1+\lambda^{l/2}|\theta|^{(2r+1)l}}{1+ \lambda^{l/2}|\theta|^{2rl}}\nonumber \\
& \quad +\lambda^{-k/2}2^{qk-1}K_G^{k}(1+|x|)^{\rho k}\frac{ 1+\lambda^{k/2}|\theta|^{qk}}{1+\lambda^{k/2}|\theta|^{2rk} }\nonumber\\
&\leq \sum_{l=1}^{k-1}\binom{k}{l} \lambda^{-k/2}2^{qk}K_G^{k-l}K_F^l (1+|x|)^{\rho k}(1+|\theta|)^l+\lambda^{-k/2}2^{qk}K_G^{k}(1+|x|)^{\rho k}\nonumber\\
& =  \sum_{l=0}^{k-1}\binom{k}{l} \lambda^{-k/2}2^{qk}K_G^{k-l}K_F^l (1+|x|)^{\rho k}(1+|\theta|)^l,
\end{align}
where the third inequality holds due to $(u+v)^s \geq u^s+v^s$ for $u, v \geq 0, s \geq 1$ and $\lambda \leq \lambda_{p,\max} \leq 1$, while the last inequality holds due to the following inequalities, for $2r \geq q \geq 1$,
\[
\frac{1+\lambda^{(k-l)/2}|\theta|^{q(k-l)}}{1+\lambda^{(k-l)/2}|\theta|^{2r(k-l)} } \leq 2, \quad \frac{ 1+\lambda^{l/2}|\theta|^{(2r+1)l}}{1+ \lambda^{l/2}|\theta|^{2rl}} \leq 2(1+|\theta|)^l.
\]
Inserting \eqref{2pmmtes2} into \eqref{2pmmtes1} together with Assumption \ref{AG}, \ref{AF} yields
\begin{align*}
& \sum_{k = 2}^{2p}\binom{2p}{k}\E\left[\left.|\bar{\theta}^{\lambda}_n|^{2p-k} |\lambda H_{\lambda}(\bar{\theta}^{\lambda}_n, X_{n+1})(t-n)|^k \right|\bar{\theta}^{\lambda}_n \right]\\
&\leq \sum_{k = 2}^{2p}\binom{2p}{k}\lambda^{k/2}(t-n)^k\sum_{l=0}^{k-1}\binom{k}{l} 2^{qk}K_G^{k-l}K_F^l \E\left[(1+|X_0|)^{\rho k} \right] (1+|\bar{\theta}^{\lambda}_n|)^{2p-k+l}\\
&\quad +\sum_{k = 2}^{2p}\binom{2p}{k}|\bar{\theta}^{\lambda}_n|^{2p-k}\lambda^k(t-n)^k \frac{2^{k-1}K_F^{k}\E\left[(1+|X_0|)^{\rho k}\right] (1+|\bar{\theta}^{\lambda}_n|^{k(2r+1)})}{(1+\sqrt{\lambda}| \bar{\theta}^{\lambda}_n|^{2r})^k}\\
& \leq \binom{2p}{p}^2 2^{2p(q+1)-2} p(2p-1)\lambda(t-n) K_G^{2p}(1+K_F)^{2p}\E\left[(1+|X_0|)^{2p\rho } \right] (1+|\bar{\theta}^{\lambda}_n|^{2p-1})\\
&\quad +\binom{2p}{p} 2^{2p-1}(2p-1) \lambda(t-n) (1+K_F)^{2p}\E\left[(1+|X_0|)^{2p\rho } \right](1+|\bar{\theta}^{\lambda}_n|^{2p-1})\\
&\quad + \sum_{k = 2}^{2p}\binom{2p}{k}\lambda^k(t-n)^k \frac{2^{k-1}K_F^{k}\E\left[(1+|X_0|)^{\rho k}\right] |\bar{\theta}^{\lambda}_n|^{2rk+2p}}{(1+\sqrt{\lambda}| \bar{\theta}^{\lambda}_n|^{2r})^k}\\
&\leq c_3(p)\lambda(t-n) |\bar{\theta}^{\lambda}_n|^{2p-1} + c_3(p)\lambda(t-n) \\
&\quad + \sum_{k = 2}^{2p}\binom{2p}{k}\lambda^k(t-n)^k \frac{2^{k-1}K_F^{k}\E\left[(1+|X_0|)^{\rho k}\right] |\bar{\theta}^{\lambda}_n|^{2rk+2p}}{(1+\sqrt{\lambda}| \bar{\theta}^{\lambda}_n|^{2r})^k},
\end{align*}
where $c_3(p) :=\binom{2p}{p}^2 2^{2p(q+1)} p(2p-1) K_G^{2p}(1+K_F)^{2p}\E\left[(1+|X_0|)^{2p\rho } \right] $. By substituting the above estimate back into \eqref{binomialterm} and by applying Young's inequality, one obtains
\begin{align} \label{Jterms}
\E\left[\left.|\Delta_{n,t}^{\lambda}|^{2p}\right|\bar{\theta}^{\lambda}_n \right]
&\leq  |\bar{\theta}^{\lambda}_n|^{2p} -2p\lambda(t-n) |\bar{\theta}^{\lambda}_n|^{2p-2}\frac{ a_F|\bar{\theta}^{\lambda}_n|^{2r+2} }{1+\sqrt{\lambda}| \bar{\theta}^{\lambda}_n|^{2r}}  \nonumber\\
&\quad +\frac{2^{q+1}p\lambda(t-n)K_G\E\left[ (1+|X_0|)^{\rho}\right]|\bar{\theta}^{\lambda}_n|^{2p+q-1}}{1+\sqrt{\lambda}| \bar{\theta}^{\lambda}_n|^{2r}}\nonumber \\
&\quad + \sum_{k = 2}^{2p}\binom{2p}{k}\lambda^k(t-n)^k \frac{2^{k-1}K_F^{k}\E\left[(1+|X_0|)^{\rho k}\right] |\bar{\theta}^{\lambda}_n|^{2rk+2p}}{(1+\sqrt{\lambda}| \bar{\theta}^{\lambda}_n|^{2r})^k}\nonumber\\
&\quad +\lambda(t-n) (2pb_F +2^{q+1}p K_G\E\left[ (1+|X_0|)^{\rho}\right])(1+|\bar{\theta}^{\lambda}_n|^{2p-1})\nonumber \\
&\quad +  c_3(p)\lambda(t-n) |\bar{\theta}^{\lambda}_n|^{2p-1} + c_3(p)\lambda(t-n) \nonumber\\
\begin{split}
& = |\bar{\theta}^{\lambda}_n|^{2p} -\lambda(t-n) |\bar{\theta}^{\lambda}_n|^{2p}J_1^{\lambda}(\bar{\theta}^{\lambda}_n) - \lambda(t-n)(J_2^{\lambda}(\bar{\theta}^{\lambda}_n) +J_3^{\lambda}(\bar{\theta}^{\lambda}_n) )\\
&\quad +\lambda(t-n) (2pb_F +2^{q+1}p K_G\E\left[ (1+|X_0|)^{\rho}\right]+c_3(p)),
\end{split}
\end{align}
where for all $\theta \in \R^d$, $p\in [2, \infty) \cap {\N}$,
\begin{align*}
J_1^{\lambda}(\theta)
& := \frac{ a_F|\theta|^{2r}}{1+\sqrt{\lambda}|\theta|^{2r}} - \frac{2^{q+1}pK_G\E\left[ (1+|X_0|)^{\rho}\right]|\theta|^{q-1}}{1+\sqrt{\lambda}|\theta|^{2r}},\\
J_2^{\lambda}(\theta)
& := \frac{ a_F|\theta|^{2p+2r}- (1+\sqrt{\lambda}|\theta|^{2r})(2pb_F +2^{q+1}p K_G\E\left[ (1+|X_0|)^{\rho}\right]+c_3(p))|\theta|^{2p-1}}{1+\sqrt{\lambda}|\theta|^{2r}} ,\\
J_3^{\lambda}(\theta)
& := \frac{(2p-2) a_F|\theta|^{2p+2r}}{1+\sqrt{\lambda}|\theta|^{2r}} -  \sum_{k = 2}^{2p}\binom{2p}{k}\lambda^{k-1}(t-n)^{k-1} \frac{2^{k-1}K_F^{k}\E\left[(1+|X_0|)^{\rho k}\right] |\theta|^{2rk+2p}}{(1+\sqrt{\lambda}| \theta|^{2r})^k}.
\end{align*}
Next, we aim to choose a large enough constant $M_1(p)>0$, such that for $|\theta|>M_1(p)$, $J_1^{\lambda}(\theta)$ and $J_2^{\lambda}(\theta)$ are nonnegative. To obtain an explicit form of such a constant, one notes that, for all $\theta \in \R^d$,
\begin{align}\label{2pmmtesJ1}
\begin{split}
&a_F|\theta|^{2r} - 2^{q+1}pK_G\E\left[ (1+|X_0|)^{\rho}\right]|\theta|^{q-1} > \frac{a_F}{2}|\theta|^{2r}  \\
& \Leftrightarrow \quad |\theta|> M_{1,0}(p):=\left(\frac{2^{q+2}pK_G\E\left[ (1+|X_0|)^{\rho}\right]}{a_F}\right)^{1/(2r-q+1)}.
\end{split}
\end{align}
Moreover, one observes that, for all $\theta \in \R^d$,
\begin{align}\label{2pmmtesJ2p1}
\begin{split}
&\frac{a_F}{2}|\theta|^{2p+2r} -    (2pb_F +2^{q+1}p K_G\E\left[ (1+|X_0|)^{\rho}\right]+c_3(p))|\theta|^{2p+2r-1}  >  0 \\
& \Leftrightarrow \quad |\theta| > M_{1,1}(p):=\frac{4pb_F +2^{q+2}p K_G\E\left[ (1+|X_0|)^{\rho}\right]+2c_3(p)}{a_F},
\end{split}
\end{align}
and
\begin{align}\label{2pmmtesJ2p2}
\begin{split}
&\frac{a_F}{2}|\theta|^{2p+2r} -  (2pb_F +2^{q+1}p K_G\E\left[ (1+|X_0|)^{\rho}\right]+c_3(p))|\theta|^{2p-1}  >  0 \\
& \Leftrightarrow \quad |\theta| > M_{1,2}(p):=\left(\frac{4pb_F +2^{q+2}p K_G\E\left[ (1+|X_0|)^{\rho}\right]+2c_3(p)}{a_F}\right)^{1/(2r+1)}.
\end{split}
\end{align}
Denote by $M_1(p) := (4pb_F +2^{q+2}p K_G\E\left[ (1+|X_0|)^{\rho}\right]+2c_3(p))/\min\{1, a_F\}\geq 1$. It is straightforward to see that, for $2r\geq q \geq 1$, $p\in [2, \infty) \cap {\N}$,
\begin{equation}\label{M1consts}
M_1(p) \geq \max\{M_{1,0}(p), M_{1,1}(p), M_{1,2}(p)\}.
\end{equation}
Thus, for all $\theta \in \R^d, |\theta| >M_1(p)$, by \eqref{2pmmtesJ1}, \eqref{M1consts}, it holds that
\begin{align}\label{2pmmtesJ1lb}
J_1^{\lambda}(\theta)
& = \frac{ a_F|\theta|^{2r} - 2^{q+1}pK_G\E\left[ (1+|X_0|)^{\rho}\right]|\theta|^{q-1}  }{1+\sqrt{\lambda}|\theta|^{2r}}\nonumber\\
& > \frac{ a_F|\theta|^{2r}}{2(1+\sqrt{\lambda}|\theta|^{2r})}\nonumber \\
&\geq \frac{ a_F(M_1(p)) ^{2r}}{2(1+(M_1(p))^{2r})}\nonumber\\
& = a_F\bar{\kappa}(p),
\end{align}
where $\bar{\kappa}(p) := (M_1(p) )^{2r}/(2(1+(M_1(p) )^{2r}))$, and the last inequality holds due to $0<\lambda \leq \lambda_{p,\max} \leq 1$ and the fact that $f(s) := s/(1+\sqrt{\lambda}s)$ is non-decreasing for all $s\geq 0$.
Similarly, for all $|\theta| >M_1(p) $, it follows that $J_2^{\lambda}(\theta) >0$. Indeed, for all $\theta \in \R^d, |\theta| >M_1(p)$, $\lambda \leq \lambda_{p,\max}  \leq 1$, by using \eqref{2pmmtesJ2p1}, \eqref{2pmmtesJ2p2}, \eqref{M1consts}, one obtains
\begin{align}\label{2pmmtesJ2lb}
J_2^{\lambda}(\theta)
& = \frac{ a_F|\theta|^{2p+2r}- (1+\sqrt{\lambda}|\theta|^{2r})(2pb_F +2^{q+1}p K_G\E\left[ (1+|X_0|)^{\rho}\right]+c_3(p))|\theta|^{2p-1}}{1+\sqrt{\lambda}|\theta|^{2r}} \nonumber \\
&\geq \frac{ a_F|\theta|^{2p+2r}- (1+|\theta|^{2r})(2pb_F +2^{q+1}p K_G\E\left[ (1+|X_0|)^{\rho}\right]+c_3(p))|\theta|^{2p-1}}{1+\sqrt{\lambda}|\theta|^{2r}} \nonumber \\
& = \frac{ a_F|\theta|^{2p+2r}/2-  (2pb_F +2^{q+1}p K_G\E\left[ (1+|X_0|)^{\rho}\right]+c_3(p))|\theta|^{2p-1}}{1+\sqrt{\lambda}|\theta|^{2r}} \nonumber \\
&\quad +\frac{ a_F|\theta|^{2p+2r}/2-(2pb_F +2^{q+1}p K_G\E\left[ (1+|X_0|)^{\rho}\right]+c_3(p))|\theta|^{2p+2r-1}}{1+\sqrt{\lambda}|\theta|^{2r}} \nonumber \\
&\geq 0.
\end{align}
Furthermore, it follows that, for any $\theta \in \R^d$,
\begin{align*}
J_3^{\lambda}(\theta)
&  =  \sum_{k = 2}^{2p}\bigg(\frac{(2p-2) a_F|\theta|^{2p+2r}(1+\sqrt{\lambda}|\theta|^{2r})^{k-1}}{(2p-1)(1+\sqrt{\lambda}|\theta|^{2r})^k}\bigg.\\
&\quad \bigg. - \frac{\binom{2p}{k}\lambda^{k-1}(t-n)^{k-1} 2^{k-1}K_F^{k}\E\left[(1+|X_0|)^{\rho k}\right] |\theta|^{2rk+2p}}{(1+\sqrt{\lambda}| \theta|^{2r})^k}\bigg)\\
&  \geq  \sum_{k = 2}^{2p}\bigg(\frac{(2p-2) a_F|\theta|^{2p+2r}(1+\lambda^{(k-1)/2}|\theta|^{2r(k-1)})}{(2p-1)(1+\sqrt{\lambda}|\theta|^{2r})^k}\bigg.\\
&\quad \bigg. - \frac{\binom{2p}{k}\lambda^{k-1}(t-n)^{k-1} 2^{k-1}K_F^{k}\E\left[(1+|X_0|)^{\rho k}\right] |\theta|^{2rk+2p}}{(1+\sqrt{\lambda}| \theta|^{2r})^k}\bigg)\\
&  \geq  \sum_{k = 2}^{2p}\bigg(\frac{(2p-2) a_F\lambda^{(k-1)/2}|\theta|^{2p+2rk}}{(2p-1)(1+\sqrt{\lambda}|\theta|^{2r})^k}\bigg.\\
&\quad \bigg. - \frac{\binom{2p}{k}\lambda^{k-1}(t-n)^{k-1} 2^{k-1}K_F^{k}\E\left[(1+|X_0|)^{\rho k}\right] |\theta|^{2rk+2p}}{(1+\sqrt{\lambda}| \theta|^{2r})^k}\bigg).
\end{align*}
Then, direct calculations yield that $J_3^{\lambda}(\theta) \geq 0$ when
\[
\lambda \leq \lambda_k(p) := \frac{(a_F/K_F)^{2/(k-1)}}{9\binom{2p}{k}^{2/(k-1)}K_F^2(\E\left[(1+|X_0|)^{\rho k}\right])^{2/(k-1)}}
\]
for each $2 \leq k \leq 2p$. The above inequality further implies, a possible choice of the stepsize restriction (independent of $k$) would be:
\[
\lambda \leq \lambda_{p,\max} \leq  \lambda(p) :=\frac{\min\{(a_F/K_F)^2, (a_F/K_F)^{2/(2p-1)}\}}{9\binom{2p}{p}^2K_F^2(\E\left[(1+|X_0|)^{2p\rho  }\right])^2},
\]
which is a lower bound of $ \lambda_k(p)$, i.e. $\lambda(p) \leq \lambda_k(p)$, for all $2 \leq k \leq 2p$. Thus, for all $\theta \in \R^d$,
\begin{equation}\label{2pmmtesJ3lb}
0<\lambda \leq \lambda_{p,\max} \leq \lambda(p) \implies J_3^{\lambda}(\theta) \geq 0.
\end{equation}
Denote by $\mathsf{S}_{n,M_1(p) } := \{\omega \in \Omega: |\bar{\theta}^{\lambda}_n(\omega)| >M_1(p) \}$. Substituting the \eqref{2pmmtesJ1lb}, \eqref{2pmmtesJ2lb}, \eqref{2pmmtesJ3lb} into \eqref{Jterms} yields, for any $0<\lambda \leq \lambda_{p,\max} $
\begin{align*}
\E\left[\left.|\Delta_{n,t}^{\lambda}|^{2p} \1_{\mathsf{S}_{n,M_1(p) } }\right|\bar{\theta}^{\lambda}_n \right]
& \leq (1-\lambda(t-n)a_F\bar{\kappa}(p))|\bar{\theta}^{\lambda}_n|^{2p}  \1_{\mathsf{S}_{n,M_1(p) } }  \\
&\quad +\lambda(t-n) (2pb_F +2^{q+1}p K_G\E\left[ (1+|X_0|)^{\rho}\right]+c_3(p))\1_{\mathsf{S}_{n,M_1(p) } },
\end{align*}
and moreover, by \eqref{2pmmtesJ3lb}, one obtains, for any $0<\lambda \leq \lambda_{p,\max} $,
\begin{align*}
&\E\left[\left.|\Delta_{n,t}^{\lambda}|^{2p} \1_{\mathsf{S}_{n,M_1(p) }^{\mathsf{c}} }\right|\bar{\theta}^{\lambda}_n \right] \\
& \leq (1-\lambda(t-n)a_F\bar{\kappa}(p))|\bar{\theta}^{\lambda}_n|^{2p}  \1_{\mathsf{S}_{n,M_1(p) } } \\
&\quad + \lambda(t-n) ( a_F\bar{\kappa}(p)(M_1(p) )^{2p}+2^{q+1}p K_G\E\left[ (1+|X_0|)^{\rho}\right](M_1(p) )^{2p+q-1}) \1_{\mathsf{S}_{n,M_1(p)}^{\mathsf{c}} }\\
&\quad +\lambda(t-n) (2pb_F +2^{q+1}p K_G\E\left[ (1+|X_0|)^{\rho}\right]+c_3(p)) (1+(M_1(p) )^{2p-1})\1_{\mathsf{S}_{n,M_1(p)}^{\mathsf{c}} }.
\end{align*}
Thus, one obtains, for $p\in [2, \infty)\cap {\N}$,
\begin{equation}\label{delta2p}
\E\left[\left.|\Delta_{n,t}^{\lambda}|^{2p}\right|\bar{\theta}^{\lambda}_n \right]  \leq (1-\lambda(t-n)a_F\bar{\kappa}(p))|\bar{\theta}^{\lambda}_n|^{2p}    +\lambda(t-n) c_4(p),
\end{equation}
where
\begin{align*}
c_4(p)
& := a_F\bar{\kappa}(p)(M_1(p) )^{2p}+2^{q+1}p K_G\E\left[ (1+|X_0|)^{\rho}\right](M_1(p) )^{2p+q-1}\\
&\quad + (2pb_F +2^{q+1}p K_G\E\left[ (1+|X_0|)^{\rho}\right]+c_3(p)) (1+(M_1(p) )^{2p-1}).
\end{align*}
Moreover, one notes that, $c_2 \leq c_4(1)$ with $c_2$ given in \eqref{constc2}. Thus, by using \eqref{delta2p} and \eqref{delta2}, it holds that, for any $p\in [2, \infty)\cap {\N}$,
\begin{equation}\label{delta2p-2}
\E\left[\left.|\Delta_{n,t}^{\lambda}|^{2p-2}\right|\bar{\theta}^{\lambda}_n \right]  \leq |\bar{\theta}^{\lambda}_n|^{2p-2}  +\lambda(t-n) c_4(p-1).
\end{equation}
Substituting the upper bounds in \eqref{delta2p} and \eqref{delta2p-2} into \eqref{2pthme} therefore yields
\begin{align*}
\E\left[\left.|\bar{\theta}^{\lambda}_t|^{2p}\right|\bar{\theta}^{\lambda}_n \right]
%&\leq \E\left[\left.|\Delta_{n,t}^{\lambda}|^{2p}\right|\bar{\theta}^{\lambda}_n \right]  +2^{2p-2}p(2p-1)\lambda (t-n)d \beta^{-1}\E\left[\left. |\Delta_{n,t}^{\lambda}|^{2p-2}   \right|\bar{\theta}^{\lambda}_n \right]  \\
%&\quad +2^{2p-4}(2p(2p-1))^{p+1}(d\beta^{-1}\lambda (t-n) )^p\\
&\leq  (1-\lambda(t-n)a_F\bar{\kappa}(p))|\bar{\theta}^{\lambda}_n|^{2p} + 2^{2p-2}p(2p-1)\lambda (t-n)d \beta^{-1}|\bar{\theta}^{\lambda}_n|^{2p-2}   \\
&\quad +\lambda(t-n) c_4(p) +2^{2p-2}p(2p-1)\lambda^2 (t-n)^2d \beta^{-1}c_4(p-1)\\
&\quad +2^{2p-4}(2p(2p-1))^{p+1}(d\beta^{-1}\lambda (t-n) )^p.
\end{align*}
One notes that for any $\theta \in \R^d$
\begin{align}\label{2pdeltaesp1}
\begin{split}
&  (1-\lambda(t-n)a_F\bar{\kappa}(p))|\theta|^{2p} + 2^{2p-2}p(2p-1) \lambda(t-n)d \beta^{-1}|\theta|^{2p-2}   < (1-\lambda(t-n) a_F\bar{\kappa}(p)/2)|\theta|^{2p}  \\
& \Leftrightarrow \quad |\theta| > \left(\frac{2^{2p-1}p(2p-1) d \beta^{-1}}{a_F \bar{\kappa}(p) }\right)^{1/2}.
\end{split}
\end{align}
Denote by $M_2(p) := (2^{2p-1}p(2p-1)d \beta^{-1}/(a_F\bar{\kappa}(p)))^{1/2}$ and $\mathsf{S}_{n,M_2(p) } := \{\omega \in \Omega: |\bar{\theta}^{\lambda}_n(\omega)| >M_2(p) \}$. Then, by \eqref{2pdeltaesp1}, it follows that
\[
\E\left[\left.|\bar{\theta}^{\lambda}_t|^{2p} \1_{\mathsf{S}_{n,M_2(p) }} \right|\bar{\theta}^{\lambda}_n \right]
\leq  (1-\lambda(t-n)a_F\bar{\kappa}(p)/2)|\bar{\theta}^{\lambda}_n|^{2p}\1_{\mathsf{S}_{n,M_2(p) }} +\lambda (t-n) c_5(p)\1_{\mathsf{S}_{n,M_2(p) }} ,
\]
where $c_5(p) := c_4(p) +2^{2p-2}p(2p-1) d \beta^{-1}c_4(p-1) +2^{2p-4}(2p(2p-1))^{p+1}(d\beta^{-1})^p$. Furthermore,
\begin{align*}
\E\left[\left.|\bar{\theta}^{\lambda}_t|^{2p} \1_{\mathsf{S}_{n,M_2(p) }^{\mathsf{c}} }\right|\bar{\theta}^{\lambda}_n \right]
&\leq  (1-\lambda(t-n)a_F\bar{\kappa}(p))|\bar{\theta}^{\lambda}_n|^{2p}\1_{\mathsf{S}_{n,M_2(p) }^{\mathsf{c}}} +\lambda (t-n) c_5(p)\1_{\mathsf{S}_{n,M_2(p) }^{\mathsf{c}}}\\
& \quad + \lambda (t-n) 2^{2p-2}p(2p-1) d \beta^{-1}(M_2(p))^{2p-2}\1_{\mathsf{S}_{n,M_2(p) }^{\mathsf{c}}}.
\end{align*}
By combining the two cases, one obtains, for $t \in (n, n+1], n \in \N_0$, $0 <\lambda \leq \lambda_{p,\max} $,
\[
\E\left[\left.|\bar{\theta}^{\lambda}_t|^{2p}\right|\bar{\theta}^{\lambda}_n \right]  \leq  (1-\lambda(t-n)a_F\bar{\kappa}(p)/2)|\bar{\theta}^{\lambda}_n|^{2p} +\lambda (t-n) \bar{c}_0(p),
\]
where
\begin{align}\label{constcp}
\begin{split}
\bar{\kappa}(p)
&:= (M_1(p))^{2r}/(2(1+(M_1(p))^{2r})),\\
M_1(p)
&:=(4pb_F +2^{q+2}p K_G\E\left[ (1+|X_0|)^{\rho}\right]+2c_3(p))/\min\{1, a_F\},\\
\bar{c}_0(p)
&:= c_5(p)+ 2^{2p-2}p(2p-1) d \beta^{-1}(M_2(p))^{2p-2}, \\
M_2 (p)
& := (2^{2p-1}p(2p-1)d \beta^{-1}/(a_F\bar{\kappa}(p)))^{1/2}, \\
c_5(p)
& :=  c_4(p) +2^{2p-2}p(2p-1) d \beta^{-1}c_4(p-1) +2^{2p-4}(2p(2p-1))^{p+1}(d\beta^{-1})^p, \\
c_4(p)
&: = a_F\bar{\kappa}(p)(M_1(p))^{2p}+2^{q+1}p K_G\E\left[ (1+|X_0|)^{\rho}\right](M_1(p))^{2p+q-1}\\
&\quad + (2pb_F +2^{q+1}p K_G\E\left[ (1+|X_0|)^{\rho}\right]+c_3(p)) (1+(M_1(p))^{2p-1}), \\
c_3(p)
&:=\binom{2p}{p}^2 2^{2p(q+1)} p(2p-1) K_G^{2p}(1+K_F)^{2p}\E\left[(1+|X_0|)^{2p\rho } \right].
\end{split}
\end{align}
%\begin{align*}
%\E\left[\left.|\bar{\theta}^{\lambda}_t|^{2p}\right|\bar{\theta}^{\lambda}_n \right]
%&\leq \E\left[\left.|\Delta_{n,t}^{\lambda}|^{2p}\right|\bar{\theta}^{\lambda}_n \right]  +2^{2p-3}p(2p-1)\E\left[\left. |\Delta_{n,t}^{\lambda}|^{2p-2} |\Xi_{n,t}^{\lambda}|^2  \right|\bar{\theta}^{\lambda}_n \right]  \\
%&\quad +2^{2p-4}(2p(2p-1))^{p+1}(d\beta^{-1}\lambda (t-n) )^p\\
%& \leq \left(1+\frac{\lambda (t-n)a_F\bar{\kappa}(p)}{2}\right)\E\left[\left.|\Delta_{n,t}^{\lambda}|^{2p}\right|\bar{\theta}^{\lambda}_n \right] \\
%&\quad +\lambda (t-n)\left(1+(1/(a_F\bar{\kappa}(p)))^{p-1}\right)(2^{2p-1})^p(p(2p-2))^{2p}(d\beta^{-1})^p \\
%&\leq \left(1-\frac{\lambda (t-n)a_F\bar{\kappa}(p)}{2}\right)|\bar{\theta}^{\lambda}_n|^{2p}    +\lambda(t-n) \bar{c}_0(p),
%\end{align*}
Therefore, by noticing $\bar{\kappa}(p) \geq \bar{\kappa}(2) $, for any $p \geq 2$, and by using similar arguments as in \eqref{2ndmmttp}, one can conclude that, for $t \in (n, n+1], n \in \N_0$, $0 <\lambda \leq \lambda_{p,\max} $,
\begin{align}\label{2pme}
\begin{split}
\E\left[ |\bar{\theta}^{\lambda}_t|^{2p}  \right]
&\leq  (1-\lambda(t-n)a_F\bar{\kappa}(2)/2)\left(1- \lambda a_F\bar{\kappa}(2)/2 \right)^n \E\left[|\theta_0|^{2p}\right] + \bar{c}_0(p)(1+2/(a_F\bar{\kappa}(p)) ).
\end{split}
\end{align}
Finally, for any $p\in [2, \infty)\cap {\N}$, denote by $\kappa^\sharp_p := \min\{\bar{\kappa}(p), \tilde{\kappa}(p)\}$ and $c^\sharp_p := \max\{\bar{c}_0(p), \tilde{c}_0(p)\}$, where $\tilde{\kappa}(p), \tilde{c}_0(p)$ are given in \eqref{constcpsp}. The above inequality further implies, for $t \in (n, n+1], n \in \N_0$, $0<\lambda \leq \lambda_{p,\max}$,
\[
\E\left[ |\bar{\theta}^{\lambda}_t|^{2p} \right]  \leq (1-\lambda (t-n) a_F\kappa^\sharp_2/2) (1-\lambda a_F\kappa^\sharp_2/2)^n\E\left[|\theta_0|^{2p}\right]  + c^\sharp_p(1+2/(a_F\kappa^\sharp_p )),
\]
which completes the proof.
\end{proof}
%%%%%%%%%%%%%%%%%%%%%%%%
\begin{proof}[\textbf{Proof of Lemma \ref{2ndpthmmt}-\ref{2ndpthmmtiii}}]
We have established an upper estimate for the $2p$-th moment (with $p\in [2, \infty)\cap {\N}$) of the TUSLA algorithm \eqref{tusla} under the condition that  $0<\lambda \leq \lambda_{p,\max}$ with $\lambda_{p,\max}$ given in \eqref{stepsizemax}. One may notice that $\lambda_{p,\max}$ is quite restrictive for practical implementations when $p$ is large. Thus, in this subsection, we will show that, in some special case of $F$, the $2p$-th moment of the TUSLA algorithm \eqref{tusla} can be obtained under a relaxed stepsize restriction.

We assume in this subsection that for any $\theta \in \R^d$, $F(\theta, x) = F(\theta)$ for all $x \in \R^m$. One notes that for $F$ satisfying Assumption \ref{AF}, it further satisfies the following growth condition: for all $\theta \in \R^d$,
\begin{equation}\label{2pspeFgrwoth}
|F(\theta) | \leq K_F (1+|\theta|^{2r+1}).
\end{equation}
Moreover, for $F$ satisfying Assumption \ref{AC}, we have that by Remark \ref{ADFh}, for all $\theta \in \R^d$,
\begin{equation}\label{2pspeFdissi}
\langle \theta, F(\theta)\rangle \geq a_F|\theta|^{2r+2} - b_F.
\end{equation}

Denote by
\[
\tilde{\lambda}_{\max} := \min\left\{1,\frac{a_F^2}{16K_F^4}, \frac{1}{a_F}, \frac{1}{4a_F^2}\right\}
\]
as presented in \eqref{relaxedlamax}. To establish the $2p$-th moment estimate (with $p\in [2, \infty)\cap {\N}$) under the condition that $0<\lambda <\tilde{\lambda}_{\max}$, we apply the same arguments as in the proof of Lemma \ref{2ndpthmmt}-\ref{2ndpthmmtii} up to \eqref{2pthme}, then, we adopt a different method to obtain an upper bound of $\E\left[\left.|\Delta_{n,t}^{\lambda}|^{2p}\right|\bar{\theta}^{\lambda}_n \right]  $. For any $0<\lambda <\tilde{\lambda}_{\max}$ with $\tilde{\lambda}_{\max}$ given in \eqref{relaxedlamax}, $t\in (n, n+1]$, $n \in \N_0$, recall the definition of $\Delta_{n,t}^{\lambda}$ given in \eqref{delxinotation}. One notes that, for $0<\lambda \leq \tilde{\lambda}_{\max}$, by using Assumption \ref{AG}, \eqref{2pspeFgrwoth}, \eqref{2pspeFdissi},
\begin{align}
|\Delta_{n,t}^{\lambda}|^2
& \leq | \bar{\theta}^{\lambda}_n|^2- \lambda(t-n)\frac{2 a_F|\bar{\theta}^{\lambda}_n|^{2r+2}}{1+\sqrt{\lambda}| \bar{\theta}^{\lambda}_n|^{2r}} + 2\lambda(t-n) b_F \nonumber\\
&\quad +\lambda(t-n)\frac{2^{q+1}K_G (1+|X_{n+1}|)^{\rho}(1+|\bar{\theta}^{\lambda}_n|^{q+1}) }{1+\sqrt{\lambda}| \bar{\theta}^{\lambda}_n|^{2r}} \nonumber\\
&\quad +\lambda^2(t-n)^2\frac{2^{2q}K_G^2 (1+|X_{n+1}|)^{2\rho} (1+|\bar{\theta}^{\lambda}_n|^{2q}) }{(1+\sqrt{\lambda}| \bar{\theta}^{\lambda}_n|^{2r})^2} \nonumber\\
&\quad +\lambda^2(t-n)^2\frac{4K_F^2 (1+|\bar{\theta}^{\lambda}_n|^{4r+2})}{(1+\sqrt{\lambda}| \bar{\theta}^{\lambda}_n|^{2r})^2} \nonumber\\
\begin{split}\label{stepsizere1}
&\leq | \bar{\theta}^{\lambda}_n|^2- \lambda(t-n)\frac{2a_F|\bar{\theta}^{\lambda}_n|^{2r+2}}{1+\sqrt{\lambda}| \bar{\theta}^{\lambda}_n|^{2r}}   + 2\lambda(t-n)  b_F  \\
&\quad +\lambda(t-n)2^{q+1}K_G (1+|X_{n+1}|)^{\rho}  +\lambda(t-n)\frac{2^{q+1}K_G (1+|X_{n+1}|)^{\rho} |\bar{\theta}^{\lambda}_n|^{q+1} }{1+\sqrt{\lambda}| \bar{\theta}^{\lambda}_n|^{2r}} \\
&\quad +\lambda (t-n) 2^{2q+1}K_G^2 (1+|X_{n+1}|)^{2\rho}  +4\lambda^2(t-n)^2K_F^2+\lambda^2(t-n)^2\frac{4K_F^2  |\bar{\theta}^{\lambda}_n|^{4r+2}}{(1+\sqrt{\lambda}| \bar{\theta}^{\lambda}_n|^{2r})^2}  \\
\end{split}
\\
\begin{split}\label{stepsizere2}
&\leq | \bar{\theta}^{\lambda}_n|^2- \lambda(t-n)\frac{ a_F|\bar{\theta}^{\lambda}_n|^{2r+2}}{1+\sqrt{\lambda}| \bar{\theta}^{\lambda}_n|^{2r}}    + \lambda(t-n)(2 b_F+4K_F^2)\\
&\quad +\lambda (t-n) 2^{2q+2}K_G^2 (1+|X_{n+1}|)^{2\rho}    +\lambda(t-n)\frac{2^{q+1}K_G (1+|X_{n+1}|)^{\rho} |\bar{\theta}^{\lambda}_n|^{q+1} }{1+\sqrt{\lambda}| \bar{\theta}^{\lambda}_n|^{2r}} \\
\end{split}
\\
\begin{split}\label{delta2pJ4J5}
& = J_{4,n,t}^{\lambda}(\bar{\theta}^{\lambda}_n) +  J_{5,n,t}^{\lambda}(\bar{\theta}^{\lambda}_n, X_{n+1}),
\end{split}
\end{align}
where for any $\theta \in \R^d, x \in \R^m$,
\begin{align*}
J_{4,n,t}^{\lambda}(\theta) & :=  \left(1- \lambda(t-n)\frac{ a_F|\theta|^{2r}}{1+\sqrt{\lambda}| \theta|^{2r}} \right)| \theta|^2 \\
J_{5,n,t}^{\lambda}(\theta, x)
&:= \lambda(t-n) (2b_F+4K_F^2) +\lambda (t-n)2^{2q+2} K_G^2 (1+|x|)^{2\rho}\\
&\quad +\lambda(t-n)\frac{2^{q+1}K_G (1+|x|)^{\rho} |\theta|^{q+1} }{1+\sqrt{\lambda}| \theta|^{2r}},
\end{align*}
where the inequality \eqref{stepsizere1} holds due to the following: for $0<\lambda \leq \tilde{\lambda}_{\max} \leq 1$, $2r \geq q \geq 1$,
\begin{align*}
 \lambda^2(t-n)^2\frac{2^{2q}K_G^2 (1+|X_{n+1}|)^{2\rho} (1+|\bar{\theta}^{\lambda}_n|^{2q}) }{(1+\sqrt{\lambda}| \bar{\theta}^{\lambda}_n|^{2r})^2}
& \leq \lambda (t-n) \frac{2^{2q}K_G^2 (1+|X_{n+1}|)^{2\rho} (1+\lambda|\bar{\theta}^{\lambda}_n|^{2q}) }{1+\lambda| \bar{\theta}^{\lambda}_n|^{4r}}\\
& \leq \lambda (t-n)2^{2q+1}K_G^2 (1+|X_{n+1}|)^{2\rho} ,
\end{align*}
while \eqref{stepsizere2} holds due to the fact that for $0<\lambda \leq \tilde{\lambda}_{\max} \leq a_F^2/(16K_F^4)$,
\[
\frac{ a_F|\theta|^{2r+2}}{1+\sqrt{\lambda}| \theta|^{2r}} - \frac{ \lambda (t-n)4K_F^2|\theta|^{4r+2}}{(1+\sqrt{\lambda}| \theta|^{2r})^2} \geq 0.
\]
Moreover, one notes that for any nonzero $\theta \in \R^d$, $t\in (n, n+1]$, $n \in \N_0$, the stepsize restriction $0<\lambda \leq \tilde{\lambda}_{\max}  \leq 1/(4a_F^2)$ is chosen such that the following inequalities hold:
\[
0<1- \lambda(t-n)\frac{ 2a_F|\theta|^{2r}}{1+\sqrt{\lambda}| \theta|^{2r}} <1.
\]
Then, for $0<\lambda <\tilde{\lambda}_{\max}$, by using \eqref{delta2pJ4J5}, one obtains
\begin{align}\label{delta2pes1}
\E\left[\left.|\Delta_{n,t}^{\lambda}|^{2p}\right|\bar{\theta}^{\lambda}_n \right]
& = \sum_{k = 0}^p \binom{p}{k}(J_{4,n,t}^{\lambda}(\bar{\theta}^{\lambda}_n))^{p-k} \E\left[\left. (  J_{5,n,t}^{\lambda}(\bar{\theta}^{\lambda}_n, X_{n+1}))^k\right|\bar{\theta}^{\lambda}_n \right]  \nonumber\\
&=\left(1- \lambda(t-n)\frac{ a_F|\bar{\theta}^{\lambda}_n|^{2r}}{1+\sqrt{\lambda}| \bar{\theta}^{\lambda}_n|^{2r}} \right)^p| \bar{\theta}^{\lambda}_n|^{2p} \nonumber\\
& \quad + \sum_{k = 1}^p \binom{p}{k} (J_{4,n,t}^{\lambda}(\bar{\theta}^{\lambda}_n))^{p-k}\E\left[\left. (  J_{5,n,t}^{\lambda}(\bar{\theta}^{\lambda}_n, X_{n+1}))^k\right|\bar{\theta}^{\lambda}_n \right]  \nonumber\\
&\leq  \left(1- \lambda(t-n)\frac{  a_F|\bar{\theta}^{\lambda}_n|^{2r}}{ 1+\sqrt{\lambda}| \bar{\theta}^{\lambda}_n|^{2r}} \right)| \bar{\theta}^{\lambda}_n|^{2p} \nonumber\\
&\quad + \sum_{k = 1}^p \binom{p}{k}  \lambda^k(t-n)^k  3^{k-1}| \bar{\theta}^{\lambda}_n|^{2p-2k} (2b_F+4K_F^2)^k \nonumber\\
&\quad + \sum_{k = 1}^p \binom{p}{k}  \lambda^k(t-n)^k  3^{k-1}2^{k(2q+2)} K_G^{2k}| \bar{\theta}^{\lambda}_n|^{2p-2k}\E\left[  (1+|X_0|)^{2k\rho} \right]  \nonumber\\
&\quad + \sum_{k = 1}^p \binom{p}{k}  \lambda^k(t-n)^k  3^{k-1} \frac{2^{k(q+1)}K_G^k \E\left[ (1+|X_0|)^{k\rho} \right]  |\bar{\theta}^{\lambda}_n|^{k(q-1)+2p} }{(1+\sqrt{\lambda}| \bar{\theta}^{\lambda}_n|^{2r})^k}   \nonumber\\
& = \left(1- \lambda(t-n)\frac{  a_F|\bar{\theta}^{\lambda}_n|^{2r}}{2(1+\sqrt{\lambda}| \bar{\theta}^{\lambda}_n|^{2r})} \right)| \bar{\theta}^{\lambda}_n|^{2p}-J_{6,n,t}^{\lambda}(\bar{\theta}^{\lambda}_n) -J_{7,n,t}^{\lambda}(\bar{\theta}^{\lambda}_n),
\end{align}
where for any $\theta \in \R^d$,
\begin{align*}
J_{6,n,t}^{\lambda}(\theta )
& :=    \lambda(t-n)\frac{ a_F|\theta|^{2r+2p}}{4(1+\sqrt{\lambda}| \theta|^{2r})} \\
& \quad -\sum_{k = 1}^p \binom{p}{k}\lambda^k (t-n)^k3^{k-1}\frac{2^{k(q+1)}K_G^k \E\left[(1+|X_0|)^{k\rho}\right]  | \theta|^{k(q-1)+2p} }{(1+\sqrt{\lambda}| \theta|^{2r})^k}\\
J_{7,n,t}^{\lambda}(\theta )
&:=     \lambda(t-n)\frac{ a_F|\theta|^{2r+2p}}{4(1+\sqrt{\lambda}| \theta|^{2r})}   -\sum_{k = 1}^p \binom{p}{k}\lambda^k (t-n)^k3^{k-1}(2b_F+4K_F^2)^k | \theta|^{2(p-k)}\\
&\quad -\sum_{k = 1}^p \binom{p}{k}\lambda^k (t-n)^k3^{k-1}2^{k(2q+2)} K_G^{2k} \E\left[(1+|X_0|)^{2k\rho}\right] | \theta|^{2(p-k)}.
\end{align*}
One notes that, for any $\theta \in \R^d$, $1 \leq k \leq p$, $2r \geq q \geq 1$,
\begin{align*}
&   (t-n) \frac{ a_F\lambda^{(k+1)/2}|\theta|^{2rk+2p}}{9p(1+\sqrt{\lambda}| \theta|^{2r})^k} \\
& \quad -   \binom{p}{\lcrc{p/2}} (t-n) 3^{p-1}2^{p(q+1)}K_G^p \E\left[(1+|X_0|)^{p\rho}\right]   \frac{\lambda^{(k+1)/2} | \theta|^{k(q-1)+2p} }{(1+\sqrt{\lambda}| \theta|^{2r})^k}>0\\
&\Leftrightarrow \quad |\theta|> M_{3,0,k}(p):=\left(\frac{p\binom{p}{\lcrc{p/2}} 3^{p+1} 2^{p(q+1)}K_G^p \E\left[(1+|X_0|)^{p\rho}\right]}{a_F} \right)^{1/(k(2r-q+1))}.
\end{align*}
This implies for all $\theta \in \R^d$, $|\theta| >M_{3,0}(p) := \max_{1\leq k \leq p}\{M_{3,0,k}(p)\}$,
\begin{align}\label{2pmmtspeJ6}
J_{6,n,t}^{\lambda}(\theta )
& \geq    \lambda(t-n)\sum_{k = 1}^p\frac{ a_F|\theta|^{2r+2p}(1+\sqrt{\lambda}| \theta|^{2r})^{k-1}}{4p(1+\sqrt{\lambda}| \theta|^{2r})^k} \nonumber\\
& \quad - \binom{p}{\lcrc{p/2}}(t-n) 3^{p-1} 2^{p(q+1)}K_G^p\E\left[(1+|X_0|)^{p\rho}\right] \sum_{k = 1}^p  \lambda^k \frac{ | \theta|^{k(q-1)+2p} }{(1+\sqrt{\lambda}| \theta|^{2r})^k} \nonumber\\
& \geq    \lambda(t-n)\sum_{k = 1}^p\frac{ a_F (1+\lambda^{(k-1)/2}| \theta|^{2rk+2p}) }{4p(1+\sqrt{\lambda}| \theta|^{2r})^k}\nonumber\\
& \quad - \binom{p}{\lcrc{p/2}}(t-n) 3^{p-1} 2^{p(q+1)}K_G^p\E\left[(1+|X_0|)^{p\rho}\right] \sum_{k = 1}^p   \frac{\lambda^{(k+1)/2} | \theta|^{k(q-1)+2p} }{(1+\sqrt{\lambda}| \theta|^{2r})^k}\nonumber\\
& \geq   \sum_{k = 1}^p\Bigg((t-n)\frac{ a_F  \lambda^{(k+1)/2}| \theta|^{2rk+2p} }{9p(1+\sqrt{\lambda}| \theta|^{2r})^k}\Bigg.\nonumber\\
& \quad \Bigg. - \binom{p}{\lcrc{p/2}}(t-n) 3^{p-1} 2^{p(q+1)}K_G^p\E\left[(1+|X_0|)^{p\rho}\right]   \frac{\lambda^{(k+1)/2} | \theta|^{k(q-1)+2p} }{(1+\sqrt{\lambda}| \theta|^{2r})^k}\Bigg)\nonumber\\
&>0.
\end{align}
In addition,  for $1 \leq k \leq p$, $2r\geq q \geq 1$, it folllows that
\begin{align*}
& \lambda(t-n)\frac{ a_F|\theta|^{2r+2p}}{27p}   -\binom{p}{\lcrc{p/2}}\lambda  (t-n) 3^{p-1}(2b_F+4K_F^2)^p | \theta|^{2(p-k)}>0\\
&\Leftrightarrow \quad |\theta|>M_{3,1,k}(p):=  \left(\frac{p\binom{p}{\lcrc{p/2}}3^{p+2}(2b_F+4K_F^2)^p}{a_F}\right)^{1/(2r+2k)},\\
& \lambda(t-n)\frac{ a_F|\theta|^{2r+2p}}{27p}  - \binom{p}{\lcrc{p/2}} \lambda  (t-n) 3^{p-1}2^{p(2q+2)} K_G^{2p} \E\left[(1+|X_0|)^{2p\rho}\right] | \theta|^{2(p-k)}>0\\
&\Leftrightarrow \quad |\theta|>M_{3,2,k}(p):=\left(\frac{p\binom{p}{\lcrc{p/2}} 3^{p+2} 2^{2p(q+1)}K_G^{2p} \E\left[(1+|X_0|)^{2p\rho}\right]}{a_F} \right)^{1/(2r+2k)},\\
& \lambda(t-n)\frac{ a_F|\theta|^{2r+2p}}{27p}   -\binom{p}{\lcrc{p/2}}\lambda  (t-n) 3^{p-1}(2b_F+4K_F^2)^p | \theta|^{2r+2(p-k)}>0\\
&\Leftrightarrow \quad |\theta|>M_{3,3,k}(p):=  \left(\frac{p\binom{p}{\lcrc{p/2}}3^{p+2}(2b_F+4K_F^2)^p}{a_F}\right)^{1/(2k)},\\
& \lambda(t-n)\frac{ a_F|\theta|^{2r+2p}}{27p}  - \binom{p}{\lcrc{p/2}} \lambda  (t-n) 3^{p-1}2^{p(2q+2)} K_G^{2p} \E\left[(1+|X_0|)^{2p\rho}\right] | \theta|^{2r+2(p-k)}>0\\
&\Leftrightarrow \quad |\theta|>M_{3,4,k}(p):=\left(\frac{p\binom{p}{\lcrc{p/2}} 3^{p+2} 2^{2p(q+1)}K_G^{2p} \E\left[(1+|X_0|)^{2p\rho}\right]}{a_F} \right)^{1/(2k)}.
\end{align*}
For all $\theta \in \R^d$, $|\theta| >M_{3,1}(p)$ with
\[
M_{3,1}(p) := \max_{1\leq k \leq p}\{M_{3,1,k}(p), M_{3,2,k}(p), M_{3,3,k}(p), M_{3,4,k}(p)\},
\]
the above inequalities hence imply,
\begin{align}\label{2pmmtspeJ7}
 J_{7,n,t}^{\lambda}(\theta )
&\geq    \lambda(t-n)\sum_{k = 1}^p \frac{ a_F|\theta|^{2r+2p}}{4p(1+\sqrt{\lambda}| \theta|^{2r})} \nonumber \\
&\quad -\binom{p}{\lcrc{p/2}}\lambda  (t-n) 3^{p-1}(2b_F+4K_F^2)^p  \sum_{k = 1}^p   \frac{ | \theta|^{2(p-k)} (1+\sqrt{\lambda}| \theta|^{2r})}{ (1+\sqrt{\lambda}| \theta|^{2r})} \nonumber \\
&\quad -\binom{p}{\lcrc{p/2}} \lambda  (t-n) 3^{p-1}2^{p(2q+2)} K_G^{2p} \E\left[(1+|X_0|)^{2p\rho}\right] \sum_{k = 1}^p   \frac{ | \theta|^{2(p-k)} (1+\sqrt{\lambda}| \theta|^{2r})}{ (1+\sqrt{\lambda}| \theta|^{2r})} \nonumber \\
&\geq \sum_{k = 1}^p   \Bigg(\lambda(t-n)\frac{ a_F|\theta|^{2r+2p}}{27p(1+\sqrt{\lambda}| \theta|^{2r})}   -\binom{p}{\lcrc{p/2}}\frac{\lambda  (t-n) 3^{p-1}(2b_F+4K_F^2)^p | \theta|^{2(p-k)}}{1+\sqrt{\lambda}| \theta|^{2r}}\Bigg) \nonumber \\
&\quad + \sum_{k = 1}^p   \Bigg(\lambda(t-n)\frac{ a_F|\theta|^{2r+2p}}{27p(1+\sqrt{\lambda}| \theta|^{2r})}  \Bigg. \nonumber \\
&\quad \Bigg. - \binom{p}{\lcrc{p/2}} \frac{\lambda  (t-n) 3^{p-1}2^{p(2q+2)} K_G^{2p} \E\left[(1+|X_0|)^{2p\rho}\right] | \theta|^{2(p-k)}}{ 1+\sqrt{\lambda}| \theta|^{2r} }\Bigg) \nonumber \\
&\quad+ \sum_{k = 1}^p   \Bigg(\lambda(t-n)\frac{ a_F|\theta|^{2r+2p}}{27p(1+\sqrt{\lambda}| \theta|^{2r})}   -\binom{p}{\lcrc{p/2}}\frac{\lambda  (t-n) 3^{p-1}(2b_F+4K_F^2)^p | \theta|^{2r+2(p-k)}}{ 1+\sqrt{\lambda}| \theta|^{2r}}\Bigg) \nonumber \\
&\quad + \sum_{k = 1}^p   \Bigg( \lambda(t-n)\frac{ a_F|\theta|^{2r+2p}}{27p(1+\sqrt{\lambda}| \theta|^{2r})} \Bigg.  \nonumber \\
&\quad \Bigg. - \binom{p}{\lcrc{p/2}}\frac{ \lambda  (t-n) 3^{p-1}2^{p(2q+2)} K_G^{2p} \E\left[(1+|X_0|)^{2p\rho}\right] | \theta|^{2r+2(p-k)}}{1+\sqrt{\lambda}| \theta|^{2r}}\Bigg) \nonumber \\
&>0.
\end{align}
Denote by
\[
M_3(p) :=  \frac{\binom{p}{\lcrc{p/2}}3^{p+2}p\left( 2^{2p(q+1)}K_G^{2p} \E\left[(1+|X_0|)^{2p\rho}\right]+(1+2b_F+4K_F^2)^p\right)}{\min\{1, a_F\}},
\]
$\tilde{\kappa}(p) := (M_3(p))^{2r}/(2(1+(M_3(p))^{2r}))$ and $\mathsf{S}_{n,M_3(p) } := \{\omega \in \Omega: |\bar{\theta}^{\lambda}_n(\omega)| >M_3(p) \}$. One notes that, for $2r\geq q \geq 1$, $p\in [2, \infty) \cap {\N}$,
\begin{equation}\label{M3consts}
M_3(p) \geq \max\{M_{3,0}(p), M_{3,1}(p)\}.
\end{equation}
Thus,  for $|\theta| > M_3(p)$, by using \eqref{M3consts}, and by substituting \eqref{2pmmtspeJ6}, \eqref{2pmmtspeJ7} into \eqref{delta2pes1}, one obtains,
\begin{align*}
\E\left[\left.|\Delta_{n,t}^{\lambda}|^{2p}\1_{\mathsf{S}_{n,M_3(p) }}\right|\bar{\theta}^{\lambda}_n \right]
&\leq \left(1- \lambda(t-n)\frac{  a_F|\bar{\theta}^{\lambda}_n|^{2r}}{2(1+\sqrt{\lambda}| \bar{\theta}^{\lambda}_n|^{2r})} \right)| \bar{\theta}^{\lambda}_n|^{2p}\1_{\mathsf{S}_{n,M_3(p) }}\\
& \leq \left(1- \lambda(t-n)a_F \tilde{\kappa}(p)\right)| \bar{\theta}^{\lambda}_n|^{2p}\1_{\mathsf{S}_{n,M_3(p) }},
\end{align*}
where the last inequality holds due to the fact that, for any fixed $0<\lambda <\tilde{\lambda}_{\max}$, the function $f(s) := s/(1+\sqrt{\lambda}s)$ is non-decreasing for all $s\geq 0$. In addition, one obtains
\begin{align*}
\E\left[\left.|\Delta_{n,t}^{\lambda}|^{2p}\1_{\mathsf{S}_{n,M_3(p) }^{\mathsf{c}}}\right|\bar{\theta}^{\lambda}_n \right]
& \leq \left(1- \lambda(t-n)a_F \tilde{\kappa}(p)\right)| \bar{\theta}^{\lambda}_n|^{2p}\1_{\mathsf{S}_{n,M_3(p) }^{\mathsf{c}}} +\lambda(t-n) c_6(p)\1_{\mathsf{S}_{n,M_3(p) }^{\mathsf{c}}},
\end{align*}
where
\begin{align*}
c_6(p)
&:= a_F \tilde{\kappa}(p)(M_3(p) )^{2p}\\
&\quad+\sum_{k = 1}^p \binom{p}{k}3^k \left(2^{k(2q+2)} K_G^{2k} \E\left[(1+|X_0|)^{2k\rho}\right] +(2b_F+4K_F^2)^k \right)(M_3(p) )^{2p+k(q-1)}.
\end{align*}
Thus, it follows that for any $p\in [2, \infty)\cap {\N}$,
\begin{equation}\label{Fdelta2p}
\E\left[\left.|\Delta_{n,t}^{\lambda}|^{2p}\right|\bar{\theta}^{\lambda}_n \right]   \leq  \left(1- \lambda(t-n)a_F \tilde{\kappa}(p)\right)| \bar{\theta}^{\lambda}_n|^{2p} +\lambda(t-n) c_6(p).
\end{equation}
Furthermore, one observes that $c_2 \leq c_6(1)$ with $c_2$ given in \eqref{constc2}, hence, by using \eqref{Fdelta2p} and \eqref{delta2}, one obtains for $p\in [2, \infty)\cap {\N}$,
\begin{equation}\label{Fdelta2p-2}
\E\left[\left.|\Delta_{n,t}^{\lambda}|^{2(p-1)}\right|\bar{\theta}^{\lambda}_n \right]   \leq | \bar{\theta}^{\lambda}_n|^{2p-2} +\lambda(t-n) c_6(p-1).
\end{equation}
Substituting \eqref{Fdelta2p} and \eqref{Fdelta2p-2} into \eqref{2pthme} yields
\begin{align*}
\E\left[\left.|\bar{\theta}^{\lambda}_t|^{2p}\right|\bar{\theta}^{\lambda}_n \right]
&\leq \left(1- \lambda(t-n)a_F \tilde{\kappa}(p)\right)| \bar{\theta}^{\lambda}_n|^{2p} +2^{2p-2}p(2p-1)\lambda (t-n)d \beta^{-1}| \bar{\theta}^{\lambda}_n|^{2p-2}  \\
&\quad +\lambda(t-n) c_6(p) +2^{2p-2}p(2p-1)\lambda^2 (t-n)^2d \beta^{-1} c_6(p-1)\\
&\quad +2^{2p-4}(2p(2p-1))^{p+1}(d\beta^{-1}\lambda (t-n) )^p.
\end{align*}
One notes that for any $\theta \in \R^d$
\begin{align}\label{2pdeltaesp2}
\begin{split}
& a_F \tilde{\kappa}(p) |\theta|^{2p} - 2^{2p-2}p(2p-1) d \beta^{-1}|\theta|^{2p-2}  > \frac{a_F\tilde{\kappa}(p) }{2} |\theta|^{2p}  \\
& \Leftrightarrow \quad |\theta| > \left(\frac{2^{2p-1}p(2p-1) d \beta^{-1}}{a_F \tilde{\kappa}(p) }\right)^{1/2}.
\end{split}
\end{align}
Denote by $M_4(p) := (2^{2p-1}p(2p-1)d \beta^{-1}/(a_F\tilde{\kappa}(p)))^{1/2}$ and $\mathsf{S}_{n,M_4(p) } := \{\omega \in \Omega: |\bar{\theta}^{\lambda}_n(\omega)| >M_4(p) \}$. Then, by \eqref{2pdeltaesp2}, it follows that
\[
\E\left[\left.|\bar{\theta}^{\lambda}_t|^{2p} \1_{\mathsf{S}_{n,M_4(p) }} \right|\bar{\theta}^{\lambda}_n \right]
\leq  (1-\lambda(t-n)a_F\tilde{\kappa}(p)/2)|\bar{\theta}^{\lambda}_n|^{2p}\1_{\mathsf{S}_{n,M_4(p) }} +\lambda (t-n) c_7(p)\1_{\mathsf{S}_{n,M_4(p) }} ,
\]
where $c_7(p) := c_6(p) +2^{2p-2}p(2p-1) d \beta^{-1}c_6(p-1) +2^{2p-4}(2p(2p-1))^{p+1}(d\beta^{-1})^p$, and furthermore,
\begin{align*}
\E\left[\left.|\bar{\theta}^{\lambda}_t|^{2p} \1_{\mathsf{S}_{n,M_4(p) }^{\mathsf{c}} }\right|\bar{\theta}^{\lambda}_n \right]
&\leq  (1-\lambda(t-n)a_F\tilde{\kappa}(p))|\bar{\theta}^{\lambda}_n|^{2p}\1_{\mathsf{S}_{n,M_4(p) }^{\mathsf{c}}} +\lambda (t-n) c_7(p)\1_{\mathsf{S}_{n,M_4(p) }^{\mathsf{c}}}\\
& \quad + \lambda (t-n) 2^{2p-2}p(2p-1) d \beta^{-1}(M_4(p))^{2p-2}\1_{\mathsf{S}_{n,M_4(p) }^{\mathsf{c}}}.
\end{align*}
By combining the two cases, one obtains, for $p\in [2, \infty)\cap {\N}$, $0<\lambda \leq \tilde{\lambda}_{\max}$,
\[
\E\left[\left.|\bar{\theta}^{\lambda}_t|^{2p}\right|\bar{\theta}^{\lambda}_n \right]  \leq (1-\lambda(t-n)a_F\tilde{\kappa}(p)/2)|\bar{\theta}^{\lambda}_n|^{2p} +\lambda (t-n)\tilde{c}_0(p),
\]
where
\begin{align}\label{constcpsp}
\begin{split}
\tilde{\kappa}(p)
& := (M_3(p))^{2r}/(2(1+(M_3(p))^{2r})),\\
M_3(p)
&:=  \frac{\binom{p}{\lcrc{p/2}}3^{p+2}p\left( 2^{2p(q+1)}K_G^{2p} \E\left[(1+|X_0|)^{2p\rho}\right]+(1+2b_F+4K_F^2)^p\right)}{\min\{1, a_F\}}, \\
\tilde{c}_0(p)
& :=  c_7(p) +2^{2p-2}p(2p-1) d \beta^{-1}(M_4(p))^{2p-2},\\
M_4(p)
&:= (2^{2p-1}p(2p-1)d \beta^{-1}/(a_F\tilde{\kappa}(p)))^{1/2},\\
c_7(p)
& := c_6(p) +2^{2p-2}p(2p-1) d \beta^{-1}c_6(p-1) +2^{2p-4}(2p(2p-1))^{p+1}(d\beta^{-1})^p,\\
c_6(p)
&:= a_F \tilde{\kappa}(p)(M_3(p) )^{2p}\\
&\quad+\sum_{k = 1}^p \binom{p}{k}3^k \left(2^{k(2q+2)} K_G^{2k} \E\left[(1+|X_0|)^{2k\rho}\right] +(2b_F+4K_F^2)^k \right)(M_3(p) )^{2p+k(q-1)}.
\end{split}
\end{align}
This further implies, by noticing $\tilde{\kappa}(p) \geq \tilde{\kappa}(2)$, for any $p\in [2, \infty)\cap {\N}$, and by using similar arguments as in \eqref{2ndmmttp}, that
\begin{equation}\label{2pmesp}
\E\left[ |\bar{\theta}^{\lambda}_t|^{2p} \right]  \leq (1-\lambda(t-n)a_F\tilde{\kappa}(2)/2)(1-\lambda a_F\tilde{\kappa}(2)/2)^n\E\left[|\theta_0|^{2p}\right]  + \tilde{c}_0(p)(1+2/(a_F\tilde{\kappa}(p))).
\end{equation}
For any $p\in [2, \infty)\cap {\N}$, denote by $\kappa^\sharp_p := \min\{\bar{\kappa}(p), \tilde{\kappa}(p)\}$ and $c^\sharp_p := \max\{\bar{c}_0(p), \tilde{c}_0(p)\}$. Then, by using \eqref{2pme}, \eqref{2pmesp}, one can conclude that, for $t \in (n, n+1], n \in \N_0$, $0<\lambda \leq \tilde{\lambda}_{\max}$,
\[
\E\left[ |\bar{\theta}^{\lambda}_t|^{2p} \right]  \leq (1-\lambda (t-n) a_F\kappa^\sharp_2/2) (1-\lambda a_F\kappa^\sharp_2/2)^n\E\left[|\theta_0|^{2p}\right]  + c^\sharp_p(1+2/(a_F\kappa^\sharp_p )),
\]
which completes the proof.
\end{proof}
%%%%%%%%%%%%%%%%%%%%%%%%
\begin{proof}[\textbf{Proof of Lemma \ref{zetaprocme}}]\label{proofzetaprocme}
%The proof follows the same idea as in \cite[Lemma 3.5]{sgldloc}. We provide the details for the explicit constants.
For any $p\in [2, \infty)\cap {\N}$, $0<\lambda\leq \lambda_{\lcrc{p/2},\max}$ with $\lambda_{\lcrc{p/2},\max}$ given in \eqref{stepsizemax}, $t \in (nT, (n+1)T], n \in \N_0$, one obtains, by applying It\^o's formula, that
\begin{align}\label{zetaprocito}
\E[V_p(\bar{\zeta}_t^{\lambda,n})]
&= \E[V_p( \bar{\theta}^{\lambda}_{nT})] + \int_{nT}^t \E\left[\lambda \Delta V_p(\bar{\zeta}_s^{\lambda,n})/\beta - \lambda \langle h(\bar{\zeta}_s^{\lambda,n}), \nabla V_p(\bar{\zeta}_s^{\lambda,n}) \rangle\right] \rmd s \nonumber\\
&\quad +\E\left[\int_{nT}^t \left\langle   \nabla V_p(\bar{\zeta}_s^{\lambda,n}),\sqrt{2\lambda\beta^{-1}}\,\rmd B^{\lambda}_s \right\rangle \right] \nonumber\\
& = \E[V_p( \bar{\theta}^{\lambda}_{nT})] + \int_{nT}^t \E\left[\lambda \Delta V_p(\bar{\zeta}_s^{\lambda,n})/\beta - \lambda \langle h(\bar{\zeta}_s^{\lambda,n}), \nabla V_p(\bar{\zeta}_s^{\lambda,n}) \rangle\right] \rmd s.
\end{align}

To see that \eqref{zetaprocito} holds, it suffices to show that $\E\left[\int_{nT}^t |\nabla V_p(\bar{\zeta}_s^{\lambda,n})|^2\, \rmd s\right] <\infty$. To this end, define $\tau_k : = \inf\{s\geq nT:  |\bar{\zeta}_s^{\lambda,n} |> k\}$. Applying It\^o's formula to the stopped process $V_{2p}(\bar{\zeta}_{t\wedge \tau_k}^{\lambda,n})$ yields
\begin{align*}
\E[V_{2p}(\bar{\zeta}_{t\wedge \tau_k}^{\lambda,n})]
&= \E[V_{2p}( \bar{\theta}^{\lambda}_{nT})] + \int_{nT}^t \E\left[\lambda \Delta V_{2p}(\bar{\zeta}_{s\wedge \tau_k}^{\lambda,n})/\beta - \lambda \langle h(\bar{\zeta}_{s\wedge \tau_k}^{\lambda,n}), \nabla V_{2p}(\bar{\zeta}_{s\wedge \tau_k}^{\lambda,n}) \rangle\right] \rmd s\\
& \leq C^*_0+ \int_{nT}^t \left(-\lambda c_{V,1}(2p) \E[V_{2p}(\bar{\zeta}_{s\wedge \tau_k}^{\lambda,n})] +\lambda c_{V,2}(2p)\right)\,\rmd s\\
&\leq C^*_1+C^*_2\int_{nT}^t \E[V_{2p}(\bar{\zeta}_{s\wedge \tau_k}^{\lambda,n})] \,\rmd s\\
&\leq C^*_1e^{C^*_2(t-nT)}<\infty,
\end{align*}
for some constants $C^*_0, C^*_1, C^*_2>0$ which are independent of $\tau_k$, where the first inequality holds due to Lemma \ref{driftcon}, while the last inequality holds due to Gr\"{o}nwall's lemma. By applying Fatou's lemma, one obtains, for any $t \geq nT$,
\[
\E[V_{2p}(\bar{\zeta}_t^{\lambda,n})] \leq \liminf_{k\rightarrow \infty} \E[V_{2p}(\bar{\zeta}_{t\wedge \tau_k}^{\lambda,n})] \leq C^*_1e^{C^*_2(t-nT)}<\infty.
\]
Since $|\nabla V_p(\theta)|^2 \leq p^2|V_p(\theta)|^2=p^2V_{2p}(\theta)$ for all $\theta \in \R^d$, we have that
\[
\E\left[\int_{nT}^t |\nabla V_p(\bar{\zeta}_s^{\lambda,n})|^2\, \rmd s\right]  \leq p^2\E\left[\int_{nT}^t  V_{2p}(\bar{\zeta}_s^{\lambda,n}) \, \rmd s\right] <\infty
\]
as desired.

Then, differentiating both sides of \eqref{zetaprocito} and applying Lemma \ref{driftcon} yield
\begin{align*}
\frac{\rmd}{\rmd t} \E[V_p(\bar{\zeta}_t^{\lambda,n})]
&= \E\left[ \lambda \Delta V_p(\bar{\zeta}_t^{\lambda,n})/\beta - \lambda \langle h(\bar{\zeta}_t^{\lambda,n}), \nabla V_p(\bar{\zeta}_t^{\lambda,n}) \rangle\right] \leq -\lambda c_{V,1}(p) \E[V_p(\bar{\zeta}_t^{\lambda,n})] +\lambda c_{V,2}(p).
\end{align*}
The above inequality further implies
\[
\E[V_p(\bar{\zeta}_t^{\lambda,n})] \leq  e^{-\lambda  c_{V,1}(p) (t-nT)}\E[V_p( \bar{\theta}^{\lambda}_{nT})] +\frac{c_{V,2}(p)}{c_{V,1}(p)}\left(1- e^{-\lambda c_{V,1}(p) (t-nT)}\right).
\]
By setting $p = 2$ and by using Lemma \ref{2ndpthmmt}, one obtains, for any $0<\lambda\leq \lambda_{1,\max}$,
\begin{align*}
\E[V_2(\bar{\zeta}_t^{\lambda,n})]
&\leq  e^{-\lambda   c_{V,1}(2) (t-nT)}(1-a_F\kappa \lambda)^{nT} \E\left[|\theta_0|^2\right]  +c_0(1+1/(a_F\kappa))+\frac{c_{V,2}(2)}{c_{V,1}(2)}+1\\
&\leq e^{- \min\{a_F\kappa, a_h/2\} \lambda t}\E\left[V_2(\theta_0)\right] +c_0(1+1/(a_F\kappa))+3\mathrm{v}_2(M_V(2))+1,
\end{align*}
where the last inequality holds due to $1-\nu \leq e^{-\nu}$ for any $\nu \in \R$, moreover, $\kappa, c_0$ are given in \eqref{constc0} (Lemma \ref{2ndpthmmt}), and $M_V(2)$ is given in Lemma \ref{driftcon}. By using the same arguments, one can obtain an upper bound for $\E[V_4(\bar{\zeta}_t^{\lambda,n})] $.
\end{proof}
%%%%%%%%%%%%%%%%%%%%%%%%
\begin{lemma}\label{onesteperror}  Let Assumption \ref{AI}, \ref{AG}, \ref{AF}, and \ref{AC} hold. Then, for any $0<\lambda\leq \lambda_{\max}$ with $\lambda_{\max}$ given in \eqref{stepsizemax}, $ t\geq 0$, one obtains
\[
\E\left[|\bar{\theta}^{\lambda}_t  - \bar{\theta}^{\lambda}_{\lfrf{t}}|^4\right] \leq \lambda^2\left(e^{-\lambda a_F \kappa^\sharp_2\lfrf{t}/2}\bar{C}_{0,1}\E\left[|\theta_0|^{4(2r+1)}\right]+\bar{C}_{1,1}\right),
\]
where $\bar{C}_{0,1}, \bar{C}_{1,1}$ are given explicitly in \eqref{onesteperrorconst}.
\end{lemma}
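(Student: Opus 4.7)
The plan is to start from the one-step SDE representation provided by the continuous-time interpolation \eqref{tuslaproc}: for any $t \geq 0$,
\begin{equation*}
\bar{\theta}^{\lambda}_t - \bar{\theta}^{\lambda}_{\lfrf{t}} = -\lambda(t-\lfrf{t}) H_{\lambda}(\bar{\theta}^{\lambda}_{\lfrf{t}}, X_{\lcrc{t}}) + \sqrt{2\lambda\beta^{-1}}(B^{\lambda}_t - B^{\lambda}_{\lfrf{t}}).
\end{equation*}
I would split the $L^4$-estimate of this difference via $|a+b|^4 \leq 8(|a|^4+|b|^4)$ and bound the drift and the diffusion contributions separately.

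For the Brownian contribution, since $(B^{\lambda}_t)_{t\geq 0}$ is a standard $d$-dimensional Brownian motion and $t-\lfrf{t} \in [0,1]$, the explicit formula for Gaussian fourth moments gives $\E[|B^{\lambda}_t - B^{\lambda}_{\lfrf{t}}|^4] \leq C_d(t-\lfrf{t})^2 \leq C_d$ for a dimension-dependent constant $C_d$. Consequently this term contributes at most $32\lambda^2\beta^{-2}C_d$, which is already of the announced order $\lambda^2$ and independent of $\theta_0$.

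For the drift contribution, I would combine the elementary tameness inequality $|H_\lambda(\theta, x)| \leq |H(\theta, x)|$ from \eqref{tamedH} with the polynomial growth estimate in Remark \ref{growthHh}, together with $(1+|\theta|^{2r+1})^4 \leq 2^3(1+|\theta|^{4(2r+1)})$, to obtain
\begin{equation*}
|H_\lambda(\theta, x)|^4 \leq 8 K_H^4 (1+|x|)^{4\rho}\bigl(1+|\theta|^{4(2r+1)}\bigr).
\end{equation*}
Because $\lambda \leq \lambda_{\max} \leq 1$ we have $\lambda^4(t-\lfrf{t})^4 \leq \lambda^2$, and by construction $X_{\lcrc{t}}$ is independent of $\bar{\theta}^{\lambda}_{\lfrf{t}}$, so taking expectations yields
\begin{equation*}
\lambda^4 (t-\lfrf{t})^4 \E\bigl[|H_\lambda(\bar{\theta}^{\lambda}_{\lfrf{t}}, X_{\lcrc{t}})|^4\bigr] \leq 8\lambda^2 K_H^4 \E[(1+|X_0|)^{4\rho}] \E\bigl[1+|\bar{\theta}^{\lambda}_{\lfrf{t}}|^{4(2r+1)}\bigr].
\end{equation*}

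Finally, I would invoke Lemma \ref{2ndpthmmt}\eqref{2ndpthmmtii} with $p=2(2r+1)$, whose validity requires precisely $\lambda \leq \lambda_{2(2r+1),\max}=\lambda_{4r+2,\max}=\lambda_{\max}$; this is the reason for the exact form of the stepsize restriction in \eqref{stepsizemax}. Substituting this bound and converting the geometric factor $(1-\lambda a_F \kappa^{\sharp}_2/2)^{\lfrf{t}}$ into the exponential $e^{-\lambda a_F \kappa^{\sharp}_2 \lfrf{t}/2}$ via the elementary inequality $1-x \leq e^{-x}$ for $x\in[0,1]$ produces the desired form of the right-hand side; the explicit constants $\bar{C}_{0,1}$ and $\bar{C}_{1,1}$ in \eqref{onesteperrorconst} are then just a bookkeeping combination of $K_H$, $\E[(1+|X_0|)^{4\rho}]$, $\beta$, $d$, $a_F\kappa^{\sharp}_2$, $\kappa^{\sharp}_{2(2r+1)}$, and $c^{\sharp}_{2(2r+1)}$. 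Conceptually nothing is delicate: the only point worth watching is that the naive bound $\lambda^4 \leq \lambda^2$ (available because $\lambda \leq 1$) suffices to extract the correct power of $\lambda$ without exploiting the taming, so the moment order that appears is $4(2r+1)$ rather than something smaller; the rest is routine accounting.
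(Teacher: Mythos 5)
Your proposal is correct and follows essentially the same route as the paper's proof: the same one-step decomposition with $|a+b|^4\le 8(|a|^4+|b|^4)$, the Gaussian fourth-moment bound $\E[|B^\lambda_t-B^\lambda_{\lfrf{t}}|^4]\le d(d+2)(t-\lfrf{t})^2$ for the diffusion part, the crude bounds $|H_\lambda|\le|H|$ and $\lambda^4\le\lambda^2$ together with Remark \ref{growthHh} for the drift part, and Lemma \ref{2ndpthmmt} with $p=4r+2$ (which is exactly why $\lambda_{\max}=\lambda_{4r+2,\max}$) to close the estimate. The constant bookkeeping you describe reproduces $\bar{C}_{0,1}=64K_H^4\E[(1+|X_0|)^{4\rho}]$ and $\bar{C}_{1,1}$ as in \eqref{onesteperrorconst}.
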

\begin{proof} For any $ t\geq 0$, by using the continuous-time interpolation of the TUSLA algorithm given in \eqref{tuslaproc}, one obtains,
\begin{align*}
\E\left[|\bar{\theta}^{\lambda}_t - \bar{\theta}^{\lambda}_{\lfrf{t}}|^4\right]
&= \E\left[\left|-\lambda \int_{\lfrf{t}}^t  H_{\lambda}(\bar{\theta}^{\lambda}_{\lfrf{s}},X_{\lcrc{s}})\, \rmd s
+ \sqrt{\frac{2\lambda}{\beta}} \int_{\lfrf{t}}^t  \rmd B^{\lambda}_s\right|^4\right] \\
& \leq 8\lambda^4\E\left[|H(\bar{\theta}^{\lambda}_{\lfrf{t}},X_{\lcrc{t}})|^4\right]+32d(d+2)\lambda^2\beta^{-2},
\end{align*}
where one notices that for any $s \in [{\lfrf{t}}, t]$, $t \geq 0$, it hods that $\lfrf{s} = \lfrf{t}$ and $\lcrc{s} = \lfrf{s}+1 = \lcrc{t} $. Then, applying Remark \ref{growthHh} and Lemma \ref{2ndpthmmt} yield, for $0<\lambda\leq \lambda_{\max}$,
\begin{align*}
\E\left[|\bar{\theta}^{\lambda}_t - \bar{\theta}^{\lambda}_{\lfrf{t}}|^4\right]
& \leq 64\lambda^4K_H^4\E\left[(1+|X_0|)^{4\rho}\right] \E\left[|\bar{\theta}^{\lambda}_{\lfrf{t}}|^{4(2r+1)}\right] \\
&\quad  +64\lambda^4K_H^4\E\left[(1+|X_0|)^{4\rho}\right]+32d(d+2)\lambda^2\beta^{-2}\\
&\leq 64\lambda^4K_H^4\E\left[(1+|X_0|)^{4\rho}\right]  \left(e^{-\lambda a_F\kappa^\sharp_2 \lfrf{t} /2}\E\left[|\theta_0|^{8r+4}\right] +c^\sharp_{4r+2}(1+2/(a_F\kappa^\sharp_{4r+2} )) \right) \\
&\quad  +64\lambda^4K_H^4\E\left[(1+|X_0|)^{4\rho}\right]+32d(d+2)\lambda^2\beta^{-2}\\
&\leq \lambda^2\left(e^{-\lambda a_F \kappa^\sharp_2\lfrf{t}/2}\bar{C}_{0,1}\E\left[|\theta_0|^{4(2r+1)}\right]+\bar{C}_{1,1}\right),
\end{align*}
where the second inequality holds due to $1-\nu \leq e^{-\nu}$ for any $\nu \in \R$, and where
\begin{align}\label{onesteperrorconst}
\begin{split}
\bar{C}_{0,1}
& :=  64 K_H^4\E\left[(1+|X_0|)^{4\rho}\right],\\
\bar{C}_{1,1}
&:= 64 K_H^4\E\left[(1+|X_0|)^{4\rho}\right](1+c^\sharp_{4r+2}(1+2/(a_F\kappa^\sharp_{4r+2} ))) +32d(d+2) \beta^{-2},
\end{split}
\end{align}
with $K_H$ given in Remark \ref{growthHh}, and $c^\sharp_{4r+2}, \kappa^\sharp_{4r+2}$ given in Lemma \ref{2ndpthmmt}.
\end{proof}
%%%%%%%%%%%%%%%%%%%%%%%%
\begin{proof}[\textbf{Proof of Lemma \ref{w1converp1}}]\label{proofw1converp1}
Recall the continuous-time interpolation of the TUSLA algorithm $(\bar{\theta}^{\lambda}_t)_{t \geq 0}$ given in \eqref{tuslaproc} and the definition of the auxiliary process $(\bar{\zeta}^{\lambda, n}_t)_{t \geq nT}, n \in \N_0$, $T : = \lfrf{1/\lambda}$ given in Definition \ref{auxzeta}.
%For $t \in (nT, (n+1)T], n \in \N_0$, consider the synchronous coupling $(\bar{\theta}^{\lambda}_t, \bar{\zeta}^{\lambda, n}_t)_{t\geq nT}$ defined by
%\begin{align*}
%\begin{cases}
%\rmd \bar{\theta}^{\lambda}_t=-\lambda H_{\lambda}(\bar{\theta}^{\lambda}_{\lfrf{t}},X_{\lcrc{t}})\, \rmd t
%+ \sqrt{2\lambda\beta^{-1}} \rmd B^{\lambda}_t\\
%\rmd\bar{\zeta}^{\lambda, n}_t = -\lambda h(\bar{\zeta}^{\lambda, n}_t)\, \rmd t +\sqrt{2\lambda\beta^{-1}}\,\rmd B^{\lambda}_t,
%\end{cases}
%\end{align*}
By It\^o's formula, one obtains for any $t \in (nT, (n+1)T], n \in \N_0$,
\begin{align}\label{w2essplitting}
W_2^2(\mathcal{L}(\bar{\theta}^{\lambda}_t),\mathcal{L}(\bar{\zeta}^{\lambda, n}_t))
&\leq \E\left[|\bar{\zeta}^{\lambda, n}_t- \bar{\theta}^{\lambda}_t  |^2\right]\nonumber\\
& = -2\lambda \E\left[\int_{nT}^t  \left\langle \bar{\zeta}^{\lambda, n}_s -\bar{\theta}^{\lambda}_s ,  h(\bar{\zeta}^{\lambda, n}_s)- H_{\lambda}(\bar{\theta}^{\lambda}_{\lfrf{s}},X_{\lcrc{s}})\right\rangle \, \rmd s \right]\nonumber\\
\begin{split}
& = -2\lambda \int_{nT}^t\E\left[ \left \langle \bar{\zeta}^{\lambda, n}_s -\bar{\theta}^{\lambda}_s ,  h(\bar{\zeta}^{\lambda, n}_s)- h(\bar{\theta}^{\lambda}_s)\right\rangle \right]\, \rmd s \\
&\quad -2\lambda \int_{nT}^t\E\left[  \left\langle \bar{\zeta}^{\lambda, n}_s -\bar{\theta}^{\lambda}_s ,  h(\bar{\theta}^{\lambda}_s)- h(\bar{\theta}^{\lambda}_{\lfrf{s}}) \right\rangle \right]\, \rmd s \\
&\quad -2\lambda \int_{nT}^t\E\left[  \left\langle \bar{\zeta}^{\lambda, n}_s -\bar{\theta}^{\lambda}_s ,  h(\bar{\theta}^{\lambda}_{\lfrf{s}})- H(\bar{\theta}^{\lambda}_{\lfrf{s}},X_{\lcrc{s}})\right\rangle \right]\, \rmd s \\
&\quad -2\lambda \int_{nT}^t\E\left[  \left\langle \bar{\zeta}^{\lambda, n}_s -\bar{\theta}^{\lambda}_s ,  H(\bar{\theta}^{\lambda}_{\lfrf{s}},X_{\lcrc{s}})- H_{\lambda}(\bar{\theta}^{\lambda}_{\lfrf{s}},X_{\lcrc{s}})\right\rangle \right]\, \rmd s.
\end{split}
\end{align}
This implies, by applying Remark \ref{ACh} to the first term on the RHS of \eqref{w2essplitting}, and by applying Young's inequality, i.e., $2uv \leq \varepsilon u^2+v^2/\varepsilon$ with $\varepsilon = L_R$ for $u, v \geq 0$, to the second and the last term on the RHS of \eqref{w2essplitting},
\begin{align}\label{w2es1}
\begin{split}
\E\left[|\bar{\zeta}^{\lambda, n}_t- \bar{\theta}^{\lambda}_t  |^2\right]
& \leq 4\lambda L_R \int_{nT}^t\E\left[|\bar{\zeta}^{\lambda, n}_s -\bar{\theta}^{\lambda}_s|^2 \right]\, \rmd s   + \lambda L_R^{-1} \int_{nT}^t\E\left[  |h(\bar{\theta}^{\lambda}_s)- h(\bar{\theta}^{\lambda}_{\lfrf{s}})|^2 \right]\, \rmd s \\
&\quad + \lambda L_R^{-1} \int_{nT}^t\E\left[ | H(\bar{\theta}^{\lambda}_{\lfrf{s}},X_{\lcrc{s}})- H_{\lambda}(\bar{\theta}^{\lambda}_{\lfrf{s}},X_{\lcrc{s}})|^2\right]\, \rmd s\\
&\quad -2\lambda \int_{nT}^t\E\left[  \left\langle \bar{\zeta}^{\lambda, n}_s -\bar{\theta}^{\lambda}_{\lfrf{s}} ,  h(\bar{\theta}^{\lambda}_{\lfrf{s}})- H(\bar{\theta}^{\lambda}_{\lfrf{s}},X_{\lcrc{s}})\right\rangle \right]\, \rmd s\\
&\quad -2\lambda \int_{nT}^t\E\left[  \left\langle \bar{\theta}^{\lambda}_{\lfrf{s}} -\bar{\theta}^{\lambda}_s ,  h(\bar{\theta}^{\lambda}_{\lfrf{s}})- H(\bar{\theta}^{\lambda}_{\lfrf{s}},X_{\lcrc{s}})\right\rangle \right]\, \rmd s.
\end{split}
\end{align}
By using Remark \ref{growthHh} and Cauchy–Schwarz inequality, one obtains, for any $s \in (nT, (n+1)T], n \in \N_0$,
\begin{align}\label{w2es2}
 \E\left[  |h(\bar{\theta}^{\lambda}_s)- h(\bar{\theta}^{\lambda}_{\lfrf{s}})|^2 \right]%\nonumber\\
& \leq L_h^2\E\left[ (1+|\bar{\theta}^{\lambda}_s|+|\bar{\theta}^{\lambda}_{\lfrf{s}}|)^{4r}|\bar{\theta}^{\lambda}_s-\bar{\theta}^{\lambda}_{\lfrf{s}}|^2\right]\nonumber\\
&\leq  L_h^2  \left(\E\left[( 1+ | \bar{\theta}^{\lambda}_s| +|\bar{\theta}^{\lambda}_{\lfrf{s}}|)^{8r} \right]\right)^{1/2}\left(\E\left[  | \bar{\theta}^{\lambda}_s -\bar{\theta}^{\lambda}_{\lfrf{s}}|^4 \right]\right)^{1/2}\nonumber\\
&\leq  3^{4r-(1/2)}L_h^2  \left(\E\left[ 1+ | \bar{\theta}^{\lambda}_s|^{8r}  +|\bar{\theta}^{\lambda}_{\lfrf{s}}|^{8r} \right]\right)^{1/2}\left(\E\left[  | \bar{\theta}^{\lambda}_s -\bar{\theta}^{\lambda}_{\lfrf{s}}|^4 \right]\right)^{1/2}.
\end{align}
Moreover, for any $s \in (nT, (n+1)T], n \in \N_0$, by using Remark \ref{growthHh} and \eqref{tamedH}, the following estimate can be obtained:
\begin{align}\label{w2es3}
 \E\left[ | H(\bar{\theta}^{\lambda}_{\lfrf{s}},X_{\lcrc{s}})- H_{\lambda}(\bar{\theta}^{\lambda}_{\lfrf{s}},X_{\lcrc{s}})|^2\right]%\nonumber\\
& = \E\left[  \left|\frac{(1+\sqrt{\lambda}|\bar{\theta}^{\lambda}_{\lfrf{s}}|^{2r})H(\bar{\theta}^{\lambda}_{\lfrf{s}},X_{\lcrc{s}})- H (\bar{\theta}^{\lambda}_{\lfrf{s}},X_{\lcrc{s}})}{1+\sqrt{\lambda}|\bar{\theta}^{\lambda}_{\lfrf{s}}|^{2r}} \right|^2\right] \nonumber\\
&\leq \lambda\E\left[|\bar{\theta}^{\lambda}_{\lfrf{s}}|^{4r} | H(\bar{\theta}^{\lambda}_{\lfrf{s}},X_{\lcrc{s}})|^2\right]\nonumber\\
&\leq \lambda2K_H^2\E\left[(1+|X_{\lcrc{s}}|)^{2\rho}(1+|\bar{\theta}^{\lambda}_{\lfrf{s}}|^{4r+2})|\bar{\theta}^{\lambda}_{\lfrf{s}}|^{4r} \right]\nonumber\\
&\leq \lambda2K_H^2\E\left[(1+|X_{\lcrc{s}}|)^{2\rho}(1+|\bar{\theta}^{\lambda}_{\lfrf{s}}|^{4r+2})^2 \right]\nonumber\\
&\leq \lambda4K_H^2\E\left[(1+|X_{\lcrc{s}}|)^{2\rho}(1+|\bar{\theta}^{\lambda}_{\lfrf{s}}|^{8r+4}) \right].
\end{align}
Define a continuous-time filtration $(\mathcal{H}_t)_{t \geq 0}$ by $\mathcal{H}_t := \mathcal{F}^{\lambda}_{\infty} \vee \mathcal{G}_{\lfrf{t}} \vee \sigma(\theta_0), t \geq 0$. Substituting \eqref{w2es2}, \eqref{w2es3} into \eqref{w2es1}, and using the definition of $(\bar{\theta}^{\lambda}_t)_{ t\geq 0}$ given in \eqref{tuslaproc} yield
\begin{align}\label{w2es4}
 \begin{split}
 \E\left[|\bar{\zeta}^{\lambda, n}_t- \bar{\theta}^{\lambda}_t  |^2\right]
& \leq 4\lambda L_R \int_{nT}^t\E\left[|\bar{\zeta}^{\lambda, n}_s -\bar{\theta}^{\lambda}_s|^2 \right]\, \rmd s \\
&\quad + 3^{4r-(1/2)}\lambda L_h^2L_R^{-1} \int_{nT}^t\left(\E\left[ 1+ | \bar{\theta}^{\lambda}_s|^{8r}+|\bar{\theta}^{\lambda}_{\lfrf{s}}|^{8r} \right]\right)^{1/2}\left(\E\left[  | \bar{\theta}^{\lambda}_s -\bar{\theta}^{\lambda}_{\lfrf{s}}|^4 \right]\right)^{1/2}\, \rmd s \\
&\quad + 4\lambda^2 K_H^2 L_R^{-1} \int_{nT}^t\E\left[ (1+|X_{\lcrc{s}}|)^{2\rho}(1+|\bar{\theta}^{\lambda}_{\lfrf{s}}|^{8r+4})\right]\, \rmd s\\
&\quad -2\lambda \int_{nT}^t\E\left[ \E\left[ \left.\left\langle \bar{\zeta}^{\lambda, n}_s -\bar{\theta}^{\lambda}_{\lfrf{s}} ,  h(\bar{\theta}^{\lambda}_{\lfrf{s}})- H(\bar{\theta}^{\lambda}_{\lfrf{s}},X_{\lcrc{s}})\right\rangle\right| \mathcal{H}_s \right]\right]\, \rmd s\\
&\quad -2\lambda^2 \int_{nT}^t\E\left[  \left\langle  \int_{\lfrf{s}}^s H_{\lambda}(\bar{\theta}^{\lambda}_{\lfrf{r}},X_{\lcrc{r}}) \, \rmd r ,  h(\bar{\theta}^{\lambda}_{\lfrf{s}})- H(\bar{\theta}^{\lambda}_{\lfrf{s}},X_{\lcrc{s}})\right\rangle \right]\, \rmd s\\
&\quad +2\lambda\sqrt{2\lambda\beta^{-1}} \int_{nT}^t\E\left[  \left\langle  \int_{\lfrf{s}}^s \rmd B^{\lambda}_r ,  h(\bar{\theta}^{\lambda}_{\lfrf{s}})- H(\bar{\theta}^{\lambda}_{\lfrf{s}},X_{\lcrc{s}})\right\rangle \right]\, \rmd s.
\end{split}
\end{align}
For any $r \in [{\lfrf{s}}, s]$, $s \geq 0$, we have $\lfrf{r} = \lfrf{s}$, and $\lcrc{r} = \lfrf{r}+1 = \lcrc{s}$. By using Remark \ref{growthHh} and Young's inequality, for any $s\in (nT, (n+1)T], n \in \N_0$, the fifth term on the RHS of \eqref{w2es4} can be estimated as follows:
\begin{align}
&-\E\left[  \left\langle  \int_{\lfrf{s}}^s H_{\lambda}(\bar{\theta}^{\lambda}_{\lfrf{r}},X_{\lcrc{r}}) \, \rmd r ,  h(\bar{\theta}^{\lambda}_{\lfrf{s}})- H(\bar{\theta}^{\lambda}_{\lfrf{s}},X_{\lcrc{s}})\right\rangle \right]\nonumber\\
& \leq \E\left[   | H_{\lambda}(\bar{\theta}^{\lambda}_{\lfrf{s}},X_{\lcrc{s}})||h(\bar{\theta}^{\lambda}_{\lfrf{s}})- H(\bar{\theta}^{\lambda}_{\lfrf{s}},X_{\lcrc{s}}) | \right]\nonumber\\
&\leq \E\left[|H(\bar{\theta}^{\lambda}_{\lfrf{s}},X_{\lcrc{s}}) |\left(|h(\bar{\theta}^{\lambda}_{\lfrf{s}})|+|H(\bar{\theta}^{\lambda}_{\lfrf{s}},X_{\lcrc{s}}) | \right)\right]\nonumber\\
&\leq \E\left[|h(\bar{\theta}^{\lambda}_{\lfrf{s}})|^2\right] + 2\E\left[|H(\bar{\theta}^{\lambda}_{\lfrf{s}},X_{\lcrc{s}}) |^2\right]\nonumber\\
&\leq  \E\left[\left(L_h(1+|\bar{\theta}^{\lambda}_{\lfrf{s}}|)^{2r+1}+|h(0)|\right)^2\right]  + 4K_H^2\E\left[(1+|X_{\lcrc{s}}|)^{2\rho}(1+|\bar{\theta}^{\lambda}_{\lfrf{s}}|^{4r+2})  \right]\nonumber\\
&\leq 2(1+L_F+L_G)^2\E\left[ (1+2|X_0|)^{2\rho}\right] \E\left[ (1+|\bar{\theta}^{\lambda}_{\lfrf{s}}|)^{4r+2} \right] +2 |h(0)|^2\nonumber\\
&\quad + 8K_H^2\E\left[(1+|X_{\lcrc{s}}|)^{2\rho}(1+|\bar{\theta}^{\lambda}_{\lfrf{s}}|^{8r+4})  \right]\nonumber\\
\begin{split}\label{w2es5}
& \leq 2^{4r+3+2\rho}(1+L_F+L_G)^2\E\left[ (1+ |X_0|)^{2\rho}\right] \E\left[ (1+|\bar{\theta}^{\lambda}_{\lfrf{s}}|^{8r+4} )\right] +2 |h(0)|^2\\
&\quad + 8K_H^2\E\left[(1+|X_{\lcrc{s}}|)^{2\rho}(1+|\bar{\theta}^{\lambda}_{\lfrf{s}}|^{8r+4})  \right].
\end{split}
\end{align}
Furthermore, one notes that the fourth and the last term on the RHS of \eqref{w2es4} is zero, and $X_{\lcrc{s}}$ is independent of $\bar{\theta}^{\lambda}_{\lfrf{s}}$ for any $s \geq 0$. Therefore, it follows that, by substituting \eqref{w2es5} into \eqref{w2es4},
 \begin{align*}
&\E\left[|\bar{\zeta}^{\lambda, n}_t- \bar{\theta}^{\lambda}_t  |^2\right] \\
& \leq 4\lambda L_R \int_{nT}^t\E\left[|\bar{\zeta}^{\lambda, n}_s -\bar{\theta}^{\lambda}_s|^2 \right]\, \rmd s \\
&\quad + 3^{4r-1/2}\lambda L_h^2L_R^{-1} \int_{nT}^t\left(\E\left[ 1+ | \bar{\theta}^{\lambda}_s|^{8r}+|\bar{\theta}^{\lambda}_{\lfrf{s}}|^{8r} \right]\right)^{1/2}\left(\E\left[  | \bar{\theta}^{\lambda}_s -\bar{\theta}^{\lambda}_{\lfrf{s}}|^4 \right]\right)^{1/2}\, \rmd s \\
&\quad + 4\lambda^2 K_H^2 L_R^{-1} \int_{nT}^t\E\left[ (1+|X_0|)^{2\rho}\right]\left(1+\E\left[ |\bar{\theta}^{\lambda}_{\lfrf{s}}|^{8r+4}\right]\right)\, \rmd s\\
&\quad + 2^{4r+4+2\rho}\lambda^2(1+L_F+L_G)^2\int_{nT}^t\E\left[ (1+ |X_0|)^{2\rho}\right]\left(1+ \E\left[ |\bar{\theta}^{\lambda}_{\lfrf{s}}|^{8r+4} \right]\right)\, \rmd s +4\lambda |h(0)|^2\\
&\quad +16\lambda^2K_H^2 \int_{nT}^t\E\left[ (1+|X_0|)^{2\rho}\right]\left(1+\E\left[ |\bar{\theta}^{\lambda}_{\lfrf{s}}|^{8r+4}\right]\right)\, \rmd s.
\end{align*}
This yields, by applying Lemma \ref{2ndpthmmt}, Lemma \ref{onesteperror}, and by using the fact that $1-\nu \leq e^{-\nu}$ for any $\nu \in \R$, that
\begin{align*}
&\E\left[|\bar{\zeta}^{\lambda, n}_t- \bar{\theta}^{\lambda}_t  |^2\right] \\
&\leq 4\lambda L_R \int_{nT}^t\E\left[|\bar{\zeta}^{\lambda, n}_s -\bar{\theta}^{\lambda}_s|^2 \right]\, \rmd s +4\lambda |h(0)|^2\\
&\quad + 3^{4r-1/2}\lambda^2 L_h^2L_R^{-1} \int_{nT}^t\left( 1+2e^{-\lambda a_F\kappa^\sharp_2 \lfrf{s} /2}\E\left[|\theta_0|^{8r}\right]  + 2c^\sharp_{4r}(1+2/(a_F\kappa^\sharp_{4r} )) \right)^{1/2}  \\
&\quad\times \left(e^{-\lambda a_F \kappa^\sharp_2\lfrf{s}/2}\bar{C}_{0,1}\E\left[|\theta_0|^{4(2r+1)}\right]+\bar{C}_{1,1}\right)^{1/2}\, \rmd s \\
&\quad + \lambda^2(16K_H^2(1+L_R^{-1})+2^{4r+4+2\rho} (1+L_F+L_G)^2)\E\left[ (1+ |X_0|)^{2\rho}\right]\\
&\quad \times  \int_{nT}^t \left(1+e^{-\lambda a_F\kappa^\sharp_2 \lfrf{s} /2}\E\left[|\theta_0|^{8r+4}\right] +c^\sharp_{4r+2}(1+2/(a_F\kappa^\sharp_{4r+2} )) \right)\, \rmd s  \\
&\leq 4\lambda L_R \int_{nT}^t\E\left[|\bar{\zeta}^{\lambda, n}_s -\bar{\theta}^{\lambda}_s|^2 \right]\, \rmd s  +4\lambda |h(0)|^2\\
&\quad + 3^{4r}\lambda^2 L_h^2L_R^{-1}  \int_{nT}^t  \left(e^{-\lambda a_F \kappa^\sharp_2\lfrf{s}/2}\bar{C}_{0,1}\E\left[V_{4(2r+1)}(\theta_0)\right]+\bar{C}_{1,1}+c^\sharp_{4r}(1+2/(a_F\kappa^\sharp_{4r} )) +1\right) \, \rmd s \\
&\quad + \lambda^2(16K_H^2(1+L_R^{-1})+2^{4r+4+2\rho}(1+L_F+L_G)^2)\E\left[ (1+ |X_0|)^{2\rho}\right]\\
&\quad \times  \int_{nT}^t \left(1+e^{-\lambda a_F\kappa^\sharp_2 \lfrf{s} /2}\E\left[|\theta_0|^{8r+4}\right] +c^\sharp_{4r+2}(1+2/(a_F\kappa^\sharp_{4r+2} )) \right)\, \rmd s  \\
& \leq 4\lambda L_R \int_{nT}^t\E\left[|\bar{\zeta}^{\lambda, n}_s -\bar{\theta}^{\lambda}_s|^2 \right]\, \rmd s +4\lambda |h(0)|^2\\
&\quad + 3^{4r}\lambda L_h^2L_R^{-1} \left(e^{- a_F \kappa^\sharp_2n/4}\bar{C}_{0,1}\E\left[V_{4(2r+1)}(\theta_0)\right]+\bar{C}_{1,1}+c^\sharp_{4r}(1+2/(a_F\kappa^\sharp_{4r}))+1\right) \\
&\quad + \lambda (16K_H^2(1+L_R^{-1})+2^{4r+4+2\rho}(1+L_F+L_G)^2)\E\left[ (1+ |X_0|)^{2\rho}\right]\\
&\qquad \times \left(e^{- a_F \kappa^\sharp_2n/4}\E\left[V_{4(2r+1)}(\theta_0)\right]+c^\sharp_{4r+2}(1+2/(a_F\kappa^\sharp_{4r+2}))+1\right)\\
&\leq 4\lambda L_R \int_{nT}^t\E\left[|\bar{\zeta}^{\lambda, n}_s -\bar{\theta}^{\lambda}_s|^2 \right]\, \rmd s +\lambda e^{-4L_R}\left(e^{-a_F \kappa^\sharp_2 n/4}\bar{C}_0\E\left[V_{4(2r+1)}(\theta_0)\right]+\bar{C}_1\right),
\end{align*}
where the third inequality holds due to $1/2 \leq \lambda T \leq 1$, and where
\begin{align}\label{w1converp1const}
\begin{split}
\kappa^\sharp_2
&:=\min\{\bar{\kappa}(2), \tilde{\kappa}(2)\},\\
\bar{C}_0
& := e^{4L_R}\Big(3^{4r} L_h^2L_R^{-1}\bar{C}_{0,1}\Big.  \Big.+(16K_H^2 (1+ L_R^{-1} )+2^{4r+4+2\rho}(1+L_F+L_G)^2) \E\left[ (1+|X_0|)^{2\rho}\right]\Big) ,\\
\bar{C}_1
& := e^{4L_R}\Big(3^{4r} L_h^2L_R^{-1} + 16K_H^2 (1+ L_R^{-1} ) +2^{4r+4+2\rho}(1+L_F+L_G)^2\Big)\\
&\quad \times \Big(\bar{C}_{1,1}+c^\sharp_{4r}(1+2/(a_F\kappa^\sharp_{4r}))+1\Big)+ 4  e^{4L_R} |h(0)|^2
\end{split}
\end{align}
with $\bar{\kappa}(2), \tilde{\kappa}(2)$ given explicitly in \eqref{constcp} and \eqref{constcpsp} (Lemma \ref{2ndpthmmt}), $\bar{C}_{0,1}, \bar{C}_{1,1}$  given explicitly in \eqref{onesteperrorconst} (Lemma \ref{onesteperror}) and $c^\sharp_{4r}, \kappa^\sharp_{4r}$ given explicitly in Lemma \ref{2ndpthmmt}. Finally, by Gr\"{o}nwall's lemma, one hence obtains
\[
\E\left[|\bar{\zeta}^{\lambda, n}_t- \bar{\theta}^{\lambda}_t  |^2\right] \leq \lambda  \left(e^{-a_F \kappa^\sharp_2 n/4}\bar{C}_0\E\left[V_{4(2r+1)}(\theta_0)\right]+\bar{C}_1\right),
\]
which completes the proof.
\end{proof}
%%%%%%%%%%%%%%%%%%%%%%%%
\begin{lemma}\label{wassw1p} For any $ \mu,\nu \in \mathcal{P}_{V_p}(\R^d)$, the following inequalities hold for $w_{1,2}$ defined in \eqref{w1p}:
\begin{equation}\label{w1w2w12}
W_1(\mu,\nu)\leq w_{1,2}(\mu,\nu), \quad W_2(\mu,\nu)\leq \sqrt{2w_{1,2}(\mu,\nu)}.
\end{equation}
\end{lemma}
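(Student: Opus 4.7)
The plan is to deduce both bounds from elementary pointwise comparisons between the integrands, after which the result will follow by integrating against an arbitrary coupling $\zeta\in\mathcal{C}(\mu,\nu)$ and taking the infimum. Throughout, recall that $V_2(\theta)=1+|\theta|^2\geq 1$.

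For the first inequality, the key observation I would establish is the pointwise bound
\[
|\theta-\theta'|\leq [1\wedge|\theta-\theta'|]\bigl(1+V_2(\theta)+V_2(\theta')\bigr),\qquad \theta,\theta'\in\R^d,
\]
which I would verify by splitting into the cases $|\theta-\theta'|\leq 1$ and $|\theta-\theta'|>1$. In the first case the right-hand side equals $|\theta-\theta'|\bigl(1+V_2(\theta)+V_2(\theta')\bigr)\geq|\theta-\theta'|$; in the second case the minimum equals $1$, and the elementary bound $V_2(\theta)\geq|\theta|$ (which follows from $1+|\theta|^2\geq 2|\theta|$) gives $V_2(\theta)+V_2(\theta')\geq|\theta|+|\theta'|\geq|\theta-\theta'|$. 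Integrating this inequality against any $\zeta\in\mathcal{C}(\mu,\nu)$ and taking the infimum, recalling the definitions \eqref{eq:definition-W-p} of $W_1$ and \eqref{w1p} of $w_{1,2}$, then yields $W_1(\mu,\nu)\leq w_{1,2}(\mu,\nu)$.

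For the second inequality I would analogously establish
\[
|\theta-\theta'|^2\leq 2[1\wedge|\theta-\theta'|]\bigl(1+V_2(\theta)+V_2(\theta')\bigr),\qquad \theta,\theta'\in\R^d,
\]
via the same case split: when $|\theta-\theta'|\leq 1$ one has $|\theta-\theta'|^2\leq|\theta-\theta'|\leq|\theta-\theta'|\bigl(1+V_2(\theta)+V_2(\theta')\bigr)$; when $|\theta-\theta'|>1$ the bound $|\theta-\theta'|^2\leq 2|\theta|^2+2|\theta'|^2\leq 2\bigl(1+V_2(\theta)+V_2(\theta')\bigr)$ suffices. Integrating against any $\zeta\in\mathcal{C}(\mu,\nu)$ and taking the infimum then yields $W_2^2(\mu,\nu)\leq 2w_{1,2}(\mu,\nu)$, after which taking square roots gives the second claim.

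I do not anticipate a genuine obstacle here; the whole argument reduces to the two pointwise comparisons together with monotonicity of integration over couplings. The only mild point of care is the small-separation/large-separation dichotomy governing the behaviour of the truncation $1\wedge|\theta-\theta'|$, which is precisely why the Lyapunov weight $1+V_2(\theta)+V_2(\theta')$ is needed to absorb the unboundedness of $|\theta-\theta'|$ on the region $|\theta-\theta'|>1$.
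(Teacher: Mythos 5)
Your proposal is correct and follows essentially the same route as the paper's proof: the same case split on $|\theta-\theta'|\lessgtr 1$, the same use of $V_2(\theta)\geq|\theta|$ in the large-separation case, and the same integration against an arbitrary coupling followed by taking the infimum. Phrasing the argument as two pointwise inequalities rather than splitting the integral is a purely cosmetic difference.
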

\begin{proof} Consider two probability measures $\mu, \nu \in \mathcal{P}_{V_p}(\R^d)$. Recall the definition of $w_{1,2}(\mu, \nu)$ given in \eqref{w1p}, and the definition of $W_p(\mu, \nu)$ given in \eqref{eq:definition-W-p}. We prove the first inequality in \eqref{w1w2w12}. For any $ \zeta\in \mathcal{C}(\mu, \nu)$, one has
\begin{align*}
&\int_{\R^d}\int_{\R^d}|\theta-\theta'|\zeta(\rmd \theta \rmd \theta')\\
& = \int_{\R^d}\int_{\R^d}|\theta-\theta'|\1_{\{|\theta-\theta'|\geq 1\}}\zeta(\rmd \theta \rmd \theta')  + \int_{\R^d}\int_{\R^d}|\theta-\theta'|\1_{\{|\theta-\theta'|< 1\}}\zeta(\rmd \theta \rmd \theta')\\
&\leq \int_{\R^d}\int_{\R^d}(|\theta|+|\theta'|)\1_{\{|\theta-\theta'|\geq 1\}}\zeta(\rmd \theta \rmd \theta')   + \int_{\R^d}\int_{\R^d} |\theta-\theta'| (1+V_2(\theta)+V_2(\theta'))\1_{\{|\theta-\theta'|< 1\}}\zeta(\rmd\theta \rmd\theta')\\
&\leq \int_{\R^d}\int_{\R^d}  (1+V_2(\theta)+V_2(\theta'))\1_{\{|\theta-\theta'|\geq 1\}}\zeta(\rmd\theta \rmd\theta')\\
&\quad + \int_{\R^d}\int_{\R^d} |\theta-\theta'| (1+V_2(\theta)+V_2(\theta'))\1_{\{|\theta-\theta'|< 1\}}\zeta(\rmd\theta \rmd\theta')\\
&= \int_{\R^d}\int_{\R^d}
[1\wedge |\theta-\theta'|](1+V_2(\theta)+V_2(\theta'))\1_{\{|\theta-\theta'|\geq 1\}}\zeta(\rmd\theta \rmd\theta')\\
&\quad + \int_{\R^d}\int_{\R^d}
[1\wedge |\theta-\theta'|] (1+V_2(\theta)+V_2(\theta'))\1_{\{|\theta-\theta'|< 1\}}\zeta(\rmd\theta \rmd\theta')\\
& =  \int_{\R^d}\int_{\R^d}
[1\wedge |\theta-\theta'|] (1+V_2(\theta)+V_2(\theta')) \zeta(\rmd\theta \rmd\theta').
\end{align*}
By taking infimum over $ \zeta\in \mathcal{C}(\mu, \nu)$, the above inequality yields $W_1(\mu,\nu)\leq w_{1,2}(\mu,\nu)$.

Moreover, the second inequality in \eqref{w1w2w12} can be obtained by applying similar arguments. For any $ \zeta\in \mathcal{C}(\mu, \nu)$, one obtains
\begin{align*}
&\int_{\R^d}\int_{\R^d}|\theta-\theta'|^2\zeta(\rmd \theta \rmd \theta')\\
& = \int_{\R^d}\int_{\R^d}|\theta-\theta'|^2\1_{\{|\theta-\theta'|\geq 1\}}\zeta(\rmd \theta \rmd \theta') + \int_{\R^d}\int_{\R^d} |\theta-\theta'|^2 \1_{\{|\theta-\theta'|< 1\}}\zeta(\rmd \theta \rmd \theta')\\
&\leq \int_{\R^d}\int_{\R^d} 2(|\theta|^2+|\theta'|^2 )\1_{\{|\theta-\theta'|\geq 1\}}\zeta(\rmd \theta \rmd \theta')  + \int_{\R^d}\int_{\R^d} |\theta-\theta'|(|\theta|+|\theta'|)\1_{\{|\theta-\theta'|< 1\}}\zeta(\rmd\theta \rmd\theta')\\
&\leq \int_{\R^d}\int_{\R^d}  2 (1+V_2(\theta)+V_2(\theta'))\1_{\{|\theta-\theta'|\geq 1\}}\zeta(\rmd\theta \rmd\theta')\\
&\quad + \int_{\R^d}\int_{\R^d}2 |\theta-\theta'| (1+V_2(\theta)+V_2(\theta'))\1_{\{|\theta-\theta'|< 1\}}\zeta(\rmd\theta \rmd\theta')\\
&=2 \int_{\R^d}\int_{\R^d}
[1\wedge |\theta-\theta'|](1+V_2(\theta)+V_2(\theta'))\1_{\{|\theta-\theta'|\geq 1\}}\zeta(\rmd\theta \rmd\theta')\\
&\quad + 2\int_{\R^d}\int_{\R^d}
[1\wedge |\theta-\theta'|] (1+V_2(\theta)+V_2(\theta'))\1_{\{|\theta-\theta'|< 1\}}\zeta(\rmd\theta \rmd\theta')\\
& =  2\int_{\R^d}\int_{\R^d}
[1\wedge |\theta-\theta'|] (1+V_2(\theta)+V_2(\theta')) \zeta(\rmd\theta \rmd\theta').
\end{align*}
By taking infimum over $ \zeta\in \mathcal{C}(\mu, \nu)$, the above inequality yields $W_2^2(\mu,\nu)\leq 2w_{1,2}(\mu,\nu)$.
\end{proof}

%%%%%%%%%%%%%%%%%%%%%%%%
\begin{proof}[\textbf{Proof of Proposition \ref{contr}}]  \label{proofcontr}
One notes that \cite[Assumption 2.1]{eberle2019quantitative} holds with $\kappa = L_R$ due to Remark \ref{ACh}. \cite[Assumption 2.2]{eberle2019quantitative} holds with $V = V_2$ due to Lemma \ref{driftcon}. Moreover, \cite[Assumption 2.4 and 2.5]{eberle2019quantitative} hold due to \eqref{contrassumption}. Thus, \cite[Theorem 2.2, Corollary 2.3]{eberle2019quantitative} hold under Assumption \ref{AI}, \ref{AG}, \ref{AF}, and \ref{AC} . Then, \eqref{w12contraction} can be obtained by using the same argument as in the proof of \cite[Proposition~3.14]{nonconvex}.

To obtain the explicit expression of the contraction constant $\dot{c}$ for SDE \eqref{sde}, we apply the same arguments as in the proof of \cite[Theorem 2.2]{eberle2019quantitative} but replace $h(r)$ in \cite[Eqn. (5.14)]{eberle2019quantitative} with
\begin{equation}\label{cchr}
h(r): = \frac{\beta}{4}\int_0^r s\kappa \, \rmd s +2Q(\epsilon)r,
\end{equation}
where $ \kappa = L_R$ as explained above, $Q(\epsilon)$ is given in \cite[Eqn. (2.24)]{eberle2019quantitative}, and replace \cite[Eqn. (2.25)]{eberle2019quantitative} with
\[
 \left(4 c_{V,2}(2) \epsilon\right)^{-1}  \geq\frac{\beta}{2}\int_0^{R_1} \int_0^s\exp\left( \frac{\beta}{4}\int_r^s u\kappa\, \rmd u+2Q(\epsilon)(s-r)\right)\, \rmd r\,\rmd s.
\]
Then, one can derive an explicit expression for $\dot{c}$, which is given by
\[
\dot{c}=\min\{\phi, c_{V,1}(2), 4c_{V,2}(2) \epsilon c_{V,1}(2)\}/2,
\]
where $c_{V,1}(2) := a_h/2$, $c_{V,2}(2) := (3/2)a_h\mathrm{v}_2(M_V(2))$ with $M_V(2)$ given in Lemma \ref{driftcon}, and where $\phi$ is given by
\[%begin{align*}
\phi^{-1} =\beta \int_0^{R_2} \int_0^s\exp\left(\frac{\beta}{4}\int_r^s u \kappa \, \rmd u+2Q(\epsilon) (s-r)\right)\,\rmd r\, \rmd s,
\]%end{align*}
where $R_2$ is given in \cite[Eqn. (2.29)]{eberle2019quantitative}. Furthermore, $ \epsilon \in (0,1]$ is required to satisfy
\[
\epsilon^{-1} \geq 2\beta c_{V,2}(2) \int_0^{R_1} \int_0^s\exp\left(\frac{\beta}{4}\int_r^s u \kappa \, \rmd u+2Q(\epsilon) (s-r)\right)\,\rmd r\, \rmd s,
\]
where $R_1$ is given in \cite[Eqn. (2.29)]{eberle2019quantitative}. To simplify the expressions for $\phi$ and $\epsilon$, we follow the proof of \cite[Lemma~3.24]{nonconvex}, and thus \eqref{expcontra1}, \eqref{expcontra2}, \eqref{expcontra3} can be obtained.

To obtain an explicit expression for $\hat{c}$, one first notes that, by using \eqref{cchr}, \cite[Eqn. (5.4)]{eberle2019quantitative} becomes: for any $r \in [0,R_2]$,
\[
r\exp(-\beta \kappa R_2^2/8-2Q(\epsilon) R_2) \leq \Phi(r) \leq 2f(r) \leq 2 \Phi(r) \leq 2r.
\]
Then, in view of \cite[Eqn. (60)]{nonconvex}, and by applying the same arguments as in the proof of \cite[Lemma~3.24]{nonconvex}, one obtains
\[
C_9 = C_{11}/C_{10} \leq \hat{c}: = 2(1+\overline{R}_2)\exp(\beta K_1\overline{R}_2^2/8+2\overline{R}_2)/\epsilon,
\]
where $\overline{R}_2 :=\dot{c}_0:=2\sqrt{4c_{V,2}(2)(1+c_{V,1}(2))/c_{V,1}(2)-1}$, $K_1 =  L_R$, and $\epsilon$ is given in \eqref{expcontra3}.
\end{proof}
%%%%%%%%%%%%%%%%%%%%%%%%
\begin{proof}[\textbf{Proof of Lemma \ref{w1converp2}}] \label{proofw1converp2} The proof follows the same idea as in the proof of \cite[Lemma 3.18]{nonconvex}, the details are provided for the explicit constants. By using Definition \ref{auxzeta}, Lemma \ref{wassw1p}, Proposition \ref{contr}, one obtains, for any $t \in (nT, (n+1)T], n \in \N_0$,
\begin{align*}
&W_1(\mathcal{L}(\bar{\zeta}_t^{\lambda,n}),\mathcal{L}(Z_t^\lambda))\\
&\leq \sum_{k=1}^n W_1(\mathcal{L}(\bar{\zeta}_t^{\lambda,k}),\mathcal{L}(\bar{\zeta}_t^{\lambda,k-1}))  \\
&\leq \sum_{k=1}^n w_{1,2}(\mathcal{L}(\zeta^{kT,\bar{\theta}^{\lambda}_{kT}, \lambda}_t ),\mathcal{L}(\zeta^{kT,\bar{\zeta}_{kT}^{\lambda,k-1}, \lambda}_t))  \\
&\leq \hat{c} \sum_{k=1}^n e^{-\dot{c} (n-k)/2} w_{1,2}(\mathcal{L}(\bar{\theta}^{\lambda}_{kT}),\mathcal{L}(\bar{\zeta}_{kT}^{\lambda,k-1})) \\
&\leq \hat{c} \sum_{k=1}^n e^{-\dot{c} (n-k)/2}  W_2(\mathcal{L}(\bar{\theta}^{\lambda}_{kT}),\mathcal{L}(\bar{\zeta}_{kT}^{\lambda,k-1})) \left[1 + \left\lbrace \E[V_4(\bar{\theta}^{\lambda}_{kT})]\right\rbrace^{1/2}+ \left\lbrace\E[V_4(\bar{\zeta}_{kT}^{\lambda,k-1})] \right\rbrace^{1/2}\right],
\end{align*}
where the last inequality is obtained by using \eqref{w1p}, Cauchy-Schwarz inequality, and Minkowski inequality. This further implies, due to Young's inequality,  Lemma \ref{w1converp1}, \ref{2ndpthmmt}, and \ref{zetaprocme},
\begin{align*}
 W_1(\mathcal{L}(\bar{\zeta}_t^{\lambda,n}),\mathcal{L}(Z_t^\lambda))
&\leq (\sqrt{\lambda})^{-1}\hat{c} \sum_{k=1}^n e^{-\dot{c} (n-k)/2}  W^2_2(\mathcal{L}(\bar{\theta}^{\lambda}_{kT}),\mathcal{L}(\bar{\zeta}_{kT}^{\lambda,k-1}))  \\
&\quad +3\sqrt{\lambda}\hat{c} \sum_{k=1}^n e^{-\dot{c} (n-k)/2}  \left[1 + \E[V_4(\bar{\theta}^{\lambda}_{kT})] +\E[V_4(\bar{\zeta}_{kT}^{\lambda,k-1})] \right]  \\
&\leq \sqrt{\lambda}\hat{c} \sum_{k=1}^n e^{-\dot{c} (n-k)/2} e^{-(k-1)\min\{a_F \kappa^\sharp_2/2, a_h\}/2}(\bar{C}_0+12)\E[V_{4(2r+1)}(\theta_0)]  \\
&\quad +\sqrt{\lambda}\frac{\hat{c} }{1 - e^{-\dot{c}/2}}(\bar{C}_1+12c^\sharp_2(1+2/(a_F\kappa^\sharp_2 ))+9\mathrm{v}_4(M_V(4))+15)\\
& \leq \sqrt{\lambda}\hat{c}n e^{-(n-1)\min\{\dot{c}, a_F \kappa^\sharp_2/2, a_h\}/2}(\bar{C}_0+12)\E[V_{4(2r+1)}(\theta_0)] \\
&\quad +2\sqrt{\lambda}(\hat{c}/\dot{c})e^{ \dot{c}/2} (\bar{C}_1+12c^\sharp_2(1+2/(a_F\kappa^\sharp_2 ))+9\mathrm{v}_4(M_V(4))+15),
\end{align*}
where the last inequality holds due to $1-e^{-s} \geq s e^{-s}, s \in \R$. Finally, one notes that $e^{-\alpha y}(y+1) \leq 1+\alpha^{-1}$, for any $\alpha >0, y \geq 0$, hence, by using the inequality with $\alpha = \min\{\dot{c}, a_F \kappa^\sharp_2/2, a_h\}/4$ and $y = n-1$, one obtains
\[
W_1(\mathcal{L}(\bar{\zeta}_t^{\lambda,n}),\mathcal{L}(Z_t^\lambda)) \leq \sqrt{\lambda}\left(e^{-\min\{\dot{c}, a_F \kappa^\sharp_2/2, a_h\}n/4}\bar{C}_2\E\left[V_{4(2r+1)}(\theta_0)\right]+\bar{C}_3\right),
\]
where
\begin{align}\label{w1converp2const}
\begin{split}
\kappa^\sharp_2
& :=\min\{\bar{\kappa}(2), \tilde{\kappa}(2)\},\\
\bar{C}_2
& :=e^{\min\{\dot{c}, a_F \kappa^\sharp_2/2, a_h\}/4}\hat{c}\left(1+ \frac{4}{\min\{\dot{c}, a_F \kappa^\sharp_2/2, a_h\}}\right)(\bar{C}_0+12),\\
\bar{C}_3
& := 2(\hat{c}/\dot{c})e^{ \dot{c}/2}(\bar{C}_1+12c^\sharp_2(1+2/(a_F\kappa^\sharp_2 ))+9\mathrm{v}_4(M_V(4))+15)
\end{split}
\end{align}
with $\dot{c}, \hat{c}$ given in Proposition \ref{contr}, $\bar{\kappa}(2), \tilde{\kappa}(2)$ given in \eqref{constcp} (Lemma \ref{2ndpthmmt}), $\bar{C}_0, \bar{C}_1$ given in \eqref{w1converp1const} (Lemma \ref{w1converp1}), $c^\sharp_2$ given in Lemma \ref{2ndpthmmt} and $M_V(4)$ given in Lemma \ref{zetaprocme}.
\end{proof}
%%%%%%%%%%%%%%%%%%%%%%%%
%\begin{lemma}\label{invmfinitemmtbd} Let Assumption \ref{AI}, \ref{AG}, \ref{AF}, and \ref{AC} hold. Then, the invariant measure $\pi_{\beta}$ of SDE \eqref{tcsde} has a finite $(4r+2)$-th moment.
%\end{lemma}
%\begin{proof} By Proposition \ref{contr}, Lemma \ref{wassw1p}, it holds that

%One notes that the $(4r+2)$-th moment of $\pi_{\beta}$ is finite. Indeed, since $\nu_t = \mathcal{L}(Z^{\lambda}_t)$ converges weakly to $\pi_{\beta}$ (implied by Proposition \ref{contr}, Lemma \ref{wassw1p}), and $\sup_t\E[|Z^{\lambda}_t|^{4r+2}]<\infty$, we conclude that
%\[
%\int_{\R^d} |\theta|^{4r+2}\, \pi_{\beta}(\rmd \theta)  = \lim_{t \rightarrow \infty}\int_{\R^d} |\theta|^{4r+2}\, \nu_t (\rmd \theta) <\infty.
%\]
%\end{proof}

\begin{proof}[\textbf{Proof of Lemma \ref{eerp1}}]\label{proofeerp1}
We follow a similar approach as in \cite[Lemma 3.5]{raginsky}. Recall that $h := \nabla u$. By Remark \ref{growthHh}, for any $\theta \in \R^d$, it follows that
\[
|h(\theta)| \leq K_H\E[(1+|X_0|)^{\rho}](|\theta|^{2r+1}+1).
\]
Denote by $\bar{C}_6 := K_H\E[(1+|X_0|)^{\rho}]$, then, one obtains, for any $\theta, \theta' \in \R^d$,
\begin{align}\label{eerp1ineq}
u(\theta) - u(\theta')
&= \int_0^1 \langle h(t\theta +(1-t)\theta'), \theta-\theta' \rangle\, \rmd t \nonumber\\
& \leq \left(\int_0^1  (2^{2r}\bar{C}_6(t^{2r+1}|\theta|^{2r+1} + (1-t)^{2r+1}|\theta'|^{2r+1})+\bar{C}_6)\, \rmd t \right)|\theta-\theta'| \nonumber\\
&\leq \left(\frac{2^{2r}\bar{C}_6}{2r+2} |\theta|^{2r+1}+\frac{2^{2r}\bar{C}_6}{2r+2} |\theta'|^{2r+1}+\bar{C}_6\right)|\theta-\theta'|.
\end{align}
Recall $Z_{\infty} \sim \pi_{\beta}$ with $\pi_{\beta}(\theta) \propto e^{-\beta u(\theta)}$, $\theta \in \R^d$. We consider the coupling $\mathbf{P} \in \mathcal{C}(\mathcal{L}(\theta^{\lambda}_n), \mathcal{L}(Z_{\infty}))$ such that
\[
W_2^2(\mathcal{L}(\theta^{\lambda}_n),\mathcal{L}(Z_{\infty})) = \E_{\mathbf{P} }[|\theta^{\lambda}_n - Z_{\infty}|^2].
\]
Then, by using \eqref{eerp1ineq} and Cauchy-Schwarz inequality, it follows that
\begin{align*}
 \E[u( \theta_n^{\lambda})] - \E[u(Z_{\infty})]
 &= \E_{\mathbf{P}} [u( \theta_n^{\lambda}) - u(Z_{\infty})]\\
 &\leq \left(\frac{2^{2r}\bar{C}_6}{2r+2}(\E[|\theta^{\lambda}_n|^{4r+2}])^{1/2}+\frac{2^{2r}\bar{C}_6}{2r+2}(\E[|Z_{\infty}|^{4r+2}])^{1/2}+\bar{C}_6\right)  W_2(\mathcal{L}(\theta^{\lambda}_n),\mathcal{L}(Z_{\infty})).
\end{align*}
Finally, applying Lemma \ref{2ndpthmmt} and Corollary \ref{mainw2} yield
\begin{align*}
  \E[u( \theta_n^{\lambda})] - \E[u(Z_{\infty})]
 &\leq \left(\frac{2^{2r}\bar{C}_6}{2r+2}\left(\left(\E[|\theta^{\lambda}_0|^{4r+2}]+c^\sharp_{2r+1}(1+2/(a_F\kappa^\sharp_{2r+1} ))\right)^{1/2}+c_{Z_{\infty}, 4r+2}^{1/2}\right)+\bar{C}_6\right) \\
 &\qquad \times \left[C_4 e^{-C_3 \lambda n}(\E[|\theta_0|^{4(2r+1)}]+1)^{1/2} +C_5\lambda^{1/4}\right]\\
 &\leq C_7 e^{-C_6 \lambda n} +C_8\lambda^{1/4},
\end{align*}
where
\begin{align}\label{eerp1const}
\begin{split}
C_6
& := C_3, \\
C_7
& := C_4 \left(\frac{2^{2r}\bar{C}_6}{2r+2}\left(1+(c^\sharp_{2r+1}(1+2/(a_F\kappa^\sharp_{2r+1} )))^{1/2}+c_{Z_{\infty}, 4r+2}^{1/2}\right)+\bar{C}_6\right) (\E[|\theta_0|^{4(2r+1)}]+1), \\
C_8
& := C_5\left(\frac{2^{2r}\bar{C}_6}{2r+2}\left((\E[|\theta^{\lambda}_0|^{4r+2}]+c^\sharp_{2r+1}(1+2/(a_F\kappa^\sharp_{2r+1} )))^{1/2}+c_{Z_{\infty}, 4r+2}^{1/2}\right)+\bar{C}_6\right) ,\\
\bar{C}_6
&:= K_H\E[(1+|X_0|)^{\rho}],
\end{split}
\end{align}
with $C_3, C_4, C_5$ given in \eqref{mainw2const} (Corollary \ref{mainw2}), $c^\sharp_{2r+1}, \kappa^\sharp_{2r+1}$ given in Lemma \ref{2ndpthmmt}, $c_{Z_{\infty}, 4r+2}$ denoting the $(4r+2)$-th moment of $\pi_{\beta}$.
\end{proof}
%%%%%%%%%%%%%%%%%%%%%%%%
\begin{proof}[\textbf{Proof of Lemma \ref{eerp2}}] \label{proofeerp2}
By using \cite[Equation (3.18), (3.20)]{raginsky}, one obtains
\begin{align}\label{eerp2t1}
\E[u(Z_{\infty})]- u^*
&= \frac{1}{\beta}\left(-\int_{\R^d}   \frac{e^{-\beta u(\theta)}}{\bar{C}_{\pi_{\beta}}}\log \frac{e^{-\beta u(\theta)}}{\bar{C}_{\pi_{\beta}}}   \, \rmd \theta - \log \bar{C}_{\pi_{\beta}} \right)-u^* \nonumber\\
& \leq \frac{d}{2\beta}\log\left(\frac{2\pi e(b_h+d/\beta)}{a_hd}\right)- \frac{\log \bar{C}_{\pi_{\beta}}}{\beta} -u^*,
\end{align}
where $\bar{C}_{\pi_{\beta}} := \int_{\R^d} e^{-\beta u(\theta)}\, \rmd \theta$ is the normalizing constant. Then, to obtain an upper bound for the term $\log \bar{C}_{\pi_{\beta}}/\beta$, we follow the arguments in \cite[Lemma 3.2]{lovas2020taming}. Denote by $\theta^* \in \R^d$ a minimizer of $u$. By Remark \ref{ADFh}, we have that
\[
0=  \langle \theta^*, h(\theta^*) \rangle \geq a_h|\theta^*|^2 - b_h,
\]
which implies that $|\theta^*| \leq \sqrt{b_h/a_h}$. Moreover, one observes that, for any $\theta, \theta' \in \R^d$,
\[
-\beta(u(\theta^*) - u(\theta))  \leq \beta \left|\int_0^1 \langle h(t\theta^*+(1-t)\theta)- h(\theta^*), \theta^*-\theta \rangle \, \rmd t\right|,
\]
which implies, by using Remark \ref{growthHh},
\begin{align*}
-\beta(u(\theta^*) - u(\theta))
& \leq \beta \left( \int_0^1 L_h(1-t)(1+|t\theta^*+(1-t)\theta|+|\theta^*|)^{2r} \, \rmd t \right)|\theta^*-\theta|^2\\
&\leq \beta L_h (1+2|\theta^*-\theta|+2|\theta^*|)^{2r}|\theta^*-\theta|^2/2.
\end{align*}
Denote by $R_{\theta^*}: = \max\{ \sqrt{b_h/a_h},\sqrt{2d/(\beta L_h)} \}$, and $\bar{\mathrm{B}}(\theta^*,R_{\theta^*})$ the closed ball with radius $R_{\theta^*}$ centred at $\theta^*\in \R^d$. By noticing $u^* = u(\theta^*)$, further calculations hence yield
\begin{align}\label{eerp2t2}
\frac{\log \bar{C}_{\pi_{\beta}}}{\beta}
& = -u^* +\frac{1}{\beta}\log \int_{\R^d} e^{\beta (u^*-u(\theta))}\, \rmd \theta \nonumber\\
& \geq  -u^* +\frac{1}{\beta}\log \int_{\bar{\mathrm{B}}(\theta^*,R_{\theta^*})} e^{-\beta L_h (1+4R_{\theta^*})^{2r}|\theta^*-\theta|^2/2}\, \rmd \theta\nonumber\\
& = -u^* + \frac{1}{\beta}\log \left(\left(\frac{2\pi}{\beta\bar{C}_7}\right)^{d/2} \int_{\bar{\mathrm{B}}(\theta^*,R_{\theta^*})} f_{\Theta}(\theta)\, \rmd \theta \right),
\end{align}
where $\bar{C}_7 :=  L_h(1+4R_{\theta^*})^{2r}$ and $f_{\Theta}$ denotes the density function of a Gaussian random variable $\Theta$ with mean $\theta$ and covariance $(\beta\bar{C}_7)^{-1} I_d$. %One notes that $\sqrt{\beta\bar{C}_7} I_d(\Theta - \theta^*)$ follows a $d$-dimensional standard Gaussian distribution.
Therefore, applying Chebyshev’s inequality yields %the standard concentration inequality for Gaussian distribution, see \cite[Theorem 5.6]{boucheron2013concentration}, \cite[Eqn. (3.7)]{vershynin_2018}, yields
\begin{align}\label{eerp2t2cig}
\P(|\Theta - \theta^*| > R_{\theta^*})
%& = \P\left(|\sqrt{\beta\bar{C}_7}(\Theta - \theta^*)| > \sqrt{\beta\bar{C}_7}R_{\theta^*}\right)  \nonumber\\
%& = \P\left(|\sqrt{\beta\bar{C}_7}(\Theta - \theta^*)| -\sqrt{d}> \sqrt{\beta\bar{C}_7}R_{\theta^*} -\sqrt{d}\right)  \nonumber\\
&= \P\left(|\Theta - \theta^*| >  \sqrt{\frac{\beta\bar{C}_7R_{\theta^*}^2}{d}}\sqrt{\frac{d}{\beta\bar{C}_7}}\right) \leq \frac{d}{\beta R_{\theta^*}^2\bar{C}_7},
%&\leq e^{-(\sqrt{\beta\bar{C}_7}R_{\theta^*} -\sqrt{d})^2},
\end{align}
which, by substituting \eqref{eerp2t2cig} into \eqref{eerp2t2}, implies,
\begin{align}\label{eerp2t2ue}
\frac{\log \bar{C}_{\pi_{\beta}}}{\beta}
& \geq  -u^* +\frac{1}{\beta}\log \left(\left(\frac{2\pi}{\beta\bar{C}_7}\right)^{d/2} \left(1-\frac{d}{\beta R_{\theta^*}^2\bar{C}_7}\right) \right)  \geq-u^* +\frac{1}{\beta}\log \left(\frac{1}{2}\left(\frac{2\pi}{\beta\bar{C}_7}\right)^{d/2}  \right).
\end{align}
%One notes that in \eqref{eerp2t2ue}, $ d/(R_{\theta^*}^2\bar{C}_7 )\leq 1/2$ by the definitions of $R_{\theta^*} $ and $\bar{C}_7$.
Combining the results in \eqref{eerp2t1} and \eqref{eerp2t2ue}, one obtains
\[
\E[u(Z_{\infty})]- u^* \leq C_9/\beta,
\]
where
\begin{align}\label{eerp2const}
\begin{split}
C_9 \equiv C_9(\beta)
& := \frac{d}{2}\log\left(\frac{\bar{C}_7  e}{a_h}\left(\frac{\beta b_h}{d}+1\right)\right) +\log 2 ,\\
\bar{C}_7
& :=  L_h(1+4R_{\theta^*})^{2r},\\
R_{\theta^*}
& : =  \max\{ \sqrt{b_h/a_h},\sqrt{2d/(\beta L_h)} \}.
\end{split}
\end{align}
In particular, we have that $\lim_{\beta \to \infty} C_9(\beta)/\beta =0$.
\end{proof}
\newpage
%\newgeometry{left=1cm, right=1cm, top=1cm, bottom=1cm}

%\subsection{Table of constant}\label{sec:consttable}

%We provide explicit expressions for all the constants in the main results, i.e. Theorem \ref{mainw1}, Corollary \ref{mainw2}, and Theorem \ref{mainop}, in Table \ref{table1fullexp}. One observes that the aforementioned constants have exponential dependence due to the contraction constants $\dot{c}, \hat{c}$ presented in Table \ref{table2keydep}, which are obtained in Proposition \ref{contr} by using \cite[Theorem 2.2]{eberle2019quantitative}.

\begin{table}[H]
\renewcommand{\arraystretch}{2}
\centering
\caption{Analytic expressions of constants}
\label{table1fullexp}
\begin{threeparttable}
\scriptsize
\begin{tabular}{@{}ccll@{}} %\hline
\toprule
\multicolumn{2}{c}{Constant}  &\multicolumn{1}{c}{Full expression} &\multicolumn{1}{c}{Dependence on $d, \beta$}\\ %\hline
\midrule
	\multirow{3}{*}{Lemma \ref{driftcon}}					&$M_V(p) $			 					& $\sqrt{1/3+4b_h/(3a_h)+4d/(3a_h\beta)+4(p-2)/(3a_h\beta)}$ 							& $O(1+(d/\beta)^{1/2})$\\\cline{2-4}
    																	&$c_{V,1}(p) $							&$a_hp/4$  &$O(1)$ \\\cline{2-4}
      																	&$c_{V,2}(p) $							&$(3/4)a_hp\mathrm{v}_p(M_V(p))$& $O(1+(d/\beta)^{p/2})$\\ \hline
	\multirow{4}{*}{Lemma \ref{w1converp1}}			&\multirow{2}{*}{$\bar{C}_0$}		& $e^{4L_R}\Big(3^{4r} L_h^2L_R^{-1}64 K_H^4\E\left[(1+|X_0|)^{4\rho}\right]\Big.$										& \multirow{2}{*}{$O(1)$}\\
      																	&											&$ \Big. + \left(16K_H^2 (1+ L_R^{-1} )+2^{4r+4+2\rho}(1+L_F+L_G)^2\right) \E\left[ (1+|X_0|)^{2\rho}\right]\Big)$		& \\ \cline{2-4}
      																	&\multirow{3}{*}{$\bar{C}_1$}		 & $ e^{4L_R}\Big(3^{4r} L_h^2L_R^{-1} + 16K_H^2 (1+ L_R^{-1} )+2^{4r+4+2\rho}(1+L_F+L_G)^2 \Big)$					& \multirow{3}{*}{$O(1+(d/\beta)^{4r+2})$}\\
      																	&											&$\times \Big(64 K_H^4\E\left[(1+|X_0|)^{4\rho}\right] \times(1+c^\sharp_{4r+2}(1+2/(a_F\kappa^\sharp_{4r+2} ))) \Big.$\\
      																	&											&$ \Big. +32d(d+2) \beta^{-2}+c^\sharp_{4r}(1+2/(a_F\kappa^\sharp_{4r}))+1\Big)+4  e^{4L_R} |h(0)|^2$					&  \\\hline
	\multirow{2}{*}{Lemma \ref{w1converp2}}			& $\bar{C}_2$ 							&$e^{\min\{\dot{c}, a_F \kappa^\sharp_2/2, a_h\}/4}\hat{c}\left(1+ \frac{4}{\min\{\dot{c}, a_F \kappa^\sharp_2/2, a_h\}}\right)(\bar{C}_0+12)$& %$O\left((1+\beta)(1+\frac{d}{\beta})^2e^{C(1+\beta)(1+\frac{d}{\beta})}\right)$
	$O\left( e^{C(1+\beta)(1+\frac{d}{\beta})}\right)$\tnote{*}\\ \cline{2-4}
         																&$\bar{C}_3$ 							&  $2(\hat{c}/\dot{c})e^{ \dot{c}/2} (\bar{C}_1+12c^\sharp_2(1+2/(a_F\kappa^\sharp_2 ))+9\mathrm{v}_4(M_V(4))+15)$ & %$O\left((1+\beta)(1+\frac{d}{\beta})^{4r+4}e^{C(1+\beta)(1+\frac{d}{\beta})}\right)$
         																$O\left( e^{C(1+\beta)(1+\frac{d}{\beta})}\right)$\tnote{*}\\ \hline
	\multirow{3}{*}{Theorem \ref{mainw1}}				&$C_0$									& $\min\{\dot{c}, a_F \kappa^\sharp_2/2, a_h\}/4$ & $O(1)$\\\cline{2-4}
																	&$ C_1$									&$2^{4r+1}e^{\min\{\dot{c}, a_F \kappa^\sharp_2/2, a_h\}/4}\left[\bar{C}_0^{1/2}+\bar{C}_2+\hat{c}  \left(2+\int_{\R^d}V_2(\theta)\pi_{\beta}(d\theta)\right)\right]$&% $O\left((1+\beta)(1+\frac{d}{\beta})^2e^{C(1+\beta)(1+\frac{d}{\beta})}\right)$
																	$O\left( e^{C(1+\beta)(1+\frac{d}{\beta})}\right)$\tnote{*}\\\cline{2-4}
																	&$ C_2$									&$\bar{C}_1^{1/2}+\bar{C}_3$				&%$O\left((1+\beta)(1+\frac{d}{\beta})^{4r+4}e^{C(1+\beta)(1+\frac{d}{\beta})}\right)$
																	$O\left( e^{C(1+\beta)(1+\frac{d}{\beta})}\right)$\tnote{*}\\\hline
	\multirow{5}{*}{Corollary \ref{mainw2}}				&$\bar{C}_4$							&$e^{\min\{\dot{c}, a_F \kappa^\sharp_2/2, a_h\}/8}\sqrt{\hat{c}}\left(1+ \frac{8}{\min\{\dot{c}, a_F \kappa^\sharp_2/2, a_h\}}\right)(\bar{C}_0^{1/2}+2\sqrt{2})$  & %$O\left((\beta^{\frac{3}{4}}+\beta^{-\frac{1}{4}})(1+\frac{d}{\beta}) e^{C(1+\beta)(1+\frac{d}{\beta})}\right)$
	$O\left(%\beta^{-\frac{7}{4}}
	e^{C(1+\beta)(1+\frac{d}{\beta})}\right)$\tnote{*} \\\cline{2-4}
																	&$\bar{C}_5$								&$4(\sqrt{\hat{c}}/\dot{c})e^{\dot{c}/4}(\bar{C}_1^{1/2}+2\sqrt{2}(c^\sharp_2(1+2/(a_F\kappa^\sharp_2 )))^{1/2}+(3\mathrm{v}_4(M_V(4)))^{1/2}+3\sqrt{2})$  & %$O\left((\beta^{\frac{3}{4}}+\beta^{-\frac{1}{4}})(1+\frac{d}{\beta})^{2r+2} e^{C(1+\beta)(1+\frac{d}{\beta})}\right)$
																	$O\left(%\beta^{-\frac{7}{4}}
																	e^{C(1+\beta)(1+\frac{d}{\beta})}\right)$\tnote{*}\\\cline{2-4}	
																	&$C_3$									&$\min\{\dot{c}, a_F \kappa^\sharp_2/2, a_h\}/8$ & $O(1)$  \\\cline{2-4}
																	&$ C_4 $									&$2^{2r+1}e^{\min\{\dot{c}, a_F \kappa^\sharp_2/2, a_h\}/8}\left[\bar{C}_0^{1/2}+\bar{C}_4+\sqrt{\hat{c}}  \left(2+\int_{\R^d}V_2(\theta)\pi_{\beta}(d\theta)\right)^{1/2}\right]$& %$O\left((\beta^{\frac{3}{4}}+\beta^{-\frac{1}{4}})(1+\frac{d}{\beta}) e^{C(1+\beta)(1+\frac{d}{\beta})}\right)$
																	$O\left(%(\beta^{-\frac{7}{4}}+\beta^{-\frac{3}{4}})
																	e^{C(1+\beta)(1+\frac{d}{\beta})}\right)$\tnote{*}\\\cline{2-4}
																	&$ C_5 $									&$ \bar{C}_1^{1/2}+\bar{C}_5$& %$O\left((\beta^{\frac{3}{4}}+\beta^{-\frac{1}{4}})(1+\frac{d}{\beta})^{2r+2} e^{C(1+\beta)(1+\frac{d}{\beta})}\right)$
																	$O\left(%(1+\beta^{-\frac{7}{4}})
																	e^{C(1+\beta)(1+\frac{d}{\beta})}\right)$\tnote{*}\\\hline
  \multirow{5}{*}{Lemma \ref{eerp1}}						&$\bar{C}_6$							&$ K_H\E[(1+|X_0|)^{\rho}]$& $O(1)$\\\cline{2-4}
 																	 &$C_6$									& $C_3$ &  $O(1)$\\\cline{2-4}
        																 &\multirow{2}{*}{$C_7$	}									&$ C_4 \left(\frac{2^{2r}\bar{C}_6}{2r+2}\left(1+(c^\sharp_{2r+1}(1+2/(a_F\kappa^\sharp_{2r+1} )))^{1/2}+c_{Z_{\infty}, 4r+2}^{1/2}\right)+\bar{C}_6\right) $ & \multirow{2}{*}{%$O\left((\beta^{\frac{3}{4}}+\beta^{-\frac{1}{4}})(1+\frac{d}{\beta})^{2r+2} e^{C(1+\beta)(1+\frac{d}{\beta})}\right)$	
        																 $O\left(%(\beta^{-\frac{7}{4}}+\beta^{-\frac{3}{4}})
        																 e^{C(1+\beta)(1+\frac{d}{\beta})}\right)$\tnote{*}}\\
        																 &									&		$\times\left(\E[|\theta_0|^{4(2r+1)}]+1\right)$		&\\\cline{2-4}
        																 &$C_8$									&$ C_5\left(\frac{2^{2r}\bar{C}_6}{2r+2}\left((\E[|\theta^{\lambda}_0|^{4r+2}]+c^\sharp_{2r+1}(1+2/(a_F\kappa^\sharp_{2r+1} )))^{1/2}+c_{Z_{\infty}, 4r+2}^{1/2}\right)+\bar{C}_6\right)$& %$O\left((\beta^{\frac{3}{4}}+\beta^{-\frac{1}{4}})(1+\frac{d}{\beta})^{4r+3} e^{C(1+\beta)(1+\frac{d}{\beta})}\right)$
        																 $O\left(%(1+\beta^{-\frac{7}{4}})
        																 e^{C(1+\beta)(1+\frac{d}{\beta})}\right)$\tnote{*}\\ \hline
      	\multirow{2}{*}{Lemma \ref{eerp2}}     					&$R_{\theta^*}$					 	&$\max\{ \sqrt{b_h/a_h},\sqrt{2d/(\beta L_h)} \}$& $O(1+(d/\beta)^{1/2})$\\\cline{2-4}
 																	&$C_9$									&$ \frac{d}{2}\log\left(\frac{L_h(1+4R_{\theta^*})^{2r}  e}{a_h}\left(\frac{\beta b_h}{d}+1\right)\right) + \log 2$& $O\left(1+d\log\left(C(1+\frac{d}{\beta})^{r}(\frac{\beta}{d}+1)\right)\right)$\tnote{*}\\    \bottomrule
    \end{tabular}
        \begin{tablenotes}
   %\item[*] %$p=4r+2 $%$p\in [2, \infty)\cap \N$
   %\item[$\ddagger$] $\dot{c} \geq \big(4e^8\sqrt{\pi} e^{C^\star(a,b)L_R(1+d/\beta)}\big)^{-1}$
  \item[*] $C>0$ \text{is} a constant that may take different values at different places, but it is always independent of $d$ and $\beta$.
  \end{tablenotes}
    \end{threeparttable}
\end{table}

%%%%%%%%contraction constant%%%%%%%%%%%%%%
%\begin{align*}
%\dot{c} 	&\geq \big(4e^8\sqrt{\pi} e^{C^\star(a,b)L_1\E[\eta(X_0)](1+d/\beta)}\big)^{-1},\\
%			& =  O\big(e^{C^\star(a,b)L_1\E[\eta(X_0)](1+d/\beta)/2}\big),\\
%\hat{c}	& = O\big(e^{C^\star(a,b)(1+L_1\E[\eta(X_0)])(1+d/\beta)}\big),\\
%C^\star(a,b) 	& = 13(1+2/a)^2(1+a+b).
%\end{align*}
%%%%%%%%contraction constant%%%%%%%%%%%%%%

%%%%%%%%%%%%%%%%%%%%%%%%

\begin{table}
\begin{center}
\renewcommand{\arraystretch}{2}
\caption{Constants in Lemma \ref{2ndpthmmt} and Proposition \ref{contr}, and their dependency on key parameters}\label{table2keydep}
\scriptsize
\begin{threeparttable}
\begin{tabular}{@{}ccccc@{}}
    \toprule
     \multicolumn{2}{c}{\multirow{2}{*}{Constant}}&  \multicolumn{3}{c}{Key parameters}   \\\cline{3-5}
    															&										&$d$							&$\beta$							& Moments of $X_0$  \\\midrule
\multirow{4}{*}{Lemma \ref{2ndpthmmt}}     		&    $\kappa \in (1/4, 1/2)$ 		& --- 							& --- 								& --- 	\\\cline{2-5}
       														&  $c_0$ 								& $O(1+d/\beta)$ 			& $O(1+d/\beta)$				&$O (\E[(1+|X_0|)^{2\rho}])$  \\\cline{2-5}
															&$\kappa^\sharp_p \in (1/4, 1/2)$%\tnote{*} 	
															& --- 							& --- 								& --- 	\\\cline{2-5}
          														&$c^\sharp_p$%\tnote{*}
          																	&$O(1+(d/\beta)^p)$ 		& $O(1+(d/\beta)^p)$			&$O ((\E[(1+|X_0|)^{2p\rho}])^{p(q+1)+1})$  \\\hline
\multirow{2}{*}{Proposition \ref{contr}}   			& $\dot{c}$							& \multicolumn{3}{c}{$ \left(32\sqrt{\pi}(1+a^{-2}) \left(1+\frac{d}{\beta}\right)^{3/2}\sqrt{\frac{\beta}{L_R}}e^{8C^\star(a_h,b_h)(1+ \beta L_R)\left(1+\frac{d}{\beta}\right)+\frac{16}{\beta L_R}}\right)^{-1}$\tnote{$\dagger$}} \\\cline{2-5}
     															&  $\hat{c}$							& \multicolumn{3}{c}{$O \left(\sqrt{\frac{\beta}{L_R}}(1+\frac{d}{\beta})^2 e^{12C^\star(a_h,b_h)(1+ \beta L_R)\left(1+\frac{d}{\beta}\right)+\frac{16}{\beta L_R}}\right)$\tnote{$\dagger$}}  \\
       \bottomrule
    \end{tabular}
    \begin{tablenotes}
   %\item[*] %$p=4r+2 $%$p\in [2, \infty)\cap \N$
   %\item[$\ddagger$] $\dot{c} \geq \big(4e^8\sqrt{\pi} e^{C^\star(a,b)L_R(1+d/\beta)}\big)^{-1}$
  \item[$\dagger$] $C^\star(a_h,b_h)  = (1+2/a_h) (1+a_h+b_h)$.
  \end{tablenotes}
    \end{threeparttable}
    \end{center}
\end{table}
 % \restoregeometry
%%%%%%%%%%%%%%%%%%%%%%%%
\newpage

\bibliographystyle{plainnat}

\bibliography{references}

\end{document}